\newcommand{\rev}{}
\newlist{abbrv}{itemize}{1}
\setlist[abbrv,1]{label=,labelwidth=1in,align=parleft,itemsep=0.1\baselineskip,leftmargin=!}
\renewcommand\paragraph{\@startsection{paragraph}{4}{\z@}%
	{-2.5ex\@plus -1ex \@minus -.25ex}%
	{1.25ex \@plus .25ex}%
	{\normalfont\normalsize\bfseries}}
\newtheorem{thm}{Theorem}
\newtheorem{lm}[thm]{Lemma}
\newtheorem{defn}[thm]{Definition}
\newtheorem{prop}[thm]{Proposition}
\newtheorem{rmk}[thm]{Remark}
\newtheorem{cor}[thm]{Corollary}
\newtheorem*{claim}{Claim}
\newcommand{\cH}{\mathcal{H}}
\newcommand{\prob}{\mathbb{P}}
\newcommand{\intZ}{\mathbb{Z}}
\newcommand{\realR}{\mathbb{R}}
\newcommand{\complexC}{\mathbb{C}}
\newcommand{\polylog}{\mathrm{Li}}
\newcommand{\sign}{\mathrm{sgn}}
\newcommand{\FS}{\mathbb{F}} 
\newcommand{\FSm}{\FS^{(m)}}
\newcommand{\FSmo}{\FS^{(m-1)}}
\newcommand{\FgSm}{\tilde{\mathbb{F}}^{(m)}}
\newcommand{\FgSmo}{\tilde{\mathbb{F}}^{(m-1)}}
\newcommand{\FgSmoo}{\tilde{\mathbb{F}}^{(1)}}
\newcommand{\fslm}{\hat{\sfs}} 
\newcommand{\conf}{\mathcal{X}}
\newcommand{\beq}{ \begin{equation} }
\newcommand{\eeq}{ \end{equation} }
\newcommand{\bes}{ \begin{split} }
\newcommand{\beqq}{ \begin{equation*} }
\newcommand{\eeqq}{ \end{equation*} }
\newcommand{\dd}{{\mathrm d}}
\newcommand{\ii}{\mathrm{i}}
\newcommand{\ddbar}[1]{\frac{{\mathrm d}#1}{2\pi {\mathrm i}#1}}
\newcommand{\ddbarr}[1]{\frac{{\mathrm d}#1}{2\pi {\mathrm i}}}
\newcommand{\LL}{{\rm L}}
\newcommand{\RR}{{\rm R}}
\newcommand{\zz}{Z}
\newcommand{\rr}{\mathbbm{r}}
\newcommand{\DL}{\mathcal{L}}
\newcommand{\DR}{\mathcal{R}}
\newcommand{\DRL}{\mathcal{H}}
\newcommand{\PQS}{\tilde{\mathcal{G}}}
\newcommand{\PQ}{\mathcal{G}}
\newcommand{\consto}{\mathcal{Q}}
\newcommand{\B}{B}
\newcommand{\DP}{\mathcal{P}}
\newcommand{\cI}{\mathcal{I}}
\newcommand{\ccI}{\tilde{\mathcal{I}}}
\newcommand{\cJ}{\mathcal{J}}
\newcommand{\height}{\mathbbm{h}}
\newcommand{\region}{\mathbf{R}}
\newcommand{\id}{1}
\newcommand{\roots}{\mathcal{R}}
\newcommand{\rootsL}{\mathrm{L}}
\newcommand{\rootsR}{\mathrm{R}}
\newcommand{\ql}{\mathrm{l}}
\newcommand{\qr}{\mathrm{r}}
\newcommand{\qol}{\ql} 
\newcommand{\qor}{\qr} 
\newcommand{\fftn}{F}
\newcommand{\fs}{f}
\newcommand{\hfs}{\mathtt{f}}
\newcommand{\hatf}{\hat{\fs}}
\newcommand{\const}{C}
\newcommand{\gdet}{D}
\newcommand{\UU}{\mathrm{U}}
\newcommand{\VV}{\mathrm{V}}
\newcommand{\bUU}{\mathbf{\UU}}
\newcommand{\bVV}{\mathbf{\VV}}
\newcommand{\mrootL}{\mathrm{S_1}}
\newcommand{\mrootR}{\mathrm{S_2}}
\newcommand{\KsL}{K_1} 
\newcommand{\KsR}{K_2} 
\newcommand{\kk}{k}
\newcommand{\tKsL}{\tilde{\KsL}} 
\newcommand{\tKsR}{\tilde{\KsR}}
\newcommand{\CA}{\mathrm{E}}
\newcommand{\sH}{H} 
\newcommand{\sQL}{\mathrm{Q}_1}
\newcommand{\sQR}{\mathrm{Q}_2}
\newcommand{\jac}{\mathrm{J}}
\newcommand{\Jb}{\jac}
\newcommand{\Dt}{\tilde{D}}
\newcommand{\xx}{\mathbbm{x}}
\newcommand{\isk}{\Omega}
\newcommand{\htta}{\mathfrak{H}} 
\newcommand{\bfx}{\mathbf{x}}
\newcommand{\bftau}{\boldsymbol{\tau}}
\newcommand{\bfgamma}{\boldsymbol{\gamma}}
\newcommand{\constc}{\mathcal{C}}
\newcommand{\ccc}{\mathsf{C}}
\newcommand{\gdetlm}{\mathsf{D}}
\newcommand{\cccm}{\ccc^{(m)}}
\newcommand{\gdetlmm}{\gdetlm^{(m)}}
\newcommand{\cccmo}{\ccc^{(m-1)}}
\newcommand{\gdetlmmo}{\gdetlm^{(m-1)}}
\newcommand{\gdetlmdd}{\mathsf{d}}
\newcommand{\gdetlmddm}{\mathsf{d}^{(m)}}
\newcommand{\gdetlmddmo}{\mathsf{d}^{(m-1)}}
\newcommand{\lmk}{\mathsf{k}}
\newcommand{\lmKL}{\mathsf{K_1}}
\newcommand{\lmKR}{\mathsf{K_2}}
\newcommand{\tlmKL}{\mathsf{\tilde{K}_1}}
\newcommand{\tlmKR}{\mathsf{\tilde{K}_2}}
\newcommand{\inodesL}{\mathsf{L}}
\newcommand{\inodesR}{\mathsf{R}}
\newcommand{\SSSL}{\mathsf{S_1}}
\newcommand{\SSSR}{\mathsf{S_2}}
\newcommand{\hftn}{\mathsf{h}}
\newcommand{\QWL}{\mathsf{Q_1}}
\newcommand{\QWR}{\mathsf{Q_2}}
\newcommand{\sfs}{\mathsf{f}}
\newcommand{\UUlm}{\mathsf{U}} 
\newcommand{\VVlm}{\mathsf{V}} 
\newcommand{\WWlm}{\mathsf{W}} 
\newcommand{\wsw}{\mathsf{w}} 
\newcommand{\bUUlm}{\mathbf{\UUlm}}
\newcommand{\bVVlm}{\mathbf{\VVlm}}
\newcommand{\bUUlmk}{\bUUlm^{[k]}} 
\newcommand{\bVVlmk}{\bVVlm^{[k]}} 
\newcommand{\su}{\mathsf{u}} 
\newcommand{\sv}{\mathsf{v}} 
\newcommand{\sw}{\zeta} 
\newcommand{\spp}{\mathrm{p}}  
\newcommand{\bp}{\mathbf{p}}
\newcommand{\bpk}{\bp^{[k]}}
\newcommand{\bx}{\mathbf{x}}
\newcommand{\bxk}{\bx^{[k]}}
\newcommand{\detv}{\mathcal{D}}
\newcommand{\detvv}{\mathcal{E}}
\newcommand{\Cb}{}
\newcommand{\bfz}{\mathbf{z}}
\newcommand{\bfzk}{\bfz^{[k]}}
\newcommand{\bbz}{\mathbf{z}}
\newcommand{\bW}{\mathbf{W}}
\newcommand{\bz}{Z} 
\newcommand{\bzz}{\mathbf{\zz}}
\newcommand{\bk}{\mathbf{k}}
\newcommand{\bt}{\mathbf{t}}
\newcommand{\ba}{\mathbf{a}}
\newcommand{\bn}{\mathbf{n}}
\newcommand{\bnk}{\bn^{[k]}}
\newcommand{\fgic}{\mathcal{G}}
\newcommand{\gn}{g}
\newcommand{\mr}{\mathbf}
\newcommand{\cnt}{\mathcal{B}}
\newcommand{\ww}{\hat{\omega}}
\newcommand{\tW}{\tilde{W}}
\newcommand{\tU}{\tilde{U}}
\newcommand{\tV}{\tilde{V}}
\newcommand{\tS}{\tilde{S}}
\DeclareMathOperator{\st}{step}
\newcommand{\rF}{\mathcal{F}}
\newcommand{\rG}{\mathcal{G}}
\newcommand{\rI}{\mathcal{I}}
\newcommand{\rJ}{\mathcal{J}}
\newcommand{\C}{\mathbb{C}}
\newcommand{\Z}{\mathbb{Z}}
\newcommand{\R}{\mathbb{R}}
\newcommand{\fontlm}{\mathsf}
\numberwithin{equation}{section} 
\numberwithin{thm}{section}
\author{Jinho Baik\footnote{Department of Mathematics, University of Michigan,
Ann Arbor, MI, 48109. Email: \texttt{baik@umich.edu}} 
 and Zhipeng Liu\footnote{Department of Mathematics, University of Kansas, Lawrence, KS 66045. Email: \texttt{zhipeng@ku.edu}}}
\date{\today}
\begin{document}
	\title{Multi-point distribution of periodic TASEP} 
	
	\date{\today}

	\maketitle
	
\begin{abstract}
The height fluctuations of the models in the KPZ class are expected to converge to a universal  process. 
The spatial process at equal time is known to converge to the Airy process or its variations. However, the temporal process, or more generally {the two-dimensional space-time fluctuation field, is less well understood.}
We consider this question for the periodic TASEP (totally asymmetric simple exclusion process). 
For a particular initial condition, we evaluate the multi-time and multi-location 
distribution explicitly in terms of a multiple integral involving a Fredholm determinant.  
We then evaluate the large time limit in the so-called relaxation time scale. 
\end{abstract}

\section{Introduction}\label{sec:int}

{The models in the KPZ universality class are expected to have the 1:2:3 \rev{scaling} for the 
height fluctuations, spatial correlations, and time correlations as time $t\to \infty$.
This means that the scaled two-dimensional fluctuation field 
\beq
h_t(\gamma, \tau) := \frac{H(c_1\gamma(\tau t)^{2/3}, \tau t) - \left(c_2(\tau t)+c_3(\tau t)^{2/3}\right)}{c_4(\tau t)^{1/3}}
\eeq
of the height} function $H(\ell, t)$, where $\ell$ is the spatial variable and $t$ is time, is believed to converge  to a universal field which  depends only on the initial condition.\footnote{
For some initial conditions, such as the stationary initial condition, one may need to translate the space in the characteristic direction.} 
{Here $c_1, c_2, c_3, c_4$ are model-dependent constants.}
Determining the limiting {two-dimensional fluctuation field}
\beq \label{eq:twfid}
	(\gamma, \tau) \mapsto h(\gamma, \tau):= \lim_{t\to \infty} h_t(\gamma, \tau)
\eeq
is an outstanding question.

By now there are several results for the one-point distribution.
The one-point distribution of $h(\gamma, \tau)$ for fixed $(\gamma, \tau)$ is given by 
random matrix distributions (Tracy-Widom distributions) or their generalizations.
The convergence 
is proved for a quite long list of models including PNG, TASEP, ASEP, $q$-TASEP, random tilings, last passage percolations, directed polymers, the KPZ equation, and so on. See, for example, \cite{Baik-Deift-Johansson99, Johansson00, Tracy-Widom09, Amir-Corwin-Quastel11, Borodin-Corwin-Ferrari14}, and the review article \cite{Corwin11}.
These models were studied using various integrable methods under standard initial conditions.
See also the recent papers \cite{Corwin-Liu-Wang16, Quastel-Remenik16} for general initial conditions.

The spatial one-dimensional process, $\gamma \mapsto h(\gamma, \tau)$ for fixed $\tau$, is also well understood. 
This process is given by the Airy process and its variations.
However, the convergence 
is proved rigorously only for a smaller number of models.
It was proved for the determinantal models like PNG, TASEP, last passage percolation,\footnote{See, for example, \cite{Prahofer-Spohn02, Johansson03, Imamura-Sasamoto04, Borodin-Ferrari-Prahofer-Sasamoto07, Borodin-Ferrari-Prahofer07, Borodin-Ferrari-Sasamoto08a, Baik-Ferrari-Peche10} for special initial conditions. See the recent paper \cite{Matetski-Quastel-Remenik17} for general initial conditions for TASEP.} but not yet for other integrable models such as ASEP, $q$-TASEP, finite-temperature directed polymers, and the KPZ equation.

{The two-dimensional fluctuation field,} $(\gamma, \tau) \mapsto h(\gamma, \tau)$, on the other hand, is less well understood. 
The joint distribution is known only for the two-point distribution. 
In 2015, Johansson \cite{JohanssonTwoTime} considered the zero temperature Brownian semi-discrete directed polymer and computed the limit of the two-point (in time and location) distribution.\footnote{There are non-rigorous physics papers for the two-time distribution of directed polymers \cite{DotsenkoTime1, DotsenkoTime2, DotsenkoTime3}. However, another physics paper \cite{DeNardisLeDoussalTwoTime} indicates that the formulas in these papers are not correct.}  
The limit is obtained in terms of rather complicated 
series involving the determinants of matrices whose entries contain the Airy kernel. 
\Cb{The formula is simplified more recently in terms a contour integral of a Fredholm determinant in \cite{Johansson18} in which the author also extended his work to the directed last passage percolation model with geometric weights.}
Two other papers studied qualitative behaviors of the temporal correlations. 
Using a variational problem involving two independent Airy processes, Ferrari and Spohn \cite{FerrariSpohnTimeCorrel} proved in 2016 the power law of the covariance in the time direction in the large and small time limits, $\tau_1/\tau_2\to 0$ and $\tau_1/\tau_2\to 1$. 
Here, $\tau_1$ and $\tau_2$ denote the scaled time parameters. 
De Nardis and Le Doussal \cite{DeNardisLeDoussalTwoTime} extended this work further and also augmented by other physics arguments to compute the similar 
limits of the two-time distribution when one of the arguments is large. 
It is yet to be seen if one can deduce these results from the formula of Johansson.

\bigskip

The objective of this paper is to study the {two-dimensional fluctuation field of \emph{spatially periodic} KPZ models.}
Specifically, we evaluate the multi-point distribution of the periodic TASEP (totally asymmetric simple exclusion process) and compute a large time limit in a certain critical regime.

We denote by $L$ the period and by $N$ the number of particles per period. 
Set $\rho=N/L$, the average density of particles. 
The periodic TASEP (of period $L$ and density $\rho$) is defined by the occupation function $\eta_j(t)$ satisfying the spatial periodicity:
\begin{equation}
	\eta_j(t)=\eta_{j+L}(t),\qquad j\in \intZ, \quad t\ge 0.
\end{equation}
Apart from this condition, the particles follow the usual TASEP rules. 

Consider the limit as $t, L, N\to\infty$ with fixed $\rho=N/L$. 
Since the spatial fluctuations of the usual infinite TASEP is $O(t^{2/3})$, 
{all of the particles in the periodic TASEP are correlated when}
$t^{2/3}=O(L)$. 
We say that the periodic TASEP is in the \emph{relaxation time scale} if 
\beq
	t=O(L^{3/2}).
\eeq 
If $t\ll L^{3/2}$, we expect that {the system size has negligible effect}  and, therefore, the system follows the KPZ dynamics.
See, for example, \cite{Baik-Liu16b}.
On the other hand, if $t\gg L^{3/2}$, then the system is basically in a finite system, and hence we expect the  \rev{stationary} dynamics. See, for example, \cite{Derrida-Lebowitz98}.
Therefore, in the relaxation time scale,  we predict that the KPZ dynamics and the  \rev{stationary} dynamics 
are both present.

Even though the periodic TASEP is as natural as the infinite TASEP, the one-point distribution was obtained only recently. 
Over the last two years, in a physics paper \cite{Prolhac16} and, independently, in mathematics papers \cite{Baik-Liu16, Liu16}, the authors evaluated the one-point function of the height function in finite time and computed the large time limit in the relaxation time scale. 
The one-point function follows the the KPZ scaling $O(t^{1/3})$ but the limiting distribution is different from \rev{that of} the infinite TASEP.\footnote{The formulas obtained in \cite{Baik-Liu16, Liu16} and \cite{Prolhac16} are similar, but different. It is yet to be checked that these formulas are the same.} 
This result was obtained for the three initial conditions of periodic step, flat, and stationary.
{Some  \rev{earlier related studies} can be found in physics papers \cite{Gwa-Spohn92, Derrida-Lebowitz98, Priezzhev2003, Golinelli-Mallick04, Golinelli-Mallick05, Povolotsky-Priezzhev07, Gupta-Majumdar-Godreche-Barma07, Poghosyan-Priezzhev08}}\rev{, including results on the large deviation and spectral properties of the system.} 

In this paper, we {extend the analysis of the papers} \cite{Baik-Liu16, Liu16} and compute the multi-point (in time and location) distribution {of the periodic TASEP with a special initial condition called the periodic step initial condition.}
Here we allow any number of points unlike the previous work of Johansson on the infinite TASEP.
It appears that the periodicity of the model simplifies the algebraic computation compared with the infinite TASEP.  
In a separate paper we will consider flat and stationary initial conditions.
The main results are the following: 
\begin{enumerate}
\item {For arbitrary initial conditions, we evaluate finite-time joint distribution functions of the periodic TASEP at multiple points in the space-time coordinates}  in terms of a multiple integral involving a determinant of size $N$. See Theorem~\ref{prop:multipoint_distribution_origin} and Corollary~\ref{cor:pTASEPgenIC}.
\item For the periodic step initial condition, we simplify the determinant to a Fredholm determinant. See Theorem~\ref{thm:multi_point_formula}  and Corollary~\ref{cor:pTASEPstep}.
\item We compute the large time limit of the multi-point (in the space-time coordinates) distribution in the relaxation time scale for the periodic TASEP with the periodic step initial condition. 
See Theorem~\ref{thm:htfntasy}.
\end{enumerate}

\bigskip

One way of studying the usual infinite TASEP is the following. 
First, one computes the transition probability using the coordinate Bethe ansatz method. 
This means that we solve the Kolmogorov forward equation explicitly after replacing it (which contains complicated interactions between the particles) by the free evolution equation with certain boundary conditions. 
In \cite{Schutz97}, Sch\"utz obtained the transition probability of the infinite TASEP. 
Second, one evaluates the marginal or joint distribution by taking a sum of the transition probabilities.
It is important that the resulting expression should be suitable for the asymptotic analysis.
This is achieved typically by obtaining a Fredholm determinant formula. 
In \cite{Rakos-Schutz05},  R\'akos and Sch\"utz re-derived the famous finite-time Fredholm determinant formula of Johansson \cite{Johansson00} for the one-point distribution in the case of the step initial condition using this procedure. 
Subsequently, Sasamoto \cite{Sasamoto05} and Borodin, Ferrari, Pr\"ahofer, and Sasamoto \cite{Borodin-Ferrari-Prahofer-Sasamoto07}  obtained a Fredholm determinant formula for the joint distribution of multiple points with equal time.
This was further extended by Borodin and Ferrari \cite{Borodin-Ferrari08} to the points in spatial directions of each other. 
However, it was not extended to the case when the points are temporal directions of each other. 
The third step is to analyze the finite-time formula asymptotically using the method of steepest-descent. 
See \cite{Johansson00,Sasamoto05,Borodin-Ferrari-Prahofer-Sasamoto07,Borodin-Ferrari08} and also a more recent paper \cite{Matetski-Quastel-Remenik17}.
In the KPZ 1:2:3 scaling limit, the above algebraic formulas  give only the spatial process 
 $\gamma \mapsto h(\gamma, \tau)$. 

We applied the above procedure to the one-point distribution of the periodic TASEP in \cite{Baik-Liu16}.  
We obtained a formula for the transition probability, which is a periodic analogue of the formula of Sch\"utz.
Using that, we computed the finite-time one-point distribution for arbitrary initial condition.
The distribution was given by an integral of a determinant of size $N$. 
We then simplified the determinant to a Fredholm determinant for the cases of the step and flat initial conditions. 
The resulting expression was suitable for the asymptotic analysis. 
A similar computation for the stationary initial condition was carried out in \cite{Liu16}. 

In this paper, we extend the analysis of \cite{Baik-Liu16, Liu16} to multi-point distributions. 
For general initial conditions, we evaluate the joint distribution by taking a multiple sum of the transition probabilities obtained in \cite{Baik-Liu16}. 
The computation can be reduced to an evaluation of a sum involving only two arbitrary points in the space-time coordinates (with different time coordinates.)
The main technical result of this paper, presented in Proposition~\ref{thm:DRL_simplification}, is the evaluation of this sum in a compact form. 
The key point, compared with the infinite TASEP \cite{Borodin-Ferrari-Prahofer-Sasamoto07, Borodin-Ferrari08,Matetski-Quastel-Remenik17}, is that the points do not need to be restricted to the spatial directions.\footnote{In the large time limit, we add a certain restriction when the re-scaled times are equal. See Theorem~\ref{thm:htfntasy}.
The outcome of the above computation 
is that  we find the joint distribution in terms of a multiple integral involving a determinant of size $N$. 
For the periodic step initial condition, we simplify the determinant further to a Fredholm determinant.}
The final formula is suitable for the large-time asymptotic analysis in relaxation time scale.

\bigskip

If we take the period $L$ to infinity while keeping other parameters fixed, the periodic TASEP becomes the infinite TASEP. 
Moreover, it is easy to check (see Section~\ref{sec:inftasep} below) that the joint distributions of the periodic TASEP and the infinite TASEP are equal even for fixed $L$ if $L$ is large enough compared with the times. 
Hence, the finite-time joint distribution formula obtained in this paper (Theorem~\ref{prop:multipoint_distribution_origin} and Corollary~\ref{cor:pTASEPgenIC}) 
in fact, gives a formula of the joint distribution of the infinite TASEP; see the equations~\eqref{eq:aux_2017_07_01_10} and~\eqref{eq:aux_2017_08_14_01}.  
This formula contains an auxiliary parameter $L$ which has no meaning in the infinite TASEP. 
From this observation, we find that if we take the large time limit of our formula in the sub-relaxation time scale, $t\ll L^{3/2}$, 
then the limit, if it exists, is the joint distribution of the two-dimensional process $h(\gamma, \tau)$ in~\eqref{eq:twfid}. 
However, it is not  
clear at this moment if our formula is suitable for the asymptotic analysis {in the sub-relaxation time scale; 
the kernel of the operator} in the Fredholm determinant does not seem to converge in the sub-relaxation time scale while it converges in the relaxation time scale. 
The question of computing the limit in the sub-relaxation time scale, and hence the multi-point distribution of the infinite TASEP, will be left as a later project.

\bigskip

This paper is organized as follows. 
We state the limit theorem in Section~\ref{sec:limitdisformula}.
The finite time formula for general initial conditions is in Section~\ref{sec:tran}. 
Its simplification for the periodic step initial condition is obtained in Section~\ref{sec:stepicn}. 
{In Section~\ref{sec:proof_DRL}, we prove Proposition~\ref{thm:DRL_simplification}, the key algebraic computation.} 
The asymptotic analysis of the formula obtained in Section~\ref{sec:stepicn} is carried out in Section~\ref{sec:pfastp}, proving the result in Section~\ref{sec:limitdisformula}.  
We discuss some properties of the limit of the joint distribution in Section~\ref{sec:consistency}. 
In Section~\ref{sec:inftasep} we show that the finite-time formulas obtained in Sections~\ref{sec:tran} and~\ref{sec:stepicn} are also valid for infinite TASEP for all large enough $L$. 

\subsubsection*{Acknowledgments}
The work of Jinho Baik was supported in part by NSF grants DMS-1361782, DMS-1664531  and DMS-1664692, and the Simons Fellows program. The work was done in part when Zhipeng Liu was at Courant Institute, New York University.

\section{Limit theorem for multi-point distribution} \label{sec:limitdisformula}

\subsection{Limit theorem}

Consider the periodic TASEP of period $L$ with $N$ particles per period. 
We set $\rho=N/L$, the average particle density. 
We assume that the particles move to the right. 
Let $\eta_j(t)$ be the occupation function of periodic TASEP: $\eta_j(t)=1$ if the site $j$ is occupied at time $t$, otherwise $\eta_j(t)=0$, and it satisfies the periodicity $\eta_j(t)=\eta_{j+L}(t)$. 
We consider the \emph{periodic step initial condition} defined by 
\beq \label{eq:icst}
\bes
	& \eta_{j}(0) = \begin{dcases} 1 \qquad \text{for $-N+1\le j\le 0$,}\\
	0 \qquad \text{for $1\le j\le L-N$,} \end{dcases} \\
\end{split}
\eeq
and $\eta_{j+L}(0) =\eta_{j}(0)$. 

We state the results in terms of the height function
\begin{equation}
\label{eq:def_height_2}
	\height(\mr p)
	\qquad \text{where $\mr p=\ell \mr e_1+t\mr e_2=(\ell,t)\in\intZ\times\realR_{\ge 0}$.}
\end{equation} 
Here $\mr e_1=(1,0)$ and $\mr e_2=(0,1)$ are the unit coordinate vectors in the spatial and time directions, respectively.
The height function is defined by
\begin{equation}
\label{eq:def_height_1}
	\height(\ell \mr e_1+t\mr e_2) 
	= \begin{dcases}
2J_0(t)+\sum_{j=1}^\ell\left(1-2\eta_j(t)\right),&\ell\ge 1,\\
2J_0(t),										 &\ell=0,\\
2J_0(t)-\sum_{j=\ell+1}^0\left(1-2\eta_j(t)\right),&\ell\le -1,
\end{dcases}
\end{equation}
where $J_0(t)$ counts the number of particles jumping through the bond from $0$ to $1$ during the time interval $[0,t]$. 
The periodicity implies that 
\beq
	\height((\ell +nL)\mr e_1+t\mr e_2)=\height(\ell \mr e_1+t\mr e_2)+n(L-2N)
\eeq
for integers $n$.

See Figure~\ref{fig:density_profile} for the evolution of the density profile and Figure~\ref{fig:heightevaoluation} for the limiting height function. \rev{Note that the step initial condition~\eqref{eq:icst} generates shocks\footnote{These shocks are generated when faster particles from lower density region enter higher density region and are forced to slow down. \Cb{See, for example, \cite{Corwin-Ferrari-Peche10,Ferrari-Nejjar15} for the study of similar behaviors in infinite TASEP.}}.
By solving the Burgers' equation in a periodic domain, one could derive the explicit formulas of the density profile, the limiting height function and the shock location. These computations were done in \cite{Baik-Liu16b}.} 

\begin{figure}
	\centering
	\includegraphics[scale=0.5]{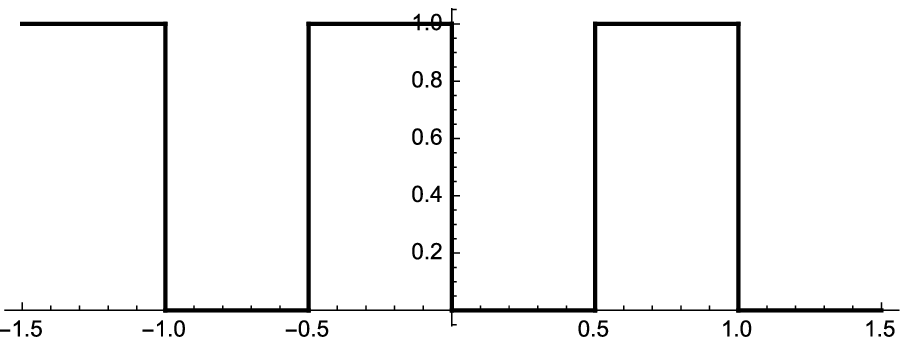} \quad 
	\includegraphics[scale=0.5]{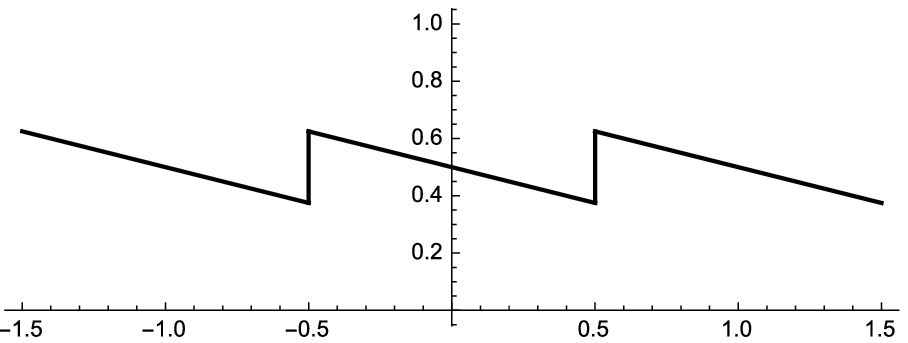} \quad 
	\includegraphics[scale=0.5]{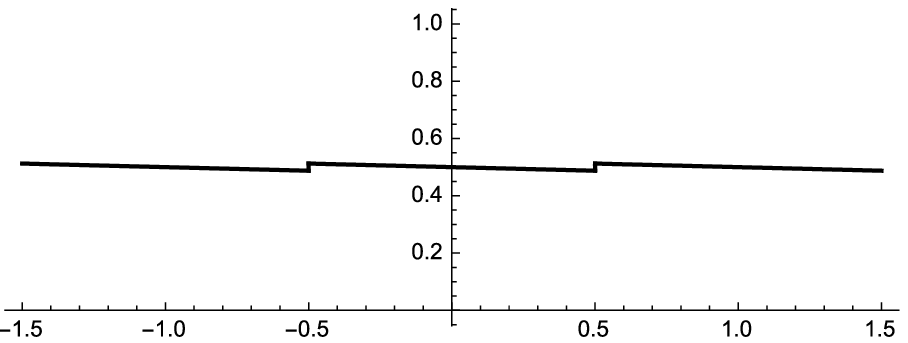}
	\caption{{The pictures represent the density profile at time $t=0$, $t=L$ and $t=10L$ when $\rho=1/2$. The horizontal axis is scaled down by $L$.}}
\label{fig:density_profile}
\end{figure}

\begin{figure}
	\centering
	\includegraphics[scale=0.6]{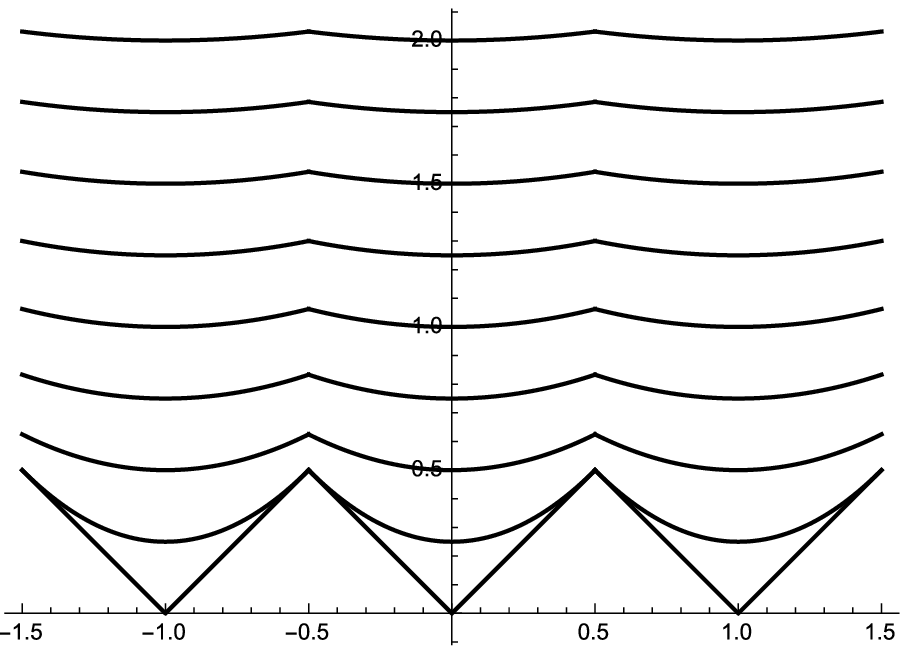} \qquad 
	\includegraphics[scale=0.6]{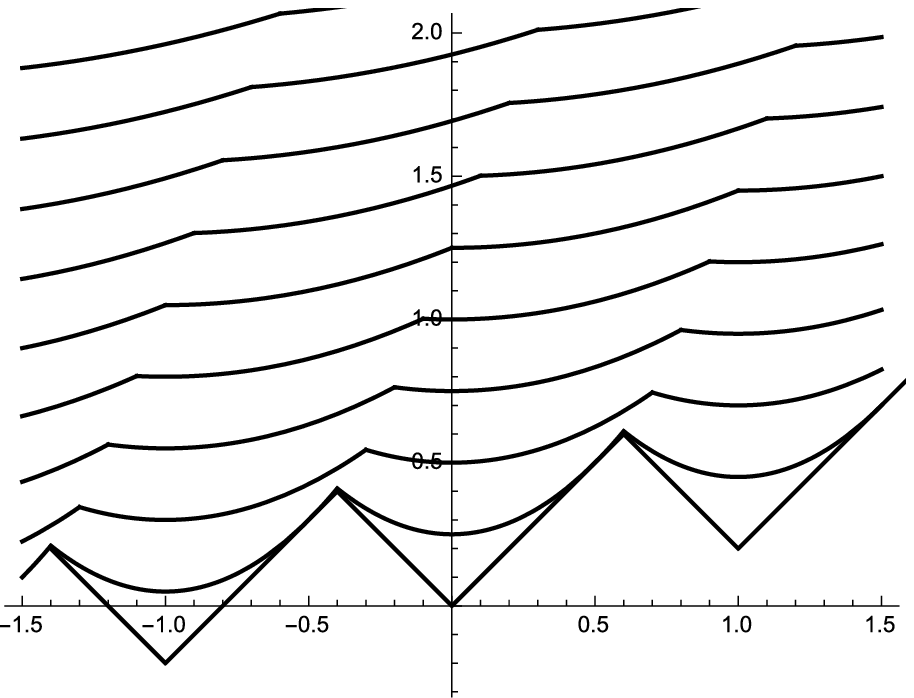} 
	\caption{{The pictures represent the limiting height function at times $t= 0.5nL$ for $n=0,1,2,\cdots$. The left picture is when $\rho=1/2$ and the right picture is when $\rho=2/5$. Both horizontal axis (location) and vertical axis (height) are scaled down by $L$.
	}}
\label{fig:heightevaoluation}
\end{figure}

\bigskip

We represent the space-time position in new coordinates.
Let 
\begin{equation}
	\mr e_c:=(1-2\rho)\mr e_1+\mr e_2 
\end{equation}
be a vector parallel to the characteristic directions. 
If we represent $\mr p= \ell \mr e_1+t\mr e_2$ in terms of $\mr e_1$ and $\mr e_c$, then
\begin{equation}
	\mr p = s \mr e_1+t \mr e_c \qquad \text{where $s=\ell- t(1-2\rho)$.}
\end{equation}
Consider the region 
\begin{equation}
	\region:=\{\ell \mr e_1+t\mr e_2\in\intZ\times\realR_{\ge 0}: 0\le \ell-(1-2\rho) t\le L\}
	= \{s \mr e_1+t \mr e_c \in\intZ\times\realR_{\ge 0}: 0\le s\le L\}.
\end{equation}
See Figure~\ref{fig:points_space_time}. 
Due to the periodicity, the height function in $\region$ determines the height function in the whole space-time plane. 
\begin{figure}
	\centering
	\includegraphics[scale=0.25]{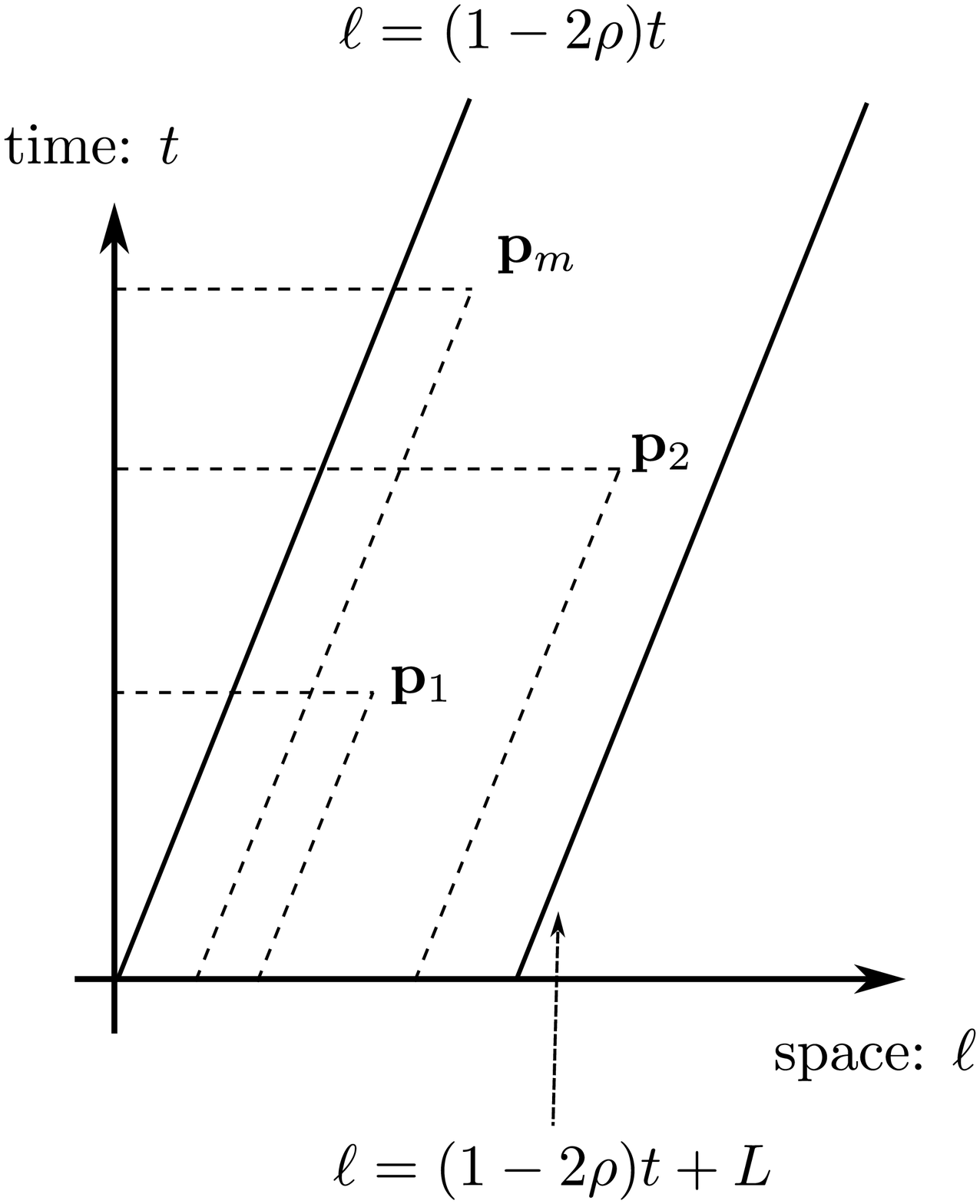}
	\caption{Illustration of the points $\mr p_j$, $j=1,\cdots,m$, in the region $\region$.}
	\label{fig:points_space_time}
\end{figure}

The following theorem is the main asymptotic result. We take the limit as follows. 
We take $L,N\to\infty$ in such a way that the average density $\rho=N/L$ is fixed, or more generally $\rho$ stays in a compact subset of the interval $(0,1)$. 
We consider $m$ distinct points $\mr p_i = s_i \mr e_1+t_i \mr e_c$ in the space-time plane such that their temporal coordinate $t_j\to\infty$ and satisfy the relaxation time scale $t_j=O(L^{3/2})$. 
The relative distances of the coordinates are scaled as in the $1:2:3$ KPZ prediction:
$t_i-t_j=O(L^{3/2})=O(t_i)$, $s_i=O(L)=O(t_i^{2/3})$, and the height at each point is scaled by $O(L^{1/2})=O(t_i^{1/3})$.

\begin{thm}[Limit of multi-point joint distribution for periodic TASEP]
	\label{thm:htfntasy}
Fix two constants $c_1$ and $c_2$ satisfying $0<c_1<c_2<1$. 
Let $N=N_L$ be a sequence of integers such that $c_1L\le N\le c_2L$ for all sufficiently large $L$. 
Consider the periodic TASEP of period $L$ and average particle density $\rho=\rho_L=N/L$.
Assume the periodic step  initial condition~\eqref{eq:icst}. 
Let $m$ be a positive integer.
Fix $m$ points $\fontlm{p}_j=(\gamma_j,\tau_j)$, $j=1, \cdots, m$, in the region
\begin{equation} \label{eq:def_region_points_after}
	\fontlm{R}:=\{(\gamma,\tau)\in\realR\times\realR_{>0}:0\le \gamma\le 1\}.
\end{equation}
Assume that 
\beq \label{eq:tauassumeor}
	\tau_1<\tau_2<\cdots< \tau_m. 
\eeq
Let $\mr p_j=s_j\mr e_1+t_j\mr e_c$ be $m$ points\footnote{Since $\mr p_j$ should have an integer value for its spatial coordinate, to be precise, we need to take the integer part of $s_j+t_j(1-2\rho)$ for the spatial coordinate. This small distinction does not change the result since the limits are uniform in the parameters $\gamma_j, \tau_j$. Therefore, we suppress the integer value notations throughout this paper.} in the region $\region$ shown in Figure~\ref{fig:points_space_time}, where $\mr e_1=(1,0)$ and $\mr e_c=(1-2\rho,1)$, with
\begin{equation} \label{eq:scaling_paremeters}
\begin{split}
	s_j=\gamma_j L,\qquad 
	t_j=\tau_j \frac{L^{3/2}}{\sqrt{\rho(1-\rho)}} .
\end{split}
\end{equation}
Then, for arbitrary fixed $x_1,\cdots,x_m\in\realR$,
\begin{equation}
	\label{eq:main_theorem}
	\lim_{L\to\infty}\prob\left(\bigcap_{j=1}^m\left\{\frac{\height(\mr p_j)-(1-2\rho)s_j-(1-2\rho+2\rho^2)t_j}{-2\rho^{1/2}(1-\rho)^{1/2} L^{1/2}}\le x_j\right\}\right)=\FS (x_1,\cdots,x_m;\fontlm{p}_1,\cdots,\fontlm{p}_m )
\end{equation}
where the function $\FS$ is defined in~\eqref{eq:def_mpdist}.
The convergence is locally uniform in $x_j, \tau_j$, and $\gamma_j$. 
If $\tau_i=\tau_{i+1}$ for some $i$, then~\eqref{eq:main_theorem} still holds if we assume that $x_{i}<  x_{i+1}$.  
\end{thm}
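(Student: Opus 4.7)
The plan is to start from the finite-time Fredholm-determinant formula for the periodic-step initial condition (Corollary~\ref{cor:pTASEPstep}) and carry out a steepest-descent analysis under the scaling~\eqref{eq:scaling_paremeters}. That formula expresses the joint distribution as a contour integral over $m$ auxiliary parameters (one per space-time point) of a Fredholm determinant whose kernel is built from the $N$ Bethe roots of a degree-$N$ polynomial. Under the relaxation scaling $t_j=\tau_j L^{3/2}/\sqrt{\rho(1-\rho)}$ and $s_j=\gamma_j L$, the relevant Bethe roots accumulate on a small arc around the critical point $w=\rho$, so the natural substitution is $w=\rho+\sqrt{\rho(1-\rho)/L}\,\xi$, converting the discrete sums over roots into contour integrals in a rescaled variable $\xi$ of order one.

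First, I would expand the scalar prefactors of the finite-time formula---the Cauchy-type determinants, jump-rate exponentials, and the polynomial whose zeros are the Bethe roots---around the critical point to third order. The cubic term $\sim (w-\rho)^3 t_j$ produces, after rescaling, an Airy-type integrand $e^{\xi^3/3-x_j\xi}$, while the linear and quadratic terms combine with the height-function centering $(1-2\rho)s_j+(1-2\rho+2\rho^2)t_j$ in~\eqref{eq:main_theorem} to yield the dependence on $x_j$, $\gamma_j$, $\tau_j$ in the limiting expression. This is standard KPZ $1{:}2{:}3$ bookkeeping, but it must be tracked carefully through the many factors to recover exactly the combination that defines $\FS$ in~\eqref{eq:def_mpdist}.

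Next, I would analyze the kernel. The Bethe roots become dense and, after rescaling, distribute along an explicit curve in the $\xi$-plane, so that the discrete sums defining the kernel entries converge to contour integrals. The block structure of the kernel, indexed by pairs $(i,j)$ of space-time points, is preserved in the limit, and each block converges to an operator built from exponential and Airy-like integrands. To exchange $\lim_{L\to\infty}$ with the Fredholm expansion one needs uniform trace-norm control, which reduces to decay estimates on the integrands: cubic decay near the critical point from $\mathrm{Re}(\xi^3/3)<0$ along descent rays, and decay far from the critical point ensured by choosing the inner contour to stay inside the unit circle and the outer contour outside, with appropriate real-part estimates of the phase function.

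The main obstacle I expect is the multi-time structure. Unlike the equal-time case handled in earlier TASEP work, the kernel here couples distinct times through cubic exponentials of the form $e^{-(t_i-t_j)(w-\rho)^3/\cdots}$, so one must choose steepest-descent contours that are descent paths for \emph{all} these cubic phases simultaneously. The ordering $\tau_1<\tau_2<\cdots<\tau_m$ is almost certainly what permits a single global deformation of this kind, and verifying that poles of the auxiliary-parameter integrand are not crossed during this deformation requires an explicit case analysis in the rescaled coordinates. The boundary case $\tau_i=\tau_{i+1}$ loses the cubic separation; the supplementary assumption $x_i<x_{i+1}$ is then needed to make the off-diagonal kernel entries converge, and I would handle it by an approximation argument, perturbing $\tau_{i+1}\downarrow\tau_i$ and using monotonicity of the joint event in the $x_j$'s to pass to the limit.
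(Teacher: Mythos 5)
Your general strategy --- start from the finite-time formula of Corollary~\ref{cor:pTASEPstep}, rescale near the critical point, expand the phase to cubic order, and control the Fredholm series uniformly --- is indeed the route the paper takes (Section~\ref{sec:pfastp}). Two points should be corrected, one minor and one that would derail the proof.

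The minor one: the critical point is at $w=-\rho$, not $w=\rho$; the level set $|w^N(w+1)^{L-N}|=|z|^L$ pinches at $w=-\rho$ (where the derivative of $w^N(w+1)^{L-N}$ vanishes), and the substitution is $w=-\rho+\rho\sqrt{1-\rho}\,N^{-1/2}\ww$.

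The substantive gap is your claim that ``the discrete sums over roots converge to contour integrals in a rescaled variable $\xi$ of order one.'' In the relaxation time scale this is false, and if pursued it would produce the wrong limiting object. Near the double point $w=-\rho$ the Bethe roots are spaced at distance $O(N^{-1/2})$ (much sparser than the $O(N^{-1})$ spacing away from the pinch), so after the $N^{1/2}$ rescaling they stay discrete and converge, not to a continuous curve with a density, but to the \emph{discrete} solution set of $e^{-\zeta^2/2}=z_j$ (the sets $\inodesL_{z_j}$, $\inodesR_{z_j}$ in~\eqref{eq:LRlmtdefs}; see the bijection in Lemma~\ref{lm:limit_nodes}). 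Consequently the limiting kernel $\lmKL\lmKR$ acts on $\ell^2$ of these discrete sets, and the Fredholm determinant $\gdetlm(\bfz)$ is a sum over tuples of such discrete points --- there is no stage at which the root sums become integrals. This discreteness, parametrized by the surviving $z_j$-integration variables, is precisely what distinguishes $\FS$ from the Airy-type contour-integral kernels of the infinite TASEP; passing to integrals would amount to the sub-relaxation ($t\ll L^{3/2}$) limit, which is explicitly not taken here (see the discussion in Section~\ref{sec:inftasep}). Relatedly, there is no deformation of the $z_j$-contours during the asymptotics: they are fixed nested circles inside the unit disk, and the ``descent'' behavior of the cubic phases $e^{\pm\frac13(\tau_i-\tau_{i-1})\zeta^3}$ is inherited automatically from the asymptotic directions $\arg\zeta\to\pm\pi/4,\ \pm 3\pi/4$ of the sets $\inodesL_{z_j}\cup\inodesR_{z_j}$, not chosen by the analyst. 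Your remark that the ordering $\tau_1<\cdots<\tau_m$ is what makes the cubic factors decaying on those rays is correct, and the equal-time case $\tau_i=\tau_{i+1}$, $x_i<x_{i+1}$ is handled in the paper by using the exponential (rather than superexponential) decay from the linear term directly, without the approximation-and-monotonicity argument you propose.
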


\begin{rmk} 
Suppose that we have arbitrary $m$ distinct points $\fontlm{p}_j=(\gamma_j,\tau_j)$ in $\fontlm{R}$.
Then we may rearrange them so that $0<\tau_1\le \cdots \le \tau_m$. 
{If $\tau_j$ are all different, we can} apply the above theorem since the result holds for arbitrarily ordered $\gamma_j$. 
If some of $\tau_j$ are equal, then we may rearrange the points further so that $x_j$ are ordered with those $\tau_j$, and use the theorem if $x_j$ are distinct. 
The only case which are not covered by the above theorem is when some of $\tau_j$ are equal and the corresponding $x_j$ are also equal. 
\end{rmk}

The case when $m=1$ was essentially obtained in our previous paper \cite{Baik-Liu16} (and also \cite{Prolhac16}.) 
In that paper, we considered the location of a tagged particle instead of the height function, but it straightforward to translate the result to the height function. 

\begin{rmk}
We will check that $\FS(x_1,\cdots,x_m; \mathrm{p}_1,\cdots,\mathrm{p}_m)$ is periodic with respect to each of the space coordinates $\gamma_j$ in Subsection~\ref{sec:limtjtdis}. 
By this spatial periodicity, we can remove the restrictions $0\le \gamma_j\le 1$ in the above theorem.
\end{rmk}

\begin{rmk}
Since we expect the KPZ dynamics in the sub-relaxation scale $t_j\ll L^{3/2}$, 
we expect that the $\tau_j\to 0$ limit of the above result should give rise to a result for the usual infinite TASEP. 
Concretely, we expect that the limit 
\beq \label{eq:zizz}
\lim_{\tau\to 0} \FS((\tau_1\tau)^{1/3}x_1, \cdots, (\tau_m\tau)^{1/3}x_m; (\gamma_1(\tau_1\tau)^{2/3}, \tau_1 \tau), \cdots (\gamma_m(\tau_m\tau)^{2/3}, \tau_m \tau))
\eeq
exists and it is the limit of the multi-time, multi-location joint distribution of 
the height function $\htta(s,t)$ of the usual TASEP with step initial condition, 
\begin{equation}
\lim_{T \to\infty}\prob\left(\bigcap_{j=1}^m\left\{\frac{\htta(\gamma_j \tau_j^{2/3}T^{2/3}, 2\tau_j T) - \tau_j T}{- \tau_j^{1/3}T^{1/3}}\le x_j\right\}\right) .
\end{equation}
\rev{In particular, we expect that when $m=1$,~\eqref{eq:zizz} is $F_{GUE}(x_1+\gamma_1^2/4)$, the Tracy-Widom GUE distribution; we expect that when $\tau_1=\cdots=\tau_m$,~\eqref{eq:zizz} is equal to the corresponding joint distribution of the Airy$_2$ process \cite{Prahofer-Spohn02}  $A_2(\gamma/2)-\gamma^2/4$; and when $m=2$,~\eqref{eq:zizz} is expected to match the two-time distribution $F_{\textrm{two-time}}(\gamma_1/2,x_1+\gamma_1^2/4;\gamma_2/2,x_2+\gamma_2^2/4;\tau_1^{1/3}(\tau_2-\tau_1)^{-1/3})$ obtained by Johansson \cite{JohanssonTwoTime,Johansson18}.}
See also Section~\ref{sec:inftasep}.

\end{rmk}

\subsection{Formula of the limit of the joint distribution} \label{sec:limtjtdis}

\subsubsection{Definition of $\FS$}\label{sec:defFS}


\begin{defn} \label{def:FSdefn}
Fix a positive integer $m$.
Let $\fontlm{p}_j=(\gamma_j, \tau_j)$ for each $j=1, \cdots, m$ where $\gamma_j\in \realR$ and 
\beq
	0<\tau_1<\tau_2<\cdots< \tau_m.
\eeq
Define, for $x_1, \cdots, x_m \in \R$, 
\begin{equation}
\label{eq:def_mpdist}
\begin{split}
	&\FS\left(x_1,\cdots,x_m;\mathrm{p}_1,\cdots,\mathrm{p}_m\right)
	=	\oint\cdots\oint \ccc(\bfz) \gdetlm(\bfz) \ddbar{z_m}\cdots\ddbar{z_1}
\end{split}
\end{equation}
where $\bfz=(z_1, \cdots, z_m)$ and the contours are nested circles in the complex plane satisfying $0<|z_m|<\cdots<|z_1|<1$. 
Set $\bfx=(x_1, \cdots, x_m)$, $\bftau=(\tau_1, \cdots, \tau_m)$, and $\bfgamma=(\gamma_1, \cdots, \gamma_m)$.
The function $\ccc(\bfz)=\ccc(\bfz; \bfx, \bftau)$ is defined by~\eqref{eq:costcdefaq} and it depends on $\bfx$ and $\bftau$ but not on $\bfgamma$. 
The function $\gdetlm(\bfz)=\gdetlm(\bfz; \bfx, \bftau, \bfgamma)$ depends on all $\bfx$, $\bftau$, and $\bfgamma$, and it is given by the Fredholm determinant, $\gdetlm(\bfz) =\det(\id-\lmKL\lmKR)$ defined  in~\eqref{eq:def_gdetlm}. 
\end{defn}

The functions in the above definition satisfy the following properties. 
The proofs of (P1), (P3), and (P4) are scattered in this section while (P2) is proved later in Lemma~\ref{lem:Dgetlmmdcov} . 
\begin{enumerate}[(P1)]
\item For each $i$, $\ccc(\bfz)$ is a meromorphic function of $z_i$ in the disk $|z_i|< 1$. 
It has simple poles at $z_i=z_{i+1}$ for $i=1, \cdots, m-1$. 
\item For each $i$, $\gdetlm(\bfz)$ is analytic in the  \rev{punctured} disk $0<|z_i|<1$. 
\item For each $i$, $\gdetlm(\bfz)$ does not change if we replace $\gamma_i$ by $\gamma_i+1$. Therefore, $\FS$ is periodic, with period $1$, in the parameter $\gamma_i$ for each $i$.
\item If $\tau_i=\tau_{i+1}$, the function $\FS$ is still well-defined for $x_i< x_{i+1}$. 
\end{enumerate}

\begin{rmk}
\Cb{It is not easy to check directly from the formula that $\FS$ defines a joint distribution function. 
Nonetheless, we may check them indirectly. 
From the fact that $\FS$ is a limit of a sequence of joint distribution functions, $0\le \FS\le 1$
and $\FS$ is a non-decreasing function of $x_k$ for each  $k$. 
It also follows from the fact that the joint distribution can be majorized by a marginal distribution, $\FS$ converges to $0$ if any coordinate $x_k\to -\infty$ since it was shown in (4.10) of \cite{Baik-Liu16} that the $m=1$ case is indeed a distribution function; see Lemma~\ref{lm:Flimni} below. 
The most difficult property to prove is the consistency which $\FS$ should satisfy as a coordinate $x_k\to +\infty$. We prove this property in Section~\ref{sec:consistency} by finding a probabilistic interpretation of the formula of $\FS$ when the $z_i$-contours are not nested; see Theorem~\ref{thm:pofEpm} and Proposition~\ref{prop:consy}.}
\end{rmk}


\subsubsection{Definition of $\ccc(\bfz)$} \label{sec:cccsz}

Let $\log z$ be the principal branch of the logarithm function with cut $\realR_{\le 0}$. 
Let $\polylog_s(z)$ be the polylogarithm function defined by 
\beq
	\polylog_s(z)= \sum_{k=1}^\infty \frac{z^k}{k^s} \quad \text{for $|z|<1$ and $s\in \complexC$.}
\eeq  
It has an analytic continuation 
using the formula
\beq
	\polylog_s(z) = \frac{z}{\Gamma(s)} \int_0^\infty \frac{x^{s-1}}{e^x-z} \dd x \quad 
	\text{for $z\in \complexC\setminus [1,\infty)$ if $\Re(s)>0$.}
\eeq
Set 
\beq \label{eq:A1andA2}
	A_1(z)= -\frac1{\sqrt{2\pi}} \polylog_{3/2}(z), \qquad A_2(z)= -\frac1{\sqrt{2\pi}} \polylog_{5/2}(z).
\eeq
For $0<|z|,|z'|<1$, set
\begin{equation} \label{eq:aux_2017_03_26_10}
	\B(z, z')= \frac{zz'}{2} \iint\frac{\eta\xi \log(-\xi+\eta)}{(e^{-\xi^2/2}-z)(e^{-\eta^2/2}-z')} \ddbarr{\xi}\ddbarr{\eta}
	=\frac{1}{4\pi}\sum_{k,k'\ge 1}\frac{z^k(z')^{k'}}{(k+k')\sqrt{kk'}}
\end{equation}
where the integral contours are the vertical lines $\Re\xi=\mathfrak{a}$ and $\Re\eta=\mathfrak{b}$ with constants $\mathfrak{a}$ and $\mathfrak{b}$ satisfying $-\sqrt{-\log|z|}<\mathfrak{a}<0<\mathfrak{b}<\sqrt{-\log|z'|}$. 
The equality of the double integral and the series is easy to check (see (9.27)--(9.30) in \cite{Baik-Liu16} for a similar calculation.) 
Note that $B(z,z')=B(z',z)$. When $z=z'$, we can also check that  
\beq \label{eq:Bdenf}
	B(z):=B(z, z)=\frac{1}{4\pi}\int_0^z\frac{\left(\polylog_{1/2}(y)\right)^2}{y}\dd y. 
\eeq

\begin{defn}
Define 
\begin{equation}
\label{eq:costcdefaq}
\begin{split}
	\ccc(\bfz)
	:= \left[ \prod_{\ell=1}^{m} \frac{z_\ell}{z_\ell- z_{\ell+1}} \right] 
	\left[ \prod_{\ell=1}^{m} \frac{e^{x_\ell A_1(z_{\ell})+ \tau_\ell A_2(z_\ell)}}{e^{x_\ell A_1(z_{\ell+1}) + \tau_\ell A_2(z_{\ell+1})}}
	e^{2B(z_{\ell})-2B(z_{\ell+1},z_\ell)} \right] 
\end{split}
\end{equation}
where we set $z_{m+1}:=0$. 
\end{defn}

Since $A_1, A_2, B$ are analytic inside the unit circle, it is clear from the definition that $\ccc(\bfz)$ satisfies the property (P1) in Subsubsection~\ref{sec:defFS}.  

\subsubsection{Definition of $\gdetlm(\bfz)$}

\begin{figure}
	\centering
	\includegraphics[scale=0.45]{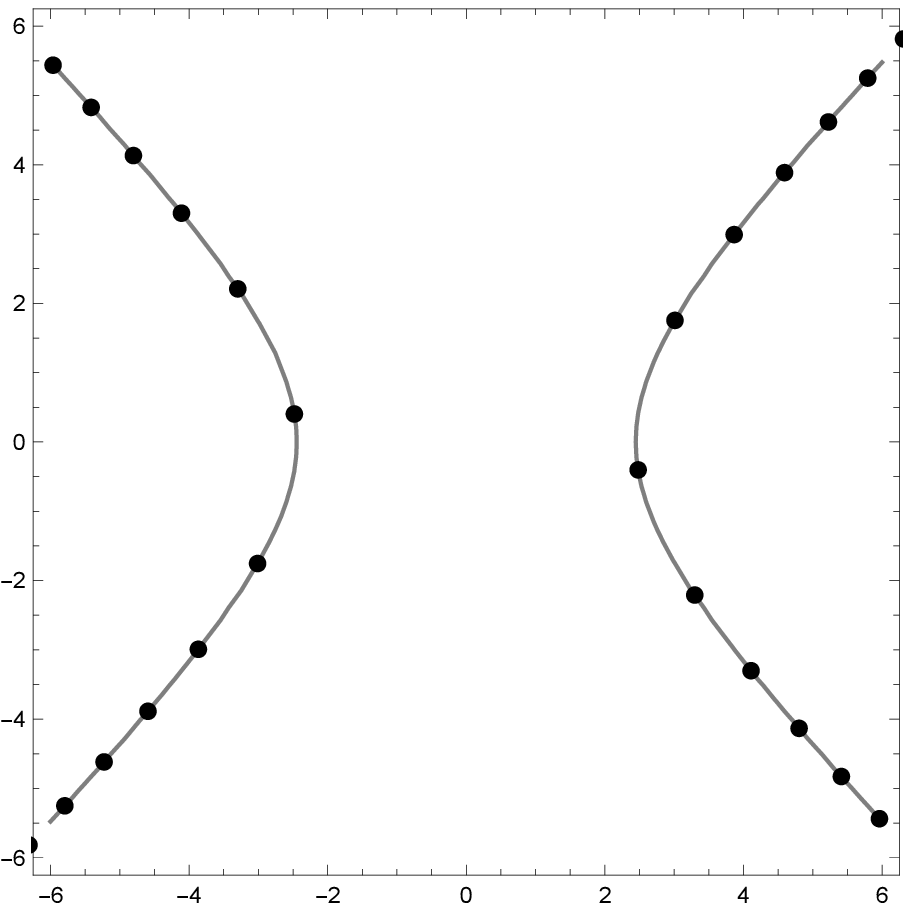} \quad 
	\includegraphics[scale=0.45]{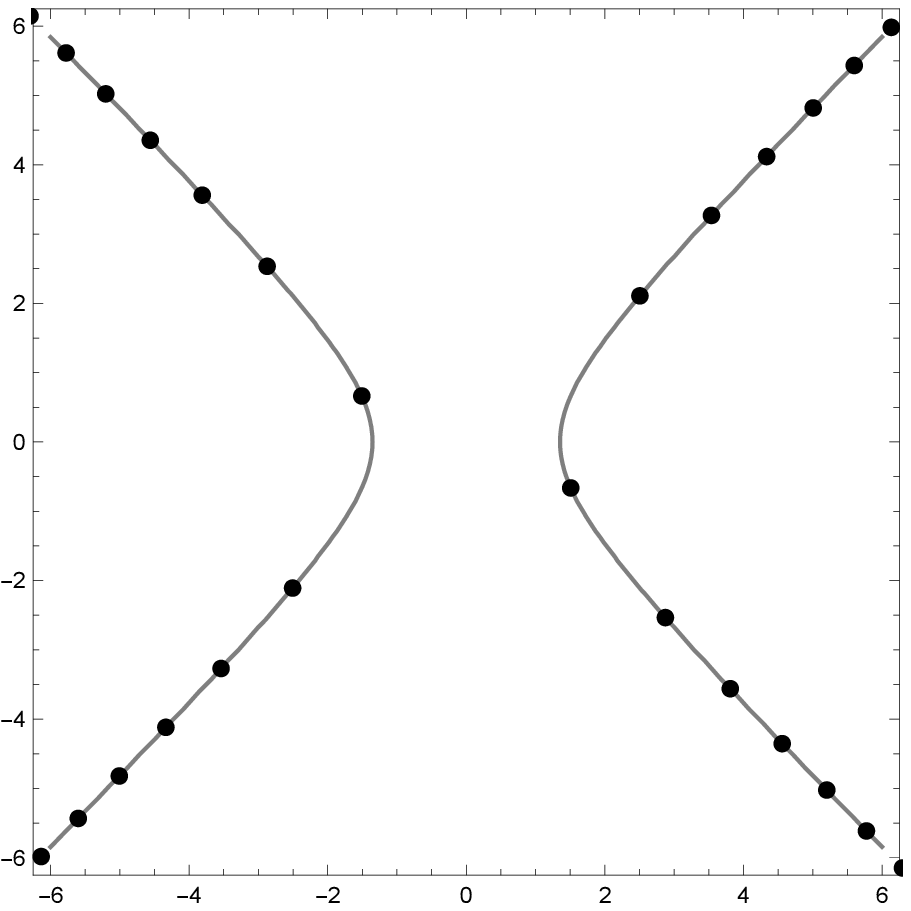} \quad 
	\includegraphics[scale=0.45]{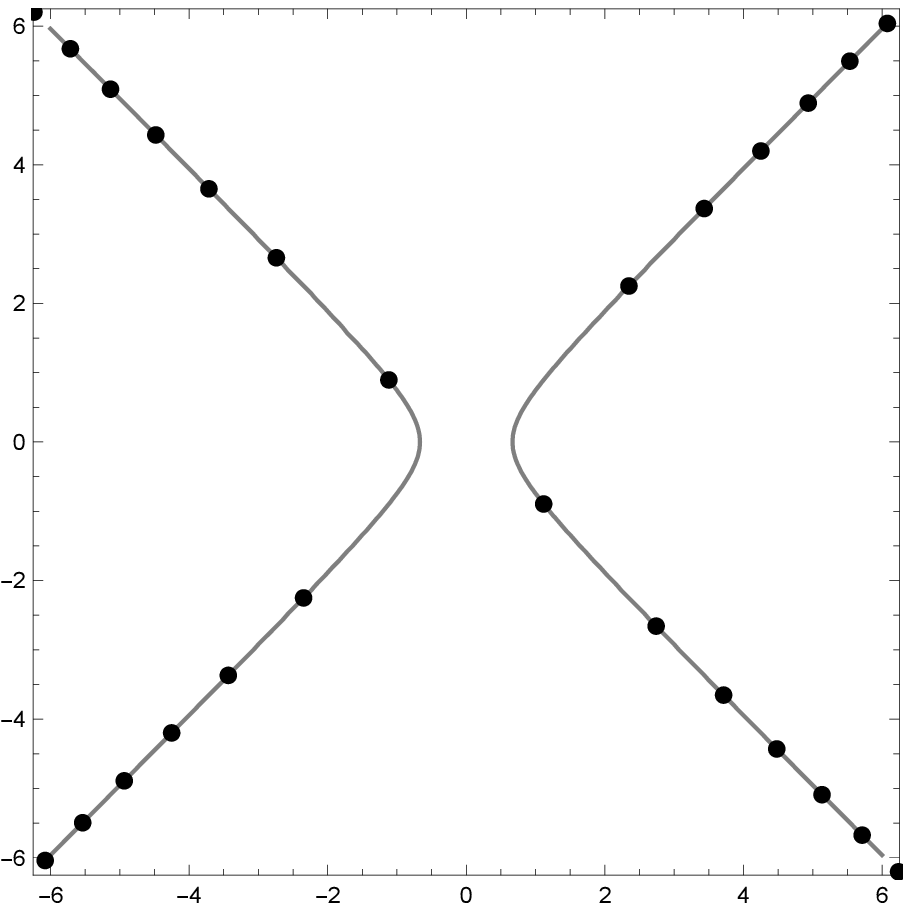}
	\caption{The pictures represent the roots of the equation $e^{-\zeta^2/2}=z$ (dots) and  the contours 
	$\Re(\zeta^2)=-2\log |z|$ (solid curves) for $z= 0.05 e^{\ii}, 0.4 e^{\ii}, 0.8 e^{\ii}$, from the left to the  right.}
\label{fig:Scontour}
\end{figure}

The function $\gdetlm(\bfz)$ is given by a Fredholm determinant. 
Before we describe the operator and the space, we first introduce a few functions.

For $|z|<1$, define the function 
\begin{equation}\label{eq:def_h_R}
	\hftn(\zeta,z)  =		
	-\frac{1}{\sqrt{2\pi}} \int_{-\infty}^{\zeta} \polylog_{1/2} \big(z e^{(\zeta^2-y^2)/2}\big) \dd y
\qquad  \text{for $\Re(\zeta) < 0$}		
\end{equation} 
and 
\begin{equation} \label{eq:def_h_L}
	\hftn(\zeta,z)  =
	-\frac{1}{\sqrt{2\pi}} \int_{-\infty}^{-\zeta} \polylog_{1/2} \big(z e^{(\zeta^2-y^2)/2}\big) \dd y
	\qquad  \text{for $\Re(\zeta) > 0$.}		
\end{equation} 
The integration contour lies in the half-plane $\Re(y)<0$, and is given by the union of the interval $(-\infty, \Re(\pm\zeta)]$ on the real axis and the line segment from $\Re(\pm\zeta)$ to $\pm \zeta$.  
Since $|z e^{(\zeta^2-y^2)/2}| <1$ on the integration contour, $\polylog_{1/2}( z e^{(\zeta^2-y^2)/2} )$ is well defined. 
Thus, we find that the integrals are well defined using $\polylog_{1/2}(\omega) \sim \omega$ as $\omega \to 0$.

Observe the symmetry,  
\begin{equation}
\label{eq:def_h_RminwithL}
	\hftn(\zeta, z) =\hftn(-\zeta,z) \quad \text{for $\Re(\zeta)<  0$.}
\end{equation} 
We also have 
\begin{equation}
\label{eq:aux_2017_09_16_01}
	\hftn(\zeta,z)= \int_{-\ii \infty}^{\ii \infty} 
	\frac{\log (1-ze^{\omega^2/2})}{\omega-\zeta}\frac{\dd\omega}{2\pi\ii} \quad \text{for $\Re(\zeta)<  0$.}
\end{equation}
This identity can be obtained by the power series expansion and using the fact that
$\frac1{\sqrt{2\pi}} \int_{-\infty}^u e^{-\omega^2/2} \dd \omega = \int_{-\ii\infty}^{\ii\infty} \frac{e^{(-u^2+\omega^2)/2}}{\omega-u} \frac{\dd\omega}{2\pi \ii}$ for $u$ with $\arg(u)\in (3\pi /4, 5\pi/4)$; see (4.8) of \cite{Baik-Liu16}.
From~\eqref{eq:def_h_RminwithL} and~\eqref{eq:aux_2017_09_16_01}, we find that
\begin{equation}
\label{eq:hftn_estimate}
	\hftn(\zeta,z)= O(\zeta^{-1})\quad \text{as $\zeta\to\infty$ in the region $\left|\arg(\zeta)\pm\frac{\pi}{2}\right|>\epsilon$}
\end{equation}
for any fixed $z$ satisfying $|z|<1$.

Let $\bfx$, $\bftau$, and $\bfgamma$ be the parameters in Definition~\ref{def:FSdefn}.
We set 
\begin{equation} \label{eq:def_sfs}
 	 \sfs_i(\zeta) :=\begin{dcases}
	 e^{-\frac13 (\tau_i-\tau_{i-1}) \zeta^3 + \frac12 (\gamma_i-\gamma_{i-1}) \zeta^2 + (x_i-x_{i-1}) \zeta}
	 \quad &\text{for $\Re(\zeta)<0$} \\
	 e^{\frac13 (\tau_i-\tau_{i-1}) \zeta^3 - \frac12 (\gamma_i-\gamma_{i-1}) \zeta^2 - (x_i-x_{i-1}) \zeta}
	 \quad & \text{for $\Re(\zeta)>0$}
 	 \end{dcases}
\end{equation}
for $i=1, \cdots, m$, where we set $\tau_0=\gamma_0=x_0=0$. 

\bigskip

Now we describe the space and the operators. 
For a non-zero complex number $z$, consider the roots $\zeta$ of the equation 
$e^{-\zeta^2/2}=z$. 
The roots are on the contour $\Re(\zeta^2)=-2\log|z|$. 
It is easy to check that if $0<|z|<1$, the contour $\Re(\zeta^2)=-2\log|z|$ consists of two disjoint components, one in $\Re(\zeta)>0$ and the other in $\Re(\zeta)<0$. 
See Figure~\ref{fig:Scontour}. 
The asymptotes of the contours are the straight lines of slope $\pm 1$.
For $0<|z|<1$, we define the discrete sets 
\beq \label{eq:LRlmtdefs}
\bes
	\inodesL_{z} &:= \{ \zeta\in \complexC : e^{-\zeta^2/2}=z\} \cap\{\Re(\zeta)<0\}, \\
	\inodesR_{z} &:= \{ \zeta\in \complexC : e^{-\zeta^2/2}=z\} \cap\{\Re(\zeta)>0\}. 
\end{split}
\eeq

For distinct complex numbers $z_1, \cdots, z_m$ satisfying $0<|z_i|<1$, define the sets 
\begin{equation} \label{eq:SSSfor2}
	\SSSL:=\inodesL_{z_1}\cup \inodesR_{z_2}\cup\inodesL_{z_3}\cup\cdots\cup
	\begin{cases}
\inodesR_{z_m} &  \text{if $m$ is even,} \\ 
\inodesL_{z_m} & \text{if $m$ is odd,} 
\end{cases}
\end{equation}
and
\begin{equation} \label{eq:SSSfor1}
	\SSSR:=\inodesR_{z_1}\cup \inodesL_{z_2}\cup\inodesR_{z_3}\cup\cdots\cup
	\begin{cases}
\inodesL_{z_m} & \text{if $m$ is even,} \\
\inodesR_{z_m}& \text{if $m$ is odd.} 
\end{cases}
\end{equation}
See Figure~\ref{fig:rootslrall}. 
Now we define two operators 
\begin{equation}
	\lmKL:\ell^2(\SSSR)\to\ell^2(\SSSL),\qquad \lmKR:\ell^2(\SSSL)\to\ell^2(\SSSR)
\end{equation}
by kernels as follows. 
If 
\beq
	\text{$\zeta\in(\inodesL_{z_i}\cup \inodesR_{z_i}) \cap \SSSL$ and $\zeta'\in(\inodesL_{z_j}\cup \inodesR_{z_j})\cap\SSSR$}
\eeq
for some $i,j\in\{1, \cdots, m\}$, then we set 
\begin{equation} \label{eq:limitkeL}
\begin{split}
	\lmKL(\zeta,\zeta')&=(\delta_i(j)+\delta_i(j+(-1)^i)) 
	 \frac{ \sfs_i(\zeta) e^{2\hftn(\zeta, z_i)-\hftn(\zeta, z_{i-(-1)^{i}}) -\hftn(\zeta', z_{j-(-1)^{j}})}}{\zeta(\zeta-\zeta')}\QWL(j) .
\end{split}
\end{equation}
Similarly, if 
\beq
	\text{$\zeta\in(\inodesL_{z_i}\cup \inodesR_{z_i})\cap \SSSR$ and $\zeta'\in(\inodesL_{z_j}\cup \inodesR_{z_j})\cap \SSSL$}
\eeq
for some $i,j\in\{1, \cdots, m\}$, then we set 
\begin{equation} \label{eq:limitkeR}
\begin{split}
	\lmKR(\zeta,\zeta')&=(\delta_i(j)+\delta_i(j-(-1)^i)) 
	\frac{\sfs_i(\zeta) e^{2\hftn(\zeta, z_i)-\hftn(\zeta, z_{i+(-1)^{i}}) -\hftn(\zeta', z_{j+(-1)^{j}})}}{\zeta(\zeta-\zeta')} \QWR(j).
\end{split}
\end{equation}
Here \rev{the delta function $\delta_i(k)=1$ if $k=i$ or $0$ otherwise. }  \rev{We also} set $z_0=z_{m+1}=0$ so that
\beq
	e^{-\hftn(\zeta, z_{0})}=e^{-\hftn(\zeta, z_{m+1})}= 1. 
\eeq
 \rev{And the functions $\QWL(j)$, $\QWR(j)$ are defined by} 
\beq \label{eq:QWbelow}
	\text{$\QWL(j)= 1- \frac{z_{j-(-1)^j}}{z_j}$ and $\QWR(j) = 1-\frac{z_{j+(-1)^j}}{z_j}$.}
\eeq

\begin{figure}
	\centering
	\includegraphics[scale=0.45]{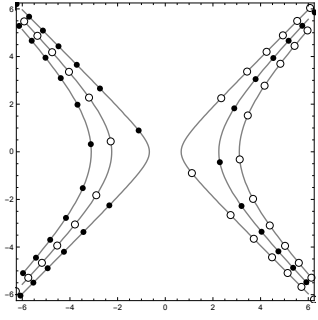}
	\caption{Example of $\SSSL$ (block dots) and $\SSSR$ (white dots) when $m=3$. The level sets are shown for visual convenience.}
	\label{fig:rootslrall}
\end{figure}


\begin{defn}
Define
\begin{equation}
\label{eq:def_gdetlm}
	\gdetlm(\bfz):=\det\left(\id-\lmKL\lmKR\right)
\end{equation}
for $\bfz=(z_1, \cdots, z_m)$ where $0<|z_i|<1$ and $z_i$ are distinct. 
\end{defn}

{In this definition, we temporarily assumed that $z_i$ are distinct in order to ensure that the term $\zeta-\zeta'$ in the denominators in~\eqref{eq:limitkeL} and~\eqref{eq:limitkeR} does not vanish. However, as we stated in (P2) in Subsubsection~\ref{sec:defFS}, $\gdetlm(\bfz)$ is still well-defined when $z_i$ are equal. See Lemma~\ref{lem:Dgetlmmdcov}.}

The definition of $\inodesL_{z}$ and $\inodesR_{z}$ implies that $|\arg(\zeta)|\to  3\pi /4$ as $|\zeta|\to \infty$ along $\zeta\in \inodesL_z$ and $|\arg(\zeta)|\to  \pi /4$ as $|\zeta|\to \infty$ along $\zeta\in\inodesR_z$. 
Hence, due to the cubic term $\zeta^3$ in~\eqref{eq:def_sfs}, $\sfs_i(\zeta)\to 0$ super exponentially as $|\zeta|\to \infty$ on the set $\inodesL_{z}\cup \inodesR_{z}$ if $\tau_1<\cdots<\tau_m$. 
Hence, using the property~\eqref{eq:hftn_estimate} of $\hftn$, we see that the kernels decay super-exponentially fast as $|\zeta|,|\zeta'|\to\infty$ on the spaces. 
Therefore, the Fredholm determinant is well defined if $\tau_1<\cdots<\tau_m$.

We now check the property (P4).
If $\tau_i=\tau_{i+1}$, the exponent of $\sfs_i$ has no cubic term $\zeta^3$. 
The quadratic term contributes to $O(1)$ since $|e^{-\zeta^2/2}|=|z_i|$ for $\zeta\in \inodesL_{z_i}\cup \inodesR_{z_i}$, and hence $|e^{c\zeta^2}|=O(1)$. 
On the other hand, the linear term in the exponent of $\sfs_i$ has a negative real part if $x_i< x_{i+1}$. 
Hence, if $\tau_i=\tau_{i+1}$ and $x_i< x_{i+1}$, then 
$|\sfs_i(\zeta)|\to 0$ exponentially as $\zeta\to \infty$ along $\zeta\in \SSSL\cup\SSSR$ and hence the kernel decays exponentially fast as $|\zeta|,|\zeta'|\to\infty$ on the spaces. 
Therefore, the Fredholm determinant is still well defined if $\tau_i=\tau_{i+1}$ and $x_i< x_{i+1}$. This proves (P4).

\subsection{Matrix kernel formula of $\lmKL$ and $\lmKR$} \label{sec:matrixkl}

Due to the delta functions, $\lmKL(\zeta, \zeta')\neq 0$ only when 
\beq
	\text{$\zeta\in \inodesL_{z_{2\ell-1}}\cup \inodesR_{z_{2\ell}}$ and $\zeta'\in \inodesR_{z_{2\ell-1}}\cup \inodesL_{z_{2\ell}}$} 	
\eeq 
for some integer $\ell$, and similarly $\lmKR(\zeta, \zeta')\neq 0$ only when 
\beq
	\text{$\zeta\in \inodesL_{z_{2\ell}}\cup \inodesR_{z_{2\ell+1}}$ and $\zeta'\in \inodesR_{z_{2\ell}}\cup \inodesL_{z_{2\ell+1}}$} 	
\eeq 
for some integer $\ell$.
Thus, if we represent the kernels as $m\times m$ matrix kernels, then they have $2\times 2$ block structures. 

For example, consider the case when $m=5$. 
Let us use $\xi_i$ and $\eta_i$ to represent variables in $\inodesL_{z_i}$ and $\inodesR_{z_i}$, respectively:
\beq
	\xi_i\in \inodesL_{z_i}, \qquad \eta_i\in \inodesR_{z_i}. 
\eeq 
The matrix kernels are given by 
\beq
	\lmKL= \begin{bmatrix} 
	\lmk(\xi_1, \eta_1) & \lmk(\xi_1, \xi_2) \\
	\lmk(\eta_2, \eta_1) & \lmk(\eta_2, \xi_2) \\
	&& \lmk(\xi_3, \eta_3) & \lmk(\xi_3, \xi_4) \\ 
	&& \lmk(\eta_4, \eta_3) & \lmk(\eta_4, \xi_4) \\
	&&&& \lmk(\xi_5, \eta_5)
	\end{bmatrix}
\eeq
and
\beq
	\lmKR= \begin{bmatrix} 
	\lmk(\eta_1, \xi_1) \\
	& \lmk(\xi_2, \eta_2) & \lmk(\xi_2, \xi_3) \\
	& \lmk(\eta_3, \eta_2) & \lmk(\eta_3, \xi_3) \\
	&&& \lmk(\xi_4, \eta_4) & \lmk(\xi_4, \xi_5) \\ 
	&&& \lmk(\eta_5, \eta_4) & \lmk(\eta_5, \xi_5) 
	\end{bmatrix}
\eeq
where the empty entries are zeros and the function $\lmk$ is given in the below.
When $m$ is odd, the structure is similar. 
On the other hand, when $m$ is even, 
$\lmKL$ consists only of $2\times 2$ blocks and
$\lmKR$ contains an additional non-zero $1\times 1$ block at the bottom right corner. 

We now define $\lmk$. For $1\le i\le m-1$, writing 
\beq
	\xi=\xi_i, \quad \eta=\eta_i,  
	\quad \xi'=\xi_{i+1}, \quad \eta'=\eta_{i+1}, 
\eeq
we define
\beq \label{eq:kblockm}
\bes
	\begin{bmatrix} 
	 \lmk(\xi, \eta) & \lmk(\xi, \xi') \\
	\lmk(\eta', \eta) & \lmk(\eta', \xi') 
	\end{bmatrix}
	= &\begin{bmatrix} 
	\sfs_i(\xi)  \\ 
	& 
	\sfs_{i+1}(\eta')
	\end{bmatrix}
	\begin{bmatrix} 
	\frac{e^{2\hftn(\xi,z_i)}}{\xi e^{\hftn(\xi, z_{i+1})}}      \\ 
	& \frac{e^{2\hftn(\eta',z_{i+1})}}{\eta' e^{\hftn(\eta', z_i)} } 
	\end{bmatrix}
	\begin{bmatrix} \frac1{\xi-\eta} & \frac1{\xi-\xi'} \\ \frac1{\eta'-\eta} & \frac1{\eta'-\xi'} 
	\end{bmatrix} \\
	&\times 
	\begin{bmatrix} \frac1{e^{\hftn(\eta, z_{i+1})}}   \\ 
	&  \frac1{e^{\hftn(\xi', z_i)}} 
	\end{bmatrix}
	\begin{bmatrix} 1-\frac{z_{i+1}}{z_i} \\ 
	&  1-\frac{z_{i}}{z_{i+1}}  
	\end{bmatrix}.
\end{split}
\eeq
\Cb{This means that 
\beq
	\lmk(\xi_i, \eta_i)= \sfs_i(\xi_i) \frac{e^{2\hftn(\xi_i,z_i)}}{\xi_i e^{\hftn(\xi_i, z_{i+1})}} \frac1{\xi_i-\eta_i} \frac1{e^{\hftn(\eta_i, z_{i+1})}} \left( 1-\frac{z_{i+1}}{z_i} \right),
\eeq
\beq
	\lmk(\xi_i, \xi_{i+1})= \sfs_i(\xi_i) \frac{e^{2\hftn(\xi_i,z_i)}}{\xi_i e^{\hftn(\xi_i, z_{i+1})}} \frac1{\xi_i-\xi_{i+1}} \frac1{e^{\hftn(\xi_{i+1}, z_i)}}  \left( 1-\frac{z_{i}}{z_{i+1}} \right),
\eeq
and so on.}
The term $\lmk(\xi_m, \eta_m)$ is defined by the $(1,1)$ entry of~\eqref{eq:kblockm} with $i=m$ where we set $z_{m+1}=0$.
The term $\lmk(\eta_1, \xi_1)$ is defined by the $(2,2)$ entry of~\eqref{eq:kblockm} with $i=0$ where we set $z_{0}=0$. 


\subsection{Series formulas for $\gdetlm(\bfz)$} \label{sec:altforjtlim}

We present two series formulas for the function $\gdetlm(\bfz)$.
The first one~\eqref{eq:Frdse1} is the series expansion of Fredholm determinant using the block structure of the matrix kernel.
The second formula~\eqref{eq:Frdse2} is obtained after evaluating the finite determinants in~\eqref{eq:Frdse1} explicitly.

   
To simplify formulas, we introduce the following notations. 
\begin{defn}[Notational conventions] \label{def:notc}
For complex vectors $\WWlm=(\wsw_1, \cdots, \wsw_n)$ and $\WWlm'=(\wsw'_1,\cdots,\wsw'_{n'})$, we set  
\begin{equation} \label{eq:notationddw}
	\Delta(\WWlm)=\prod_{i<j}(\wsw_j-\wsw_i) = \det \left[ \wsw_i^{j-1}\right], \qquad 
	\Delta(\WWlm;\WWlm')=\prod_{\substack{1\le i\le n\\ 1\le i'\le n'}}(\wsw_i-\wsw'_{i'}).
\end{equation}
For a function $h$ of single variable, we write 
\begin{equation}
\label{eq:aux_2016_12_31_01}
	h(\WWlm) = \prod_{i=1}^n h(\wsw_i) .
\end{equation}
We also use the notations
\begin{equation}
	\Delta(S;S')=\prod_{\substack{s\in S\\ s'\in S'}}(s-s'), 
	\qquad f(S)= \prod_{s\in S} f(s)
\end{equation}
for finite sets $S$ and $S'$. 
\end{defn}

\bigskip

The next lemma follows from a general result whose proof is given in Subsection~\ref{sec:eqFrsr} below.

\begin{lm}[Series formulas for $\gdetlm(\bbz)$] \label{lem:Freserfola}
We have 
\beq \label{eq:Fdmddser}
	\gdetlm(\bfz)= \sum_{\bn\in (\intZ_{\ge 0})^m} \frac{1}{(\bn !)^2}  \gdetlm_\bn(\bfz)
\eeq
with $\bn != \prod_{\ell=1}^m n_\ell!$ for $\bn=(n_1, \cdots, n_m)$, where
$\gdetlm_{\bn}(\bfz)$ can be expressed as the following two ways.
\begin{enumerate}[(i)]
\item We have, for $\bn=(n_1, \cdots, n_m)$, 
\begin{equation} \label{eq:Frdse1}
	\gdetlm_{\bn}(\bfz)
	=(-1)^{|\bn|}\sum_{\substack{\UUlm^{(\ell)}\in (\inodesL_{z_\ell})^{n_\ell} \\ \VVlm^{(\ell)}\in (\inodesR_{z_\ell})^{n_\ell} \\ \ell=1,\cdots,m}}
	\det\left[\lmKL(\sw_i, \sw'_j)\right]_{i,j=1}^{|\bn|}
	\det\left[\lmKR(\sw'_i, \sw_j)\right]_{i,j=1}^{|\bn|}
\end{equation}
where 
$\bUUlm=(\UUlm^{(1)}, \cdots, \UUlm^{(m)})$, $\bVVlm= (\VVlm^{(1)}, \cdots, \VVlm^{(m)})$ with $\UUlm^{(\ell)}=(\su_1^{(\ell)}, \cdots, \su_{n_\ell}^{(\ell)})$, $\VVlm^{(\ell)}=(\sv_1^{(\ell)}, \cdots, \sv_{n_\ell}^{(\ell)})$, and 
\begin{equation}
\label{eq:Frdse2}
	\sw_i=\begin{dcases}
	\su_k^{(\ell)}& \text{if $i=n_1 +\cdots+n_{\ell-1}+k$ for some $k\le n_\ell$ with odd integer $\ell$,}\\
\sv_k^{(\ell)}& \text{if $i=n_1 +\cdots+n_{\ell-1}+k$ for some $k\le n_\ell$ with even integer $\ell$,}
\end{dcases}
\end{equation}
and
\begin{equation}
	\sw'_i=\begin{dcases}
\sv_k^{(\ell)}& \text{if $i=n_1+\cdots+n_{\ell-1}+k$ for some $k\le n_\ell$ with odd integer $\ell$,}\\
\su_k^{(\ell)}& \text{if $i=n_1+\cdots+n_{\ell-1}+k$ for some $k\le n_\ell$ with even integer $\ell$.}
\end{dcases}
\end{equation}
Here, we set $n_0=0$.

\item
We also have 
\begin{equation}
\label{eq:aux_2017_03_29_14}
	\gdetlm_\bn(\bfz)  
	=\sum_{\substack{\UUlm^{(\ell)}\in\left(\inodesL_{z_\ell}\right)^{n_\ell} \\ \VVlm^{(\ell)}\in\left(\inodesR_{z_\ell}\right)^{n_\ell} \\ \ell=1,\cdots,m}}
	\gdetlmdd_{\bn, \bfz} (\bUUlm, \bVVlm)
\end{equation}
with 
\begin{equation}
\label{eq:aux_2017_07_01_05}
\begin{split}
	&\gdetlmdd_{\bn, \bfz} (\bUUlm, \bVVlm)
	:=
	\left[ \prod_{\ell=1}^m \frac{\Delta(\UUlm^{(\ell)})^2\Delta(\VVlm^{(\ell)})^2}{\Delta(\UUlm^{(\ell)};\VVlm^{(\ell)})^2}
	\fslm_\ell(\UUlm^{(\ell)}) 
	\fslm_\ell(\VVlm^{(\ell)})\right]\\
	&\times\left[ \prod_{\ell=2}^m  \frac{\Delta(\UUlm^{(\ell)};\VVlm^{(\ell-1)})\Delta(\VVlm^{(\ell)};\UUlm^{(\ell-1)}) e^{-\hftn(\VVlm^{(\ell)}, z_{\ell-1}) - \hftn(\VVlm^{(\ell-1)}, z_{\ell})} }
	{\Delta(\UUlm^{(\ell)};\UUlm^{(\ell-1)})\Delta(\VVlm^{(\ell)};\VVlm^{(\ell-1)}) 
	 e^{\hftn(\UUlm^{(\ell)}, z_{\ell-1}) + \hftn(\UUlm^{(\ell-1)}, z_{\ell})} }
	\left(1- \frac{z_{\ell-1}}{z_{\ell}} \right)^{n_\ell} \left(1-\frac{z_\ell}{z_{\ell-1}} \right)^{n_{\ell-1}} \right]
\end{split}
\end{equation}
where
\begin{equation}
\label{eq:aux_2017_04_05_02}
\begin{split}
	\fslm_\ell(\zeta)&:=\frac{1}{\zeta} 
	\sfs_\ell(\zeta) e^{2\hftn(\zeta, z_\ell)}.
\end{split}
\end{equation}
Recall~\eqref{eq:def_h_R},~\eqref{eq:def_h_L}, and~\eqref{eq:def_sfs} 
for the definition of $\hftn$ and $\sfs_j$.
\end{enumerate}
\end{lm}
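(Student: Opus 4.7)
The plan is to derive both formulas by combining a Fredholm/Cauchy--Binet expansion of $\gdetlm(\bfz)=\det(\id-\lmKL\lmKR)$ with the block (nearest-neighbor) structure of $\lmKL$ and $\lmKR$, and then evaluating the resulting finite determinants via the classical Cauchy determinant identity.

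For part~(i), I would start from the general identity
\begin{equation*}
\det(\id-AB) = \sum_{n\ge 0}\frac{(-1)^n}{(n!)^2}\sum_{\substack{w_1,\dots,w_n\in\SSSL\\ w'_1,\dots,w'_n\in\SSSR}} \det\bigl[A(w_i,w'_j)\bigr]\det\bigl[B(w'_i,w_j)\bigr],
\end{equation*}
which follows from the Fredholm series for $\det(\id-AB)$ combined with the Cauchy--Binet formula applied to the composition $AB$. The delta-function factors $\delta_i(j)+\delta_i(j\pm(-1)^i)$ in~\eqref{eq:limitkeL} and~\eqref{eq:limitkeR} make $\lmKL$ block-diagonal with blocks $\{2\ell-1,2\ell\}$ and $\lmKR$ block-diagonal with blocks $\{2\ell,2\ell+1\}$. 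Consequently, $\det[\lmKL(w_i,w'_j)]\det[\lmKR(w'_i,w_j)]$ vanishes unless, for every $\ell$, exactly $n_\ell$ of the $w_i$ lie in $(\inodesL_{z_\ell}\cup\inodesR_{z_\ell})\cap\SSSL$ and exactly $n_\ell$ of the $w'_j$ lie in $(\inodesL_{z_\ell}\cup\inodesR_{z_\ell})\cap\SSSR$. Grouping the sum by the occupancy vector $\bn=(n_1,\dots,n_m)$ and identifying such selections with tuples $(\bUUlm,\bVVlm)$ via the dictionary~\eqref{eq:Frdse2} (the two factorials $(\bn!)^2$ tracking the symmetry of the determinants under permutations within each $\inodesL_{z_\ell}$ or $\inodesR_{z_\ell}$) yields formula~\eqref{eq:Frdse1}.

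For part~(ii) I would evaluate $\det[\lmKL]$ and $\det[\lmKR]$ in closed form block by block. Pulling from each row of a $\{2\ell-1,2\ell\}$-block of $\lmKL$ the row-only prefactor $\tfrac{1}{\zeta}\sfs_i(\zeta)e^{2\hftn(\zeta,z_i)-\hftn(\zeta,z_{i-(-1)^i})}$ and from each column the column-only prefactor $e^{-\hftn(\zeta',z_{j-(-1)^j})}\QWL(j)$ reduces the remaining matrix to the Cauchy matrix $[1/(\zeta_i-\zeta'_j)]$ within each block and zero between blocks. The classical identity
\begin{equation*}
\det\left[\frac{1}{a_i-b_j}\right]_{i,j=1}^n = \frac{\prod_{i<j}(a_i-a_j)(b_j-b_i)}{\prod_{i,j}(a_i-b_j)}
\end{equation*}
then evaluates each block determinant, and expanding the resulting Vandermondes such as $\Delta(\UUlm^{(2\ell-1)}\cup\VVlm^{(2\ell)})$ into products of single-set Vandermondes and cross factors supplies part of~\eqref{eq:aux_2017_07_01_05}. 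The analogous computation for $\lmKR$, whose $\{2\ell,2\ell+1\}$-blocks are shifted by one relative to those of $\lmKL$, produces the complementary nearest-neighbor cross factors; multiplying the two determinants gives the squared Vandermondes $\Delta(\UUlm^{(\ell)})^2\Delta(\VVlm^{(\ell)})^2/\Delta(\UUlm^{(\ell)};\VVlm^{(\ell)})^2$ together with the ``off-diagonal'' ratio involving $\Delta(\UUlm^{(\ell)};\VVlm^{(\ell-1)})$, $\Delta(\VVlm^{(\ell)};\UUlm^{(\ell-1)})$, $\Delta(\UUlm^{(\ell)};\UUlm^{(\ell-1)})$, and $\Delta(\VVlm^{(\ell)};\VVlm^{(\ell-1)})$. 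Gathering the row/column exponential prefactors from both kernels and invoking the definition~\eqref{eq:aux_2017_04_05_02} of $\fslm_\ell$ identifies the remaining $\fslm_\ell(\UUlm^{(\ell)})\fslm_\ell(\VVlm^{(\ell)})$ terms as well as the exponential factors $e^{\pm\hftn(\cdot,z_{\ell\pm 1})}$ and the powers $(1-z_{\ell\pm 1}/z_\ell)^{n_\ell}$ appearing in~\eqref{eq:aux_2017_07_01_05}.

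The main obstacle is the bookkeeping caused by the parity-dependent roles played by $\inodesL_{z_\ell}$ and $\inodesR_{z_\ell}$ inside the spaces $\SSSL,\SSSR$: the same variable $\UUlm^{(\ell)}$ or $\VVlm^{(\ell)}$ enters as a ``row'' in one of $\det[\lmKL],\det[\lmKR]$ and as a ``column'' in the other as $\ell$ varies, so one must carefully track the sign reorderings when turning the product of per-block Cauchy determinants into the symmetric expression~\eqref{eq:aux_2017_07_01_05}. A clean way to manage this is to verify the identity first for $m=1$ (where only the $\ell=1$ factor appears and the statement reduces to the classical Cauchy determinant), then for $m=2$ (where the nearest-neighbor cross factors $\Delta(\UUlm^{(2)};\VVlm^{(1)})$ and $\Delta(\VVlm^{(2)};\UUlm^{(1)})$ first emerge), and then extend by induction on $m$ by peeling off the last block.
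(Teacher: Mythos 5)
Your proposal is correct and follows essentially the same route as the paper. Part~(i) mirrors the paper's Lemma~\ref{lm:aux2} exactly (Fredholm series, Cauchy--Binet for the composition $AB$, and the observation that the nearest-neighbor block structure forces $\bn'=\bn$, producing the $1/(\bn!)^2$). For part~(ii) you go from the determinants to the product formula, while the paper's proof (Lemma~\ref{lm:aux_01}) goes in the opposite direction, but the computational content — factoring diagonal row/column prefactors out of each $2\times 2$ block, applying the Cauchy determinant identity, and recombining the resulting Vandermonde and cross-Delta factors — is identical, and the paper's Lemma~\ref{lm:aux_01} is precisely the Cauchy-identity-with-mixed-alphabet bookkeeping device you anticipate needing; the sign tracking you flag as the main obstacle is exactly what that lemma packages up as $(-1)^{\binom{n+m}{2}+n+mn}$.
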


The property (P3)  in Subsubsection~\ref{sec:defFS} follows easily from~\eqref{eq:aux_2017_07_01_05}.
Note that $\gamma_i$ only appears in the factor $\fslm_\ell(\UUlm^{(\ell)})\fslm_\ell(\VVlm^{(\ell)})$ for $\ell=i$ or $i+1$. If we replace $\gamma_i$ by $\gamma_i+1$, then $\sfs_i(\zeta)$ and $\sfs_{i+1}(\zeta)$ are changed by $z_i^{-1}\sfs_i(\zeta)$ and $z_{i+1}\sfs_{i+1}(\zeta)$ if $\Re(\zeta)<0$, or $z_i\sfs_i(\zeta)$ and $z_{i+1}^{-1}\sfs_{i+1}(\zeta)$ if $\Re(\zeta)>0$. But $\UUlm^{(\ell)}$ has the same number of components as $\VVlm^{(\ell)}$ for each $\ell$. Therefore $\fslm_\ell(\UUlm^{(\ell)})\fslm_\ell(\VVlm^{(\ell)})$ does not change.
We can also check (P3) from the original Fredholm determinant formula. 

The analyticity property (P2) is proved in Lemma~\ref{lem:Dgetlmmdcov} later using the series formula.

\section{Joint distribution function for general initial condition}
\label{sec:tran}

We obtain the limit theorem of the previous section from a finite-time formula of the joint distribution. 
In this section, we describe a formula of the finite-time joint distribution for an arbitrary initial condition. 
We simplify the formula further in the next section for the case of the periodic step initial condition. 

We state the results in terms of particle locations instead of the height function used in the previous section. 
It is easy to convert one to another; see~\eqref{eq:xandhet2}. 
The particle locations are denoted by $\xx_i(t)$ where
\beq
	\cdots< \xx_0(t)<\xx_1(t)<\xx_2(t)< \cdots.
\eeq
Due to the periodicity of the system, we have $\xx_i(t)=\xx_{i+nN}(t)-nL$ for all integers $n$. 

The periodic TASEP can be described if we keep track of $N$ consecutive particles, say $\xx_1(t)<\cdots<\xx_N(t)$. 
If we  focus only on these particles, they follow the usual TASEP rules plus the extra condition that $\xx_N(t)<\xx_1(t)+L$ for all  $t$. 
Define the configuration space 
\begin{equation}
	\conf_N(L) = \{(x_1,x_2,\cdots,x_N)\in \intZ^N: x_1 <x_2 <\cdots <x_N <x_1+L  \}.
\end{equation}
We call the process of the $N$ particles \emph{TASEP in $\conf_N(L)$}.
We use the same notations $\xx_i(t)$, $i=1, \cdots, N$, to denote the particle locations in the TASEP in $\conf_N(L)$.
We state the result for the TASEP in $\conf_N(L)$ first and then for the periodic TASEP as a corollary.

For $z\in \complexC$, consider the polynomial of degree $L$ given by 
\begin{equation}
	\label{eq:def_roots}
	q_z(w) = w^N(w+1)^{L-N}-z^L.
\end{equation}
Denote the set of the roots by 
\begin{equation}\label{eq:rootsz}
	\roots_z = \{w\in\complexC : q_z(w)=0\}. 
\end{equation} 
The roots are on the level set $|w^N(w+1)^{L-N}|=|z|^L$. 
It is straightforward to check the following properties of the level set. 
Set
\beq \label{eq:r0defn}
	\rr_0:= \rho^\rho(1-\rho)^{1-\rho}
\eeq
where, as before, $\rho=N/L$.
The level set becomes larger as $|z|$ increases, see Figure~\ref{fig:rootsfinite}. 
If $0<|z|< \rr_0$, the level set consists of two closed contours, one in $\Re(w)<-\rho$ enclosing the point $w=-1$ and the other in $\Re(w)>-\rho$. enclosing the point $w=0$.    
When $|z|=\rr_0$, the level set has a self-intersection at $w=-\rho$. 
If $|z|>\rr_0$, then the level set is a connected closed contour. 
Now consider the set of roots $\roots_z$. Note that if $z\neq 0$, then $-1, 0\notin \roots_z$.  
It is also easy to check that if a non-zero $z$ satisfies $z^L\neq \rr_0^L$, then the roots of $q_z(w)$ are all simple. 
On the other hand, if $z^L=\rr_0^L$, then there is a double root at $w=-\rho$ and the rest $L-2$ roots are simple. 
For the results in this section, we take $z$ to be any non-zero complex number. 
But in the next section, we restrict $0<|z|<\rr_0$.


\begin{figure}
	\centering
	\includegraphics[scale=0.5]{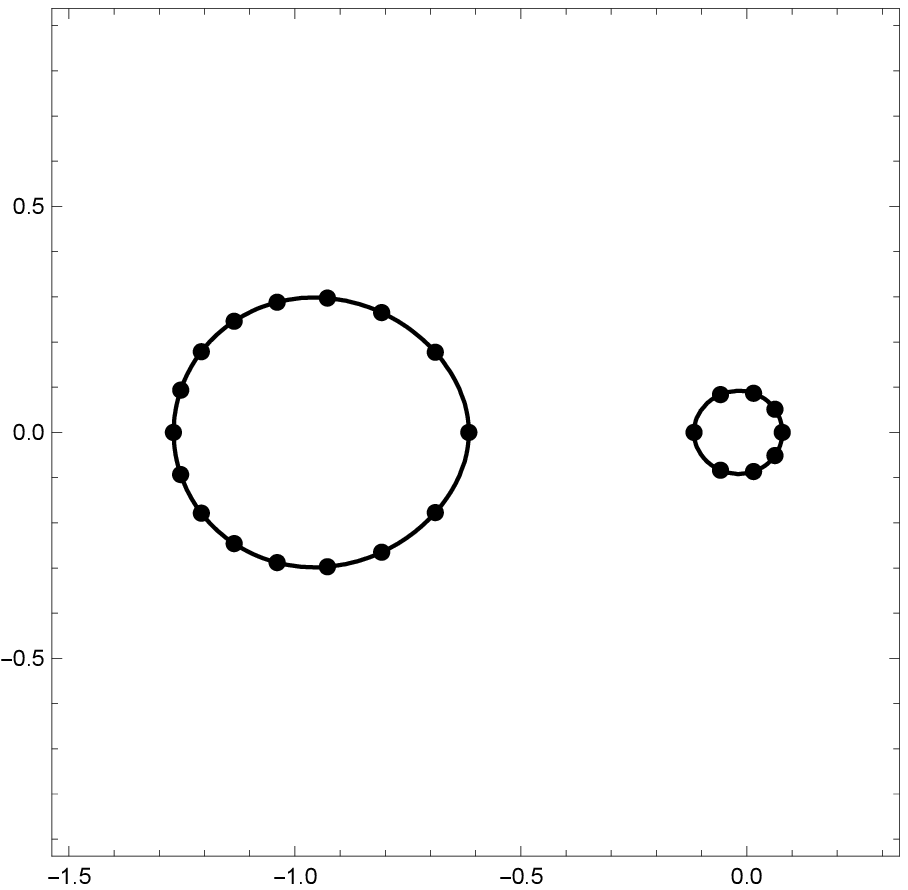}
	\includegraphics[scale=0.5]{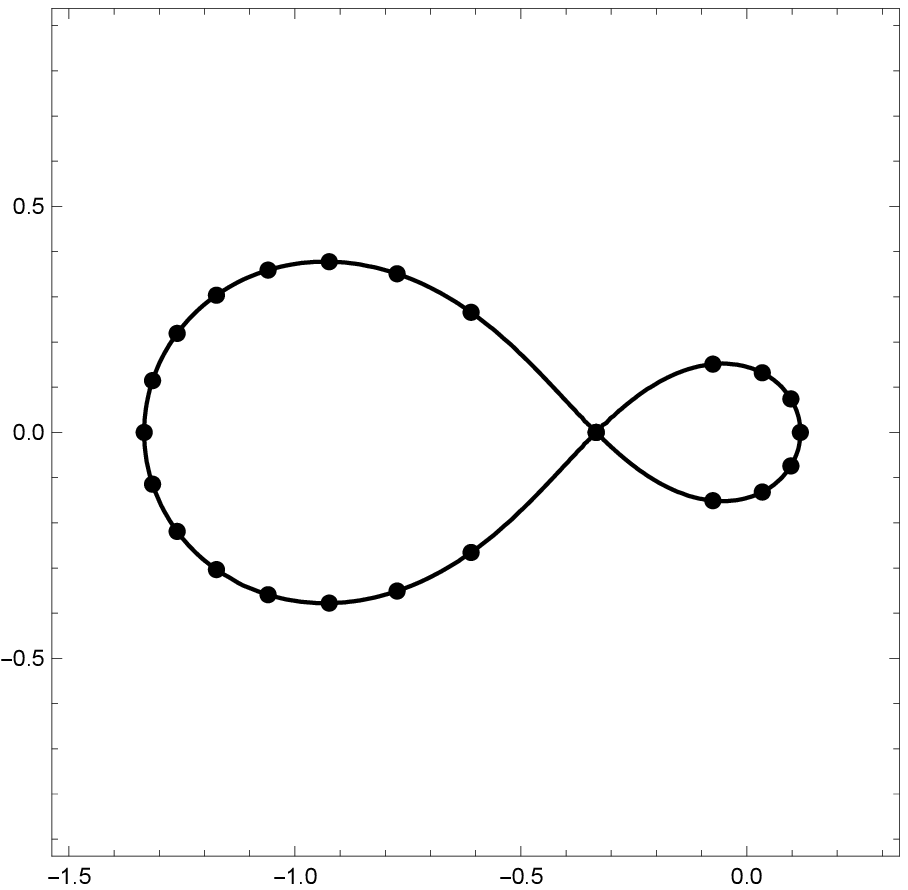}
	\includegraphics[scale=0.5]{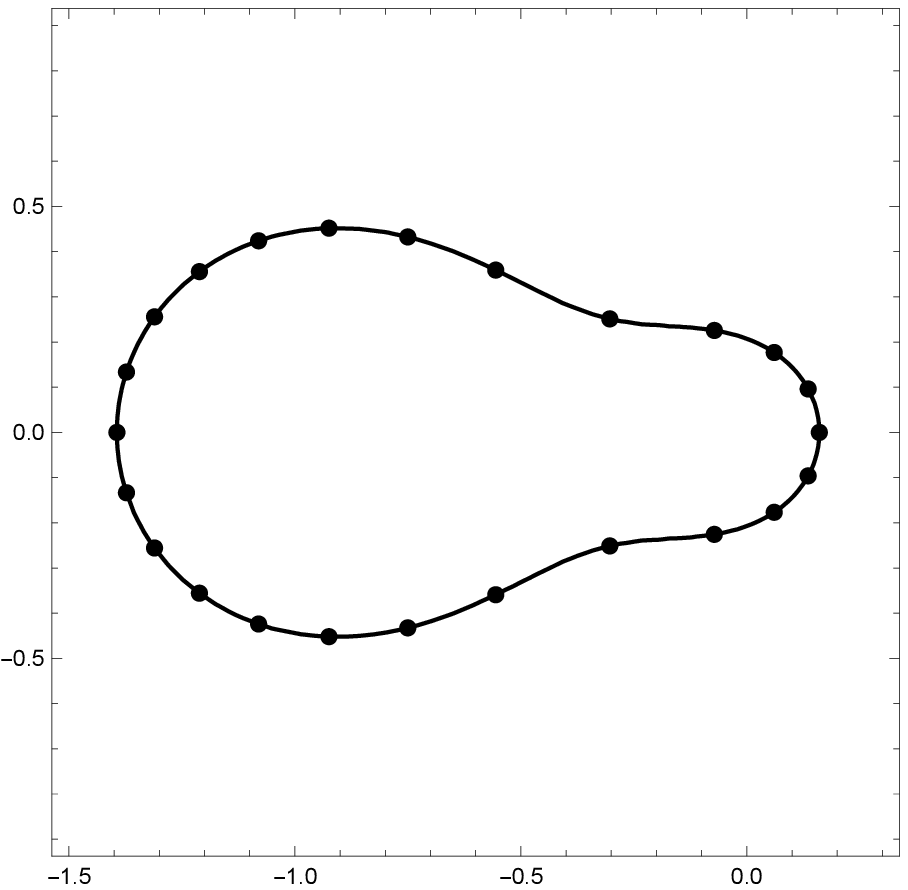}
	\caption{The pictures represent the roots of the equation $w^N(w+1)^{L-N}=z^L$ and the contours $|w^N(w+1)^{L-N}|=|z|^L$ with $N=8, L=24$ for three different values of $z$. 
The value of $|z|$ increases from the left picture to the right picture. 
The middle picture is when $z= \rho^{\rho}(1-\rho)^{1-\rho}$ where $\rho=N/L=1/3$.}
\label{fig:rootsfinite}
\end{figure}

\begin{thm}[Joint distribution of TASEP in $\conf_N(L)$ for general initial condition] \label{prop:multipoint_distribution_origin}
Consider the TASEP in $\conf_N(L)$.
Let $Y=(y_1, \cdots, y_N) \in\conf_N(L)$ and assume that $(\xx_1(0), \cdots, \xx_N(0))=Y$.  
Fix a positive integer $m$.
Let $(k_1, t_1), \cdots, (k_m, t_m)$ be $m$ distinct points in $\{1, \cdots, N\} \times [0, \infty)$. 
Assume that $0\le t_1\le\cdots\le t_m$.
Let $a_i\in \intZ$ for $1\le i\le m$. 
Then 
\begin{equation}\label{eq:multipoint_distribution_origin}
 	\begin{split}
	\prob_Y\left(\xx_{k_1}(t_1)\ge a_1,\cdots, \xx_{k_m}(t_m)\ge a_m\right) 
	=  
	\oint \cdots\oint 
	\constc(\bbz, \bk)  \detv_Y(\bbz, \bk, \ba, \bt) \ddbar{z_m}\cdots \ddbar{z_1}
 	\end{split}
\end{equation}
where the contours are nested circles in the complex plane satisfying $0<|z_m|<\cdots<|z_1|$. 
Here $\bbz=(z_1, \cdots, z_m)$, $\bk=(k_1, \cdots, k_m)$, $\ba=(a_1, \cdots, a_m)$, and $\bt=(t_1, \cdots, t_m)$. 
The functions in the integrand are  
\begin{equation} \label{eq:constcti1e}
\begin{split}
 	\constc (\bbz, \bk)
 	&= (-1)^{(k_m-1)(N+1)}z_1^{(k_1-1)L} 
	\prod_{\ell=2}^m \Bigg[ z_\ell^{(k_\ell-k_{\ell-1})L}\bigg(\bigg(\frac{z_\ell}{z_{\ell-1}}\bigg)^L-1 \bigg)^{N-1}\Bigg] 
\end{split}
\end{equation}
and
\begin{equation}\label{eq:multipoint_distribution_origin12}
 \begin{split}
		\detv_Y(\bbz, \bk, \ba, \bt)
		 =  \det\left[ \sum_{\substack{w_1\in \roots_{z_1} \\ \cdots\\ w_m\in\roots_{z_m}}}  
		\frac{w_1^i \left(w_1+1\right)^{y_i-i}w_m^{-j}}{ \prod_{\ell=2}^m (w_\ell-w_{\ell-1})  } \prod_{\ell=1}^m \fgic_\ell(w_\ell) \right]_{i,j=1}^N
\end{split}
\end{equation}
with
\begin{equation}
	\fgic_\ell(w) = \frac{w(w+1)}{L(w+\rho)}
	\frac{w^{-k_\ell}(w+1)^{-a_\ell+k_\ell}e^{t_\ell w}}{w^{-k_{\ell-1}}(w+1)^{-a_{\ell-1}+k_{\ell-1}}e^{t_{\ell-1}w}}
\end{equation}
where we set $t_0=k_0=a_0=0$. 
\end{thm}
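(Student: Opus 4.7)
The plan is to derive \eqref{eq:multipoint_distribution_origin} by combining the Markov property of TASEP in $\conf_N(L)$ with the single-time transition probability formula proved in \cite{Baik-Liu16}, and then carrying out the resulting multiple summations using determinantal identities.

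First, by the Markov property the joint probability equals
\[
\sum_{X^{(1)},\ldots,X^{(m)}\in \conf_N(L)} \Bigl(\prod_{j=1}^m \mathbbm{1}\{x^{(j)}_{k_j}\ge a_j\}\Bigr)\,
\prob_Y(\xx(t_1)=X^{(1)})\prod_{j=2}^m \prob_{X^{(j-1)}}(\xx(t_j-t_{j-1})=X^{(j)}).
\]
The single-time transition probability derived in \cite{Baik-Liu16} (the periodic analogue of Sch\"utz's formula) expresses $\prob_X(\xx(t)=X')$ as a contour integral in a variable $z$ of an $N\times N$ determinant whose entries are sums over the roots $w\in\roots_z$ of $q_z$, of terms of the form $w^{*}(w+1)^{*}e^{tw}$. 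Substituting this formula for each of the $m$ transition factors introduces $m$ contour integrals in variables $z_1,\ldots,z_m$ and a product of $m$ such determinants.

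Second, I would perform the summation over the intermediate configurations $X^{(1)},\ldots,X^{(m-1)}\in \conf_N(L)$. For unrestricted summation over $\intZ^N$ this is standard Cauchy--Binet (Andr\'eief), which collapses a sum of products of two $N\times N$ determinants into a single determinant; the collapse produces the pairwise coupling factors $1/(w_\ell-w_{\ell-1})$ in \eqref{eq:multipoint_distribution_origin12}. The restriction to $\conf_N(L)$ (i.e., $x_1<\cdots<x_N<x_1+L$) is handled by first summing over $\intZ^N$ with the strict ordering and then correcting via a geometric series enforcing the periodicity cutoff $x_N<x_1+L$; this correction is what produces the factor $((z_\ell/z_{\ell-1})^L-1)^{N-1}$ in \eqref{eq:constcti1e}. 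Taking the remaining sums over $\roots_{z_\ell}$ to live in the integrand rather than outside the determinant is justified by multilinearity of the determinant in rows/columns.

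Third, the marginal condition $x^{(j)}_{k_j}\ge a_j$ is handled by summing the dependence on $x^{(j)}_{k_j}=a$ from $a_j$ to $\infty$. Since this dependence appears only through a factor of the form $(w_\ell+1)^{-a}$, one obtains a convergent geometric series on the chosen contours, whose sum contributes the piece $w(w+1)/(w+\rho)$ in $\fgic_\ell$ (the $1/(w+\rho)$ being the standard Jacobian $1/q_z'(w)\cdot L z^L/w$ arising when converting a root-sum into a contour integral around $\roots_z$, combined with the geometric series sum). Collecting all the pieces into the single determinant with rows indexed by the initial data via $w_1^i(w_1+1)^{y_i-i}$ and columns indexed by $w_m^{-j}$ recovers \eqref{eq:multipoint_distribution_origin12}, while the accumulated prefactors match \eqref{eq:constcti1e}.

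The main obstacle will be the bookkeeping in the second step: tracking the precise signs, the exponents $(k_\ell-k_{\ell-1})L$ of $z_\ell$, and the overall signature $(-1)^{(k_m-1)(N+1)}$ in $\constc$. These factors arise from the interplay between the Andr\'eief sign conventions, the periodicity-induced geometric correction, and the allowance of arbitrary particle labels $k_1,\ldots,k_m$ rather than a single tagged particle. Careful handling of the cyclic relabeling $\xx_i=\xx_{i+N}-L$ inside $\conf_N(L)$ is needed to ensure that the formula is consistent with the chosen labelling convention and with the one-point result of \cite{Baik-Liu16} when $m=1$.
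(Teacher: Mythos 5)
Your high-level skeleton — Markov property to decompose into a product of transition probabilities, substitute the single-time periodic Sch\"utz-type formula from \cite{Baik-Liu16}, then collapse by Cauchy--Binet — matches the paper's Step 1, and your treatment of the final marginal condition $x_{k_m}\ge a_m$ via a convergent geometric series is essentially the paper's equation (3.25). However, the central technical difficulty is glossed over in a way I don't think can be repaired as stated.

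The crux of the proof is not a Cauchy--Binet collapse plus a geometric-series correction. After inserting the transition formula, each intermediate sum has the form
\[
\DRL_{k,a}(W;W')=\sum_{\substack{X\in\conf_N(L)\\ x_k\ge a}}\DR_X(W)\,\DL_X(W'),
\]
and the claim (the paper's Proposition~\ref{thm:DRL_simplification}) is that this equals an explicit prefactor times the single Cauchy determinant $\det[1/(w_i-w'_{i'})]$, \emph{for the restricted domain} $\conf_N(L)\cap\{x_k\ge a\}$. Your proposal says to handle the restriction to $\conf_N(L)$ by first summing over $\intZ^N$ with strict ordering and then ``correcting via a geometric series enforcing the periodicity cutoff $x_N<x_1+L$,'' and that this correction is what produces $((z_\ell/z_{\ell-1})^L-1)^{N-1}$. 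That is not how the factor arises and I don't see how a correction of this kind would produce a Cauchy determinant. The constraint $x_N<x_1+L$ couples the first and last coordinates, and the sum over the restricted simplex does not split into an unrestricted Andr\'eief sum minus a simple geometric tail. The paper's actual derivation of $\DRL_{k,a}$ (Section~\ref{sec:proof_DRL}) goes through a reduction from $k$ to $k=1$ using the cyclic relabeling $w^N(w+1)^{L-N}=z^L$, then a four-step combinatorial argument: expand both determinants, reorganize the $x$-summation via the telescoping identity \eqref{eq:identity_basic_sum}, re-index the resulting $2^{N-1}$ terms by partitions into blocks, prove two nontrivial Cauchy-type determinant identities (Lemmas~\ref{lm:identity_01} and~\ref{lm:identity_02}), and finally use the root relation $w^N(w+1)^{L-N}=z^L$ to collapse a sum indexed by pairs of subsets into a single $N\times N$ determinant via Lemma~\ref{lem:sumofAB}. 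The factor $((z'/z)^L-1)^{N-1}$ only appears at the very end of this chain (Step 4), after the root relation is invoked; it is not visible before that point and does not come from a geometric-series correction to an $\intZ^N$-sum.

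A smaller issue: the Jacobian-type factor $\frac{w(w+1)}{L(w+\rho)}$ in $\fgic_\ell$ does not arise from a geometric series. The $\frac{w}{L(w+\rho)}$ part is already present in the transition-probability kernel \eqref{eq:transition_probability} (equivalently in $\consto$ of \eqref{eq:Fforma}), and the remaining $(w+1)$ comes from the asymmetric exponents $(w_j+1)^{-a+k+1}$ vs.\ $(w'_j+1)^{-a+k}$ in the evaluated $\DRL_{k,a}$ formula \eqref{eq:DRL_simplification}; it's an artifact of the intermediate-configuration summation, not of converting a root sum to a contour integral. So while the prefactor bookkeeping you flag as the ``main obstacle'' is indeed delicate, the real missing ingredient is a proof of the $\DRL_{k,a}$ identity, and your sketch doesn't contain one.
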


\begin{rmk} \label{rmk:parmt}
The limiting joint distribution $\FS$ in the previous section was not defined {for all parameters:
When $\tau_i=\tau_{i+1}$, we need to put 
the restriction $x_i<x_{i+1}$. See Property (P4)} in Subsubsection~\ref{sec:defFS}.   
The finite-time joint distribution does not require such restrictions. 
The sums in 
the entries of the determinant $\detv_Y(\bbz, \bk, \ba, \bt)$ are over finite sets, and hence there is no issue with the convergence. 
Therefore, the right-hand side of~\eqref{eq:multipoint_distribution_origin} is well-defined for all real numbers  $t_i$ and integers $a_i$ and $k_i$. 
\end{rmk}

\begin{cor}[Joint distribution of periodic TASEP for general initial condition]  \label{cor:pTASEPgenIC}
Consider the periodic TASEP with a general initial condition determined by 
$Y=(y_1, \cdots, y_N) \in\conf_N(L)$ and its periodic translations;  
$\xx_{j+n N}(0)=y_j+n L$ for all $n\in\intZ$ and $j=1,\cdots,N$. 
Then~\eqref{eq:multipoint_distribution_origin} holds for all $k_i\in \Z$ without the restriction that $k_i\in \{1, \cdots, N\}$.
\end{cor}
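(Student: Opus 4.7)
The plan is to deduce this corollary from Theorem~\ref{prop:multipoint_distribution_origin} by combining the spatial periodicity of the particle configuration with a matching algebraic symmetry of the integrand. The periodic translation relation $\xx_{k+nN}(t)=\xx_{k}(t)+nL$ lets me reduce any index $k_i\in\Z$ to the canonical range $\{1,\ldots,N\}$ at the cost of shifting $a_i$. The corresponding formula I get from Theorem~\ref{prop:multipoint_distribution_origin} then needs to be rewritten back in terms of the original $k_i$'s, which is where the algebraic invariance enters.

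First I would write each $k'_i\in\Z$ as $k'_i = k_i + n_i N$ with $k_i\in\{1,\ldots,N\}$ and $n_i\in\Z$. Since the periodic TASEP satisfies $\xx_{k'_i}(t_i)=\xx_{k_i}(t_i)+n_i L$, the event $\{\xx_{k'_i}(t_i)\ge a_i\}$ is identical to $\{\xx_{k_i}(t_i)\ge a_i-n_iL\}$. Applying Theorem~\ref{prop:multipoint_distribution_origin} to the latter events (well-defined since $k_i\in\{1,\ldots,N\}$) produces an integral formula with parameters $(\bk,\ba-\bn L,\bt)$. The key claim to establish is:
\begin{equation*}
\constc(\bbz,\bk)\,\detv_Y(\bbz,\bk,\ba,\bt) = \constc(\bbz,\bk+N\mr e_i)\,\detv_Y(\bbz,\bk+N\mr e_i,\ba+L\mr e_i,\bt)
\end{equation*}
for each $i\in\{1,\ldots,m\}$, where $\mr e_i$ is the standard basis vector. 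Iterating this invariance in $i$ lets me replace $(\bk,\ba-\bn L)$ by $(\bk',\ba)$ inside the integrand, yielding the formula of the corollary.

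The proof of the invariance is where the actual work lies, but it is just careful bookkeeping. Under $(k_i,a_i)\mapsto(k_i+N,a_i+L)$, only the factors $\fgic_i(w_i)$ and (when $i<m$) $\fgic_{i+1}(w_{i+1})$ in the matrix entries of $\detv_Y$ are affected: a direct inspection of~\eqref{eq:multipoint_distribution_origin12} gives
\begin{equation*}
\fgic_i(w_i)\mapsto\fgic_i(w_i)\cdot w_i^{-N}(w_i+1)^{N-L},\qquad
\fgic_{i+1}(w_{i+1})\mapsto\fgic_{i+1}(w_{i+1})\cdot w_{i+1}^{N}(w_{i+1}+1)^{L-N}.
\end{equation*}
Because $w\in\roots_z$ satisfies the Bethe-like relation $w^N(w+1)^{L-N}=z^L$, these extra factors equal the constants $z_i^{-L}$ and $z_{i+1}^{L}$ respectively on the summation sets, so they pull out of the sums. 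Consequently every entry of the $N\times N$ matrix in $\detv_Y$ is multiplied by the same constant $z_i^{-L}z_{i+1}^{L}$ (or $z_m^{-L}$ when $i=m$, as $\fgic_{m+1}$ is absent), and by multilinearity the determinant acquires a factor $z_i^{-NL}z_{i+1}^{NL}$ (respectively $z_m^{-NL}$).

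On the other hand, reading off~\eqref{eq:constcti1e}, the shift $k_i\mapsto k_i+N$ changes $\constc(\bbz,\bk)$ only through the $z_\ell^{(k_\ell-k_{\ell-1})L}$ factors, producing exactly $z_i^{NL}z_{i+1}^{-NL}$ for $i<m$ and $z_m^{NL}$ for $i=m$. The sign $(-1)^{(k_m-1)(N+1)}$ is unaffected since $N(N+1)$ is even, and the factors $((z_\ell/z_{\ell-1})^L-1)^{N-1}$ do not depend on $\bk$ at all. These contributions exactly cancel those from $\detv_Y$, proving the invariance and hence the corollary. The main, and essentially the only, obstacle is matching the powers of $z_\ell$ between $\constc$ and $\detv_Y$ correctly at the boundary index $i=m$; the conceptual content is simply that the defining equation of $\roots_z$ is tailored so that the periodic shift of particle labels acts trivially on the formula.
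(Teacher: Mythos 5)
Your argument is correct and is essentially the same as the paper's proof: both reduce to the canonical range of indices via the periodic translation $\xx_{k+nN}(t)=\xx_k(t)+nL$ and then verify that $\constc(\bbz,\bk)\detv_Y(\bbz,\bk,\ba,\bt)$ is invariant under the shift $(k_i,a_i)\mapsto(k_i+N,a_i+L)$ by tracking the extra factors in $\fgic_i$ and $\fgic_{i+1}$, converting them to powers of $z_i^{\pm L}$ via the Bethe relation $w^N(w+1)^{L-N}=z^L$, pulling them out of the determinant, and matching them against the change in $\constc$. The only cosmetic difference is that you iterate unit shifts $n_i=1$ while the paper works with a general integer $n$ in one step, and you make explicit the observation (left implicit in the paper) that the sign factor $(-1)^{(k_m-1)(N+1)}$ is unchanged because $N(N+1)$ is even.
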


\begin{proof}
The particles in the periodic TASEP satisfies $\xx_{j+nN}(t)=\xx_{j}(t)+nL$ for every integer $n$. 
Hence if $k_\ell$ is not between $1$ and $N$, we may translate it. 
This amounts to changing $k_\ell$ to $k_\ell+nN$ and $a_\ell$ to $a_\ell+nL$ for some integer $n$. 
Hence it is enough to show that the right-hand side of~\eqref{eq:multipoint_distribution_origin} is invariant  under these changes. 
Under these changes, the term $\constc (\bbz, \bk)$ is multiplied by the factor $z_\ell^{nNL}z_{\ell+1}^{-nNL}$ if $1\le \ell\le m-1$ and by $z_m^{nNL}$ if $\ell=m$. 
On the other hand, $\fgic_\ell(w_\ell)$ produces the multiplicative factor $w_\ell^{-nN}(w_\ell+1)^{-nL+nN}$
which is $z_\ell^{-nL}$ by~\eqref{eq:rootsz}. 
Taking this factor outside the determinant~\eqref{eq:multipoint_distribution_origin12}, we  cancel out the factor $z_\ell^{nNL}$ from $\constc (\bbz, \bk)$. Similarly  $\fgic_{\ell+1}(w_{\ell+1})$ produces a factor which cancel out $z_{\ell+1}^{-nNL}$ if $1\le \ell\le m-1$. 
\end{proof}

Before we prove the theorem, let us comment on the analytic property of the integrand in the formula~\eqref{eq:multipoint_distribution_origin}. 
The function $\constc (\bbz, \bk)$ is clearly analytic in each $z_\ell\neq 0$. 
Consider the function $\detv_Y(\bbz, \bk, \ba, \bt)$.
Note that 
\beq
	\frac{\dd}{\dd w} q_z(w)  
	= \frac{L(w+\rho)}{w(w+1)} w^N (w+1)^{L-N}.
\eeq
Hence, if $F(w)$ is an analysis function of $w$ in $\complexC\setminus\{-1,0\}$, and 
$f(w)= F(w)w^N(w+1)^{L-N}$, then
\beq \label{eq:Fqintresdi}
	\sum_{w\in \roots_z} F(w)  \frac{w(w+1)}{L(w+\rho)} 
	= \frac1{2\pi \ii} \oint_{|w|=r} \frac{f(w)}{q_z(w)} \dd w- \frac1{2\pi \ii} \oint_{|w+1|=\epsilon_1} \frac{f(w)}{q_z(w)} \dd w
	-  \frac1{2\pi \ii} \oint_{|w|=\epsilon_2} \frac{f(w)}{q_z(w)} \dd w
\eeq
for any $\epsilon_1, \epsilon_2>0$ and $r> \max\{\epsilon_1+1, \epsilon_2\}$ such that all roots of $q_z(w)$ lie in the region $\{w: \epsilon_2<|w|<r, |w+1|>\epsilon_1\}$. 
Note that $q_z(w)$ is an entire function of $z$ for each $w$. 
Since we may take $r$ arbitrarily large and $\epsilon_1, \epsilon_2$  arbitrary small and positive, the right hand-side of~\eqref{eq:Fqintresdi} defines an analytic function of $z\neq 0$.
Now the entries of the determinant in~\eqref{eq:multipoint_distribution_origin12} are of the form
\begin{equation} \label{eq:Fqintresdimut}
	\sum_{\substack{w_1\in \roots_{z1} \\ \cdots\\ w_m\in\roots_{z_m}}}  
	F(w_1, \cdots, w_m) \prod_{\ell=1}^m \frac{w_\ell(w_\ell+1)}{L(w_\ell+\rho)} 
\end{equation}
for a function $F(w_1, \cdots, w_m)$ which is analytic in each variable in $\complexC\setminus\{-1, 0\}$ as long as $w_\ell\neq w_{\ell-1}$ for all $\ell=2, \cdots, m$. The last condition is due to the factor $\prod_{\ell=2}^m (w_\ell-w_{\ell-1})$ in the denominator. 
Note that if $w_\ell=w_{\ell-1}$, then $z_\ell= z_{\ell-1}$. 
Hence by using~\eqref{eq:Fqintresdi} $m$ times, each entry of~\eqref{eq:multipoint_distribution_origin12}, and hence $\detv_Y(\bbz, \bk, \ba, \bt)$, is an analytic function of each $z_\ell\neq 0$ in the region where all $z_\ell$ are distinct.

\medskip

When $m=1$, the product in~\eqref{eq:constcti1e} is set to be $1$ and the formula~\eqref{eq:multipoint_distribution_origin} in this case was obtained in Proposition 6.1 in \cite{Baik-Liu16}. 
For $m\ge 2$, as we mentioned in Introduction, we prove~\eqref{eq:multipoint_distribution_origin} by taking a multiple sum of the transition probability. 
The main new technical result is a summation formula and we summarize it in Proposition~\ref{thm:DRL_simplification} below.

The transition probability was obtained in Proposition 5.1 of \cite{Baik-Liu16}. 
Denoting by $\prob_X(X';t)$ the transition probability from $X=(x_1, \cdots, x_N) \in\conf_N(L)$ to $X'=(x'_1, \cdots, x'_N)\in \conf_N(L)$ in time $t$, 
\begin{equation} \label{eq:transition_probability}
	\prob_X(X';t) = \oint \det \left[ \frac1L \sum_{w\in \roots_z}  
	\frac{w^{j-i+1}(w+1)^{-x'_i+x_j+i-j}e^{tw}}{w+\rho} \right]_{i,j=1}^N \ddbar{z}
\end{equation}
where the integral is over any simple closed contour in $|z|>0$ which contains $0$ inside. 
The integrand is an analytic function of $z$ for $z\neq 0$ by using~\eqref{eq:Fqintresdi}.

\begin{proof}[Proof of Theorem~\ref{prop:multipoint_distribution_origin}]
It is enough to consider $m\ge 2$. 
It is also sufficient to consider the case when the times are distinct, $t_1<\cdots<t_m$, because both sides of~\eqref{eq:multipoint_distribution_origin} are continuous functions of $t_1, \cdots, t_m$. Note that~\eqref{eq:multipoint_distribution_origin12} involve only finite sums.

Denoting by $X^{(\ell)}=(x_1^{(\ell)}, \cdots, x_N^{(\ell)})$ the configuration of the particles at time $t_\ell$, 
the joint distribution function on the left hand-side of~\eqref{eq:multipoint_distribution_origin} is equal to 
\begin{equation}
\label{eq:aux_2016_09_24_01}
	\sum_{\substack{X^{(\ell)}\in \conf_N(L) \cap\{x_{k_\ell}^{(\ell)}\ge a_\ell\} \\ \ell=1,\cdots, m}}
	\prob_Y(X^{(1)};t_1)\prob_{X^{(1)}}(X^{(2)},t_2-t_1)\cdots \prob_{X^{(m-1)}}(X^{(m)};t_m-t_{m-1}).
\end{equation}

Applying the Cauchy-Binnet formula to~\eqref{eq:transition_probability}, we have 
\begin{equation}
\label{eq:reformed_transition_probability}
	\prob_X(X';t)=\oint \sum_{\substack{W\in (\roots_{z})^N}} \DL_X(W) \DR_{X'}(W) \consto(W;t) \ddbar{z}
\end{equation}
where for $W=(w_1,\cdots,w_N)\in \complexC$, 
\begin{equation}
	\DL_X(W)=\det\left[w_i^{j}(w_i+1)^{x_j-j}\right]_{i,j=1}^N,
	\qquad
	\DR_{X'}(W)=\det\left[w_i^{-j}(w_i+1)^{-x'_j+j}\right]_{i,j=1}^N,
\end{equation}
and
\begin{equation}\label{eq:Fforma}
	\consto(W;t)=\frac{1}{N!L^N}\prod_{i=1}^N\frac{w_ie^{tw_i}}{w_i+\rho}.
\end{equation}
\Cb{Here the factor $N!$ in the denominator comes from the Cauchy-Binnet formula; it will eventually disappear since we will apply the Cauch-Binet identity backward again at the end of the proof.}

We  insert~\eqref{eq:reformed_transition_probability} into~\eqref{eq:aux_2016_09_24_01} and interchange the order of the sums and the integrals.
Assuming that the series converges absolutely so that the interchange is possible, the joint distribution is equal to 
\begin{equation}
\label{eq:multi_point_origin}
\begin{split}
	&\oint \ddbar{z_1}\cdots\oint \ddbar{z_m} \sum_{\substack{W^{(1)}\in (\roots_{z_1})^N \\ \cdots\\ W^{(m)}\in(\roots_{z_m})^N}}
	\DP(W^{(1)}, \cdots, W^{(m})  \prod_{\ell=1}^m\consto (W^{(\ell)};t_\ell-t_{\ell-1}) 
\end{split}
\end{equation}
where $W^{(\ell)}=(w_1^{(\ell)}, \cdots, w_N^{(\ell)})$ and 
\begin{equation}
\label{eq:multi_point_origin2}
\begin{split}
	\DP(W^{(1)}, \cdots, W^{(m}) 
	= \DL_Y (W^{(1)}) \left[ \prod_{\ell=1}^{m-1} \DRL_{k_\ell,a_\ell} (W^{(\ell)};W^{(\ell+1)}) \right]
	 \bigg[ \sum_{\substack{X \in \conf_N(L)\\ x_{k_m}\ge a_m}} \DR_{X}(W^{(m)}) \bigg] .
\end{split}
\end{equation}
Here we set  
\begin{equation} \label{eq:sumRLsw}
	\DRL_{k,a}(W;W') :=\sum_{\substack{X\in \conf_N(L)\cap\{x_{k}\ge a\} }} \DR_{X}(W) \DL_{X}(W')
\end{equation}
for a pair of complex vectors $W=(w_1,\cdots,w_N)$ and $W'=(w'_{1},\cdots,w'_N)$. 
Let us now show that it is possible to exchange the sums and integrals if we take the $z_i$-contours properly. 
We first consider the convergence of~\eqref{eq:sumRLsw} and the sum in~\eqref{eq:multi_point_origin2}.
Note that shifting the summation variable $X$ to $X-(b, \cdots, b)$,  
\begin{equation}
	\sum_{\substack{X\in \conf_N(L)\cap\{x_{k}= b\} }} \DR_{X}(W) \DL_{X}(W') =\left[\sum_{\substack{Y\in\conf_N(X)}\cap\{y_k=0\}} \DR_{Y}(W) \DL_{Y}(W') \right] \left(\prod_{j=1}^N\frac{w'_j+1}{w_j+1}\right)^{b}.
\end{equation}
The right hand side of~\eqref{eq:sumRLsw} is the sum of the above formula over $b\ge a$. 
Hence~\eqref{eq:sumRLsw} converges absolutely and the convergence is uniform for $w_i, w_i'$  if  $\prod_{j=1}^N \big| \frac{w'_j+1}{w_j+1} \big|$ is in a compact subset of $[0,1)$. 
Similarly, the sum of $\DR_{X}(W^{(m)})$ in~\eqref{eq:multi_point_origin2} converges if $\prod_{j=1}^N |w_j^{(m)}+1|>1$. 
Therefore,~\eqref{eq:multi_point_origin2} converge absolutely if the intermediate variables $W^{(\ell)}=(w_1^{(\ell)}, \cdots, w_N^{(\ell)})$ satisfy 
\beq \label{eq:wordtep}
	\prod_{j=1}^N|w_j^{(1)}+1| > \prod_{j=1}^N|w_j^{(2)}+1| >\cdots> \prod_{j=1}^N|w_j^{(m)}+1| > 1.
\eeq
We now show that it is possible to choose the contours of $z_i$'s so that~\eqref{eq:wordtep} is achieved. 
Since $W^{(\ell)}\in (R_{z_\ell})^N$, $w_j^{(\ell)}$ satisfies the equation $|w^N(w+1)^{L-N}|=|z_\ell^L|$. 
Hence $|w_j^{(\ell)}|=|z_j|+O(1)$ as $|z_j|\to \infty$. 
Therefore, if we take the contours $|z_\ell|=r_\ell$ where $r_1>\cdots>r_m>0$ and $r_\ell-r_{\ell+1}$ are large enough (where $r_{m+1}:=0$), then~\eqref{eq:wordtep} is satisfied. 
Thus,~\eqref{eq:sumRLsw} and the sum in~\eqref{eq:multi_point_origin2} converge absolutely.
It is easy to see that the convergences are uniform. 
Hence we can exchange the sums and integrals, and therefore, the joint distribution is indeed given by~\eqref{eq:multi_point_origin}
if we take the contours of $z_i$ to be large nested circles. 

We simplify~\eqref{eq:multi_point_origin}. 
The terms $\DRL_{k_\ell,a_\ell} (W^{(\ell)};W^{(\ell+1)})$ are evaluated in Proposition~\ref{thm:DRL_simplification} below.
Note that since the $z_i$-contours are the large nested circles, we have~\eqref{eq:wordtep}, and hence the assumptions in Proposition~\ref{thm:DRL_simplification} are satisfied.
On the other hand, the sum of $\DR_{X}(W^{(m)})$ in~\eqref{eq:multi_point_origin} was computed in \cite{Baik-Liu16}. 
Lemma 6.1 in \cite{Baik-Liu16} implies that for $W=(w_1,\cdots,w_N)\in \roots_z^N$, 
\begin{equation}
\label{eq:aux_2016_10_21_02_diff}
\begin{split}
	&\sum_{\substack{X \in \conf_N(L)\\ x_{k}= a}} \DR_{X}(W) 
	= (-1)^{(k-1)(N+1)} z^{(k-1)L}  \left[ 1- \prod_{j=1}^N (w_{j}+1)^{-1} \right]  \left[ \prod_{j=1}^N w_j ^{-k} (w_{j}+1)^{-a+k+1} \right] \det\left[ w_j^{-i}\right]_{i,j=1}^N.
\end{split}
\end{equation}
Hence, from the geometric series, for $W=(w_1,\cdots,w_N)\in \roots_z^N$, 
\begin{equation}
\label{eq:aux_2016_10_21_02}
\begin{split}
	&\sum_{\substack{X \in \conf_N(L)\\ x_{k}\ge a}}\DR_{X}(W) 
	= (-1)^{(k-1)(N+1)} z^{(k-1)L} \left[ \prod_{j=1}^N w_j ^{-k} (w_{j}+1)^{-a+k+1} \right] \det\left[ w_j^{-i}\right]_{i,j=1}^N
\end{split}
\end{equation}
if $\prod_{j=1}^N|w_j+1|>1$.  The last condition is satisfies for $W=W^{(m)}$. 
We thus find that~\eqref{eq:multi_point_origin2} is equal to an explicit factor times a product of $m-1$ Cauchy determinants times a Vandermonde determinant.
By using the Cauchy-Binet identity $m$ times, we obtain~\eqref{eq:multipoint_distribution_origin} assuming that the $z_i$-contours are large nested circles. 

Finally, using the analyticity of the integrand on the right-hand side of~\eqref{eq:multipoint_distribution_origin}, which was discussed before the start of this proof, we can deform the contours of $z_i$ to any nested circles, not necessarily large circles. This completes the proof. 
\end{proof}

The main technical part of this section is the following summation formula. 
We prove it in Section \ref{sec:proof_DRL}. 

\begin{prop} \label{thm:DRL_simplification}
Let $z$ and $z'$ be two non-zero complex numbers satisfying $z^L\ne (z')^L$.
Let $W=(w_1,\cdots,w_N)\in (\roots_z)^N$ and $W'=(w'_1,\cdots,w'_N)\in (\roots_{z'})^N$. 
Suppose that $\prod_{j=1}^N|w'_j+1|<\prod_{j=1}^N|w_j+1|$. 
Consider $\DRL_{k,a}(W;W')$ defined in~\eqref{eq:sumRLsw}. 
Then for any $1\le k\le N$ and integer $a$, 
\begin{equation}
		\label{eq:DRL_simplification}
		\DRL_{k,a}(W;W')=  \left(\frac{z}{z'}\right)^{(k-1)L}\left(1-\left(\frac{z'}{z}\right)^L\right)^{N-1}
		\left[ \prod_{j=1}^N \frac{w_j^{-k}(w_j+1)^{-a+k+1}}{(w'_j)^{-k}(w'_j+1)^{-a+k}} \right] \det\left[\frac{1}{w_i-w'_{i'}}\right]_{i,i'=1}^N.
\end{equation}
\end{prop}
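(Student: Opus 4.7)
The plan is to evaluate $\DRL_{k,a}(W;W')$ in two steps: first reduce to a geometric series in the value of $x_k$, and then evaluate the remaining centered sum using the root equations $w^N(w+1)^{L-N}=z^L$ and $(w')^N(w'+1)^{L-N}=(z')^L$ defining $\roots_z$ and $\roots_{z'}$.

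\textbf{Reduction to a centered sum.} Fix $b=x_k$ and set $Y=X-b\mathbf{1}\in\conf_N(L)$ with $y_k=0$. Since $\DR_X(W)$ depends on $x_j$ only through the factor $(w_i+1)^{-x_j}$, row-factoring $(w_i+1)^{-b}$ out of each row gives $\DR_X(W)=\prod_{i=1}^{N}(w_i+1)^{-b}\,\DR_Y(W)$, and similarly $\DL_X(W')=\prod_{i=1}^{N}(w'_i+1)^{b}\,\DL_Y(W')$. Therefore
\[
\DRL_{k,a}(W;W') \;=\; \Bigl[\sum_{b\ge a}\alpha^{b}\Bigr]\,\widetilde{\DRL}_{k}(W;W'),\qquad \alpha := \prod_{j=1}^{N}\frac{w'_j+1}{w_j+1},
\]
where $\widetilde{\DRL}_{k}(W;W') := \sum_{Y\in\conf_N(L),\,y_k=0}\DR_Y(W)\,\DL_Y(W')$ is a finite sum. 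By the hypothesis $|\alpha|<1$, the series evaluates to $\alpha^{a}/(1-\alpha)$, and the problem is reduced to computing $\widetilde{\DRL}_{k}$.

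\textbf{Evaluation of the centered sum.} Both $\DR_Y(W)$ and $\DL_Y(W')$ are antisymmetric in the column indices, so their product is symmetric in $(y_1,\ldots,y_N)$, and Cauchy--Binet collapses the product into a single $N\times N$ determinant whose $(i,i')$-entry is a sum over the admissible $Y$ of products of one-variable factors of the form $\bigl((w'_{i'}+1)/(w_i+1)\bigr)^{y_j}$. Splitting the Weyl-chamber ordering $y_1<\cdots<y_N$ around the pinned coordinate $y_k=0$ and using the periodic bound $y_N<y_1+L$, each one-variable sub-sum is a finite geometric series $\sum_y\beta^{y}=(1-\beta^{L})/(1-\beta)$ with $\beta=(w'_{i'}+1)/(w_i+1)$. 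The identity $(w+1)^{L}=(w+1)^{N}z^{L}/w^{N}$ for $w\in\roots_z$ (and its counterpart for $w'\in\roots_{z'}$) reduces $\beta^{L}$ to $(z'/z)^{L}\cdot(w_i/w'_{i'})^{N}\cdot((w'_{i'}+1)/(w_i+1))^{N}$; the $L$-dependence then factors out of the determinant and assembles into the prefactor $(1-(z'/z)^{L})^{N-1}$, while the remaining $1/(1-\beta)=(w_i+1)/(w_i-w'_{i'})$ factors assemble into the Cauchy determinant $\det[1/(w_i-w'_{i'})]_{i,i'}$. The factor $(z/z')^{(k-1)L}$ comes from reindexing to place the pinned coordinate at position $1$, analogous to the corresponding factor in~\eqref{eq:aux_2016_10_21_02_diff}.

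The main obstacle is the bookkeeping in the second step, and in particular explaining why the exponent on $1-(z'/z)^{L}$ is $N-1$ and not $N$. The heuristic reason is that only one combination of the $N$ geometric-sum endpoints is genuinely constrained by the periodic identification $y_N<y_1+L$, so exactly one power of $(1-(z'/z)^{L})$ is ``lost'' relative to a fully unconstrained problem. A cleaner alternative that I would also try is to first evaluate $\widetilde{\DRL}_k$ in the unbounded (infinite-TASEP) regime, and then compute the periodic correction via residues arising from the root equations, which should directly yield the claimed exponent.
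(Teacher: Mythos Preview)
Your first step (the reduction by peeling off a geometric series in $b=x_k$) is correct and matches what the paper does: the paper first reduces general $k$ to $k=1$ by a cyclic relabeling (Lemma~\ref{lem:kto1}), then writes $\DRL_{1,a}$ as $\sum_{b\ge a}H_b$ where $H_b$ is the finite sum with $x_1=b$ fixed. Your centered sum $\widetilde{\DRL}_k$ is exactly $H_0$ up to the index shift.

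Your second step, however, has a genuine gap. The claim that ``$\DR_Y(W)$ and $\DL_Y(W')$ are antisymmetric in the column indices, so their product is symmetric in $(y_1,\ldots,y_N)$'' is false: the $j$-th column of $\DR_Y$ is $w_i^{-j}(w_i+1)^{-y_j+j}$, and the factor $w_i^{-j}(w_i+1)^{j}$ depends on $j$ independently of $y_j$. Swapping $y_a\leftrightarrow y_b$ is therefore \emph{not} a column swap, and the product $\DR_Y\DL_Y$ is not a symmetric function of the $y_j$. Consequently there is no Cauchy--Binet/Andr\'eief collapse that reduces the Weyl-chamber sum to a determinant of one-variable sums. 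Even after expanding both determinants over permutation pairs $(\sigma,\sigma')$, the constrained sum $\sum_{0=y_1<y_2<\cdots<y_N<L}\prod_j\beta_j^{\,y_j}$ does \emph{not} factor into independent geometric series: the upper endpoint of each $y_j$-sum is $y_{j+1}-1$, so the sums are nested, and the periodic bound $y_N<y_1+L$ couples the two ends. Your heuristic for the exponent $N-1$ is recognizing the right phenomenon but is not a proof.

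What the paper actually does to evaluate $H_a$ (Section~\ref{sec:proof_DRL}) is much more involved than a Cauchy--Binet step. It expands both determinants as a double sum over $(\sigma,\sigma')\in S_N^2$ and evaluates the nested geometric sums explicitly; this produces a sum over ordered partitions $(I_0,\ldots,I_k)$, $(I_0',\ldots,I_k')$ of $\{1,\ldots,N\}$ recording the ``break points'' where the periodic bound is saturated (Step~1). Two nontrivial determinantal identities (Lemmas~\ref{lm:identity_01} and~\ref{lm:identity_02}) then collapse the inner permutation sums and the partition sum, respectively, and only in the final Step~4 are the root equations $w_i^N(w_i+1)^{L-N}=z^L$ invoked to produce the factor $(1-(z'/z)^L)^{N-1}$ and the Cauchy determinant. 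The exponent $N-1$ emerges there from an explicit rank-one perturbation identity (Lemma~\ref{lem:sumCauchyq2}), not from a counting argument. If you want to pursue your route, the honest starting point is the nested-sum formula in Step~1; the subsequent simplification is where the real work lies.
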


\section{Periodic step initial condition}\label{sec:stepicn}

We now assume the following periodic step condition: 
\beq \label{eq:psic}
	\text{$\xx_{i+nN}(0)=i-N+nL$ for $1\le i\le N$ and $n\in \intZ$.}
\eeq 
In the previous section, we obtain a formula for general initial conditions. 
In this section, we find a simpler formula for the periodic step initial condition which is suitable for the asymptotic analysis. 
We express $\detv_Y(\bbz, \bk, \ba, \bt)$ as a Fredholm determinant times a simple factor. 
The result is described in terms of two functions $\const(\bbz)$ and $\gdet(\bbz)$. 
We first define them and then state the result.

Throughout this section, we fix a positive integer $m$, and fix parameters $k_1,\cdots,k_m, a_1,\cdots,a_m, t_1,\cdots,t_m$ as in the previous section.

\subsection{Definitions} \label{sec:sticde}

Recall the function $q_z(w) = w^N(w+1)^{L-N}-z^L$ for complex $z$ in~\eqref{eq:def_roots} and the set of its roots 
\beq
	\roots_z = \{w\in\complexC : q_z(w)=0\}
\eeq 
in~\eqref{eq:rootsz}. 
Set 
\beq
	\rr_0:= \rho^\rho(1-\rho)^{1-\rho}, \qquad \rho=N/L 
\eeq 
as in~\eqref{eq:r0defn}. 
We discussed in the previous section that if $0<|z|<\rr_0$, then the contour $|q_z(w)|=0$ consists of two closed contours, one in $\Re(w)<-\rho$ enclosing the point $w=-1$ and the other in $\Re(w)>-\rho$ enclosing the point $w=0$.
Now, for $0<|z|<\rr_0$, 
set
\begin{equation} \label{eq:RLfinds}
	\rootsL_{z}=\{w\in\roots_{z}:\Re(w)<-\rho\},\qquad \rootsR_{z}=\{w\in\roots_{z}:\Re(w)>-\rho\}.
\end{equation}
It is not difficult to check that
\beq
	|\rootsL_z|=L-N, \qquad |\rootsR_z|=N. 
\eeq
See the left picture in Figure~\ref{fig:rootsfinite} in Section~\ref{sec:tran}.
(Note that if $z=0$, then the roots are $w=-1$ with multiplicity $L-N$ and $w=0$ with multiplicity $N$.)
From the definitions, we have  
\beq
	\roots_z= \rootsL_z\cup \rootsR_z.
\eeq

In Theorem~\ref{prop:multipoint_distribution_origin}, we took the contours of $z_i$ as nested circles of arbitrary sizes. 
In this section, we assume that the circles satisfy 
\beq \label{eq:finforzord}
	0<|z_m|<\cdots<|z_1|<\rr_0 .
\eeq
Hence $\rootsL_{z_i}$ and $\rootsR_{z_i}$ are all well-defined. 

We define two functions $\const(\bbz)$ and $\gdet(\bbz)$ of $\bbz=(z_1, \cdots, z_m)$, both of which depend on the parameters $k_i, t_i, a_i$. The first one is the following. Recall the notational convention introduced in Definition~\ref{def:notc}.
For example, $\Delta(\rootsR_{z};\rootsL_{z})=\prod_{v\in \rootsR_z} \prod_{u\in \rootsL_z} (v-u)$.

\begin{defn}
Define 
\begin{equation} \label{eq:forBs}
		\begin{split}
		\const(\bbz)		&=\left[\prod_{\ell=1}^m\frac{\CA_\ell(z_\ell)}{\CA_{\ell-1}(z_{\ell})}\right]\left[\prod_{\ell=1}^m\frac{\prod_{u\in\rootsL_{z_\ell}}(-u)^{N}\prod_{v\in\rootsR_{z_\ell}}(v+1)^{L-N}}{\Delta(\rootsR_{z_\ell};\rootsL_{z_\ell})}\right]\\
		&\qquad\times \left[\prod_{\ell=2}^m\frac{z_{\ell-1}^L}{z_{\ell-1}^L-z_\ell^L}\right]\left[\prod_{\ell=2}^m\frac{\Delta(\rootsR_{z_\ell};\rootsL_{z_{\ell-1}})}{\prod_{u\in\rootsL_{z_{\ell-1}}}(-u)^N\prod_{v\in\rootsR_{z_\ell}}(v+1)^{L-N}}\right]
		\end{split}
\end{equation}
where
\begin{equation} \label{eq:CAbutE}
	\CA_i(z):=\prod_{u\in\rootsL_{z}}(-u)^{k_i-N-1}\prod_{v\in\rootsR_{z}}(v+1)^{-a_i+k_i-N}e^{t_iv}
\end{equation}
for $i=1,\cdots,m$, and $\CA_0(z):=1$.
\end{defn}

It is easy to see that all terms in $\const(\bbz)$ other than  $\prod_{\ell=2}^m\frac{z_{\ell-1}^L}{z_{\ell-1}^L-z_\ell^L}$ are analytic for $z_1,\cdots,z_m$ within the disk $\{z;|z|<\rr_0\}$. Hence $\const(\bbz)$ is analytic in the disk except the simple poles when $z_{\ell-1}^L=z_\ell^{L}$, $\ell=2, \cdots,m$.

\bigskip

We now define $\gdet(\bbz)$. It is given by a Fredholm determinant. 
Set 
\begin{equation} \label{eq:def_fftn}
\begin{split}
	\fftn_{i}(w) &:=  w^{-k_i+N+1} (w+1)^{-a_i+k_i-N} e^{t_iw}\quad  \text{for $i=1,\cdots,m$,}\\
	\fftn_{0}(w) &:=0. 
\end{split}
\end{equation}
Define 
\begin{equation}\label{eq:def_fs}
 	 \fs_i(w)=\begin{dcases}
 	 \frac{\fftn_i(w)}{\fftn_{i-1}(w)} \quad &\text{for $\Re(w)<-\rho$,} \\
 	 \frac{\fftn_{i-1}(w)}{\fftn_i(w)} \quad & \text{for $\Re(w)>-\rho$.}
 	 \end{dcases}
\end{equation}	
Also, set 
\beq \label{eq:jac}
	\jac(w)= \frac{w(w+1)}{L(w+\rho)}. 
\eeq
	
Define, for $0<|z|<\rr_0$,  
\beq \label{eq:def_original_hL_hR}
	\ql_z(w) = \frac1{(w+1)^{L-N}}\prod_{u\in\rootsL_{z}}(w-u),  
	\qquad 
	\qr_z(w) = \frac1{w^N} \prod_{u\in\rootsR_{z}}(w-u).
\eeq
{Note that $\ql_z(w) \qr_z(w) 
	= \frac{q_z(w) }{(w+1)^{L-N}w^N}  
	= \frac{w^N(w+1)^{L-N}-z^L}{w^N(w+1)^{L-N}}$. }
Set 
\beq \label{eq:defofHszw}
	\sH_z(w) := \begin{dcases}
	\qol_z(w) \qquad &\text{for $\Re(w)>-\rho$,} \\
	\qor_z(w) \qquad &\text{for $\Re(w)<-\rho$,}
	\end{dcases}
\eeq	
When $z=0$, we define $\qol_z(w)=\qor_z(w)=1$ and hence $\sH_z(w)=1$.

\begin{figure}
	\centering
	\includegraphics[scale=0.8]{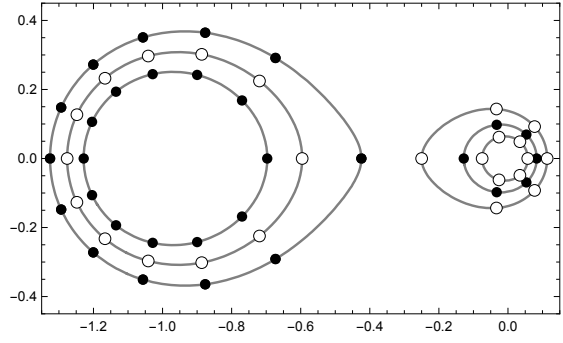}
	\caption{Example of $\mrootL$ and $\mrootR$ when $m=3$. The block dots are $\mrootL$ and the white dots are $\mrootR$. The level sets are shown for visual convenience.}
	\label{fig:rootslr}
\end{figure}

Define two sets  
\begin{equation}
	\mrootL:=\rootsL_{z_1}\cup \rootsR_{z_2} \cup\rootsL_{z_3}\cup\cdots\cup\begin{cases}
\rootsL_{z_m}& \text{if $m$ is odd,} \\
\rootsR_{z_m}& \text{if $m$ is even,} \\
\end{cases}
\end{equation}
and 
\begin{equation}
	\mrootR:=\rootsR_{z_1}\cup \rootsL_{z_2} \cup\rootsR_{z_3}\cup\cdots\cup\begin{cases}
	\rootsR_{z_m}& \text{if $m$ is odd,} \\
	\rootsL_{z_m}& \text{if $m$ is even.} 
\end{cases}
\end{equation}
See Figure~\ref{fig:rootslr}. 
We define two operators 
\beq
	\KsL:\ell^2(\mrootR)\to\ell^2(\mrootL), \qquad \KsR:\ell^2(\mrootL)\to\ell^2(\mrootR)
\eeq
by kernels.
{If $w\in\roots_{z_i}\cap\mrootL$ and $w'\in\roots_{z_j}\cap\mrootR$
for some $i,j\in\{1, \cdots, m\}$, }
we set
\begin{equation} \label{eq:aux_2017_3_17_01}
 \begin{split}
 	\KsL(w,w') &=  (\delta_i(j)+\delta_i (j+(-1)^i) ) 
	\frac{\jac(w)\fs_i(w) (\sH_{z_{i}}(w))^2 }{\sH_{z_{i-(-1)^{i}}}(w) \sH_{z_{j-(-1)^{j}}}(w')(w-w')} \sQL(j) .
\end{split} \end{equation}
{Similarly, if $w\in\roots_{z_i}\cap\mrootR$ and $w'\in\roots_{z_j}\cap\mrootL$ for some $i,j\in\{1, \cdots, m\}$, 
we set }
\begin{equation} \label{eq:aux_2017_3_17_01_0002}
 \begin{split}
 	\KsR(w,w') &= (\delta_i(j)+\delta_i (j-(-1)^i ) )
 	\frac{\jac(w) \fs_i(w) (\sH_{z_i}(w))^2}{\sH_{z_{i+(-1)^{i}}}(w) \sH_{z_{j+(-1)^{j}}}(w')(w-w')} \sQR(j).
\end{split} \end{equation}
Here we set $z_0=z_{m+1}=0$. 
We also set
\begin{equation}
\label{eq:sQ1}
\sQL(j):=1-\left(\frac{z_{j-(-1)^j}}{z_j}\right)^L,\qquad \sQR(j):=1-\left(\frac{z_{j+(-1)^j}}{z_j}\right)^L.
\end{equation}

\begin{defn}
Define
\begin{equation}  \label{eq:stpMfp}
	\gdet(\bbz) =\det (I-\KsL\KsR).
\end{equation}
\end{defn}

\medskip

\begin{rmk} \label{rmk:matrixkernl}
The matrix kernels for $\KsL$ and $\KsR$ have block structures similar to the infinite time case discussed in Subsection~\ref{sec:matrixkl}. 
The only change is that $\lmk$ is replaced by $\kk$ which is given as follows. 
For
\beq
	u\in \rootsL_{z_i}, \qquad v\in \rootsR_{z_i}, \qquad u'\in \rootsL_{z_{i+1}}, \qquad v'\in \rootsR_{z_{i+1}},
\eeq
we have 
\beq
\bes
	\begin{bmatrix} 
	\kk(u,v) & \kk (u,u') \\ \kk(v',v) & \kk(v',u')
	\end{bmatrix}
	&
	= \begin{bmatrix} 
	\frac{\fftn_i(u)}{\fftn_{i-1}(u)}    \\ 
	& \frac{\fftn_{i}(v')}{\fftn_{i+1}(v')} 
	\end{bmatrix}
	\begin{bmatrix} 
	 \frac{u(u+1)\qor_{z_i}(u)^2}{L(u+\rho)\qor_{z_{i+1}}(u)}   \\ 
	 &  \frac{v'(v'+1) \qol_{z_{i+1}}(v')^2}{L(v'+\rho) \qol_{z_i}(v') }  \\
	\end{bmatrix} \\
	&\qquad \times 
	\begin{bmatrix} 
	\frac1{u-v} & \frac1{u-u'}    \\ \frac1{v'-v} & \frac1{v'-u'}
	\end{bmatrix}
	\begin{bmatrix} 
	\frac1{\qol_{z_{i+1}}(v)} \\
	 &  \frac1{\qor_{z_i}(u')}  \\ 
	\end{bmatrix}
	\begin{bmatrix} 
	   1- \frac{z_{i+1}^L}{z_i^L} \\
	 &  1- \frac{z_{i}^L}{z_{i+1}^L}   \\ 
	\end{bmatrix} .
\end{split}
\eeq
\end{rmk}

\bigskip

As in Subsection~\ref{sec:altforjtlim}, the above Fredholm determinant also has two series formulas. 
The proof of the following lemma is given in Subsection~\ref{sec:eqFrsr}. 

\begin{lm}[Series formulas of $\gdet(\bbz)$] \label{lem:DFredexpf} \label{thm:multi_point_formula2}
We have 
\begin{equation} 
\label{eq:aux_2017_03_23_02}
	\gdet(\bbz)
	=\sum_{\bn\in (\intZ_{\ge 0})^m} \frac{1}{(\bn !)^2} \gdet_{\bn}(\bbz)
\end{equation}
with $\bn != \prod_{\ell=1}^m n_\ell!$ for $\bn=(n_1, \cdots, n_m)$ and $\gdet_{\bn}(\bbz)$ can be 
expressed in the following two ways. 
Here we set $\gdet_{\bn}(\bbz)=0$ if one of $n_\ell$ is larger than $N$. 
\begin{enumerate}[(a)]
\item 
We have
\begin{equation} \label{eq:aux_2017_03_22_01}
	\gdet_{\bn}(z_1,\cdots,z_m)
	=(-1)^{|\bn|}\sum_{\substack{\UU^{(\ell)}\in (\rootsL_{z_\ell})^{n_\ell} \\ \VV^{(\ell)}\in (\rootsR_{z_\ell})^{n_\ell} \\ l=1,\cdots,m}}\det\left[\KsL(w_i,w'_j)\right]_{i,j=1}^{|\bn|}
	\det\left[\KsR(w'_i,w_j)\right]_{i,j=1}^{|\bn|}
\end{equation}
where $\bUU=(\UU^{(1)}, \cdots, \UU^{(m)})$, $\bVV= (\VV^{(1)}, \cdots, \VV^{(m)})$ with $\UU^{(\ell)}=(u_1^{(\ell)}, \cdots, u_{n_\ell}^{(\ell)})$, $\VV^{(\ell)}=(v_1^{(\ell)}, \cdots, v_{n_\ell}^{(\ell)})$, and 
\begin{equation}
w_i=\begin{dcases}
	u_k^{(\ell)}& \text{if $i=n_1 +\cdots+n_{\ell-1}+k$ for some $k\le n_\ell$ with odd integer $\ell$,}\\
v_k^{(\ell)}& \text{if $i=n_1 +\cdots+n_{\ell-1}+k$ for some $k\le n_\ell$ with even integer $\ell$,}
\end{dcases}
\end{equation}
and
\begin{equation}
w'_i=\begin{dcases}
v_k^{(\ell)}& \text{if $i=n_1+\cdots+n_{\ell-1}+k$ for some $k\le n_\ell$ with odd integer $\ell$,}\\
u_k^{(\ell)}& \text{if $i=n_1+\cdots+n_{\ell-1}+k$ for some $k\le n_\ell$ with even integer $\ell$.}
\end{dcases}
\end{equation}

\item 
We have  
\begin{equation} \label{eq:stpMfpnnn}
	\begin{split}
	\gdet_{\bn}(\bbz)
	& =
	\sum_{\substack{\UU^{(\ell)}\in (\rootsL_{z_\ell})^{n_\ell}\\ \VV^{(\ell)}\in (\rootsR_{z_\ell})^{n_\ell} \\ \ell=1,\cdots,m}} 
	 \left[\prod_{\ell=1}^m 
	\frac{(\Delta(\UU^{(\ell)}))^2(\Delta(\VV^{(\ell)}))^2}{(\Delta(\UU^{(\ell)};\VV^{(\ell)}))^2}
	\hatf_\ell(\UU^{(\ell)}) \hatf_\ell(\VV^{(\ell)})
	\right]  \\
	&\times   
	\left[ \prod_{\ell=2}^{m}
	\frac{\Delta(\UU^{(\ell)};\VV^{(\ell-1)})\Delta(\VV^{(\ell)};\UU^{(\ell-1)}) (1-\frac{z^L_\ell}{z^L_{\ell-1}})^{n_{\ell-1}}(1-\frac{z^L_{\ell-1}}{z^L_{\ell}})^{n_{\ell}}}{\Delta(\UU^{(\ell)};\UU^{(\ell-1)})\Delta(\VV^{(\ell)};\VV^{(\ell-1)})\qor_{z_{\ell-1}}(\UU^{(\ell)}) \qor_{z_\ell}(\UU^{(\ell-1)})\qol_{z_{\ell-1}}(\VV^{(\ell)})\qol_{z_\ell}(\VV^{(\ell-1)})} \right]
	\end{split}
\end{equation}
where
\beq \label{eq:hatfel}
	\hatf_\ell(w) := \jac(w) \fs_\ell(w) (\sH_{z_\ell}(w))^2 
	= \begin{cases}
	\jac(w) \frac{\fftn_\ell(w)}{\fftn_{\ell-1}(w)} (\qor_{z_\ell}(w))^2 \quad & \text{for $w\in \rootsL_{z_\ell}$,} \\
	\jac(w) \frac{\fftn_{\ell-1}(w)}{\fftn_{\ell}(w)} (\qol_{z_\ell}(w))^2 \quad &\text{for $w\in \rootsR_{z_\ell}$.}
	\end{cases}
\eeq
\end{enumerate}
\end{lm}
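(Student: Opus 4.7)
The plan is to derive both formulas from the Fredholm expansion of $\gdet(\bbz) = \det(\id - \KsL\KsR)$, exploiting the block structure described in Remark~\ref{rmk:matrixkernl} to index the sum by $\bn = (n_1, \ldots, n_m)$, and then applying the Cauchy determinant identity to obtain the explicit form (b). This parallels the argument for $\gdetlm(\bfz)$ in Lemma~\ref{lem:Freserfola}, so the proof would proceed via the same general Fredholm identity stated in Subsection~\ref{sec:eqFrsr}.

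For part (a), I would first apply the Andreief/Heine expansion
\begin{equation*}
  \det(\id - \KsL\KsR) = \sum_{n\ge 0}\frac{1}{(n!)^2}\sum_{\substack{w_1,\ldots,w_n\in\mrootL \\ w'_1,\ldots,w'_n\in\mrootR}} \det[\KsL(w_i,w'_j)]_{i,j=1}^n\,\det[\KsR(w'_i,w_j)]_{i,j=1}^n,
\end{equation*}
obtained by expanding the Fredholm determinant as a sum of principal minors and factoring each minor by Cauchy--Binet. The delta factors $\delta_i(j)+\delta_i(j\pm(-1)^i)$ in \eqref{eq:aux_2017_3_17_01}--\eqref{eq:aux_2017_3_17_01_0002} force the sums to vanish unless the multiset of $w_i$ and $w'_i$ respects the alternating block pattern of $\mrootL$ and $\mrootR$. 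A bookkeeping argument then shows that the only nonzero contributions come from configurations in which exactly $n_\ell$ variables lie in $\rootsL_{z_\ell}$ and exactly $n_\ell$ in $\rootsR_{z_\ell}$ for each $\ell$. Reorganizing the sum by $\bn$ and relabeling the variables as in the definition of $\bUU$, $\bVV$ yields \eqref{eq:aux_2017_03_22_01}.

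For part (b), I would next evaluate each $|\bn|\times|\bn|$ determinant in \eqref{eq:aux_2017_03_22_01} by the Cauchy determinant identity
\begin{equation*}
   \det\left[\frac{1}{x_i - y_j}\right]_{i,j=1}^n = \frac{\Delta(x)\,\Delta(y)}{\prod_{i,j}(x_i - y_j)} \cdot (-1)^{n(n-1)/2}.
\end{equation*}
Remark~\ref{rmk:matrixkernl} presents $\kk$ as a product of diagonal matrices flanking a $2\times 2$ block of reciprocal differences, so the determinants factor cleanly. Squaring (from the product of the two Cauchy ratios) produces the factors $\Delta(\UU^{(\ell)})^2\Delta(\VV^{(\ell)})^2/\Delta(\UU^{(\ell)};\VV^{(\ell)})^2$; the cross-block couplings from pairing between consecutive indices $\ell$ and $\ell-1$ produce the cross-block $\Delta$ ratios in~\eqref{eq:stpMfpnnn}. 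The scalar prefactors $\jac(w)\fs_\ell(w)$ collect into $\hatf_\ell$ (using $\sH_{z_\ell}(w)^2$), while the powers of $\sQL$ and $\sQR$ give the $(1-z^L_\ell/z^L_{\ell-1})$ factors.

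The main obstacle will be the careful tracking of the $\sH_{z_i}$ factors across the Cauchy simplification: each variable in $\rootsL_{z_\ell}$ contributes $\sH_{z_\ell}(w)^2/\sH_{z_{\ell\pm 1}}(w)$, where the $\pm$ depends on whether the pairing points toward block $\ell-1$ or $\ell+1$, which in turn depends on the parity of $\ell$ that determines membership in $\mrootL$ versus $\mrootR$. These need to combine with the analogous factors from the paired block so that, after the Cauchy simplification, they condense into precisely the denominator $\qor_{z_{\ell-1}}(\UU^{(\ell)})\,\qor_{z_\ell}(\UU^{(\ell-1)})\,\qol_{z_{\ell-1}}(\VV^{(\ell)})\,\qol_{z_\ell}(\VV^{(\ell-1)})$ in \eqref{eq:stpMfpnnn}. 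Getting the subscripts right uniformly across all parities, including the boundary conventions $z_0 = z_{m+1} = 0$ that make $\sH_{z_0} = \sH_{z_{m+1}} = 1$, is the delicate piece. Once the $\sH$-bookkeeping is confirmed, the formula~\eqref{eq:hatfel} for $\hatf_\ell$ emerges automatically, completing the proof.
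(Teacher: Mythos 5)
Your proposal follows essentially the same route as the paper's proof in Subsection~\ref{sec:eqFrsr}: part (a) via the Fredholm series plus Cauchy--Binet plus the block-structure observation that forces matching multi-indices on the two sides (the paper's Lemma~\ref{lm:aux2}), and part (b) via a generalized Cauchy determinant identity applied block by block (the paper's Lemma~\ref{lm:aux_01}), with the only superficial difference being that you pass from (a) to (b) while the paper packages the Cauchy identity the other way around. The displayed Fredholm expansion in your sketch omits the $(-1)^n$ factor, but since you recover the $(-1)^{|\bn|}$ in~\eqref{eq:aux_2017_03_22_01} this is just a transcription slip, not a gap.
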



\begin{rmk}
From~\eqref{eq:stpMfpnnn}, we can check that $\gdet_{\bn}(\bbz)$ is analytic for each $z_\ell$ in $0<|z_\ell|<\rr_0$, $1\le \ell\le m$ just like $\gdetlm(\bfz)$ of Section~\ref{sec:limitdisformula}.
{The proof for $\gdetlm(\bfz)$ is in Lemma~\ref{lem:Dgetlmmdcov}. 
The proof for $\gdet_{\bn}(\bbz)$ is similar, and we skip it. }
\end{rmk}

\subsection{Result and proof}

\begin{thm}[Joint distribution of TASEP in $\conf_N(L)$ for step initial condition] \label{thm:multi_point_formula}
Consider the TASEP in $\conf_N(L)$ with the step initial condition $\xx_i(0)=i-N$, $1\le i\le N$. 
Set $\rho=N/L$. 
Fix a positive integer $m$.
Let $(k_1, t_1), \cdots, (k_m, t_m)$ be $m$ distinct points in $\{1, \cdots, N\} \times [0, \infty)$. 
Assume that $0\le t_1\le\cdots\le t_m$.
Let $a_i\in \intZ$ for $1\le i\le m$.
Then
\begin{equation}
	\label{eq:multi_point_formula}
	\begin{split}
	&\prob\left(\xx_{k_1}(t_1)\ge a_1,\cdots, \xx_{k_m}(t_m)\ge a_m\right) 
	= \oint\cdots\oint \const(\bbz) \gdet(\bbz) \ddbar{z_m}\cdots\ddbar{z_1} 
	\end{split}
\end{equation}
where $\bbz = (z_1,\cdots,z_m)$ and the contours are nested circles satisfying 
\beq
	0<|z_m|<\cdots <|z_1|<\rr_0
\eeq
with $\rr_0=\rho^\rho(1-\rho)^{1-\rho}$.
The functions $\const(\bbz)$ and $\gdet(\bbz)$ are defined in~\eqref{eq:forBs} and~\eqref{eq:stpMfp}, respectively. 
\end{thm}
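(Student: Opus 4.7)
The plan is to specialize Theorem~\ref{prop:multipoint_distribution_origin} to the step initial condition and then simplify the resulting determinant into the Fredholm form. With $y_i = i - N$, the exponent $y_i - i = -N$ is independent of $i$, so $(w_1+1)^{y_i - i} = (w_1+1)^{-N}$ comes out of the $i$-dependence. The $(i,j)$-entry of $\detv_Y$ in~\eqref{eq:multipoint_distribution_origin12} then factors as
\[
M_{ij} = \sum_{w_1 \in \roots_{z_1}}\sum_{w_m \in \roots_{z_m}} w_1^i\,(w_1+1)^{-N}\,w_m^{-j}\,\Psi(w_1, w_m),
\]
where $\Psi(w_1, w_m)$ is the propagator obtained by summing over the intermediate $w_2, \ldots, w_{m-1}$ with the Cauchy-like chain $\prod_{\ell=2}^m (w_\ell - w_{\ell-1})^{-1}$ and the weights $\prod_\ell \fgic_\ell(w_\ell)$. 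Writing $M = P\,\Psi\,Q$ with $P_{i,w_1} = w_1^i(w_1+1)^{-N}$ and $Q_{w_m, j} = w_m^{-j}$, I would apply the generalized Cauchy-Binet identity to expand $\det M$ as a triple sum over $N$-element subsets of $\roots_{z_1}$ and $\roots_{z_m}$ together with the sum defining $\Psi$.

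Next I would split each set of roots as $\roots_{z_\ell} = \rootsL_{z_\ell} \cup \rootsR_{z_\ell}$, a decomposition which is well defined since the contours satisfy $|z_\ell| < \rr_0$. Because $|\rootsR_{z_\ell}| = N$ and $|\rootsL_{z_\ell}| = L - N$, the selection of $N$-element subsets, when combined with the Cauchy factors $(w_\ell - w_{\ell-1})^{-1}$ in $\Psi$, forces the alternating $\rootsL/\rootsR$ pattern that defines $\mrootL$ and $\mrootR$ in Subsection~\ref{sec:sticde}. The Vandermonde-type determinants $\det[w_1^{(a)\,i}]$ and $\det[(w_m^{(b)})^{-j}]$ evaluate explicitly and furnish the $\Delta(\rootsL_{z_\ell}; \rootsR_{z_\ell})$-type products appearing in $\const(\bbz)$, while the terms $(w+1)^{-N}$ and $w^{-k}$ arising from $P$, $Q$, and $\fgic_\ell$ cluster into the factors $\CA_\ell(z_\ell)$ defined in~\eqref{eq:CAbutE}.

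The main obstacle is reorganizing the resulting multiple sum into the series form of Lemma~\ref{lem:DFredexpf}(b). This requires the systematic use of Cauchy's determinant identity $\det[(u_i - v_j)^{-1}] = \Delta(\vec u)\Delta(\vec v)/\prod(u_i - v_j)$ to collapse the chain of Cauchy factors $\prod_\ell (w_\ell - w_{\ell-1})^{-1}$ from $\Psi$ into the ratios of $\Delta$-products visible in~\eqref{eq:stpMfpnnn}. During this manipulation, the factors $\sH_{z_\ell}$, $\qol_{z_\ell}$, $\qor_{z_\ell}$ emerge from collecting terms of the form $\prod_{u \in \rootsL_{z_\ell}}(w - u)$ and $\prod_{v \in \rootsR_{z_\ell}}(w - v)$, using the relation $\ql_z(w)\qr_z(w) = 1 - z^L/(w^N(w+1)^{L-N})$; the weights $\sQL(j)$ and $\sQR(j)$ defined in~\eqref{eq:sQ1} arise from ratios $(z_{\ell-1}/z_\ell)^L$ produced by the size-asymmetric subset selection through $w^N(w+1)^{L-N} = z^L$ for $w \in \roots_z$.

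Once the triple sum is expressed in the form of Lemma~\ref{lem:DFredexpf}(b), the $\bn$-independent prefactor collects precisely into $\const(\bbz)$ defined in~\eqref{eq:forBs}, and the $\bn$-indexed summand matches $\gdet_\bn(\bbz)$, so that the full series sums to the Fredholm determinant $\gdet(\bbz) = \det(I - \KsL\KsR)$. The choice of contours $0 < |z_m| < \cdots < |z_1| < \rr_0$ ensures both that $\rootsL_{z_\ell}, \rootsR_{z_\ell}$ are well defined throughout and that the contour deformation from large nested circles (where Theorem~\ref{prop:multipoint_distribution_origin} was established) down to the prescribed contours is justified by the analyticity discussion preceding the proof of Theorem~\ref{prop:multipoint_distribution_origin}, together with the simple-pole structure $z_{\ell-1}^L = z_\ell^L$ of $\const(\bbz)$, which lies on $|z_{\ell-1}| = |z_\ell|$ and is avoided by the nested contours.
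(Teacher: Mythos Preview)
Your overall strategy --- specialize Theorem~\ref{prop:multipoint_distribution_origin} at $y_i=i-N$, then reorganize the resulting $N\times N$ determinant into the Fredholm series --- matches the paper's. But the mechanism you describe for that reorganization is not the one that actually works, and the step you call ``the main obstacle'' is where the gap lies.

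After inserting $y_i=i-N$ and applying Cauchy--Binet $m$ times, one obtains a sum over vectors $W^{(\ell)}\in(\roots_{z_\ell})^N$ for \emph{every} $\ell=1,\dots,m$ (not just $\ell=1,m$ as your $M=P\Psi Q$ description suggests), with summand a ratio of Vandermonde and Cauchy products. This becomes a sum over $N$-element subsets $\tilde W^{(\ell)}\subset\roots_{z_\ell}$. The key combinatorial step is then a \emph{complementation in $\rootsR_{z_\ell}$}: since $|\rootsR_{z_\ell}|=N=|\tilde W^{(\ell)}|$, one sets $\tilde U^{(\ell)}=\tilde W^{(\ell)}\cap\rootsL_{z_\ell}$ and $\tilde V^{(\ell)}=\rootsR_{z_\ell}\setminus\tilde W^{(\ell)}$, and the equality of cardinalities forces $|\tilde U^{(\ell)}|=|\tilde V^{(\ell)}|=:n_\ell$. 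One then rewrites $\Delta(\tilde W^{(\ell)})^2$ and $\Delta(\tilde W^{(\ell)};\tilde W^{(\ell-1)})$ in terms of $\tilde U^{(\ell)},\tilde V^{(\ell)}$ via identities such as
\[
\Delta(\tilde W^{(\ell)})^2
=(-1)^{N(N-1)/2}\,\frac{\Delta(\tilde U^{(\ell)})^2\Delta(\tilde V^{(\ell)})^2\,q_{z_\ell,\RR}(\tilde U^{(\ell)})^2}{\Delta(\tilde U^{(\ell)};\tilde V^{(\ell)})^2\,q'_{z_\ell,\RR}(\tilde V^{(\ell)})^2}\,q'_{z_\ell,\RR}(\rootsR_{z_\ell}),
\]
which is exactly where the factors $\qor_{z_\ell}$, $\qol_{z_\ell}$, $\sH_{z_\ell}$ and the prefactor $\const(\bbz)$ emerge. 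Your account does not contain this complementation idea; instead you assert that the Cauchy denominators $(w_\ell-w_{\ell-1})^{-1}$ ``force the alternating $\rootsL/\rootsR$ pattern that defines $\mrootL$ and $\mrootR$''. They do not: nothing in the subset selection or the Cauchy chain forces any alternation. The alternating block structure of $\mrootL,\mrootR$ is purely a device for packaging the series~\eqref{eq:stpMfpnnn} as a Fredholm determinant (this is the content of Lemma~\ref{lem:DFredexpf} and Section~\ref{sec:eqFrsr}), established \emph{after} the series is already in hand. Without the complementation step and the accompanying $\Delta$-identities, your proposal does not produce the formula~\eqref{eq:stpMfpnnn}, and the passage from $\detv_{\st}$ to $\const(\bbz)\gdet(\bbz)$ remains unproved.
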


Recall that $\const(\bbz)$ is analytic in $|z_\ell|<\rr_0$ except for the poles when $z_{\ell-1}^L=z_\ell^L$, and $\gdet(\bbz)$ is analytic in $0<|z_\ell|<\rr_0$. We point out that Remark~\ref{rmk:parmt} still applies to the above theorem; the Fredholm determinant expansion involves only finite sums.

\begin{cor}[Joint distribution of periodic TASEP for periodic step initial condition] \label{cor:pTASEPstep}
Consider the periodic TASEP with periodic step initial condition, $\xx_{i+nN}(0)=i-N+nL$ for $1\le i\le N$ and $n\in \intZ$.
Then~\eqref{eq:multi_point_formula} holds for all integer indices $k_1, \cdots, k_m$ without the restriction that they are between $1$ and $N$. 
\end{cor}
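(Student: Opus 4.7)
The plan is to mirror the proof of Corollary~\ref{cor:pTASEPgenIC}. Given arbitrary integer indices $k_1,\ldots,k_m$, choose integers $n_1,\ldots,n_m$ so that $k_\ell + n_\ell N \in \{1,\ldots,N\}$ for every $\ell$. By the periodicity $\xx_{j+nN}(t) = \xx_j(t) + nL$ of periodic TASEP,
\[
\prob(\xx_{k_1}(t_1) \ge a_1,\ldots,\xx_{k_m}(t_m) \ge a_m) = \prob(\xx_{k_1+n_1 N}(t_1) \ge a_1+n_1 L,\ldots,\xx_{k_m+n_m N}(t_m) \ge a_m+n_m L),
\]
and by Theorem~\ref{thm:multi_point_formula} applied to the shifted parameters the right-hand side equals the integral in~\eqref{eq:multi_point_formula} with $(k_\ell + n_\ell N, a_\ell + n_\ell L)$ in place of $(k_\ell, a_\ell)$. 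It therefore suffices to show that the integrand $\const(\bbz)\gdet(\bbz)$ is invariant under the simultaneous shift $(k_\ell, a_\ell) \to (k_\ell + n_\ell N, a_\ell + n_\ell L)$. Since shifts in different indices affect disjoint pieces of the integrand, it is enough to treat the case of a single shift in one index $\ell$ with $n_\ell = 1$.

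For $\gdet(\bbz)$, I would check invariance via the series formula~\eqref{eq:stpMfpnnn}. The shift multiplies $\fftn_\ell(w)$ by $w^{-N}(w+1)^{N-L}$, which on $\roots_{z_j}$ equals $z_j^{-L}$ because $w^N(w+1)^{L-N} = z_j^L$. Only the factors $\hatf_\ell(\UU^{(\ell)})\hatf_\ell(\VV^{(\ell)})$ and, when $\ell<m$, $\hatf_{\ell+1}(\UU^{(\ell+1)})\hatf_{\ell+1}(\VV^{(\ell+1)})$ involve $\fftn_\ell$. From~\eqref{eq:hatfel}, $\hatf$ contains $\fftn_\ell$ with opposite exponents on $\rootsL$ versus $\rootsR$; since $|\UU^{(i)}| = |\VV^{(i)}| = n_i$, the factor $z_i^{-Ln_i}$ from the $\UU$ side cancels the $z_i^{+Ln_i}$ from the $\VV$ side in each product, so $\gdet_{\bn}(\bbz)$ is unchanged.

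For $\const(\bbz)$, the only $(k,a)$-dependent piece is $\prod_{i=1}^m \CA_i(z_i)/\CA_{i-1}(z_i)$. A single-index shift multiplies $\CA_\ell(z)$ by
\[
r(z) := \prod_{u \in \rootsL_z}(-u)^N \prod_{v \in \rootsR_z}(v+1)^{N-L}.
\]
The crucial computation I would carry out is that $r(z) = 1$ for every $z$ with $0 < |z| < \rr_0$. Using $(v+1)^{-(L-N)} = v^N/z^L$ on $\rootsR_z$ gives $r(z) = (-1)^{N(L-N)} z^{-LN} \prod_{w \in \roots_z} w^N$. Combining with the Vieta-type identity $\prod_{w \in \roots_z} w = (-1)^{L+1} z^L$, read off from $q_z(0) = -z^L$, reduces $r(z)$ to $(-1)^{N(L-N)+N(L+1)} = (-1)^{N(N-1)}$, which equals $+1$ because $N(N-1)$ is always even. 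Hence $\CA_\ell$ is invariant at every $z$, and so is $\const(\bbz)$.

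I expect the parity identity $r(z)=1$ to be the main technical point; all other steps are direct exponent bookkeeping on $\rootsL_z$ and $\rootsR_z$ using the root equation $w^N(w+1)^{L-N} = z^L$. Once both $\const(\bbz)$ and $\gdet(\bbz)$ are established to be shift-invariant, the corollary follows immediately by iterating the single-index shift argument in each coordinate.
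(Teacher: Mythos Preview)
Your proposal is correct and follows essentially the same approach as the paper: reduce to invariance under a single shift $(k_\ell,a_\ell)\to(k_\ell+N,a_\ell+L)$, handle $\gdet(\bbz)$ via the root equation $w^N(w+1)^{L-N}=z_\ell^L$ together with the equal-cardinality structure $|\UU^{(i)}|=|\VV^{(i)}|$, and handle $\const(\bbz)$ via the identity $\prod_{u\in\rootsL_z}(-u)^N=\prod_{v\in\rootsR_z}(v+1)^{L-N}$ (your $r(z)=1$). The paper invokes this last identity directly as~\eqref{eq:vvarerela}, whereas you derive it via Vieta and a parity count, but the content is the same.
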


\begin{proof}
As in the proof of Corollary~\ref{cor:pTASEPgenIC}, it is enough to show that the formulas are invariant under the changes $k_i\mapsto k_i\pm N$ and $a_i\mapsto a_i\pm L$ for  \rev{some} $i$.
This can be checked easily for $\const(\bbz)$ using the identity $\prod_{u\in\rootsL_{z}}(-u)^N=\prod_{v\in\rootsR_{z}}(v+1)^{L-N}$, which is easy to prove; see~\eqref{eq:vvarerela} below. 
For $\gdet(\bbz)$, we use the fact that $u^N(u+1)^{L-N}=z_\ell^L$ for $u\in \rootsL_{z_\ell}$ and $v_i^N(v_i+1)^{L-N}=z_i^L$ for $v\in \rootsR_{z_\ell}$ (plus the special structure of $\KsL$ and $\KsR$.) 
\end{proof}

\begin{proof}[Proof of Theorem~\ref{thm:multi_point_formula}]
When $m=1$, the result was obtained in Theorem 7.4 of \cite{Baik-Liu16}.
We assume $m\ge 2$. 
In Theorem~\ref{prop:multipoint_distribution_origin}, $\constc (\bbz, \bk)$ does not depend on the initial condition. 
Let us denote $\detv_Y(\bbz, \bk, \ba, \bt)$ by $\detv_{\st}$ when $Y=(1-N, \cdots, 1,0)$, the step initial condition.
We need to show that $\detv_{\st}=\gdet(\bbz)\frac{\const(\bbz)}{\constc (\bbz, \bk)}$. 

Inserting the initial condition $y_i=i-N$, re-writing $\fgic_\ell$ in terms of $\fftn_{\ell}$ and $\Jb$  in~\eqref{eq:def_fftn} and~\eqref{eq:jac}, and reversing the rows,
\beq
\begin{split}
		\detv_{\st} 
		 =  (-1)^{N(N-1)/2} \det\Bigg[ \sum
		\frac{w_1^{-i} w_m^{-j}}{ \prod_{\ell=2}^m (w_\ell-w_{\ell-1})  } \prod_{\ell=1}^m 
		\Jb(w_\ell)  \frac{\fftn_\ell(w_\ell)}{\fftn_{\ell-1}(w_\ell)} \Bigg]_{i,j=1}^N .
\end{split}
\eeq
The sum is over all $w_1\in \roots_{z_1}, \cdots, w_m\in\roots_{z_m}$.
Using the Cauchy-Binet identity $m$ times, 
\beq \label{eq:sumdetvap}
	\detv_{\st}
	= \frac{(-1)^{N(N-1)/2}}{(N!)^m}  \sum_{\substack{W^{(\ell)}\in (\roots_{z_\ell})^N\\ \ell=1,\cdots,m}}  \detvv(\bW)
	 \prod_{\ell=1}^m \jac(W^{(\ell)}) \frac{\fftn_\ell(W^{(\ell)})}{\fftn_{\ell-1}(W^{(\ell)})} 
\eeq
where $W^{(\ell)}=(w_1^{(\ell)}, \cdots, w_N^{(\ell)})$ with $w_i^{(\ell)}\in \roots_{z_\ell}$ for each $i$, $\bW=(W^{(1)}, \cdots, W^{(m)})$, and 
\begin{equation}
\begin{split}
 	\detvv(\bW) &=\det\left[(w_i^{(1)})^{-j}\right] 
	\det\left[\frac{1}{w_i^{(2)}-w_j^{(1)}}\right] \cdots 
	\det\left[\frac{1}{w_i^{(m)}-w_j^{(m-1)}}\right]
 	\det\left[(w_i^{(m)})^{-j}\right].
\end{split}
\end{equation}
Here all matrices are indexed by $1\le i,j\le N$. 
Note that in~\eqref{eq:sumdetvap}, we use the notational convention such as $\fftn_\ell(W^{(\ell)}) = \prod_{i=1}^N \fftn_\ell(w_i^{(\ell)})$ mentioned in Definition~\ref{def:notc}.  
Evaluating the Vandermonde determinants and the Cauchy determinants, 
$\detv_{\st}$ is equal to (recall the notations~\eqref{eq:notationddw})
\begin{equation}
\label{eq:aux_2016_12_31_05}
\begin{split}
	& \frac{(-1)^{mN(N-1)/2}}{(N!)^m} \sum_{\substack{W^{(\ell)}\in (\roots_{z_\ell})^N\\ \ell=1,\cdots,m}} 
	\frac{\prod_{\ell=1}^m \Delta (W^{(\ell)})^2}{\prod_{\ell=2}^{m}\Delta(W^{(\ell)};W^{(\ell-1)})}
	\left[ \prod_{i=1}^N (w_i^{(1)}w_i^{(m)})^{-N} \right]
	 \prod_{\ell=1}^m \jac(W^{(\ell)})\frac{\fftn_\ell(W^{(\ell)})}{\fftn_{\ell-1}(W^{(\ell)})} .
\end{split}
\end{equation}
Note that for each $\ell$, we may assume that the coordinates of the vector $W^{(\ell)}$ are all distinct since otherwise the summand is zero due to $\Delta(W^{(\ell)})$. 
Also note that the summand is a symmetric function of the coordinates of $W^{(\ell)}$ for each $\ell$.
Hence instead of taking the sum over the vectors $W^{(\ell)}\in (\roots_{z_\ell})^N$, we can take a sum over 
the subsets $\tW^{(\ell)}\subset \roots_{z_\ell}$ of size $N$: $\detv_{\st}$ is equal to 
\begin{equation} \label{eq:vtost}
\begin{split}
	(-1)^{mN(N-1)/2} \sum_{\substack{\tW^{(\ell)}\subset \roots_{z_\ell} \\ |\tW^{(\ell)}|=N \\ \ell=1,\cdots,m}} 
	\frac{\prod_{\ell=1}^m \Delta (\tW^{(\ell)})^2}{\prod_{\ell=2}^{m}\Delta(\tW^{(\ell)};\tW^{(\ell-1)})}
	\left[ \prod_{i=1}^N (w_i^{(1)}w_i^{(m)})^{-N} \right]
	 \prod_{\ell=1}^m \jac(\tW^{(\ell)})\frac{\fftn_\ell(\tW^{(\ell)})}{\fftn_{\ell-1}(\tW^{(\ell)})} 
\end{split}
\end{equation}
where $w_i^{(1)}$ are the elements of $\tW^{(1)}$ and $w_i^{(m)}$ are the elements of $\tW^{(m)}$.

We now change the sum as follows. 
Since $\roots_{z_\ell}$ is the disjoint union of $\rootsL_{z_\ell}$ and $\rootsR_{z_\ell}$, 
some elements of the set $\tW^{(\ell)}$ are in $\rootsL_{z_\ell}$ and the rest in $\rootsR_{z_\ell}$.
(Recall that $|\rootsL_{z_\ell}|=L-N$ and $|\rootsR_{z_\ell}|=N$.)
Let $\tU^{(\ell)}=\tW^{(\ell)}\cap \rootsL_{z_\ell}$ and $\tV^{(\ell)}=\rootsR_{z_\ell}\setminus \tW^{(\ell)}$.
Observe that since $|\tW^{(\ell)}|=|\rootsR_{z_\ell}|(=N)$, we have $|\tU^{(\ell)}|=|\tV^{(\ell)}|$. 
Call this last number $n_\ell$. 
We thus find that the sum in~\eqref{eq:vtost} can be replaced by the sums
\beq
	\sum_{\substack{n_\ell=0, \cdots, N \\ \ell=1, \cdots, m}} 
	\quad\sum_{\substack{\tU^{(\ell)}\subset \rootsL_{z_\ell},  \tV^{(\ell)}\subset \rootsR_{z_\ell} \\ 
	|\tU^{(\ell)}|=|\tV^{(\ell)}|=n_\ell \\ \ell=1, \cdots, m}}.
\eeq
We now express the summand in terms of $\tU^{(\ell)}$ and $\tV^{(\ell)}$ instead of $\tW^{(\ell)}$. 
First, for any function $h$, 
\beq \label{eq:aux_2016_12_31_03}
	h(\tW^{(\ell)}) 
	= h(\tU^{(\ell)})  \frac{h(\rootsR_{z_\ell})}{h(\tV^{(\ell)})}.
\eeq
Now consider $\Delta (\tW^{(\ell)})^2$.
We suppress the dependence on $\ell$ in the next a few sentences to make the notations light. 
Setting tentatively $\tS=\rootsR_{z}\setminus \tV$ so that $\rootsR_z=\tV\cup\tS$. 
Note that $\tW=\tU\cup \tS$, a disjoint union. We thus have 
\beq
	\Delta (\tW)^2= \Delta (\tU)^2 \Delta (\tS)^2 \Delta(\tU; \tS)^2, 
	\qquad
	\Delta(\rootsR_z)^2= \Delta(\tV)^2\Delta(\tS)^2\Delta(\tV; \tS)^2. 
\eeq 
Let
\beq \label{eq:qRandr}
	q_{z, \RR}(w) := \prod_{v\in \rootsR_z} (w-v) = w^N \qor_z(w). 
\eeq 
Then, 
\beq
	q_{z, \RR}(\tU) = \Delta(\tU; \tV) \Delta(\tU;\tS), 
	\qquad 
	q'_{z, \RR}(\tV) = (-1)^{N(N-1)/2}\Delta(\tV)^2 \Delta(\tV;\tS).
\eeq
It is also direct to see that 
\beq
	\Delta(\rootsR_{z})^2= (-1)^{N(N-1)/2} q'_{z,\RR}(\rootsR_{z}). 
\eeq
From these, after canceling out all terms involving $\tS$ and inserting the dependence on $\ell$, we find that 
\begin{equation}
\label{eq:aux_2016_12_31_04}
\begin{split}
	&\Delta(\tW^{(\ell)})^2
	= (-1)^{N(N-1)/2}
	\frac{\Delta(\tU^{(\ell)})^2 \Delta(\tV^{(\ell)})^2 (q_{z_\ell,\RR}(\tU^{(\ell)}))^2 }{\Delta(\tU^{(\ell)} ;\tV^{(\ell)})^2 (q'_{z_\ell,\RR}(\tV^{(\ell)}))^2}
	q'_{z_\ell,\RR}(\rootsR_{z_\ell}) .
\end{split}
\end{equation}
(This computation was also given in (7.48) and (7.50) of \cite{Baik-Liu16}.)
Similarly,  
\begin{equation}
\label{eq:aux_2016_12_31_02}
	\frac{\Delta(\tW^{(\ell)}; \tW^{(\ell-1)})}{\Delta(\rootsR_{z_{\ell}};\rootsR_{z_{\ell-1}})}=
	\frac{\Delta(\tU^{(\ell)} ;\tU^{(\ell-1)})\Delta(\tV^{(\ell)};\tV^{(\ell-1)})q_{z_{\ell-1},\RR}(\tU^{(\ell)})q_{z_{\ell},\RR}(\tU^{(\ell-1)})}{\Delta(\tU^{(\ell)};\tV^{(\ell-1)})\Delta(\tV^{(\ell)};\tU^{(\ell-1)})q_{z_{\ell-1},\RR}(\tV^{(\ell)})q_{z_{\ell},\RR}(\tV^{(\ell-1)})}.
\end{equation}

We express the summands in~\eqref{eq:vtost} using~\eqref{eq:aux_2016_12_31_03},~\eqref{eq:aux_2016_12_31_04}, and~\eqref{eq:aux_2016_12_31_02}.
We then change the subsets $\tU^{(\ell)}\subset \rootsL_{z_\ell}$ and $\tV^{(\ell)}\subset \rootsR_{z_\ell}$ to vectors $\UU^{(\ell)}\in (\rootsL_{z_\ell})^{n_\ell}$ and $\VV^{(\ell)}\in (\rootsR_{z_\ell})^{n_\ell}$.
This has the effect of introducing the factors $\frac1{(n_\ell!)^2}$.
We thus obtain  
\begin{equation} \label{eq:DEMt}
\begin{split}
	\detv_{\st}
	& = \cnt(\bbz)   \left[ \sum_{\bn\in (\intZ_{\ge 0})^m} \frac{1}{(\bn !)^2} \hat{\gdet}_{\bn}(\bbz) \right]
\end{split}
\end{equation}
with  
\begin{equation} \label{eq:forBs1}
\begin{split}
	\cnt(\bbz)
	= \left[ \frac{\prod_{\ell=1}^mq'_{z_\ell,\RR} (\rootsR_{z_\ell} )}{\prod_{\ell=2}^{m}\Delta (\rootsR_{z_\ell};\rootsR_{z_{\ell-1}} )} \right] 
\left[\prod_{v\in\rootsR_{z_1}}v^{-N}\right] 	\left[\prod_{v\in\rootsR_{z_m}}v^{-N}\right]
	\left[ \prod_{\ell=1}^{m} \jac(\rootsR_{z_\ell})\frac{\fftn_\ell (\rootsR_{z_\ell})}{\fftn_{\ell-1}(\rootsR_{z_\ell})}   \right]
\end{split}
\end{equation}
and
\beq \label{eq:stpMfpnnn00}
	\begin{split}
	\hat{\gdet}_{\bn}(\bbz)
	&=
	\sum_{\substack{\UU^{(\ell)}\in (\rootsL_{z_\ell})^{n_\ell} \\ \VV^{(\ell)}\in (\rootsR_{z_\ell})^{n_\ell} \\ \ell=1,\cdots,m}} 
	 \left[\prod_{\ell=1}^m 
	\frac{(\Delta(\UU^{(\ell)}))^2 (\Delta(\VV^{(\ell)}))^2 (q_{z_\ell,\RR} (\UU^{(\ell)}))^2 \jac(\UU^{(\ell)}) \fftn_\ell(\UU^{(\ell)})\fftn_{\ell-1}(\VV^{(\ell)})} {(\Delta(\UU^{(\ell)};\VV^{(\ell)}))^2 (q'_{z_\ell,\RR} (\VV^{(\ell)}))^2  \jac(\VV^{(\ell)})\fftn_\ell(\VV^{(\ell)})\fftn_{\ell-1}(\UU^{(\ell)})} 
	\right]  \\
	&\times   
	\left[\prod_{i=1}^{n_1} \bigg( \frac{u_i^{(1)}}{v_i^{(1)}} \bigg)^{-N}\right] 
	\left[\prod_{i=1}^{n_m} \bigg( \frac{u_i^{(m)}}{v_i^{(m)}} \bigg)^{-N}\right] 
	\left[ \prod_{\ell=2}^{m}
	\frac{\Delta (\UU^{(\ell)};\VV^{(\ell-1)}) \Delta (\VV^{(\ell)};\UU^{(\ell-1)}) q_{z_{\ell-1},\RR}(\VV^{(\ell)}) q_{z_\ell,\RR} (\VV^{(\ell-1)})}{\Delta(\UU^{(\ell)};\UU^{(\ell-1)}) \Delta(\VV^{(\ell)};\VV^{(\ell-1)})q_{z_{\ell-1},\RR} (\UU^{(\ell)}) q_{z_\ell,\RR} (\UU^{(\ell-1)})} \right].
	\end{split}
\eeq
We re-write $(q'_{z_\ell,\RR} (\VV^{(\ell)}))^2$ using the identity, $q_{z, \RR}'(v)= \frac{v^N}{\jac(v)\qol_z(v)}$, which we prove later in~\eqref{eq:qprimezrrv}.
We also use $q_{z, \RR}(w) =w^N \qor_z(w)$ (see~\eqref{eq:qRandr}). 
Furthermore, we re-express $\qor_{z_\ell}(\VV^{(\ell')})$ in terms of $\qol_z(\VV^{(\ell')})$ using the identity
\beq
	\qor_z(v)= \frac{1}{\qol_z(v)} \big( 1-\frac{z^L}{(z')^L} \big) 
	\qquad \text{for $v\in \rootsR_{z'}$}
\eeq
which follows from the fact that $\qor_z(w)\qol_z(w)= 1- \frac{z^L}{w^N(w+1)^{L-N}}$. 
Then, using the notation~\eqref{eq:hatfel}, we find that $\hat{\gdet}_{\bn}(\bbz)= \gdet_{\bn}(\bbz)$, given by~\eqref{eq:stpMfpnnn}.

\bigskip

Thus, the theorem is proved if we show that $\constc (\bbz, \bk)\cnt(\bbz)= \const(\bbz)$.
Before we prove it, we make the following observations.
\begin{itemize}
\item
For any $v'\in\rootsR_{z'}$, we have $0=q_{z'}(v')=(v')^N(v'+1)^{L-N}-(z')^L$. Hence, for another complex number $z$, 
\begin{equation}
\label{eq:aux_2017_03_26_01}
	z'^L-z^L= (v')^N(v'+1)^{L-N}-z^L= q_z(v')= \prod_{u\in\rootsL_{z}}(v'-u)\prod_{v\in\rootsR_{z}}(v'-v).
\end{equation}

\item As a special case of the above identity, taking $z'=0$ and $v'=0$, we obtain
\begin{equation}
\label{eq:aux_2017_03_26_02}
	z^{L}=(-1)^{N-1}\prod_{u\in\rootsL_{z}}(-u)\prod_{v\in\rootsR_{z}}v.
\end{equation}

\item 
Setting $q_{z, \LL}(w)= \prod_{u\in \rootsL_z}(w-u)$, we have $q_{z,\RR}(w)q_{z, \LL}(w)=q_z(w)=w^N(w+1)^{L-N}-z^L$, and hence,
\beq \label{eq:qprimezrrv}
	q'_{z,\RR}(v) = \frac{L(v+\rho)}{v(v+1)} \frac{v^N(v+1)^{L-N}}{q_{z, \LL}(v)}
	= \frac{v^N}{\jac(v)\qol_{z}(v)}
	\quad \text{for $v\in \rootsR_{z}$.}
\eeq

\item Since $z^{NL}= \prod_{u\in\rootsL_{z}}(-u)^{N}\prod_{v\in\rootsR_{z}}v^{N}$ from~\eqref{eq:aux_2017_03_26_02} and 
$z^{NL}=\prod_{v\in\rootsR_{z}}v^{N}(v+1)^{L-N}$ by using the definition of $\rootsR_{z}$, we find that 
\begin{equation} \label{eq:vvarerela}
	\prod_{u\in\rootsL_{z}}(-u)^N=\prod_{v\in\rootsR_{z}}(v+1)^{L-N}.
\end{equation}
\end{itemize}

We now prove that $\constc (\bbz, \bk)\cnt(\bbz)=\const(\bbz)$.
Consider $\constc (\bbz, \bk)$ defined in~\eqref{eq:constcti1e}. 
Using~\eqref{eq:aux_2017_03_26_01},
\beqq \label{eq:aux_2017_03_26_06}
\begin{split}
&(-1)^{(k_m-1)(N+1)}z_1^{(k_1-1)L}\prod_{\ell=2}^mz_\ell^{(k_\ell-k_{\ell-1})L}\\
&=\left[\prod_{u\in\rootsL_{z_1}}(-u)^{k_1-1}\prod_{v\in\rootsR_{z_1}}v^{(k_1-1)}\right]\cdot\prod_{\ell=2}^m\left[\prod_{u\in\rootsL_{z_\ell}}(-u)^{k_\ell-k_{\ell-1}}\prod_{v\in\rootsR_{z_\ell}}v^{(k_\ell-k_{\ell-1})}\right] .
\end{split}
\eeqq
Using~\eqref{eq:aux_2017_03_26_02} and~\eqref{eq:aux_2017_03_26_01}, 
\beqq \label{eq:aux_2017_03_26_07}
\begin{split}
	\prod_{\ell=2}^m\left(\left(\frac{z_\ell}{z_{\ell-1}}\right)^L-1\right)^N
	&=\prod_{\ell=2}^m\frac{\Delta(\rootsR_{z_\ell};\rootsL_{z_{\ell-1}}) \Delta(\rootsR_{z_\ell};\rootsR_{z_{\ell-1}})}{\prod_{u\in\rootsL_{z_{\ell-1}}}(-u)^N\prod_{v\in\rootsR_{z_{\ell-1}}}v^N}\\
	&= \left[\prod_{\ell=2}^m \Delta(\rootsR_{z_\ell};\rootsR_{z_{\ell-1}})\right]
\left[\prod_{\ell=2}^m \frac{\Delta(\rootsR_{z_\ell};\rootsL_{z_{\ell-1}})}{\prod_{u\in\rootsL_{z_{\ell-1}}}(-u)^N\prod_{v\in\rootsR_{z_{\ell}}}(v+1)^{L-N}}\right]\\
&\qquad \times
	\left[\prod_{\ell=2}^m\prod_{v\in\rootsR_{z_{\ell}}}(v+1)^{L-N}\right]\left[\prod_{\ell=1}^{m-1}\prod_{v\in\rootsR_{z_{\ell}}}v^{-N}\right].
\end{split}
\eeqq
Now consider $\cnt(\bbz)$.
Using~\eqref{eq:qprimezrrv}, the fact that $\prod_{v\in \rootsR_z} q_{z,\LL}(v)= \Delta(\rootsR_{z};\rootsL_{z})$, and~\eqref{eq:vvarerela}, we see that
\beqq \label{eq:aux_2017_03_26_03}
\begin{split}
	q'_{z,\RR} (\rootsR_{z}) \jac(R_{z})
	&
	= \frac{\prod_{v\in\rootsR_{z}}(v+1)^{L-N}}{\Delta(\rootsR_{z};\rootsL_{z})}\left[\prod_{v\in\rootsR_{z}}v^N\right] \\
	&= \frac{\left[\prod_{v\in\rootsR_{z}}(v+1)^{L-N}\right]\left[\prod_{u\in\rootsL_{z}}(-u)^N\right]}{\Delta(\rootsR_{z};\rootsL_{z})}\left[\prod_{v\in\rootsR_{z}}v^N\right]\left[\prod_{v\in\rootsR_{z}}(v+1)^{-L+N}\right].
\end{split}
\eeqq
This implies that 
\beqq \label{eq:aux_2017_03_26_05}
\begin{split}
	&\prod_{\ell=1}^m q'_{z_\ell,\RR}(\rootsR_{z_\ell}) \jac(\rootsR_{z_\ell})\\
	&= \prod_{\ell=1}^m\frac{\left[\prod_{v\in\rootsR_{z_\ell}}(v+1)^{L-N}\right]\left[\prod_{u\in\rootsL_{z_\ell}}(-u)^N\right]}{\Delta(\rootsR_{z_\ell};\rootsL_{z_\ell})} \left[\prod_{\ell=1}^m\prod_{v\in\rootsR_{z_\ell}}v^N\right]\left[\prod_{\ell=2}^m\prod_{v\in\rootsR_{z_\ell}}(v+1)^{-L+N}\right]\left[\prod_{u\in\rootsL_{z_1}}(-u)^{-N}\right].
\end{split}
\eeqq 
From these calculations, we find that $\constc (\bbz, \bk)\cnt(\bbz)=\const(\bbz)$.

\end{proof}

\subsection{Equivalence of the Fredholm determinant and series formulas} \label{sec:eqFrsr}

We presented three formulas of $\gdet(\bbz)$ in Subsection~\ref{sec:sticde}: see Lemma~\ref{lem:DFredexpf}. 
One of them is a Fredholm determinant~\eqref{eq:stpMfp} and the other two are series formulas~\eqref{eq:aux_2017_03_23_02} with~\eqref{eq:aux_2017_03_22_01} and~\eqref{eq:stpMfpnnn}. 
In this subsection, we prove the equivalence of these formulas.
The proof is general, and the same argument also gives a proof of Lemma~\ref{lem:Freserfola}, which states the equivalence of three formulas of $\gdetlm(\bbz)$, a limit of $\gdet(\bbz)$.

First, we prove Lemma~\ref{lem:DFredexpf}(a) and Lemma~\ref{lem:Freserfola}(a).
These are special cases of the next general result which follows from the series definition of Fredholm determinant together with a block structure of the operator. 

\begin{lm}	\label{lm:aux2}
Let $\Sigma_1, \cdots, \Sigma_m$ be disjoint sets in $\C$ 
and let $\cH=L^2(\Sigma_1\cup\cdots \cup \Sigma_m, \mu)$ for some measure $\mu$.
Let $\Sigma_1', \cdots, \Sigma_m'$ be another collection of disjoint sets in $\C$ 
and let $\cH'=L^2(\Sigma_1'\cup\cdots \cup \Sigma_m', \mu')$ for some measure $\mu'$. 
Let $A$ be an operator from $\cH'$ to $\cH$  
and $B$ an operator from $\cH$ to $\cH'$, both of which are defined by kernels. 
Suppose the following block structures:
\begin{itemize}
\item $A(w,w')=0$ unless there is an index $i$ such that  $w\in \Sigma_{2i-1}\cup \Sigma_{2i}$ and $w'\in \Sigma'_{2i-1}\cup \Sigma'_{2i}$,
\item $B(w',w)=0$ unless there is an index $i$ such that $w'\in \Sigma'_{2i}\cup \Sigma'_{2i+1}$ and $w\in \Sigma_{2i}\cup \Sigma_{2i+1}$. 
\end{itemize}
Assume that the Fredholm determinant $\det(1-AB)$ is well-defined and is equal to the usual Fredholm determinant series expansion. 
Then 
\begin{equation} \label{eq:detabblk}
\bes
	\det(1-AB )
	= & \sum_{\bn\in(\intZ_{\ge 0})^m} \frac{(-1)^{|\bn|}}{(\bn!)^2} 
	\int_{\Sigma_1^{n_1}\times\cdots \times \Sigma_m^{n_m}}  
	\int_{(\Sigma_1')^{n_1}\times\cdots \times (\Sigma_m')^{n_m}}  \\
	&\quad \det\left[ A(w_i,w_j')\right]_{i,j=1}^{|\bn|} \det\left[ B(w_i',w_j)\right]_{i,j=1}^{|\bn|} \prod_{i=1}^{|\bn|} \dd\mu'(w_i') \prod_{i=1}^{|\bn|} \dd\mu(w_i) 
\end{split}
\end{equation}
where $\bn=(n_1, \cdots, n_m)$. 
\end{lm}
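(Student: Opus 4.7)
The plan is to reduce the Fredholm determinant to the claimed block-decomposed series via three standard moves: the Fredholm series expansion, the continuous Cauchy--Binet (Andreief) identity, and a partition of the integration regions by which atom $\Sigma_k$ or $\Sigma'_k$ each variable occupies. By the hypothesis that the Fredholm series converges,
$$\det(1-AB)=\sum_{N\ge 0}\frac{(-1)^N}{N!}\int_{\cH^N}\det\bigl[(AB)(w_i,w_j)\bigr]_{i,j=1}^{N}\prod_k d\mu(w_k).$$
Since $(AB)(w,\tilde w)=\int A(w,w')B(w',\tilde w)\,d\mu'(w')$, Andreief's identity gives
$$\det\bigl[(AB)(w_i,w_j)\bigr]_{i,j=1}^{N}=\frac{1}{N!}\int_{(\cH')^N}\det\bigl[A(w_i,w'_k)\bigr]_{i,k=1}^{N}\det\bigl[B(w'_k,w_j)\bigr]_{k,j=1}^{N}\prod_k d\mu'(w'_k),$$
so the Fredholm determinant becomes
$$\sum_{N\ge 0}\frac{(-1)^N}{(N!)^2}\int_{\cH^N\times(\cH')^N}\det[A(w_i,w'_j)]\det[B(w'_i,w_j)]\,d\mu^N d\mu'^N.$$

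Next I would decompose each region by which $\Sigma_k$ (resp.\ $\Sigma'_k$) each $w_i$ (resp.\ $w'_j$) lies in. The key observation is that the product $\det[A(w_i,w'_j)]\cdot\det[B(w'_i,w_j)]$ is invariant under any joint permutation of the $w_i$'s, because one factor picks up a sign from a row permutation of $A$ and the other picks up the same sign from a column permutation of $B$, and these signs cancel; the same invariance holds for the $w'_j$'s. Consequently the integral over $\prod_i\Sigma_{\alpha(i)}\times\prod_j\Sigma'_{\beta(j)}$ depends only on the multi-indices $\bn_\alpha=(|\alpha^{-1}(k)|)_k$ and $\bn_\beta$, and each multi-index is realized by exactly $N!/\bn!$ maps. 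After collapsing, the sum takes the form
$$\sum_{\bn,\bn'\,:\,|\bn|=|\bn'|}\frac{(-1)^{|\bn|}}{\bn!\,\bn'!}\int_{\prod_k\Sigma_k^{n_k}\times\prod_k(\Sigma'_k)^{n'_k}}\det[A]\det[B]\,d\mu\,d\mu'.$$

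The decisive step is to prove that this integrand vanishes identically unless $\bn=\bn'$. Expanding each determinant as a sum over permutations, a non-vanishing term demands a pair of matchings $\sigma$ (from $A$) and $\tau$ (from $B$) compatible with the prescribed blocks: $\sigma$ pairs elements of $\Sigma_{2p-1}\cup\Sigma_{2p}$ with $\Sigma'_{2p-1}\cup\Sigma'_{2p}$, while $\tau$ pairs elements of $\Sigma'_{2q}\cup\Sigma'_{2q+1}$ with $\Sigma_{2q}\cup\Sigma_{2q+1}$. These force the balance equations $n_{2p-1}+n_{2p}=n'_{2p-1}+n'_{2p}$ and $n_{2q}+n_{2q+1}=n'_{2q}+n'_{2q+1}$. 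Under the natural convention $\Sigma_0=\Sigma'_0=\emptyset$, the only $B$-block that can host an element of $\Sigma'_1$ is $q=0$, whose partner on the $\cH$ side reduces to $\Sigma_1$; this yields the boundary identity $n_1=n'_1$, after which induction through the alternating balance equations propagates $n_k=n'_k$ for all $k$. Renaming $\bn=\bn'$ in the surviving terms gives exactly \eqref{eq:detabblk}. The main obstacle is this last combinatorial step: one must verify that the integrand is \emph{identically} zero on every mismatched region (and not merely in aggregate), which is the case because every single $(\sigma,\tau)$ term in the joint permutation expansion enforces the same block-matching constraints and hence the same multi-index equalities.
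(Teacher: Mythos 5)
Your proof is correct and follows essentially the same route as the paper's: Fredholm series expansion, the Cauchy--Binet/Andreief identity, decomposition of the integration domain by which $\Sigma_k$ (resp.\ $\Sigma'_k$) each variable lies in, and then using the block structure of $\det[A]$ and $\det[B]$ to force $\bn'=\bn$. The paper phrases the last step directly in terms of block-diagonal matrices with rectangular off-sizes having zero determinant, while you unfold the same fact term-by-term over permutation pairs $(\sigma,\tau)$; these are two presentations of the same combinatorial observation, and your boundary-plus-induction argument ($n_1=n'_1$ then alternating balance equations) matches the paper's conclusion exactly.
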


\begin{proof}
From the standard Fredholm determinant series expansion,  
\beqq
	\det(1-AB )
	=\sum_{\bn\in(\intZ_{\ge 0})^m} \frac{(-1)^{|\bn|}}{\bn!} 
	\int_{\Sigma_1^{n_1}\times\cdots \times \Sigma_m^{n_m}}  
	\det\left[ (AB)(w_i,w_j)\right]_{i,j=1}^{|\bn|} \prod_{i=1}^{|\bn|} \dd\mu(w_i)
\eeqq
where among $|\bn|$ variables $w_i$, the first $n_1$ are in $\Sigma_1$, the next $n_2$ are in $\Sigma_2$, and so on. 
By the Cauchy-Binet formula, for given $\bn$, 
\beqq
\bes
	&\det\left[ (AB)(w_i,w_j)\right]_{i,j=1}^{|\bn|}
	= \frac1{|\bn|!}
	\int_{(\Sigma_1'\cup \cdots \cup \Sigma_m')^{|\bn|}}  
	\det\left[ A(w_i,w_j')\right]_{i,j=1}^{|\bn|} \det\left[ B(w_i',w_j)\right]_{i,j=1}^{|\bn|} \prod_{i=1}^{|\bn|} \dd\mu'(w_i') \\
	&\qquad = \sum_{\substack{\bn'\in(\intZ_{\ge 0})^m \\ |\bn'|=|\bn|}}
	\frac1{(\bn')!}  \int_{(\Sigma_1')^{n_1'}\times\cdots \times (\Sigma_m')^{n_m'}}  
	\det\left[ A(w_i,w_j')\right]_{i,j=1}^{|\bn|} \det\left[ B(w_i',w_j)\right]_{i,j=1}^{|\bn|} \prod_{i=1}^{|\bn|} \dd\mu'(w_i') .
\end{split}
\eeqq
From the structure of the kernels, we find that the matrix $A(w_i,w_j')$ has a natural  block structure of block sizes $n_1+n_2$, $n_3+n_4$, and so on for the rows and of sizes 
$n_1'+n_2'$, $n_3'+n_4'$, so on for the columns. 
The matrix has non-zero entries only in the diagonal blocks, which have sizes $(n_1+n_2)\times (n_1'+n_2')$, $(n_3+n_4)\times (n_3'+n_4')$, ...
Similarly,  the matrix $B(w_i,w_j')$ has non-zero entries only on the diagonal blocks, which have sizes 
$n_1\times n_1'$, $(n_2+n_3)\times (n_2'+n_3')$, $(n_4+n_5)\times (n_4'+n_5')$, ...
From these structures, we find that the determinant of $A(w_i,w_j')$ times the determinant of    $B(w_i,w_j')$ is zero unless $\bn'=\bn$. 
Hence
\beqq
\bes
	&\det\left[ (AB)(w_i,w_j)\right]_{i,j=1}^{|\bn|}
	= 
	\frac1{\bn!}  \int_{(\Sigma_1')^{n_1}\times\cdots \times (\Sigma_m')^{n_m}}  
	\det\left[ A(w_i,w_j')\right]_{i,j=1}^{|\bn|} \det\left[ B(w_i',w_j)\right]_{i,j=1}^{|\bn|} \prod_{i=1}^{|\bn|} \dd\mu'(w_i').
\end{split}
\eeqq
This implies~\eqref{eq:detabblk}.
\end{proof}

Now, we prove Lemma~\ref{lem:DFredexpf}(b) by showing that~\eqref{eq:aux_2017_03_22_01} and~\eqref{eq:stpMfpnnn} are the same. 
We prove Lemma~\ref{lem:Freserfola}(b) similarly. 
For this purpose, we use the following general result. 

\begin{lm} \label{lm:aux_01}
Let $\UU=(u_1,\cdots,u_m)$, $\VV=(v_1,\cdots,v_m)$, $\UU'=(u'_1,\cdots,u'_n)$ and $\VV'=(v'_1,\cdots,v'_n)$ be four complex vectors. 
Then for any single-variable functions $\rF$, $\rG$, $\rI$, and $\rJ$,  	
\begin{equation}
\bes
	& \frac{\Delta(\UU)\Delta(\VV)\Delta(\UU')\Delta(\VV')\Delta(\UU';\VV)\Delta(\VV';\UU)}{\Delta(\UU;\VV)\Delta(\UU';\VV')\Delta(\UU';\UU)\Delta(\VV';\VV)} 
	\rF(\UU) \rG(\VV) \rI(\UU') \rJ(\VV') \\
	&\qquad =(-1)^{{n+m\choose 2}+n+mn}
	\det\left[ 
	\begin{array}{cc}
	\left[ \frac{\rF(u_i)\rG(v_j)}{u_i-v_j} \right]_{m\times m} 
	&\left[ \frac{\rF(u_i)\rI(u'_j)}{u_i-u_j'} \right]_{m\times n}  \\ 
&\\
\left[ \frac{\rJ(v_i')\rG(v_j)}{v_i'-v_j} \right]_{n\times m}  & 
\left[ \frac{\rJ(v_i')\rI(u_j')}{v_i'-u_j'} \right]_{n\times n} \end{array}
	\right] .
\end{split}
\end{equation}
\end{lm}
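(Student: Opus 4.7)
The claimed identity is an algebraic rearrangement: the $(m+n)\times(m+n)$ block matrix on the right has the structure of a Cauchy matrix conjugated by diagonal factors, so the plan is to unfold it, apply the Cauchy determinant formula, and then repackage the resulting products into the $\Delta$-notation on the left. The only substantive work will be careful sign bookkeeping arising from reordering arguments in $\Delta(A;B)$.

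Concretely, I would introduce the concatenated vectors $X=(u_1,\ldots,u_m,v'_1,\ldots,v'_n)$ and $Y=(v_1,\ldots,v_m,u'_1,\ldots,u'_n)$, both of length $N=m+n$, and diagonal matrices $\Phi=\diag(\rF(u_1),\ldots,\rF(u_m),\rJ(v'_1),\ldots,\rJ(v'_n))$ and $\Psi=\diag(\rG(v_1),\ldots,\rG(v_m),\rI(u'_1),\ldots,\rI(u'_n))$. Then the big block matrix in the statement is precisely $\Phi\cdot \bigl[1/(x_i-y_j)\bigr]_{i,j=1}^{N}\cdot \Psi$, so its determinant factors as
\begin{equation*}
\rF(\UU)\rG(\VV)\rI(\UU')\rJ(\VV')\cdot \det\!\left[\frac{1}{x_i-y_j}\right]_{i,j=1}^{N}.
\end{equation*}

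Next I would apply the Cauchy determinant identity $\det[1/(x_i-y_j)]=\prod_{i<j}(x_i-x_j)(y_j-y_i)\big/\prod_{i,j}(x_i-y_j)$, and rewrite the first factor as $(-1)^{\binom{N}{2}}\Delta(X)\Delta(Y)$ using the convention $\Delta(W)=\prod_{i<j}(w_j-w_i)$ from Definition~\ref{def:notc}. Splitting the index ranges into the $u,v'$ blocks gives the block decompositions
\begin{equation*}
\Delta(X)=\Delta(\UU)\Delta(\VV')\Delta(\VV';\UU),\qquad \Delta(Y)=\Delta(\VV)\Delta(\UU')\Delta(\UU';\VV),
\end{equation*}
\begin{equation*}
\prod_{i,j}(x_i-y_j)=\Delta(\UU;\VV)\,\Delta(\UU;\UU')\,\Delta(\VV';\VV)\,\Delta(\VV';\UU').
\end{equation*}
The numerator already matches the stated numerator $\Delta(\UU)\Delta(\VV)\Delta(\UU')\Delta(\VV')\Delta(\UU';\VV)\Delta(\VV';\UU)$ exactly.

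The final step is to reconcile the denominator with $\Delta(\UU;\VV)\Delta(\UU';\VV')\Delta(\UU';\UU)\Delta(\VV';\VV)$. Swapping the two arguments in $\Delta(A;B)=\prod_{a,b}(a-b)$ costs a sign $(-1)^{|A|\cdot|B|}$, so $\Delta(\UU;\UU')=(-1)^{mn}\Delta(\UU';\UU)$ and $\Delta(\VV';\UU')=(-1)^{n^2}\Delta(\UU';\VV')$. Since $n^2\equiv n\pmod 2$, combining these two sign flips with $(-1)^{\binom{N}{2}}$ yields the overall sign $(-1)^{\binom{m+n}{2}+mn+n}$ stated in the lemma. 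The main (and only) obstacle is this sign accounting; once it is carried out the identity is immediate.
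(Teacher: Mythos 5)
Your proposal is correct and is exactly the argument the paper intends: the paper's proof is the single sentence ``It follows directly from the Cauchy determinant formula,'' and your write-up supplies precisely that computation — conjugating the Cauchy matrix by the diagonal factors, applying Cauchy's formula, splitting $\Delta(X)$, $\Delta(Y)$, and $\prod(x_i-y_j)$ into blocks, and tracking the sign $(-1)^{\binom{m+n}{2}+mn+n}$ from reordering the arguments of the two-vector $\Delta$'s. The sign bookkeeping checks out.
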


\begin{proof}
	It follows directly from the Cauchy determinant formula.
\end{proof}

Consider~\eqref{eq:stpMfpnnn}. We can write it as 
\begin{equation}
	\gdet_{\bn}(\bbz)=\sum_{\substack{\UU^{(\ell)}\in (\rootsL_{z_\ell})^{n_\ell}  \\ \VV^{(\ell)}\in (\rootsR_{z_\ell})^{n_\ell} \\ l=1,\cdots,m}} \Dt_0\cdots \Dt_{m}
\end{equation}
where
\begin{equation}
\bes
	 \Dt_0 =\frac{\Delta(\UU^{(1)})\Delta(\VV^{(1)})}{\Delta(\UU^{(1)};\VV^{(1)})}		
	\hatf_1(\VV^{(1)}), 
\qquad 
	 \Dt_{m} =\frac{\Delta(\UU^{(m)})\Delta(\VV^{(m)})}{\Delta(\UU^{(m)};\VV^{(m)})}
	\hatf_m(\UU^{(m)}) , 
\end{split}
\end{equation}
and
\begin{equation} \label{eq:Dli2m}
	\begin{split}
	\Dt_\ell&=\frac{\Delta(\UU^{(\ell)})\Delta(\VV^{(\ell)})\Delta(\UU^{(\ell+1)})\Delta(\VV^{(\ell+1)})\Delta(\UU^{(\ell+1)};\VV^{(\ell)})\Delta(\VV^{(\ell+1)};\UU^{(\ell)})}{\Delta(\UU^{(\ell)};\VV^{(\ell)})\Delta(\UU^{(\ell+1)};\VV^{(\ell+1)})\Delta(\UU^{(\ell+1)};\UU^{(\ell)})\Delta(\VV^{(\ell+1)};\VV^{(\ell)})}
	\\
	&\qquad \times 
	\frac{ \hatf_{\ell}(\UU^{(\ell)})   (1-\frac{z^L_{\ell+1}}{z^L_{\ell}})^{n_{\ell}}(1-\frac{z^L_{\ell}}{z^L_{\ell+1}})^{n_{\ell+1}} \hatf_\ell(\VV^{(\ell+1)})}{\qor_{z_{\ell+1}}(\UU^{(\ell)}) \qol_{z_{\ell+1}}(\VV^{(\ell)}) \qor_{z_{\ell}}(\UU^{(\ell+1)}) \qol_{z_{\ell}}(\VV^{(\ell+1)})} 
	\end{split}
\end{equation}
for $1\le \ell\le m-1$. 
For $1\le \ell\le m-1$, we apply Lemma~\ref{lm:aux_01} with $m=n_{\ell}$, $n=n_{\ell+1}$, 
$\rF(u)=\frac{\hatf_{\ell}(u)}{\qor_{z_{\ell+1}}(u)}$, 
$\rG(v)= \frac1{\qol_{z_{\ell+1}}(v)} ( 1-\frac{z_{\ell+1}^L}{z_{\ell}^L})$, 
$\rI(u')= \frac1{\qor_{z_{\ell}}(u')} ( 1-\frac{z_{\ell}^L}{z_{\ell+1}^L})$, and
$\rJ(v')= \frac{\hatf_{\ell+1}(v')}{\qol_{z_{\ell}}(v')}$.
Then, recalling~\eqref{eq:hatfel}, 
\begin{equation*}
	\Dt_{\ell}=(-1)^{{n_{\ell}+ n_{\ell+1}\choose 2}+n_{\ell+1}+n_{\ell}n_{\ell+1} }\det\left[
\begin{array}{cc}
\left[\KsL(u^{(\ell)}_i,v^{(\ell)}_j)\right]_{n_{\ell} \times n_{\ell} }&
\left[\KsL(u_i^{(\ell)},u^{(\ell+1)}_j)\right]_{n_{\ell}\times n_{\ell+1}}\\
&\\
\left[\KsL(v^{(\ell+1)}_i,v^{(\ell)}_j)\right]_{n_{\ell+1}\times n_{\ell}} &
\left[\KsL(v^{(\ell+1)}_i,u^{(\ell+1)}_j)\right]_{n_{\ell+1}\times n_{\ell+1}} 
\end{array}
\right]
\end{equation*}
if $\ell$ is odd, and 
\begin{equation*}
	\Dt_{\ell}=(-1)^{{n_{\ell}+ n_{\ell+1}\choose 2}+n_{\ell+1}+n_{\ell}n_{\ell+1} }\det\left[
\begin{array}{cc}
\left[\KsR(u^{(\ell)}_i,v^{(\ell)}_j)\right]_{n_{\ell} \times n_{\ell} }&
\left[\KsR(u_i^{(\ell)},u^{(\ell+1)}_j)\right]_{n_{\ell}\times n_{\ell+1}}\\
&\\
\left[\KsR(v^{(\ell+1)}_i,v^{(\ell)}_j)\right]_{n_{\ell+1}\times n_{\ell}} &
\left[\KsR(v^{(\ell+1)}_i,u^{(\ell+1)}_j)\right]_{n_{\ell+1}\times n_{\ell+1}} 
\end{array}
\right]
\end{equation*}
if $\ell$ is even. 
On the other hand, using the Cauchy determinant formula,
\begin{equation}
	\Dt_0=(-1)^{{n_1 \choose 2}+n_1} \det\left[\KsR(v_i^{(1)},u_j^{(1)})\right]_{i,j=1}^{n_1} 
\end{equation}
and
\begin{equation}
	\Dt_{m}=\begin{dcases}
(-1)^{{n_m \choose 2}}\det\left[\KsL(u^{(m)}_i,v^{(m)}_j)\right]_{i,j=1}^{n_m} 
	& \text{if $m$ is even,}\\
(-1)^{{n_m \choose 2}}\det\left[\KsR(u^{(m)}_i,v^{(m)}_j)\right]_{i,j=1}^{n_m} 
	& \text{if $m$ is odd.}
\end{dcases}
\end{equation}
The formula~\eqref{eq:aux_2017_03_22_01} follows by combining the product of $\Dt_\ell$ for odd $\ell$ as a single determinant of a block diagonal matrix, and combining the product $\Dt_\ell$ for even indices as another single determinant, 
we obtain~\eqref{eq:aux_2017_03_22_01}.
This proves Lemma~\ref{lem:DFredexpf}(a). 
The proof of Lemma~\ref{lem:Freserfola}(b) is similar.

\section{Proof of Proposition~\ref{thm:DRL_simplification}}
\label{sec:proof_DRL}

As we mentioned before, Proposition~\ref{thm:DRL_simplification} is the key technical result of this paper. 
We prove it in this section. 

Let $z$ and $z'$ be two non-zero complex numbers satisfying $z^L\ne (z')^L$. 
Let   $W=(w_1,\cdots,w_N)\in (\roots_z)^N$ and $W'=(w'_1,\cdots,w'_N)\in(\roots_{z'})^N$ be two complex vectors satisfying $\prod_{j=1}^N|w'_j+1|<\prod_{j=1}^N|w_j+1|$. 
Let 
\begin{equation}
	\DL_Y(W)=\det\left[w_i^{j}(w_i+1)^{y_j-j}\right]_{i,j=1}^N,
	\qquad
	\DR_X(W)=\det\left[w_i^{-j}(w_i+1)^{-x_j+j}\right]_{i,j=1}^N.
\end{equation}
The goal is to evaluate 
\begin{equation}\label{eq:DRLsumtemo}
	\DRL_{k,a}(W;W')
	=\sum_{\substack{X\in \conf_N(L)\cap\{x_{k}\ge a\} }} \DR_{X}(W) \DL_{X}(W')
\end{equation}
and show that it is equal to the right hand-side of~\eqref{eq:DRL_simplification}.

We first reduce general $k$ case to $k=1$ case. 

\begin{lm} \label{lem:kto1}
Under the same assumptions as Proposition~\ref{thm:DRL_simplification}, 
\begin{equation}
\label{eq:aux_2016_10_20_08}
	\DRL_{k,a}(W;W')=\left(\frac{z}{z'}\right)^{(k-1)L} 
	\left[ \prod_{j=1}^N\left(\frac{(w_j+1)w'_j}{w_j(w'_j+1))}\right)^{k-1} \right] \DRL_{1,a}(W;W').
\end{equation}
\end{lm}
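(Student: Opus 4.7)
The plan is to exploit the cyclic symmetry of $\conf_N(L)$. Given $X=(x_1,\ldots,x_N)\in\conf_N(L)$, define $\tilde X=(x_k,x_{k+1},\ldots,x_N,x_1+L,\ldots,x_{k-1}+L)$. Since $x_{k-1}<x_k$, we have $\tilde x_N-\tilde x_1=x_{k-1}+L-x_k<L$, so $\tilde X\in\conf_N(L)$, and $\tilde x_1=x_k$. Thus $X\mapsto\tilde X$ is a bijection $\conf_N(L)\cap\{x_k\ge a\}\to\conf_N(L)\cap\{\tilde x_1\ge a\}$. With this bijection in hand, the aim is to show that $\DR_X(W)\DL_X(W')$ differs from $\DR_{\tilde X}(W)\DL_{\tilde X}(W')$ only by the prefactor appearing in~\eqref{eq:aux_2016_10_20_08}.

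I would carry this out by substituting the inverse relations $x_j=\tilde x_{j-k+1}$ for $j\ge k$ and $x_j=\tilde x_{j-k+1+N}-L$ for $j<k$ into $\DR_X(W)=\det[w_i^{-j}(w_i+1)^{-x_j+j}]$, then reindexing columns by the cyclic shift $j\mapsto j'\in\{1,\ldots,N\}$ with $j'\equiv j-k+1\pmod N$. After this column permutation, column $j'$ for $j'\le N-k+1$ equals $\bigl(\tfrac{w_i+1}{w_i}\bigr)^{k-1}\cdot w_i^{-j'}(w_i+1)^{-\tilde x_{j'}+j'}$, while column $j'$ for $j'\ge N-k+2$ picks up an extra factor $w_i^{N-k+1}(w_i+1)^{L-N+k-1}$. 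Using the defining relation $w_i^N(w_i+1)^{L-N}=z^L$ for $w_i\in\roots_z$, this extra factor simplifies to $z^L\bigl(\tfrac{w_i+1}{w_i}\bigr)^{k-1}$, where $z^L$ is $i$-independent and so factors out of the column. The row-dependent factor $\bigl(\tfrac{w_i+1}{w_i}\bigr)^{k-1}$ is common to every column of row $i$, hence factors out of the row. So after cleanup,
\begin{equation*}
\DR_X(W)=\epsilon\cdot z^{L(k-1)}\prod_{i=1}^N\Bigl(\tfrac{w_i+1}{w_i}\Bigr)^{k-1}\DR_{\tilde X}(W),
\end{equation*}
where $\epsilon=\pm1$ is the sign of the cyclic column permutation. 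An entirely parallel computation for $\DL_X(W')=\det[(w'_i)^j(w'_i+1)^{x_j-j}]$, using $(w'_i)^N(w'_i+1)^{L-N}=(z')^L$, gives
\begin{equation*}
\DL_X(W')=\epsilon\cdot (z')^{-L(k-1)}\prod_{i=1}^N\Bigl(\tfrac{w'_i}{w'_i+1}\Bigr)^{k-1}\DL_{\tilde X}(W').
\end{equation*}

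Multiplying these two identities, the signs $\epsilon$ cancel (the \emph{same} cyclic column permutation acts on both determinants), and one obtains
\begin{equation*}
\DR_X(W)\DL_X(W')=\Bigl(\tfrac{z}{z'}\Bigr)^{L(k-1)}\prod_{j=1}^N\Bigl(\tfrac{(w_j+1)w'_j}{w_j(w'_j+1)}\Bigr)^{k-1}\DR_{\tilde X}(W)\DL_{\tilde X}(W').
\end{equation*}
Summing over $X\in\conf_N(L)\cap\{x_k\ge a\}$ and using the bijection $X\leftrightarrow\tilde X$ yields~\eqref{eq:aux_2016_10_20_08}. The only place where care is needed is the combinatorial bookkeeping: tracking how the cyclic column permutation acts on both factors and recognising that its sign cancels, and correctly identifying the $L$-dependent correction for columns $j'\ge N-k+2$. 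Everything else is a direct manipulation using the defining equations of $\roots_z$ and $\roots_{z'}$.
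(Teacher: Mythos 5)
Your proposal is correct and follows essentially the same route as the paper: both construct the cyclic shift $\tilde X$ (the paper's $X'$), perform the corresponding column permutation in $\DR_X(W)$ and $\DL_X(W')$, and use $w_i^N(w_i+1)^{L-N}=z^L$ to absorb the wrap-around columns. The only cosmetic difference is that the paper computes the permutation sign explicitly as $(-1)^{(k-1)(N-1)}$, whereas you note that the same sign appears in both determinants and cancels upon multiplication, which is a cleaner way to dispatch it.
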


\begin{proof}
The sum in~\eqref{eq:DRLsumtemo} is over the discrete variables $x_1<\cdots< x_N<x_{1}+L$ with $x_k\ge a$. The first condition is equivalent to 
$x_k<\cdots< x_N<x_1+L<\cdots< x_{k-1}+L<x_k+L$.
Hence, if we set $x_j'=x_{j+ k-1}$ for $j=1, \cdots, N-k+1$ and $x_j'=x_{j+k-1-N}+L$ for $j=N-k+2, \cdots, N$, 
then the sum is over $x_1'<\cdots< x_N'<x_1'+L$ with $x_1'\ge a$. 
Consider 
\begin{equation*}
\begin{split}
	\DR_X(W)&=\det\left[w_i^{-j}(w_i+1)^{-x_j+j}\right]_{i,j=1}^N.
\end{split}
\end{equation*}
If we move the first $k-1$ columns to the end of the matrix and use the variables $x_j'$,
then $\DR_X(W)$
is equal to $(-1)^{(k-1)(N-1)}$ times the determinant of the matrix whose $(i,j)$-entry is 
$w_i^{-j-k+1}(w_i+1)^{-x_j'+j+k-1}$ for the first $N-k+1$ columns and 
$w_i^{-j-k+1+N}(w_i+1)^{-x_j'+L+j+k-1-N}$ for the remaining $k-1$ columns. 
Since $w_i^N(w_i+1)^{L-N}=z^L$ for $w_i\in R_z$, the entries of the last $k-1$ are
$z^L$ times $w_i^{-j-k+1}(w_i+1)^{-x_j'+j+k-1}$.  
Thus, the row $i$ of the matrix has the common multiplicative factor $w_i^{-k+1}(w_i+1)^{k-1}$. 
Factoring out $z^L$ from the last $k-1$ columns and also the common row factors, we find that, 
setting $X'=(x_1', \cdots, x_N')$,  
\begin{equation}
\begin{split}
	\DR_X(W) =(-1)^{(k-1)(N-1)}z^{(k-1)L} \left[ \prod_{j=1}^N\left(\frac{w_j+1}{w_j}\right)^{k-1} \right]
	\DR_{X'} (W) . 
\end{split}
\end{equation}
Similarly, 
\begin{equation}
\begin{split}
	\DL_X(W')
	=(-1)^{(k-1)(N-1)}(z')^{-(k-1)L} \left[ \prod_{j=1}^N\left(\frac{w'_j+1}{w'_j}\right)^{-k+1} \right] 	
	\DL_{X'}(W').
\end{split}
\end{equation}
Hence the sum~\eqref{eq:DRLsumtemo} is a certain explicit constant  times the sum 
\beqq
	\sum_{\substack{X'\in \conf_N(L)\cap\{x_{1}'\ge a\} }} \DR_{X'}(W) \DL_{X'}(W')
\eeqq
which is $\DRL_{1,a}(W;W')$.
Checking the multiplicative constant factor explicitly, we obtain the lemma.
\end{proof}

It is thus enough to prove Proposition~\ref{thm:DRL_simplification} for $k=1$. 
Set
\beq \label{eq:Hsum}
	H_a(W;W'):=\DRL_{1,a}(W;W')-\DRL_{1,a+1}(W;W')=\sum_{\substack{X\in\conf_N(L)\\ x_1=a}}\DR_X(W)\DL_X(W').
\eeq
We prove the following result in this section. 

\begin{prop}\label{prop:01}
Let $z,z'\in\complexC\setminus\{0\}$ such that $z^L\ne (z')^L$. 
Then for every $W=(w_1, \cdots, w_N)\in (\roots_z)^N$, $W'=(w'_1, \cdots, w'_N)\in (\roots_{z'})^N$, and integer $a$, 
\begin{equation}
\label{eq:aux_2016_12_25_05}
	H_{a}(W;W')=  - \left( \left(\frac{z'}{z}\right)^L -1 \right)^{N-1}\left( \prod_{j=1}^N\frac{w'_j(w'_j+1)^{a-1}}{w_j(w_j+1)^{a-2}} - \prod_{j=1}^N\frac{w'_j(w'_j+1)^{a}}{w_j(w_j+1)^{a-1}} \right)\det\left[\frac{1}{w'_{i'}-w_i}\right]_{i,i'=1}^N.
\end{equation}
\end{prop}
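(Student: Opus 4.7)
The plan is to compute $H_a(W;W')$ directly by combining a shift reducing the $a$-dependence, symmetrization turning the ordered sum into an unrestricted one, and a Cauchy--Binet-type identity used together with the Bethe-ansatz defining relations $w^N(w+1)^{L-N}=z^L$ to recognize the result as a Cauchy determinant.

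First, I would substitute $y_j:=x_j-a$. Factoring $(w_i+1)^{-a}$ from row $i$ of $\DR_X$ and $(w'_i+1)^{a}$ from row $i$ of $\DL_X$ gives $H_a(W;W')=r^a H_0(W;W')$ with $r=\prod_j(w'_j+1)/(w_j+1)$, so it suffices to match the $a=0$ case. Next, since both $\DR_Y$ and $\DL_Y$ are antisymmetric under column swaps of $(y_2,\dots,y_N)$, their product is symmetric in these variables and vanishes when any two coincide. Hence
\begin{equation*}
(N-1)!\,H_0(W;W')=\sum_{y_2,\dots,y_N\in\{1,\dots,L-1\}}\DR_Y(W)\DL_Y(W'),
\end{equation*}
with $y_1=0$ fixed and the remaining $y_i$'s summed independently.

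Expanding both determinants as sums over permutations $\sigma,\tau\in S_N$, the $y_i$-dependence for $i\geq 2$ factors as $\gamma_{\sigma(i),\tau(i)}^{y_i}$ with $\gamma_{j,k}:=(w'_k+1)/(w_j+1)$, so each $y_i$-sum is a geometric series equal to $(\gamma-\gamma^L)/(1-\gamma)$. The essential simplification is that the Bethe relations imply $\gamma_{j,k}^L=q\,\mu_{j,k}^N$ with $q:=(z'/z)^L$ and $\mu_{j,k}:=w_j(w'_k+1)/((w_j+1)w'_k)$. After substituting this and using $1-\gamma_{j,k}=(w_j-w'_k)/(w_j+1)$, the $(\sigma,\tau)$-sum reorganizes, via the Cauchy--Binet identity applied in reverse, into a single $N\times N$ determinant, which is then identified with the Cauchy determinant $\det[1/(w'_{i'}-w_i)]$ up to explicit prefactors of the form $w'_j(w_j+1)/w_j$.

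The main obstacle will be the combinatorial bookkeeping needed to extract the factor $(q-1)^{N-1}$: each of the $N-1$ geometric sums produces two monomials after the Bethe substitution (the $\gamma$ and the $q\mu^N$ contributions), giving $2^{N-1}$ total summands, and one must show that the signs from $\sign(\sigma\tau)$ collapse these into the single power $(q-1)^{N-1}$. This collapse relies crucially on the fact that all $w_j$ (respectively all $w'_k$) satisfy the same polynomial equation, so the ``$\gamma^L$-replacement'' is a uniform global change across indices rather than a per-index perturbation. For $N=1$ the identity is immediate by direct computation and would serve as the base of an alternative inductive proof via cofactor expansion along the first column; such an inductive route may be cleaner since it avoids the $2^{N-1}$-term bookkeeping.
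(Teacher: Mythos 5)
The central claim in your second step fails: $\DR_Y(W)=\det\bigl[w_i^{-j}(w_i+1)^{-y_j+j}\bigr]_{i,j=1}^N$ is \emph{not} antisymmetric (nor symmetric up to sign) in $(y_2,\dots,y_N)$, because the column $j$ carries the $j$-dependent factor $\bigl((w_i+1)/w_i\bigr)^{j}$ in addition to $(w_i+1)^{-y_j}$. Under $y_j\leftrightarrow y_{j'}$ the matrix does not change by a column swap, and if $y_j=y_{j'}$ the columns $j$ and $j'$ equal $(w_i+1)^{-y_j}\bigl((w_i+1)/w_i\bigr)^j$ and $(w_i+1)^{-y_j}\bigl((w_i+1)/w_i\bigr)^{j'}$, which are not proportional, so neither $\DR_Y$ nor $\DL_Y$ nor their product vanishes. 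You therefore cannot replace the ordered sum over $0<y_2<\cdots<y_N<L$ by $\tfrac{1}{(N-1)!}$ times the unordered sum; the symmetrization shortcut on which the rest of the argument rests is invalid.

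This is precisely the difficulty the paper's proof is built to confront. After the permutation expansion, the \emph{ordered} geometric sum is evaluated with the two-branch identity $\sum_{x=b+1}^{c}A^x=\tfrac{A^{b+1}}{1-A}+\tfrac{A^{c}}{1-A^{-1}}$, producing $2^{N-1}$ terms per permutation pair; these are then reorganized as sums over ordered set partitions of $\{1,\dots,N\}$ (the paper's Lemma~\ref{lm:identity_01}--Lemma~\ref{lem:sumofAB}) before the Bethe relations and Cauchy--Binet can be brought to bear. You correctly identify this bookkeeping as ``the main obstacle,'' but the proposal does not actually supply the reorganization; it is asserted that the $(\sigma,\tau)$-sum ``reorganizes into a single $N\times N$ determinant,'' which is the hard content of the proposition, not a step. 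The opening reduction $H_a=r^aH_0$ with $r=\prod_j(w'_j+1)/(w_j+1)$ is correct, and the $N=1$ base case is fine, but the cofactor-expansion induction you gesture at is not developed enough to be assessed. As it stands the argument has a genuine gap at the symmetrization step and an unfulfilled promise at the combinatorial core.
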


Note that the sum~\eqref{eq:Hsum} is over the finite set $a<x_2<\cdots< x_N<a+L$.  
Since it is a finite sum, there is no issue of convergence, and hence we do not need to assume that  $\prod_{j=1}^N|w'_j+1|<\prod_{j=1}^N|w_j+1|$. 
Now, if we assume this extra condition, then the sum of $H_b(W;W')$ over $b=a,a+1, a+2, \cdots$ converges and equal to $\DRL_{1,a}(W;W')$, and the resulting formula is 
\begin{equation}
\label{eq:DRL_1}
	\DRL_{1,a}(W;W')= - \left( \left(\frac{z'}{z}\right)^L-1 \right)^{N-1} \left[ \prod_{j=1}^N\frac{w'_j(w'_j+1)^{a-1}}{w_j(w_j+1)^{a-2}} \right] \det\left[\frac{1}{w'_{i'}-w_i}\right]_{i,i'=1}^N.
\end{equation} 
This is precisely Proposition~\ref{thm:DRL_simplification} when $k=1$. 
Hence, by Lemma~\ref{lem:kto1}, Proposition~\ref{thm:DRL_simplification} is obtained if we prove Proposition~\ref{prop:01}. The rest of this section is devoted to the proof of Proposition~\ref{prop:01}.

The proof is split into four steps. 
The condition that $W\in (\roots_z)^N$, $W'\in (\roots_{z'})^N$ is used only in step 4. 
Step 1, 2, and 3 apply to any complex vectors.

\subsection{Step 1}

We expand $\DR_X(W)$ and $\DL_X(W')$ as sums and  interchange the order of summation.

Let $S_N$ be the symmetric group of order $N$. Let $\sign(\sigma)$ denote the sign of permutation $\sigma\in S_N$. 
Expanding the determinant of $\DR_X(W)$ and $\DL_X(W')$, we have 
\begin{equation}
\label{eq:aux_2016_10_16_01}
	H_a(W;W') = \sum_{\sigma,\sigma'\in S_N}\sign(\sigma\sigma') \left[ \prod_{j=1}^N\left(\frac{w'_{\sigma'(j)}}{w_{\sigma(j)}}\right)^{j} \right] 
	S_{\sigma, \sigma'}(W;W')
\end{equation}
where
\beq 
	S_{\sigma, \sigma'}(W;W') = \sum_{\substack{X\in\conf_{N}(L)\\ x_1= a}}  
	\prod_{j=1}^N\left(\frac{w'_{\sigma'(j)}+1}{w_{\sigma(j)}+1}\right)^{x_j-j} . 
\eeq
We re-write the last sum 
using the following formula. 

\begin{lm}
For complex numbers $f_j$, set 
\beq
	F_{m,n}= \prod_{j=m}^{n} f_j \qquad \text{for $1\le m\le n$.}
\eeq
Then 
\beq \label{eq:aux_2016_12_20_02}
\bes
	&\sum_{\substack{X\in\conf_{N}(L)\\ x_1= a}}    \prod_{j=1}^N (f_j )^{x_j-j}    \\
	&=  \sum_{k=0}^{N-1}\sum_{1<s_1<\cdots<s_k\le N}\dfrac{(F_{1, s_1-1})^{a-1}}{\prod_{j=2}^{s_1-1}(1-F_{j, s_1-1})} 
	 \prod_{i=1}^{k}\frac{(F_{s_i, s_{i+1}-1})^{a+L-N-1}}{(1-(F_{s_i, s_{i+1}-1})^{-1})\prod_{j=s_i+1}^{s_{i+1}-1}(1-F_{j, s_{i+1}-1})}
\end{split}
\eeq
where we set $s_{k+1}=N+1$.  
When $k=0$, the summand is $\dfrac{(F_{1, N})^{a-1}}{\prod_{j=2}^{N}(1-F_{j, N})}$. 
\end{lm}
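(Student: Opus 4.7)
The plan is to prove the identity by induction on $N$. The base case $N=1$ is immediate: the sole configuration $X=(a)$ gives LHS $=f_1^{a-1}$, while the RHS has only the $k=0$ term which, with empty products interpreted as $1$, also equals $f_1^{a-1}$. For the induction step, I would sum out the innermost variable $x_N\in\{x_{N-1}+1,\ldots,a+L-1\}$ on the LHS using the geometric-sum identity $\sum_{y=A}^{B}r^y=(r^A-r^{B+1})/(1-r)$, together with the key algebraic observation that
\begin{equation*}
f_{N-1}^{x_{N-1}-(N-1)}\,f_N^{x_{N-1}+1-N}=(f_{N-1}f_N)^{x_{N-1}-(N-1)},
\end{equation*}
since both exponents on the left equal $x_{N-1}-(N-1)$. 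This lets one \emph{merge} $f_{N-1}$ and $f_N$ into a single factor $\tilde f_{N-1}:=f_{N-1}f_N$ in the ``continuing'' part of the sum, yielding the recursion
\begin{equation*}
T_N(a,L;f_1,\ldots,f_N)=\frac{T_{N-1}(a,L-1;f_1,\ldots,f_{N-2},f_{N-1}f_N)}{1-f_N}-\frac{f_N^{a+L-N}\,T_{N-1}(a,L-1;f_1,\ldots,f_{N-1})}{1-f_N},
\end{equation*}
where $T_N$ denotes the LHS of the lemma.

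I would then verify that the RHS, call it $R_N$, satisfies the same recursion by splitting the sum over subsets $S=\{s_1,\ldots,s_k\}\subseteq\{2,\ldots,N\}$ according to whether $N\in S$. If $N\notin S$, the relabeling $\tilde f_{N-1}=f_{N-1}f_N$ makes $\tilde F_{m,N-1}=F_{m,N}$ for every $m\le N-1$, so each such $S$ contributes to $R_{N-1}(a,L-1;\tilde f_1,\ldots,\tilde f_{N-1})$ \emph{except} that the denominator of the last block in $R_N$ carries one extra factor $(1-F_{N,N})=(1-f_N)$; summing produces the first term of the recursion. If $N\in S$, the additional block $[N,N]$ contributes the prefactor $f_N^{a+L-N-1}/(1-f_N^{-1})=-f_N^{a+L-N}/(1-f_N)$, while the remaining blocks match those of $R_{N-1}(a,L-1;f_1,\ldots,f_{N-1})$ at the subset $S\setminus\{N\}$, producing the second term. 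Combining the two cases yields the recursion, and induction concludes the proof.

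The main obstacle is the combinatorial bookkeeping in the second step: one must check that the block exponents ($a-1$ on the first block and $a+L-N-1$ on later blocks) transform correctly under $L\mapsto L-1$, $N\mapsto N-1$, and that the relabeling $\tilde f_{N-1}=f_{N-1}f_N$ is compatible both with the inner products $F_{j,s-1}$ appearing in each factor and with the reduced index range $s_k\le N-1$. These verifications are mechanical but delicate, since a single misaligned exponent would break the match between the two recursions.
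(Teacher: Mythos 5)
Your proposal is correct, and it is closely related to the paper's own proof but reorganized. The paper evaluates the iterated sum over $x_N, x_{N-1}, \ldots, x_2$ directly, applying at every stage the two-term identity
\begin{equation*}
\sum_{x_j=x_{j-1}+1}^{a+L-N+j-1} A^{x_j-j}=\frac{A^{x_{j-1}-(j-1)}}{1-A}+\frac{A^{a+L-N-1}}{1-A^{-1}},
\end{equation*}
which produces a branching tree of $2^{N-1}$ terms; the terms are then grouped by the set $\{s_1,\ldots,s_k\}$ of indices at which the second branch was taken, and the exponent-merging $f_{j}^{x_{j}-j}A^{x_{j}-j}=(f_jA)^{x_j-j}$ is used implicitly to accumulate the $F_{j,s-1}$. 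You instead sum out only $x_N$ to get a two-term recursion in $N$ (with $L\mapsto L-1$ and $f_{N-1}\mapsto f_{N-1}f_N$ in the surviving branch), and then verify that the right-hand side satisfies the same recursion by splitting according to whether $N\in\{s_1,\ldots,s_k\}$; I checked your recursion and the case analysis on the RHS, and both are correct. The underlying computation, the geometric sum plus the exponent merge, is identical in the two proofs; the difference is that the paper unrolls the full tree at once while you encapsulate it in an induction, at the cost of having to verify separately that the RHS obeys the recursion. Your version localizes the bookkeeping nicely, but the work you flagged as the "main obstacle" is exactly the algebra the paper dismisses as "a simple algebra" after the grouping, so neither route genuinely shortens the other.
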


\begin{proof}
The sum  is over $a<x_2<\cdots<x_N<a+L$. 
We evaluate the repeated sums in the following order: $x_N, x_{N-1}, \cdots, x_2$. 
For $x_j$ the summation range is $x_{j-1}+1\le x_j\le a+L-N+j-1$. Hence the sum is over
\beqq 
	\sum_{x_2=a+1}^{a+L-N+1} \cdots \sum_{x_{N-1}=x_{N-2}+1}^{a+L-2} \sum_{x_N=x_{N-1}+1}^{a+L-1}.
\eeqq
For the sum over $x_j$, we use the  simple identity 
\begin{equation}
\label{eq:identity_basic_sum}
	\sum_{x_j=x_{j-1}+1}^{a+L-N+j-1} A^{x_j-j}=\frac{A^{x_{j-1}-(j-1)}}{1-A}+\frac{A^{a+L-N-1}}{1-A^{-1}}.
\end{equation}
Note that the first term of the right hand-side involves $x_{j-1}$ and the second term does not. 
Applying~\eqref{eq:identity_basic_sum} repeatedly, the left hand-side of~\eqref{eq:aux_2016_12_20_02} 
is equal to a sum of $2^{N-1}$ terms. 
Each term is a product of $N-1$ terms some of which are from the first term of~\eqref{eq:identity_basic_sum} and the rest are from the second term. 
We combine the summand into groups depending on how many times the second term $\frac{A^{a+L-N-1}}{1-A^{-1}}$ is used. 
This number is represented by $k$: $0\le k\le N-1$. 
For given $1\le k\le N-1$, we denote by $1\le s_1<\cdots< s_k<N$ the indices $j$ such that we had chosen the second term when we take the sum over $x_j$~\eqref{eq:identity_basic_sum}.
The result then follows after a simple algebra. 
\end{proof}

We apply the lemma to $S_{\sigma, \sigma'}(W;W')$ and exchange the summation orders.
Then, 
\begin{equation}
\label{eq:aux_2016_10_16_02}
\begin{split}
	H_a(W;W')
	=& \sum_{k= 0}^{N-1}\sum_{1< s_1<\cdots<s_k\le N}\sum_{\sigma,\sigma'\in S_N}
	\sign(\sigma\sigma') \prod_{i=0}^k U_i
\end{split}
\end{equation}
where
\begin{equation}
\begin{split}
	U_0= \dfrac{P_1(s_1-1)\left(Q_{1}(s_1-1)\right)^{a-1}}{\prod_{j=2}^{s_1-1}\left(1-Q_j(s_1-1)\right)} 
\end{split}
\end{equation}
and, for $1\le i\le k$, 
\begin{equation}
\begin{split}
	U_i= \frac{P_{s_i}(s_{i+1}-1)\left(Q_{s_i}(s_{i+1}-1)\right)^{a+L-N-1}}{\left(1-\left(Q_{s_i}(s_{i+1}-1)\right)^{-1}\right)\prod_{j=s_i+1}^{s_{i+1}-1}\left(1-Q_{j}(s_{i+1}-1)\right)} 
\end{split}
\end{equation}
with 
\begin{equation}
	Q_m(n):=\prod_{j=m}^n\frac{w'_{\sigma'(j)}+1}{w_{\sigma(j)}+1}, 
	\qquad P_m(n):=\prod_{j=m}^n\left(\frac{w'_{\sigma'(j)}}{w_{\sigma(j)}}\right)^{j}
\end{equation}
for $1\le m\le n\le N$.
Here we set $s_{k+1}:=N+1$.

We now re-write the last two sums in~\eqref{eq:aux_2016_10_16_02} for fixed $k$. 
Given $s_1, \cdots, s_k$ and $\sigma, \sigma'$, let 
$I_i=\sigma(\{s_i,\cdots,s_{i+1}-1\})$ and $I_i'=\sigma'(\{s_i,\cdots,s_{i+1}-1\})$ for $0\le i\le k$, 
where we set $s_0:=1$. 
With these notations, 
\begin{equation}
\begin{split}
	Q_{s_i}(s_{i+1}-1) = \frac{\prod_{j'\in I_i'} (w'_{j'}+1)}{\prod_{j\in I_i}(w_j+1)}.
\end{split}
\end{equation}
Consider the restriction of the permutation $\sigma$ on $\{s_i, \cdots, s_{i+1}-1\}$. 
This gives rise to a bijection $\sigma_i$ from $\{1, \cdots, |I_i|\}$ to $I_i$ by setting $\sigma_i(\ell)=\sigma(s_i+\ell-1)$. 
Similarly we obtain a bijection $\sigma'_i$ from $\sigma$. 
Then we have 
\begin{equation}
\begin{split}
	Q_{s_i+j}(s_{i+1}-1) =  \prod_{\ell=j}^{|I_i|} \left(\frac{w'_{\sigma'_i(\ell)}+1}{w_{\sigma_i(\ell)}+1}\right)
\end{split}
\end{equation}
for $j=1, \cdots, |I_i|$, and
\begin{equation}
\begin{split}
	P_{s_i}(s_{i+1}-1) = \left[ \prod_{j=1}^{|I_i|} \left(\frac{w'_{\sigma_i'(j)}}{w_{\sigma_i(j)}}\right)^{j}  \right]
	\left( \frac{\prod_{j'\in I_i'} w'_{j'}}{\prod_{j\in I_i} w_j}\right)^{|I_0|+\cdots+|I_{i-1}|}.
\end{split}
\end{equation}
We now reorganize the summations over $s_1, \cdots, s_k$ and $\sigma, \sigma'$ in~\eqref{eq:aux_2016_10_16_02} as follows. 
We first decide on two partitions $I_0, \cdots, I_k$ and $I'_0, \cdots, I'_k$ of $\{1, \cdots, N\}$ satisfying $|I_i|=|I_i'|\neq 0$ for all $i$.
Then for each $I_i$, we consider a bijection $\sigma_i$ from $\{1, \cdots, |I_i|\}$ to $I_i$ and a bijection $\sigma'_i$ from $\{1, \cdots, |I_i|\}$ to $I_i$. 
The collection of $\sigma_0, \cdots, \sigma_k$ is equivalent to a permutation $\sigma$. 
Note that the sign  becomes
\beq
	\sign(\sigma)= (-1)^{\#(I_0,\cdots,I_k)} \prod_{i=0}^k \sign(\sigma_i)
\eeq
where 
\begin{equation}
	\#(I_0,\cdots,I_k):=
	\left| \{ (m,n)\in I_i\times I_j : m<n, \, i>j\}\right|.
\end{equation}
It is easy to see that summing over the partitions and then over the bijections $\sigma_i$ and $\sigma_i'$ is equivalent to summing  
over $s_1, \cdots, s_k$ and $\sigma, \sigma'$ in~\eqref{eq:aux_2016_10_16_02}. 
Hence we obtain 
\begin{equation}
\label{eq:aux_2016_10_17_02}
\begin{split}
	&H_a(W;W')= 	\sum_{k=0}^{N-1}  \sum_{\cI, \cI'}  (-1)^{\#(\cI)+\#(\cI')} 
 	\left[ \prod_{i=1}^k \left( \frac{\prod_{j'\in I_i'} w'_{j'}}{\prod_{j\in I_i} w_j}\right)^{|I_0|+\cdots+|I_{i-1}|} \right]
	\PQS(I_0,I'_0)\prod_{i=1}^k\PQ(I_i,I'_i)
\end{split}
\end{equation}
where the second sum is over two partitions $\cI=(I_0, \cdots, I_k)$ and $\cI'=(I'_0, \cdots, I'_k)$ of $\{1, \cdots, N\}$ such that $|I_i|=|I'_i|\neq 0$ for all $0\le i\le k$.
The function $\PQ(I;I')$ on two subsets of $\{1, \cdots, N\}$ of equal cardinality is defined by 
\begin{equation}
\label{eq:aux_2016_12_23_01}
\begin{split}
	\PQ(I,I')&= \left[ \sum_{\substack{	\sigma:\{1,\cdots, |I|\}\to I\\
		\sigma':\{1,\cdots,|I'|\}\to I' }}
	\sign(\sigma\sigma') \frac{P(\sigma, \sigma')}
{\prod_{j=2}^{|I|}\left(1-Q_j(\sigma, \sigma')\right)} \right] 
	\frac{Q(I,I')^{a+L-N-1}}
{1-Q(I,I')^{-1}} .
\end{split}
\end{equation}
Here the sum is over bijections $\sigma$ and $\sigma'$, and 
\begin{equation}
\begin{split}
	P(\sigma, \sigma')  = \prod_{\ell=1}^{|I|} \left(\frac{w'_{\sigma'(\ell)}}{w_{\sigma(\ell)}}\right)^{\ell}, 
	\quad
	Q_{j}(\sigma, \sigma') =  \prod_{\ell=j}^{|I|} \left(\frac{w'_{\sigma'(\ell)}+1}{w_{\sigma(\ell)}+1}\right), 
	\quad 
	Q(I,I')= \frac{\prod_{\ell'\in I'} (w'_{\ell'}+1)}{\prod_{\ell\in I}(w_\ell+1)} .
\end{split}
\end{equation}
Note that $Q_1(\sigma, \sigma')$ does not depend on $\sigma, \sigma'$ and is equal to $Q(I, I')$. 
On the other hand, 
\begin{equation}
\label{eq:aux_2016_12_23_01_2}
\begin{split}
	\PQS(I,I')&= \left[ \sum_{\substack{	\sigma:\{1,\cdots, |I|\}\to I\\
		\sigma':\{1,\cdots,|I'|\}\to I' }}
	\sign(\sigma\sigma') \frac{P(\sigma, \sigma')}
{\prod_{j=2}^{|I|}\left(1-Q_j(\sigma, \sigma')\right)} \right] 
{Q(I,I')^{a-1}}.
\end{split}
\end{equation}

\subsection{Step 2} 

We simplify $\PQS(I,I')$ and $\PQ(I,I')$.
They have a common sum. 
We claim that this sum is equal to 
\begin{equation}
\label{eq:aux_2016_10_18_06}
	\left(\frac{\prod_{i'\in I'} w'_{i'}}{\prod_{i\in I}w_i}\right) \left(\prod_{i\in I}(w_i+1)-\prod_{i'\in I'} (w'_{i'}+1)\right)
\det\left[\frac{1}{w_i-w'_{i'}}\right]_{i\in I,i'\in I'}.
\end{equation}
This implies that 
\begin{equation}
\label{eq:aux_2016_12_25_01}
\begin{split}
	\PQS(I,I')&=\left(\frac{\prod_{i'\in I'} (w'_{i'}+1)}{\prod_{i\in I}(w_i+1)}\right)^{a-1}\left(\frac{\prod_{i'\in I'} w'_{i'}}{\prod_{i\in I}w_i}\right)\left(\prod_{i\in I}(w_i+1)-\prod_{i'\in I'} (w'_{i'}+1)\right)
\det\left[\frac{1}{w_i-w'_{i'}}\right]_{i\in I,i'\in I'}
\end{split}
\end{equation}
and
\begin{equation}
\label{eq:aux_2016_12_25_01_2}
\begin{split}
	\PQ(I,I')&=-\frac{\left(\prod_{i'\in I'} (w'_{i'}+1)\right)^{a+L-N}}{\left(\prod_{i\in I}(w_i+1)\right)^{a+L-N-1}} \left(\frac{\prod_{i'\in I'} w'_{i'}}{\prod_{i\in I}w_i}\right)\det\left[\frac{1}{w_i-w'_{i'}}\right]_{i\in I,i'\in I'}.
\end{split}
\end{equation}

We now prove~\eqref{eq:aux_2016_10_18_06}. This follows from the next lemma. 
This lemma assumes that $I=I'=\{1, \cdots, n\}$ but the general case is obtained if we re-label the indices.

\begin{lm} \label{lm:identity_01}
For complex numbers $w_i$ and $w'_i$, $i=1, \cdots, n$, 
\begin{equation}
\label{eq:identity_01}
	\sum_{\sigma,\sigma'\in S_n}
	 \frac{\sign(\sigma\sigma') \displaystyle \prod_{i=1}^n \Big(\dfrac{w'_{\sigma'(i)}}{w_{\sigma(i)}}\Big)^{i-1}}{\displaystyle \prod_{j=2}^n\Big(1-\displaystyle \prod_{i=j}^n  \dfrac{w'_{\sigma'(i)}+1}{w_{\sigma(i)}+1}  \Big)}
	 =\left(\prod_{j=1}^n(w_j+1)-\prod_{j=1}^n (w'_{j}+1)\right)
\det\left[\frac{1}{w_i-w'_{i'}}\right]_{i,i'=1}^n.
\end{equation}
\end{lm}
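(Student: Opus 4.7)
The plan is to proceed by induction on $n$, viewing both sides of~\eqref{eq:identity_01} as rational functions in the $w_i$ and $w'_{i'}$. The base case $n=1$ is immediate: the LHS equals $1$ (no permutations beyond the identity, empty denominator product) and the RHS equals $((w_1+1)-(w'_1+1))/(w_1-w'_1)=1$.

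For the inductive step, I would fix all variables except $w_n$ and regard both sides as meromorphic functions of $w_n$. The RHS has simple poles exactly at $w_n=w'_j$ for $j=1,\ldots,n$ and tends to a controllable limit as $w_n\to\infty$. The LHS, term-by-term in $(\sigma,\sigma')$, has potential poles on the algebraic loci where a factor $1-\prod_{i=j}^n(w'_{\sigma'(i)}+1)/(w_{\sigma(i)}+1)$ vanishes. The desired poles $w_n=w'_j$ come only from the $j=n$ factor (when $\sigma(n)$ and $\sigma'(n)$ are the appropriate indices), but for $j<n$ there are many ``spurious'' higher-order algebraic poles such as $(w_{\sigma(n-1)}+1)(w_{\sigma(n)}+1)=(w'_{\sigma'(n-1)}+1)(w'_{\sigma'(n)}+1)$. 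The first key step would be to exhibit a sign-reversing pairing on $(\sigma,\sigma')$ whose contributions cancel all such spurious singularities.

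Once the LHS is known to have only the simple poles at $w_n=w'_j$, I would compute the residue at each such pole: only terms with $\sigma(n)$ fixed and $\sigma'(n)$ equal to the appropriate index survive, producing a sum of the same shape but over permutations of $\{1,\ldots,n-1\}$ with one fewer primed variable. Applying the inductive hypothesis to this smaller sum, and using the Cauchy-determinant expansion of $\det[1/(w_i-w'_{i'})]$ (which produces exactly the corresponding residue of the RHS), would match the two residues. Combining the residue match at all $n$ poles with a matching of the leading behavior as $w_n\to\infty$ (where both sides vanish like $1/w_n$ times an $(n-1)$-variable instance of the same identity) determines the rational function of $w_n$ uniquely.

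The main obstacle is the pole-cancellation step: a priori the LHS has a profusion of algebraic singularities coming from the deeper denominator factors, and showing that they all vanish after the double permutation sum requires an explicit combinatorial involution. An alternative (and perhaps cleaner) route is to formally expand each $1/(1-R)$ as a geometric series, interchange sums, and rewrite the LHS as
\[
\sum_{0=m_1\le m_2\le\cdots\le m_n}\det\!\bigl[w_j^{-(i-1)}(w_j+1)^{-m_i}\bigr]_{i,j=1}^n\det\!\bigl[(w'_j)^{i-1}(w'_j+1)^{m_i}\bigr]_{i,j=1}^n,
\]
after which one evaluates the multiple sum via a Cauchy-Binet-type argument. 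The obstacle in that approach is the weak ordering $m_1\le\cdots\le m_n$ with the fixed boundary $m_1=0$, which breaks the natural symmetry needed for a direct Cauchy-Binet reduction; the prefactor $\prod_j(w_j+1)-\prod_j(w'_j+1)$ on the RHS would arise precisely from the boundary contribution at $m_1=0$, while the Cauchy kernel emerges from summing the remaining geometric series $\sum_{m\ge 0}((w'+1)/(w+1))^m=(w+1)/(w-w')$.
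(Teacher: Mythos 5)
Both routes you sketch differ genuinely from the paper's proof, and both stall at exactly the points you yourself flag. The paper also inducts on $n$, but through a purely algebraic recursion that sidesteps both of your obstacles: decompose the double sum according to the values $\sigma(1)=\ell$ and $\sigma'(1)=\ell'$. Since the slot $i=1$ contributes trivially to the numerator (the exponent is zero) and never appears in any denominator factor (those involve $\prod_{i=j}^n$ for $j\ge 2$), peeling it off detaches precisely the $j=2$ factor of the denominator --- which, rewritten using $\sigma(1)=\ell,\sigma'(1)=\ell'$, equals $\bigl(\prod_{k\neq\ell}(w_k+1)-\prod_{k\neq\ell'}(w'_k+1)\bigr)\big/\prod_{k\neq\ell}(w_k+1)$ --- and leaves an inner sum of exactly the original shape on $n-1$ variables. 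When the induction hypothesis is applied to that inner sum, the prefactor $\prod_{k\neq\ell}(w_k+1)-\prod_{k\neq\ell'}(w'_k+1)$ coming from the RHS cancels the detached denominator identically; everything collapses to a weighted sum of Cauchy minors $\sum_{\ell,\ell'}(-1)^{\ell+\ell'}\tfrac{w_\ell}{(w_\ell+1)w'_{\ell'}}\det[C_{\ell,\ell'}]$, which the paper closes with a separate residue identity (Lemma~\ref{lem:sumCauchyq}). This slot-peeling cancellation is the whole point; it replaces both the pole-cancellation you would need and the multi-sum evaluation.

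Concretely, the gaps in your two routes are these. Route 1 requires the ``sign-reversing pairing'' that kills the spurious algebraic singularities of the inner denominator factors as $w_n$ varies; you name this as the main obstacle but do not construct it, so the residue argument never gets started. Route 2 has a structural mismatch: after the geometric-series expansion you arrive at $\sum_{0=m_1\le m_2\le\cdots\le m_n}\det\bigl[w_j^{-(i-1)}(w_j+1)^{-m_i}\bigr]\det\bigl[(w'_j)^{i-1}(w'_j+1)^{m_i}\bigr]$, but this is \emph{not} in the Gram form $\det[f_i(x_j)]\det[g_i(x_j)]$ that Andreief/Cauchy--Binet handles: the row index $i$ enters each entry both through the fixed power $w_j^{-(i-1)}$ and through the label $m_i$, so the determinant is not a minor of a kernel $\sum_m f(w,m)g(w',m)$, and the ordered sum with the fixed boundary $m_1=0$ compounds rather than causes the difficulty. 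Each gap is where the actual content of the identity lives, and the paper's recursion avoids both entirely.
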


\begin{proof} 
We use an induction in $n$. It is direct to check that the identity holds for $n=1$.

Let $n\ge 2$ and 
assume that the identity holds for index $n-1$. We now prove the identity for index $n$. 
We consider the sum according to different values of $\sigma(1)$ and $\sigma'(1)$. 
Setting $\sigma(1)=l$ and $\sigma'(1)=l'$, and then renaming the shifted version of the rest of $\sigma$ and $\sigma'$ by again $\sigma$ and $\sigma'$ respectively, 
the left-hand side of~\eqref{eq:identity_01} is equal to 
\begin{equation*} 
\begin{split}
	\sum_{\ell,\ell'=1}^n (-1)^{\ell+\ell'}
	\frac{\dfrac{w_\ell}{w'_{\ell'}}\prod_{k=1}^n\bigg(\dfrac{w'_k}{w_k}\bigg)}{1-\dfrac{w_\ell+1}{w'_{\ell'}+1}\prod_{k=1}^n \bigg( \dfrac{w'_k+1}{w_k+1} \bigg) }
	\left[ \sum_{\substack{\sigma:\{1,\cdots,n-1\}\to \{1,\cdots,n\}\setminus\{\ell\}\\ \sigma':\{1,\cdots,n-1\}\to \{1,\cdots,n\}\setminus\{\ell'\}}}
		\frac{\sign(\sigma)\sign(\sigma')\prod_{i=2}^{n-1}\bigg(\dfrac{w'_{\sigma'(i)}}{w_{\sigma(i)}}\bigg)^{i-1}}{\prod_{j=2}^{n-1}\bigg(1-\prod_{i=j}^{n-1}\dfrac{w'_{\sigma'(i)}+1}{w_{\sigma(i)}+1}\bigg)} \right] .
\end{split}
\end{equation*}
From the induction hypothesis, the bracket term is equal to 
\begin{equation*}
\begin{split}
	\Big(\prod_{\substack{j=1\\ j\neq \ell}}^n(w_j+1)-\prod_{\substack{j=1\\ j\neq \ell'}}^n (w'_{j}+1)\Big)
	\det\left[\frac{1}{w_i-w'_{i'}}\right]_{\substack{1\le i,i'\le n\\ i\ne \ell,i'\ne \ell'}}.
\end{split}
\end{equation*}
Hence the sum is equal to 
\begin{equation*}
\begin{split}
	& \sum_{\ell,\ell'=1}^n (-1)^{\ell+\ell'}
	\Big( \prod_{\substack{1\le k'\le n\\ k'\ne \ell'}}w'_{k'} \Big) 
	\Big( \prod_{\substack{1\le k\le n\\ k\ne \ell}}\dfrac{w_k+1}{w_k} \Big) 
	\det\left[\frac{1}{w_i-w'_{i'}}\right]_{\substack{1\le i,i'\le n\\ i\ne \ell,i'\ne \ell'}} .
\end{split}
\end{equation*}
This is same as 
\begin{equation*}
\begin{split}
	\left( \prod_{k=1}^n\frac{w'_k(w_k+1)}{w_k} \right)
	\left( \sum_{i,i'=1}^n   (-1)^{\ell+\ell'}  \frac{w_\ell}{(w_\ell+1)w'_{\ell'}}\det\left[\frac{1}{w_i-w'_{i'}}\right]_{\substack{1\le i,i'\le n\\ i\ne \ell,i'\ne \ell'}} \right).
\end{split}
\end{equation*}
Lemma~\ref{lem:sumCauchyq} below implies that the sum in the second parentheses is equal to 
\begin{equation*}
\begin{split}
	\left(\prod_{k=1}^n\frac{w_k}{w'_k(w_k+1)}\right) \left(\prod_{i=1}^n(w_i+1)-\prod_{i=1}^n(w'_i+1)\right) \det\left[\frac{1}{w_i-w'_{i'}}\right]_{i,i'=1}^n.
\end{split}
\end{equation*}
This completes the proof. 
\end{proof}

\begin{lm} \label{lem:sumCauchyq}
For distinct complex numbers $x_1, \cdots, x_n$ and $y_1, \cdots, y_n$, let $C$ be the $n\times n$ Cauchy matrix with entries $\frac{1}{x_i-y_j}$. 
Let $C_{\ell,k}$ be the matrix obtained from $C$ by removing row $\ell$ and column $k$. 
Define the functions  
\beq
	A(z)= \prod_{i=1}^n (z-x_i), \qquad B(z)=\prod_{i=1}^n (z-y_i). 
\eeq
Then
\beq \label{eq:Cauchyratosu}
	\sum_{\ell,k=1}^n (-1)^{l+k} \frac{x_\ell}{(x_\ell+1) y_k} \det \left[C_{\ell,k}\right]
	 = \frac{A(0)}{B(0)} \left( 1-\frac{B(-1)}{A(-1)} \right)  \det \left[C \right] .
\eeq
\end{lm}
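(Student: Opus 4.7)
\medskip
\noindent\textbf{Proof plan for Lemma~\ref{lem:sumCauchyq}.}
The plan is to recognize the left-hand side of~\eqref{eq:Cauchyratosu} as a quantity of the form $v^T \operatorname{adj}(C) u$ and then apply the matrix determinant lemma. Since the cofactors of $C$ are exactly $(-1)^{\ell+k}\det[C_{\ell,k}]$, setting $u=(u_\ell)_{\ell=1}^n$ with $u_\ell=x_\ell/(x_\ell+1)$ and $v=(v_k)_{k=1}^n$ with $v_k=1/y_k$ yields
\begin{equation*}
\sum_{\ell,k=1}^n (-1)^{\ell+k}\frac{x_\ell}{(x_\ell+1)y_k}\det[C_{\ell,k}] \;=\; v^T\operatorname{adj}(C)u \;=\; \det(C+uv^T)-\det(C).
\end{equation*}
So the problem reduces to computing a single rank-one perturbation of a Cauchy matrix.

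The key trick that makes the perturbed determinant explicit is the decomposition $\frac{x_\ell}{x_\ell+1}=1-\frac{1}{x_\ell+1}$, which splits $uv^T$ into the difference of two rank-one matrices $u^{(1)}v^T$ and $u^{(2)}v^T$ with $u^{(1)}_\ell=1$ and $u^{(2)}_\ell=1/(x_\ell+1)$. Applying the matrix determinant lemma separately to each piece, one obtains
\begin{equation*}
\sum_{\ell,k}(-1)^{\ell+k}\frac{x_\ell}{(x_\ell+1)y_k}\det[C_{\ell,k}] \;=\; \det(C+u^{(1)}v^T)-\det(C+u^{(2)}v^T).
\end{equation*}
Each of the perturbed matrices has entries that telescope neatly:
\begin{equation*}
\frac{1}{x_i-y_j}+\frac{1}{y_j}=\frac{x_i}{y_j(x_i-y_j)},\qquad \frac{1}{x_i-y_j}+\frac{1}{(x_i+1)y_j}=\frac{x_i(y_j+1)}{(x_i+1)y_j(x_i-y_j)}.
\end{equation*}
So both are diagonal rescalings of the original Cauchy matrix, and their determinants come out to $\bigl(\prod x_i/\prod y_j\bigr)\det C$ and $\bigl(\prod\tfrac{x_i}{x_i+1}\prod\tfrac{y_j+1}{y_j}\bigr)\det C$, respectively.

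Rewriting the scalar factors using $A(0)=(-1)^n\prod x_i$, $A(-1)=(-1)^n\prod(x_i+1)$, $B(0)=(-1)^n\prod y_j$, $B(-1)=(-1)^n\prod(y_j+1)$, these factors become $A(0)/B(0)$ and $A(0)B(-1)/(A(-1)B(0))$. Subtracting and factoring gives
\begin{equation*}
\left(\frac{A(0)}{B(0)}-\frac{A(0)B(-1)}{A(-1)B(0)}\right)\det(C)=\frac{A(0)}{B(0)}\left(1-\frac{B(-1)}{A(-1)}\right)\det(C),
\end{equation*}
which is exactly the right-hand side of~\eqref{eq:Cauchyratosu}. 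There is no substantial obstacle; the only points requiring care are the sign convention in the matrix determinant lemma (making sure the relation between $v^T\operatorname{adj}(C)u$ and $\sum(-1)^{\ell+k}u_\ell v_k\det[C_{\ell,k}]$ is set up correctly) and the bookkeeping of the $(-1)^n$ factors when translating between $\prod x_i$, $\prod(x_i+1)$ and $A(0)$, $A(-1)$ (and similarly for $B$), all of which cancel cleanly in the final ratio.
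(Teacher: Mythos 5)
Your proof is correct, and it takes a genuinely different route from the paper's. The paper proves the identity by residue calculus: it introduces the double contour integral
$\frac{1}{(2\pi \mathrm{i})^2} \oint\oint f(z)g(\xi)\,\frac{B(z)A(\xi)}{(z-\xi)A(z)B(\xi)}\,\dd z\,\dd\xi$
with $f(z)=\frac{z}{z+1}$, $g(\xi)=\frac1\xi$, argues it vanishes by pushing the $\xi$-contour to infinity, and then expands by residues to recover the desired cofactor sum plus boundary terms. Your approach instead identifies the sum directly as $v^T\mathrm{adj}(C)u = \det(C+uv^T)-\det(C)$ via the matrix determinant lemma, and then exploits the partial-fraction split $\frac{x}{x+1}=1-\frac{1}{x+1}$ so that each of the two resulting rank-one perturbations of $C$ collapses (by the Cauchy-kernel covariance under M\"obius maps) to a diagonal rescaling of $C$, whose determinant is explicit. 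Linearity of $u\mapsto v^T\mathrm{adj}(C)u$ is the one step that needs a word of justification to convert the difference of two perturbed determinants into the original cofactor sum, and you have that right. Your argument is more elementary and self-contained; the paper's contour-integral framework has the advantage of being a flexible template that the authors reuse (e.g.\ for the companion Lemma 5.6 of the paper with a different choice of $f,g$), but your method is arguably cleaner here and essentially amounts to reading the paper's own post-lemma Remark (the rank-one/Hadamard formula) in the reverse direction, from formula to proof.
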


\begin{proof}
Recall  the Cauchy determinant formula, 
\beqq	
	\det\left[ C \right] = \frac{\prod_{i<j}(x_i-x_j)(y_j-y_i)}{\prod_{i,j} (x_i-y_j)}.
\eeqq
Note that $C_{\ell,k}$ is also a Cauchy matrix. 
Hence we find that 
\beq \label{eq:ratioCst}
	\frac{\det\left[C_{\ell,k}\right]}{\det \left[C \right]} = (-1)^{\ell+k+1}\frac{B(x_\ell)A(y_{k})}{(x_\ell-y_{k})A'(x_\ell)B'(y_{k})}.
\eeq

Let $f$ and $g$ be meromorphic functions with finitely many poles and consider the double integral
\beq \label{eq:dintl}
	\frac1{(2\pi \ii)^2} \oint_{|\xi|=r_2} \oint_{|z|=r_1} f(z)g(\xi) \frac{B(z)A(\xi)}{(z-\xi)A(z)B(\xi)}  \dd z\dd\xi .
\eeq
Here we take $r_1<r_2$  large so that the poles of $\frac{f(z)}{A(z)}$ are inside $|z|<r_1$ and the poles of $\frac{g(\xi)}{B(\xi)}$ are inside $|\xi|<r_2$. 
Consider $f(z)= \frac{z}{z+1}$ and $g(\xi)=\frac1{\xi}$. 
By changing the order of integration and noting that the integrand is $O(\xi^{-2})$ as $\xi\to\infty$ for fixed $z$, the double integral is zero by taking the $\xi$ contour to infinity. 
On the other hand, we also evaluate the double integral by residue Calculus. 
For given $\xi$,  the integral in $z$ is equal to 
\beqq
	\frac{A(\xi)B(-1)}{\xi(\xi+1)B(\xi)A(-1)} 
	+ \sum_{\ell=1}^n  \frac{x_\ell A(\xi)B(x_\ell)}{(x_\ell+1)\xi(x_\ell-\xi)B(\xi)A'(x_\ell)}.
\eeqq
The first term is $O(\xi^{-2})$ and hence the integral with respect to $\xi$ is zero. 
The integral of the second term is, by residue Calculus, 
\beqq
	\left[ \sum_{\ell=1}^n  \frac{B(x_\ell)}{(x_\ell+1)A'(x_\ell)}  \right] \frac{A(0)}{B(0)}
	 + \sum_{\ell,k=1}^n\frac{x_\ell B(x_\ell)A(y_k)}{(x_\ell+1) y_k(x_\ell-y_k)A'(x_\ell)B'(y_k)} .
\eeqq
Since the double integral is zero, the above expression is zero. 
Now the sum inside the bracket can be simplified to $1-\frac{B(-1)}{A(-1)}$ by considering the integral of $\frac{B(z)}{(z+1)A(z)}$.
Therefore, we obtain
\beqq
	\sum_{\ell,k=1}^n\frac{x_\ell B(x_\ell)A(y_k)}{(x_\ell+1) y_k(x_\ell-y_k)A'(x_\ell)B'(y_k)} 
	 = - \left( 1-\frac{B(-1)}{A(-1)} \right) \frac{A(0)}{B(0)}.
\eeqq
Using~\eqref{eq:ratioCst}, we obtain the lemma. 
\end{proof}

\begin{rmk}
Note that using the rank one property and  the Hadamard's formula for the inverse of a matrix, 
\beq \label{eq:Cran1pr}
\bes
	\det\left[ \frac1{x_i-y_j}+ f(x_i)g(y_j) \right]_{i,j=1}^n
	&= \det[C] \left(1+ \sum_{\ell, k=1}^n g(y_k) (C^{-1})_{k,\ell} f(x_\ell) \right) \\
	&= \det[C] + \sum_{\ell, k=1}^n (-1)^{\ell+k}  \det[ C_{\ell, k}] f(x_\ell) g(y_k) . 
\end{split}
\eeq
Hence,  the above lemma implies that 
\beq
	\det\left[ \frac1{x_i-y_j}+ \frac{x_i}{(x_i+1)y_j} \right]_{i,j=1}^n 
	= \left( 1 + \frac{A(0)}{B(0)}  -\frac{A(0)B(-1)}{B(0)A(-1)} \right)    \det\left[ \frac1{x_i-y_j} \right]_{i,j=1}^n . 
\eeq
\end{rmk}

We will need the following variation of the above lemma in the next subsection. 

\begin{lm} \label{lem:sumCauchyq2}
Using the same notations as Lemma~\ref{lem:sumCauchyq}, 
\beq \label{eq:Cauchyratosu2}
	\det\left[ \frac1{x_i-y_j}+ \frac{u}{x_j} \right]_{i,j=1}^n 
	= \left(1 + u \left( 1-  \frac{B(0)}{A(0)} \right) \right) \det \left[C \right] . 
\eeq
\end{lm}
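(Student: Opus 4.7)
The plan is to mimic the structure of the proof of Lemma~\ref{lem:sumCauchyq}, exploiting the rank-one nature of the perturbation and then evaluating the resulting sum via a one-variable contour integral (rather than the double integral needed there). First, I would observe that the additive term $\frac{u}{x_j}$ makes the perturbation $M-C$ a rank-one matrix, so that the general rank-one identity~\eqref{eq:Cran1pr} applies with an appropriate choice of $f$ and $g$. Writing the $(\ell,k)$ cofactor ratio in closed form via the Cauchy determinant formula,
\[
	\frac{\det[C_{\ell,k}]}{\det[C]} = (-1)^{\ell+k+1} \frac{B(x_\ell)\,A(y_k)}{(x_\ell-y_k)\,A'(x_\ell)\,B'(y_k)},
\]
reduces the determinant to a double sum over $\ell$ and $k$.

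Next, I would collapse one of the two sums using a standard partial-fraction identity. Since $\deg A = \deg B = n$ with matching leading coefficients,
\[
	\frac{A(y)}{B(y)} = 1 + \sum_{k=1}^{n} \frac{A(y_k)}{(y-y_k)\,B'(y_k)} .
\]
Evaluating at $y = x_\ell$ and using $A(x_\ell)=0$ gives $\sum_k \frac{A(y_k)}{(x_\ell-y_k)B'(y_k)} = -1$. This collapses the double sum into a single sum of the form $\sum_\ell \frac{B(x_\ell)}{x_\ell A'(x_\ell)}$, up to explicit prefactors.

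The final step is to evaluate this remaining single sum, which I would do by contour integration, in direct parallel with the trick used in the proof of Lemma~\ref{lem:sumCauchyq}. Consider
\[
	\frac{1}{2\pi\ii}\oint \frac{B(z)-A(z)}{z\,A(z)}\,\dd z
\]
along a circle of large radius. The integrand is $O(z^{-2})$ at infinity, so the integral vanishes, and summing residues at $z=0$ and at $z=x_\ell$ yields
\[
	\sum_{\ell=1}^{n} \frac{B(x_\ell)}{x_\ell\,A'(x_\ell)} = 1 - \frac{B(0)}{A(0)} .
\]
Combining this with the sign and prefactor accounting from the rank-one expansion produces the claimed formula $\det[M] = \bigl(1 + u(1 - B(0)/A(0))\bigr)\det[C]$.

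The main obstacle I anticipate is bookkeeping: one must be careful about the pairing of the row/column index in the perturbation $u/x_j$ with the correct slot in the Cauchy cofactor ratio and the correct variable in the partial fraction step, since the index $j$ in $x_j$ runs over the same range as the $y$-labels in the Cauchy matrix. Once the rank-one structure is set up correctly (matching the shape of~\eqref{eq:Cran1pr}), the rest of the argument is routine residue calculus, and no new analytic input beyond what was used in Lemma~\ref{lem:sumCauchyq} is needed.
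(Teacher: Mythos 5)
Your plan is correct and yields the stated result, but it differs slightly in mechanism from the paper's own proof. The paper cites $f(z)=u/z$ and $g(\xi)=1$ in the double contour integral~\eqref{eq:dintl} — the same one-shot device used for Lemma~\ref{lem:sumCauchyq} — and lets the outer contour recede to infinity, handling both indices of the double sum at once. You instead first collapse the $k$-sum algebraically via the partial-fraction identity $\sum_k\frac{A(y_k)}{(x_\ell-y_k)B'(y_k)}=-1$ and then finish the remaining $\ell$-sum by a one-variable contour integral for $\frac{B(z)-A(z)}{zA(z)}$. These are the same residue computation organized differently; your split makes each step easier to check, while the paper's double integral is a single uniform template covering both lemmas.

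The bookkeeping issue you anticipate is real and deserves to be resolved rather than flagged. As written,~\eqref{eq:Cauchyratosu2} has the perturbation $\frac{u}{x_j}$, which depends on the column index $j$ through $x_j$ and therefore does not fit the $f(x_i)g(y_j)$ shape of~\eqref{eq:Cran1pr}. The paper's own hint ($f(z)=u/z$, $g(\xi)=1$, so $f$ acts on the row variable) and the application in Step~4 of Section~\ref{sec:proof_DRL}, where the added entry $-\frac{1}{(1-q)w_i}$ is a function of the row variable, both show the intended perturbation is $\frac{u}{x_i}$; a direct numerical check (e.g.\ $n=2$, $x=(2,3)$, $y=(5,7)$, $u=1$) confirms that the $\frac{u}{x_j}$ version of~\eqref{eq:Cauchyratosu2} is false while the $\frac{u}{x_i}$ version holds. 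If you literally used $\frac{u}{x_j}$, the collapse would leave $\sum_k\frac{A(y_k)}{x_k B'(y_k)}$, which is not equal to $1-B(0)/A(0)$. Your derivation in fact lands on $\sum_\ell\frac{B(x_\ell)}{x_\ell A'(x_\ell)}$, with the $1/x$ factor carried by the row index $\ell$, so you have implicitly adopted the correct $\frac{u}{x_i}$ reading; making that choice explicit would close the gap between the stated perturbation and what your formulas compute.
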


\begin{proof}
The proof is similar as the previous lemma using $f(z)=\frac{u}{z}$ and $g(\xi)=1$ in~\eqref{eq:dintl} instead. We also use~\eqref{eq:Cran1pr}. 
\end{proof}

\subsection{Step 3}

We insert the formulas~\eqref{eq:aux_2016_12_25_01} and~\eqref{eq:aux_2016_12_25_01_2} into~\eqref{eq:aux_2016_10_17_02},
and then reorganize the sum as follows. 
For the partitions $\cI=(I_0, \cdots, I_k)$ and $\cI'=(I'_0, \cdots, I'_k)$, we consider the first parts $I_0$ and $I'_0$ separately. 
Set $\ccI=(I_1, \cdots, I_k)$ and $\ccI'=(I'_1, \cdots, I'_k)$. 
Note that 
\beqq
	\#(\cI)= \#(I_0, I_0^c)+ \#(\ccI), \qquad \#(\cI')= \#(I'_0, (I'_0)^c)+ \#(\ccI')
\eeqq
where $I_0^c=\{1, \cdots, N\}\setminus I_0$ and $(I'_0)^c=\{1, \cdots, N\}\setminus I'_0$. 
Then~\eqref{eq:aux_2016_10_17_02} can be written as
\begin{equation}
\label{eq:aux_2016_12_25_02}
\begin{split}
	H_a(W;W')=
	\sum_{\substack{I_0,I'_0\subseteq\{1,\cdots,N\} \\ |I_0|=|I'_0|\ne 0}}
	 (-1)^{\#(I_0,I_0^c)+\#(I'_0,(I'_0)^c)}	 \PQS(I_0,I'_0)
	\left[ \frac{\left(\prod_{i'\in (I'_0)^c}(w'_{i'}+1)\right)^{a+L-N}}
	{\left(\prod_{i\in I_0^c}(w_i+1)\right)^{a+L-N-1}} \right] S(I_0, I'_0)
\end{split}
\end{equation}
where
\begin{equation}
\label{eq:aux_2016_12_25_02_2}
\begin{split}
	S(I_0, I'_0)=\sum_{k=1}^{N-|I_0|} \sum_{\ccI, \ccI' }  (-1)^{k+\#(\ccI)+\#(\ccI')}
		\prod_{j=1}^k  \left[  \left( \frac{\prod_{i'\in I'_j}w'_{i'}}{\prod_{i\in I_j}w_i} 
		\right)^{|I_0|+\cdots+|I_{j-1}|+1} 
		\det \left[\frac{1}{w_i-w'_{i'}} \right]_{i\in I_j,i'\in I'_j} \right] .
\end{split}
\end{equation}
Here the second sum in~\eqref{eq:aux_2016_12_25_02_2} is over all partitions $\ccI=(I_1, \cdots, I_k)$ of $\{1, \cdots, N\}\setminus I_0$ and partitions $\ccI'=(I'_1, \cdots, I'_k)$ of $\{1, \cdots, N\}\setminus I'_0$ satisfying $|I_i|=|I'_i|\neq 0$ for all $1\le i\le k$. 
When $|I_0|=N$, we understand that $S(I_0, I'_0)=1$. 
The following lemma simplifies $S(I_0, I_0')$.

\begin{lm}\label{lm:identity_02}
Let $n\ge 1$. Let  $w_i$ and $w'_i$, $1\le i\le n$, be complex numbers. Then 
\begin{equation} \label{eq:identity_02}
	\begin{split}
	&\sum_{k=1}^{n}
	\sum_{\cJ, \cJ'} (-1)^{k+\#(\cJ)+\#(\cJ')} 
	\prod_{j=1}^k \left[ \left(\frac{\prod_{i'\in J'_j} w'_{i'}}{\prod_{i\in J_j} w_i}\right)^{|J_1|+\cdots+|J_{j-1}|+1}  
	\det\left[\frac{1}{w_i-w'_{i'}}\right]_{{i\in J_j, i'\in J'_j}} \right] \\
	&= \left[ \prod_{j=1}^n\left(\frac{ w'_{j}}{ w_j}\right)^{n} \right] \det\left[\frac{1}{-w_i+w'_{i'}}\right]_{1\le i,i'\le n} .
	\end{split}
\end{equation}
Here the second sum is over all partitions $\cJ=(J_1, \cdots, J_k)$ and $\cJ'=(J'_1, \cdots, J'_k)$ of $\{1, \cdots, n\}$ such that 
$|J_i|=|J'_i|\neq 0$ for all $1\le i\le k$. 
\end{lm}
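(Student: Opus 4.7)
My plan is to prove Lemma~\ref{lm:identity_02} by induction on $n$. The base case $n=1$ is a direct calculation: the only partition has $k=1$, $J_1=J'_1=\{1\}$, and both sides equal $\frac{w'_1}{w_1(w'_1-w_1)}$. For the inductive step, I condition on the first block $(J_1,J'_1)$ with $|J_1|=|J'_1|=m\in\{1,\ldots,n\}$, where $m=n$ captures the $k=1$ case. The sign factorizes as $(-1)^{\#(\cJ)+\#(\cJ')}=(-1)^{\#(J_1,J_1^c)+\#(J'_1,(J'_1)^c)}\cdot(-1)^{\#(\tilde\cJ)+\#(\tilde\cJ')}$, where $\tilde\cJ,\tilde\cJ'$ are the induced partitions of the complements, since any pair $(p,q)$ with $p<q$ and $p$ in a later block than $q$ either has $q\in J_1$ or has both $p,q\notin J_1$. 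For each $j\ge 2$, the exponent $|J_1|+\cdots+|J_{j-1}|+1$ equals $m+(|J_2|+\cdots+|J_{j-1}|+1)$; the shift by $m$ in each of the $k-1$ remaining factors combines into $\left(\frac{\prod_{i'\notin J'_1}w'_{i'}}{\prod_{i\notin J_1}w_i}\right)^{m}$ since $\sum_{j\ge 2}|J_j|=n-m$. The residual inner sum is then exactly the LHS of~\eqref{eq:identity_02} for the reduced index sets and, by the inductive hypothesis, equals $\left(\frac{\prod_{i'\notin J'_1}w'_{i'}}{\prod_{i\notin J_1}w_i}\right)^{n-m}\det\!\left[\frac{1}{w'_{i'}-w_i}\right]_{i\notin J_1,\,i'\notin J'_1}$.

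Assembling all contributions and dividing both sides of~\eqref{eq:identity_02} by the target prefactor $\prod_j(w'_j/w_j)^n$, the induction reduces to the determinantal identity
\[
\sum_{m=0}^{n}(-1)^{n-m}\sum_{|J_1|=|J'_1|=m}(-1)^{\#(J_1,J_1^c)+\#(J'_1,(J'_1)^c)}\left(\frac{W_{J_1}}{W'_{J'_1}}\right)^{n-1}\det[C|_{J_1,J'_1}]\det[C|_{J_1^c,(J'_1)^c}]=0,
\]
where $C_{ij}=1/(w_i-w'_j)$, $W_J=\prod_{i\in J}w_i$, $W'_{J'}=\prod_{j\in J'}w'_j$, and the terms $m=0$ and $m=n$ correspond to the empty/full block boundary contributions. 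Since $m(m+1)$ is even, the sign $(-1)^{\#(J_1,J_1^c)+\#(J'_1,(J'_1)^c)}$ coincides with the Laplace expansion sign $(-1)^{\sum_{i\in J_1}i+\sum_{j\in J'_1}j}$. Absent the weight $(W_{J_1}/W'_{J'_1})^{n-1}$, the inner Laplace sum collapses to $\binom{n}{m}\det[C]$ and the outer alternating sum vanishes by $\sum_m(-1)^{n-m}\binom{n}{m}=0$.

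The main obstacle is proving this identity in the presence of the non-trivial weight. My plan is to clear Cauchy denominators via the factorization $\det[C|_{J,J'}]=\pm\,\Delta(\{w_i\}_{i\in J})\,\Delta(\{w'_j\}_{j\in J'})\big/\prod_{i\in J,j\in J'}(w_i-w'_j)$, so that after multiplying by $\prod_{i,j}(w_i-w'_j)\cdot\prod_j(w'_j)^{n-1}$ the identity becomes a polynomial identity in $w_1,\ldots,w_n,w'_1,\ldots,w'_n$. Transposing two $w_i$'s maps each label $(J_1,J'_1)$ to a permuted label with compensating sign changes coming from the shuffle sign and from the Cauchy determinants; this shows the resulting polynomial is antisymmetric in $(w_1,\ldots,w_n)$ and, separately, in $(w'_1,\ldots,w'_n)$. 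A degree count---each variable appears with individual degree at most $2n-2$, while antisymmetry forces divisibility by $\Delta(w)\Delta(w')$---combined with an explicit evaluation at a convenient specialization (for instance, by sending one $w_i\to\infty$ and using the trivial Laplace case recalled above), will force the polynomial to vanish identically and complete the induction.
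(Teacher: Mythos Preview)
Your induction setup---condition on the first block $(J_1,J'_1)$, apply the hypothesis to the complement, and reduce the claim to the vanishing of the weighted Laplace-type sum over all $J,J'$ with $|J|=|J'|$---is exactly the paper's. The divergence is in how you finish.

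Your final step has a genuine gap. The degree bound $2n-2$ and the antisymmetry in each family are correct, but after dividing by $\Delta(w)\Delta(w')$ you are left with a \emph{symmetric} polynomial of degree $\le n-1$ in each of the $2n$ variables; a single limit $w_n\to\infty$ does not determine such a polynomial. Concretely, the $w_n^{2n-2}$-coefficient of your cleared-denominators expression is itself a weighted sum of the same shape (over $J_1\ni n$, with weight $(W_{J_1\setminus\{n\}}/W'_{J'_1})^{n-1}$ surviving), not the unweighted Laplace identity you invoke, so ``the trivial Laplace case'' does not apply there. You would need a much more elaborate multi-variable specialization or an additional structural input to close this.

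The paper supplies that input: the sum is recognized, via Lemma~\ref{lem:sumofAB}, as the Laplace expansion of $\det[A+B]$ with $A_{ij}=\dfrac{w_i^{\,n-1}}{(w'_j)^{\,n-1}(w_i-w'_j)}$ and $B_{ij}=\dfrac{-1}{w_i-w'_j}$. Then
\[
(A+B)_{ij}=\frac{(w'_j)^{\,n-1}-w_i^{\,n-1}}{(w'_j)^{\,n-1}(w'_j-w_i)}
=\frac{1}{(w'_j)^{\,n-1}}\sum_{\alpha=0}^{n-2}(w'_j)^{\alpha}\,w_i^{\,n-2-\alpha},
\]
so after pulling $(w'_j)^{-(n-1)}$ from each column the $i$th row lies in the $(n-1)$-dimensional span of $1,w'_j,\dots,(w'_j)^{\,n-2}$; hence the $n\times n$ determinant vanishes. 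This single-determinant collapse is the missing idea that replaces your specialization argument.
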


\begin{proof}
We use an induction in $n$. When $n=1$ the identity is trivial. 
Now we prove that the identity holds for index $n$ assuming that it  holds for all indices less than $n$. 
We fix $J_1= J$ and $J'_1=J'$ with $|J|=|J'|\neq 0$ and then apply the induction hypothesis on the remaining sum with index $n-|J|$.
Then, the left-hand side of~\eqref{eq:identity_02} is equal to
\beq \label{eq:yyz}
\bes
	-\sum_{\substack{J,J'\subseteq\{1,\cdots,n\}\\ |J|=|J'|\ne 0}}
	& (-1)^{\#(J,J^c)+\#(J',(J')^c)}
	\left( \frac{\prod_{i'\in J'}w'_{i'}}{\prod_{i\in J}w_i} \right) 
	\left(\frac{\prod_{i'\in (J')^c} w'_{i'}}{\prod_{i\in J^c} w_i}\right)^{n} \\
	&\qquad \times \det\left[\frac{1}{w_i-w'_{i'}}\right]_{{i\in J,i'\in J'}}
	\det\left[\frac{1}{-w_i+w'_{i'}}\right]_{{i\in J^c, i'\in (J')^c}}.
\end{split}
\eeq
Here the minus sign comes from $(-1)^k$ where $k$, the number of parts of partition $\cJ$, is reduced by $1$ when we apply the induction hypothesis since we remove $J_1$ in the counting. 
Now, note that the right-hand side of~\eqref{eq:identity_02} is the summand of~\eqref{eq:yyz} when $|J|=|J'|=0$.
Hence, after dividing by $\prod_{j=1}^n\big(\frac{ w'_{j}}{ w_j}\big)^{n}$, 
we find that~\eqref{eq:identity_02} is obtained if we show that 
\begin{equation*} \label{eq:aux_2016_10_20_01}
\begin{split}
	\sum_{\substack{J,J'\subseteq\{1,\cdots,n\}\\ |J|=|J'|}} 
	& (-1)^{\#(J,J^c)+\#(J',(J')^c)}
	\left( \frac{\prod_{i'\in J'}w'_{i'}}{\prod_{i\in J}w_i} \right)^{1-n}  \det\left[\frac{1}{w_i-w'_{i'}}\right]_{{i\in J,i'\in J'}}
	\det\left[\frac{1}{-w_i+w'_{i'}}\right]_{{i\in J^c, i'\in (J')^c}} 
\end{split}
\end{equation*}
is equal to $0$. 
Here the sum is over all $J,J'\subseteq\{1,\cdots,n\}$ with $|J|=|J'|$, including the case when $|J|=|J'|=0$.
By Lemma~\ref{lem:sumofAB} below, this sum is equal to
\beqq
	\det\left[\frac{(w_{i})^{n-1}}{(w'_{i'})^{n-1}}\frac{1}{w_i-w'_{i'}}+\frac{1}{-w_i+w'_{i'}}\right]_{i,i'=1}^n.
\eeqq
Hence it is enough to show that 
\beqq
	\det\left[\frac{(w'_{i'})^{n-1}-w_i^{n-1}}{w'_{i'}-w_i}\right]_{i,i'=1}^n=0.
\eeqq
Using $\frac{a^{n-1}-b^{n-1}}{a-b}=a^{n-2}+a^{n-3}b+\cdots + b^{n-2}$, this last determinant is a sum of the determinants of form
\beqq
	\det\left[(w'_{i'})^{\alpha_i}w_i^{n-2-\alpha_i}\right]_{i,i'=1}^{n} = \det\left[(w'_{i'})^{\alpha_i}\right]_{i,i'=1}^{n} \prod_{i=1}^n w_i^{n-2-\alpha_i} 
\eeqq
for some $\alpha_i\in\{0,1,\cdots,n-2\}$.
The last determinant is zero since at least two rows are equal. This completes the proof. 
\end{proof}

\begin{lm}\label{lem:sumofAB}
For two $n\times n$ matrices $A$ and $B$, 
\beq
	\sum_{\substack{J,J'\subseteq\{1,\cdots,n\}\\ |J|=|J'|}} 
	(-1)^{\#(J,J^c)+\#(J',(J')^c)}
	\det\left[A(i,i')\right]_{{i\in J,i'\in J'}}
	\det\left[B(i,i')\right]_{{i\in J^c, i'\in (J')^c}}
	= \det[A+B]
\eeq
where the sum is over all subsets $J$ and $J'$ of equal size, including the case when $J=J'=\emptyset$. 
\end{lm}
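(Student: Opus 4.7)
The plan is to recognize this identity as the classical ``sum-expansion'' for $\det(A+B)$ that follows from row-multilinearity of the determinant combined with the generalized Laplace expansion. No convergence or analytic subtlety is involved; the only thing that genuinely needs care is verifying that the combinatorial sign $(-1)^{\#(J,J^c)+\#(J',(J')^c)}$ appearing in the statement agrees with the sign produced by Laplace.

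First, I will use multilinearity of the determinant along the rows. Writing each row of $A+B$ as the sum of the corresponding row of $A$ and the corresponding row of $B$, and expanding, I obtain
\[
\det(A+B)=\sum_{J\subseteq\{1,\cdots,n\}}\det(M_J),
\]
where $M_J$ is the $n\times n$ matrix whose $i$-th row is the $i$-th row of $A$ if $i\in J$ and the $i$-th row of $B$ if $i\in J^c$. Next, for each $J$ I apply the generalized Laplace expansion along the $|J|$ rows of $M_J$ indexed by $J$:
\[
\det(M_J)=\sum_{\substack{J'\subseteq\{1,\cdots,n\}\\ |J'|=|J|}} (-1)^{\sigma(J)+\sigma(J')}\det[A(i,i')]_{i\in J,\,i'\in J'}\,\det[B(i,i')]_{i\in J^c,\,i'\in (J')^c},
\]
where $\sigma(S):=\sum_{s\in S}s$. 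Since the $J$-rows of $M_J$ come only from $A$ and the $J^c$-rows only from $B$, the sub-determinants factor exactly as shown. Summing over $J$ gives $\det(A+B)$ in the form claimed, modulo the sign.

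The remaining step is the sign identification. Writing $J=\{j_1<\cdots<j_k\}$, the definition of $\#(J,J^c)$ specializes to $\#(J,J^c)=|\{(m,n)\in J^c\times J:m<n\}|=\sum_{\ell=1}^k(j_\ell-\ell)=\sigma(J)-\binom{k+1}{2}$, and likewise $\#(J',(J')^c)=\sigma(J')-\binom{k+1}{2}$ (using $|J|=|J'|=k$). Since $2\binom{k+1}{2}=k(k+1)$ is even, we get
\[
(-1)^{\#(J,J^c)+\#(J',(J')^c)}=(-1)^{\sigma(J)+\sigma(J')},
\]
matching the Laplace sign. This completes the proof.

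There is no real obstacle: the hardest piece is merely to be careful with the interleaving sign, and the computation above reduces it to the elementary identity $\#(J,J^c)\equiv\sigma(J)\pmod 2$. The case $|J|=|J'|=0$ is included in the sum (contributing $\det(B)$) and matches the $J=\emptyset$ term of the multilinearity expansion, so the two sides agree term-by-term after reindexing.
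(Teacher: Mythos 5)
Your proof is correct, and it supplies exactly the detail that the paper's one-line ``direct to check by expanding all determinants by sums'' leaves implicit. The organization you chose---row multilinearity of $\det(A+B)$ to produce the sum over $J$ of $\det(M_J)$, followed by the generalized Laplace expansion of $\det(M_J)$ along the rows in $J$ to produce the sum over $J'$---is the natural and cleanest way to structure that ``direct check,'' and it avoids matching terms in raw permutation expansions. The only point requiring genuine care is the sign: your identity $\#(J,J^c)=\sum_{\ell=1}^{k}(j_\ell-\ell)=\sigma(J)-\binom{k+1}{2}$ for $J=\{j_1<\cdots<j_k\}$ is correct (the $\ell$-th smallest element $j_\ell$ of $J$ has exactly $j_\ell-\ell$ elements of $J^c$ below it), and since $|J|=|J'|$ forces the two $\binom{k+1}{2}$ contributions to cancel mod $2$, you recover the Laplace sign $(-1)^{\sigma(J)+\sigma(J')}$ precisely. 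The empty-set case is handled consistently on both sides. No gap.
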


\begin{proof}
It is direct to check by expanding all determinants by sums using definition. 
\end{proof}

We apply Lemma \ref{lm:identity_02} to $S(I_0, I'_0)$. Note that the power of the products of $w'_{i'}$ and $w_i$ in~\eqref{eq:aux_2016_12_25_02_2} is $|I_0|+|I_1|+\cdots+|I_k|+1$ while the corresponding products have power $|J_1|+ \cdots+|J_k|+1$ in the lemma. 
We find that 
\begin{equation}
\begin{split}
	S(I_0, I'_0)=\left( \frac{\prod_{i'\in (I'_0)^c}w'_{i'}}{\prod_{i\in I_0^c}w_i}  \right)^{N} 
	\det \left[\frac{1}{w_i-w'_{i'}} \right]_{i\in I_0^c,i'\in (I'_0)^c}  .
\end{split}
\end{equation}
Using this formula and also the formula~\eqref{eq:aux_2016_12_25_01} for $\PQS(I_0, I'_0)$, the equation~\eqref{eq:aux_2016_12_25_02} becomes
\begin{equation}
\label{eq:aux_2016_12_25_03}
\begin{split}
	&H_a(W;W') \\ 
&=
	\sum_{\substack{I_0,I'_0\subseteq\{1,\cdots,N\}\\ |I_0|=|I'_0|\ne 0}} 
	(-1)^{\#(I_0,I_0^c)+\#(I'_0,(I'_0)^c)}
	\left[ \frac{\prod_{i'\in I'_0}w'_{i'}(w'_{i'}+1)^{a}}
{\prod_{i\in I_0}w_i(w_i+1)^{a-1}} \right]
	\left[  \frac{\prod_{i'\in (I'_0)^c} (w'_{i'})^{N} (w'_{i'}+1)^{a+L-N}}
{\prod_{i\in I_0^c} w_i^{N} (w_i+1)^{a+L-N-1}} \right]
\\
&
	\qquad \qquad \times 
	\left(\frac{\prod_{i\in I_0}(w_i+1)}{\prod_{i'\in I'_0}(w'_{i'}+1)}-1\right)	
\det\left[\frac{1}{w_i-w'_{i'}}\right]_{i\in I_0,i'\in I'_0}
\det\left[\frac{1}{-w_i+w'_{i'}}\right]_{i\in I_0^c,i'\in (I'_0)^c}					 
\end{split}
\end{equation}
where we added the zero partitions $|I_0|=|I'_0|=0$ in the sum since the summand is zero in that case.

\subsection{Step 4}

We evaluate the sum~\eqref{eq:aux_2016_12_25_03}. 
So far $w_i$ and $w'_{i'}$ were any complex numbers. 
We now use the assumption that $w_i\in\roots_{z}$ and $w'_{i'}\in \roots_{z'}$. 
This means that 
\beqq
	w_i^N (w_i+1)^{L-N}= z^L, 
	\qquad 
	(w'_{i'})^N (w'_{i'}+1)^{L-N} = (z')^L
\eeqq
for all $i$ and $i'$, and hence the second square bracket in~\eqref{eq:aux_2016_12_25_03} can be simplified. 
We then separate the formula of $H_a(W;W')$ into two terms using the two terms in the big parentheses. 
The second term, which comes from the term $1$ in the big parenthesis, can be written as 
\beqq
\bes
	\left[ \prod_{j=1}^N\frac{(w'_j+1)^{a}}{(w_j+1)^{a-1}} \right] 
	& \left\{ \sum_{\substack{I_0,I'_0\subseteq\{1,\cdots,N\}\\ |I_0|=|I'_0|}} 
	 (-1)^{\#(I_0,I_0^c)+\#(I'_0,(I'_0)^c)}
	\left[ \frac{\prod_{i'\in I'_0}w'_{i'}}
{\prod_{i\in I_0}w_i} \right]
	\left[  \frac{\prod_{i'\in (I'_0)^c} (z')^L}
{\prod_{i\in I_0^c} z^L} \right] \right.
\\
&
	\qquad \qquad \times \left.
\det\left[\frac{1}{w_i-w'_{i'}}\right]_{i\in I_0,i'\in I'_0}
\det\left[\frac{1}{-w_i+w'_{i'}}\right]_{i\in I_0^c,i'\in (I'_0)^c} \right\}.
\end{split}
\eeqq
The sum is, after inserting the terms in the products into the determinants, of form in Lemma~\ref{lem:sumofAB} above, and hence is equal to 
\beqq
\bes
	\det\left[\frac{w'_{i'}}{w_i(w_i-w'_{i'})}-\left(\frac{z'}{z}\right)^L\frac{1}{w_i-w'_{i'}}\right]_{1\le i, i'\le N} .
\end{split}
\eeqq
The first term is also similar, and we obtain
\beq \label{eq:yyr1}
\bes
	H_a(W;W') 
	= 	\left[ \prod_{j=1}^N\frac{(w'_j+1)^{a}}{(w_j+1)^{a-1}} \right] (D_1-D_2)
\end{split}
\eeq
where, for $k=1,2$, 
\beq
\bes
	D_k= \det\left[\frac{ \frac{f_k(w'_{i'})}{f_k(w_i)}}{w_i-w'_{i'}}- \frac{q}{w_i-w'_{i'}}\right]_{i,i'=1}^N
\end{split}
\eeq
with
\beq
	f_1(w)= \frac{w}{w+1}, \qquad f_2(w)=w, \qquad 	q=\left(\frac{z'}{z}\right)^L . 
\eeq
The entries of the determinants $D_k$ are of form 
\beqq
	\frac{\frac{f(w'_{i'})}{f(w_i)}-1}{w_i-w'_{i'}} + \frac{1-q}{w_i-w'_{i'}}
	=(1-q) \left(  \frac{1}{w_i-w'_{i'}} + \frac{f(w'_{i'})-f(w_i)}{(1-q)f(w_i)(w_i-w'_{i'})}  \right) 
\eeqq
where $f$ represents either $f_1$ or $f_2$. 
For $f(w)=f_2(w)=w$, the last term is $-\frac{1}{(1-q)w_i}$ and 
\beq \label{eq:yyr2}
\bes
	D_2	
	&= (1-q)^N \det\left[\frac{1}{w_i-w'_{i'}} - \frac{1}{(1-q)w_i}\right] \\
	&= (1-q)^N \bigg( 1- \frac1{1-q} \bigg(1- \prod_{i=1}^N \frac{w'_{i'}}{w_i} \bigg) \bigg) 
	\det\left[\frac{1}{w_i-w'_{i'}}\right]_{i,i'=1}^N
\end{split}
\eeq 
where we used Lemma~\ref{lem:sumCauchyq2} for the second equality. 
On the other hand, 
\beqq
	D_1	
	= (1-q)^N \det\left[\frac{1}{w_i-w'_{i'}} - \frac{1}{(1-q)w_i(w'_{i'}+1)}\right] .
\eeqq
We may evaluate this determinant by finding a variation of Lemma~\ref{lem:sumCauchyq2}. Alternatively, we set $w_i=\frac1{x_i-1}$ and $w'_{i'}=\frac1{y_{i'}-1}$, and use Lemma~\ref{lem:sumCauchyq2} to obtain 
\beq \label{eq:yyr3}
\bes
	D_1	
	&= \frac{(1-q)^N}{\prod_{i=1}^N (w_iw'_{i})} \det\left[\frac{1}{y_{i'}-x_i} - \frac{1}{(1-q) y_{i'}}\right] \\
	&= (1-q)^N \bigg(1-\frac{1}{1-q}\bigg(1-\prod_{j=1}^N\frac{(w_j+1)w'_{j}}{w_j(w'_{j}+1)}\bigg)\bigg)
	\det\left[\frac{1}{w_i-w'_{i'}}\right]_{i,i'=1}^N.
\end{split}
\eeq 
From~\eqref{eq:yyr1},~\eqref{eq:yyr2}, and~\eqref{eq:yyr3}, we obtain 
\beqq
	H_a(W;W')=\bigg(1-\bigg(\frac{z'}{z}\bigg)^L\bigg)^{N-1} 
	\left[ \prod_{j=1}^N\frac{w'_j}{w_j} \right] 
	\bigg(\prod_{j=1}^N\frac{(w'_j+1)^{a-1}}{(w_j+1)^{a-2}} - \prod_{j=1}^N\frac{(w'_j+1)^{a}}{(w_j+1)^{a-1}}\bigg)
	\det\left[\frac{1}{w_i-w'_{i'}}\right]_{i,i'=1}^N .
\eeqq
This completes the proof of Proposition~\ref{prop:01}.

\section{Proof of Theorem~\ref{thm:htfntasy}}\label{sec:pfastp}

In this section, we prove Theorem~\ref{thm:htfntasy} starting from Corollary~\ref{cor:pTASEPstep}.

\subsection{Parameters} \label{sec:ptpamr}

We evaluate the limit of 
\begin{equation} 
    \label{eq:asgoaol}
	\prob\left( \height(\mr{p}_1) \ge b_1, \cdots,  \height(\mr{p}_m) \ge b_m \right) 
\end{equation}
as $L\to\infty$, 
where $\mr{p}_j= s_j \mr e_1+t_j\mr e_c = \ell_j \mr e_1+t_j\mr e_2$ with $s_j=\ell_j-(1-2\rho)t_j$,
\begin{equation}
\label{eq:scaling_t}
	t_j= \tau_j\frac{L^{3/2}}{\sqrt{\rho(1-\rho)}}   + O(L),
\end{equation}
\begin{equation}
\label{eq:scaling_ell}
	\ell_j = (1-2\rho) t_j + \gamma_j L + O(L^{1/2}) ,
\end{equation}
and 
\begin{equation}
\label{eq:scaling_b}
	b_j=2\rho(1-\rho)t_j+(1-2\rho)\ell_j-2x_j\rho^{1/2}(1-\rho)^{1/2} L^{1/2}
\end{equation}
for fixed $0<\tau_1< \cdots< \tau_m$, $\gamma_j\in[0,1]$ and $x_j\in\realR$. 
The analysis needs small changes if $\tau_i=\tau_{i+1}$ and $x_i<x_{i+1}$ for some $i$. 
We will comment such changes in the footnotes throughout the analysis.

It is tedious but easy to check in the analysis that 
the convergence is uniform for the parameters $\tau_j, \gamma_j$, and $x_j$ in compact sets. 
In order to keep the writing light, we do not keep track of the uniformity.

The height function and the particle location have the following relation:
\beq \label{eq:xandhet2}
	\text{$\height(\ell \mr e_1+t \mr e_2) \ge b$ if and only if $\xx_{N-\frac{b-\ell}2+1}(t)\ge \ell+1$}
\eeq
for all $b-2\ell\in 2\intZ$ satisfying $b\ge \height(\ell  \mr e_1)$. 
Hence, 
\begin{equation}
 \label{eq:asgoaol2}
 	\prob\left( \height(\mr{p}_1) \ge b_1, \cdots,  \height(\mr{p}_m) \ge b_m \right) 
 	= \prob \left( \xx_{k_1}(t_1)\ge a_1, \cdots, \xx_{k_m}(t_m)\ge a_m \right) 
\end{equation}
where $t_j$ are same as above,
\begin{equation}
\label{eq:scaling_a}
a_j=\ell_j+1,
\end{equation}
and
\begin{equation}
\label{eq:scaling_k}
k_j=N-\frac{b_j-\ell_j}{2}+1.
\end{equation}
From Corollary~\ref{cor:pTASEPstep}, we thus need to evaluate the limit of   
\begin{equation}
	\begin{split}
	&\prob \left( \xx_{k_1}(t_1)\ge a_1, \cdots, \xx_{k_m}(t_m)\ge a_m \right)
	= \oint\cdots\oint \const(\bzz) \gdet(\bzz)  
	\ddbar{\bz_1}\cdots\ddbar{\bz_m} 
	\end{split}
\end{equation}
where the contours are nest circles satisfying $0<|\bz_m|<\cdots< |\bz_1|<\rr_0$. 
Here we use the notation $\bz_j$, instead of $z_j$. The notation $z_j$ will be reserved for the rescaled parameter of $\bz_j$ in the asymptotic analysis.

In \cite{Baik-Liu16} and \cite{Liu16}, we analyzed the case when $m=1$. 
The case when $m\ge 2$ is similar and we follows the same strategy. 
The results and analysis of the above two papers are used heavily in this section.

We change the variables $\bz_j$ to $z_j$ defined by 
\begin{equation} 
\label{eq:ztozz}
z_j= (-1)^N \frac{\bz_j^{L}}{\rr_0^L}.
\end{equation}
Then $\frac{\dd \bz_j}{\bz_j} = \frac{\dd z_j}{L z_j}$, but the simple closed circle of $\bz_j$ becomes a circle with multiplicity $L$ of $z_j$. 
Note that $\const(\bzz)$ and $\gdet(\bzz)$ depend on $\bz_j$ through the set of the roots of the equation $w^N(w+1)^{L-N}=\bz_j^L$, which is unchanged if $\bz_j$ is changed $\bz_j e^{2\pi \ii k/L}$ for integer $k$. 
Hence the above integral is same as 
\begin{equation}
\label{eq:aux_2017_04_03_01}
	\begin{split}
	&\prob \left( \xx_{k_1}(t_1)\ge a_1, \cdots, \xx_{k_m}(t_m)\ge a_m \right)
	= \oint\cdots\oint \const(\bzz) \gdet(\bzz) \ddbar{z_1}\cdots\ddbar{z_m} 
	\end{split}
\end{equation}
where the integrals are nested simple circles such that $0<|z_m|<\cdots<|z_1|<1$ on the contours, and 
for given $z_j$, $\bz_j$ is any one of the $L$ roots of the equation $\bz_j^L=  (-1)^L \rr_0^L z_j$.

\begin{rmk} \label{rmk:zord}
We note that the analysis in this section does not depend on the fact that $|z_i|$ are ordered in a particular way.  
It is easy to check in each step of the analysis that we only require that $|z_i|$ are distinct. 
Hence, if we fix $0<r_m<\cdots<r_1<1$, and $|z_j|=r_{\sigma(j)}$ for a permutation $\sigma$ of $1,\cdots, m$,  then the asymptotics and error estimates of $\const(\bzz)$, $\gdet(\bzz)$ and the integral on the right hand side of~\eqref{eq:aux_2017_04_03_01} still hold (with different constants $\epsilon, c, C$'s in the error estimates). This fact is used in an important way in the next section; see the proof of Theorem~\ref{thm:pofEpm}. 
\end{rmk}

\subsection{Asymptotics of $\const(\bzz)$}

\begin{lm}[\cite{Liu16}] \label{lem:katzlt}
Let $N=N_L$ be a sequence of integers such that $\rho=\rho_L:= \frac{N}{L}$ stays in a compact subset of $(0,1)$ for all $L$. 
Fix $\epsilon\in (0,1/2)$.  
Fix a complex number $z$ such that $0<|z|<1$ and let
$\bz$ satisfy $\bz^L=(-1)^N\rr_0^L z$.  
Assume that for fixed $\tau>0$, $\gamma\in [0,1)$, and $x\in \realR$, 
\beq \label{eq:akfm}
	t= \frac{\tau}{\sqrt{\rho(1-\rho)}}L^{3/2}+ O(L), 
	\qquad 
	a=\ell+1, \qquad k= N-\frac{b-\ell}2+1
\eeq
where 
\beq \label{eq:blfm}
	\ell= (1-2\rho)t + \gamma L + O(L^{1/2}), 
	\qquad
	b= 2\rho(1-2\rho)t + (1-2\rho)\ell- 2x\rho^{1/2}(1-\rho)^{1/2} L^{1/2} . 
\eeq
Recall the definition of $\CA(\bz)$ in~\eqref{eq:CAbutE}:
\begin{equation}
	\CA(\bz)=\CA(\bz;a,k,t):=\prod_{u\in\rootsL_{\bz}}(-u)^{k-N-1}\prod_{v\in\rootsR_{\bz}}(v+1)^{-a+k-N}e^{tv}.
\end{equation}
Then 
\beq \label{eq:limit_h2102}
	\CA(\bz)	= e^{x A_1(z) +\tau A_2(z)} \left(1+O(N^{\epsilon-1/2})\right)
\eeq
where $A_1(z)$ and $A_2(z)$ are polylogarithm functions defined in~\eqref{eq:A1andA2}.
\end{lm}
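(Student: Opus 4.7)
The plan is to take logarithms and asymptotically evaluate
\begin{equation*}
\log \CA(\bz) = (k-N-1)\sum_{u \in \rootsL_{\bz}}\log(-u) + (-a+k-N)\sum_{v \in \rootsR_{\bz}}\log(v+1) + t\sum_{v \in \rootsR_{\bz}} v
\end{equation*}
term by term. Each power sum over a root set can be represented as a contour integral $\frac{1}{2\pi\ii}\oint_{\Gamma} f(w)\,\frac{q'_{\bz}(w)}{q_{\bz}(w)}\,\dd w$ with $\Gamma$ enclosing only $\rootsL_{\bz}$ or only $\rootsR_{\bz}$, using the factorization $q'_{\bz}(w)/q_{\bz}(w) = \frac{L(w+\rho)}{w(w+1)}\cdot(1-\bz^L/(w^N(w+1)^{L-N}))^{-1}$. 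I would deform each contour slightly outside the relevant component of the level set $|w^N(w+1)^{L-N}|=|\bz|^L$, where the geometric expansion $\sum_{j\ge 0}(\bz^L/(w^N(w+1)^{L-N}))^j$ converges uniformly.

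Each $j$-th term is analyzed by steepest descent at the saddle $w=-\rho$ of $f_0(w) := \rho\log w + (1-\rho)\log(w+1)$, for which $f_0'(-\rho)=0$ and $f_0''(-\rho)=-1/(\rho(1-\rho))$. Under the local scaling $w = -\rho + \sqrt{\rho(1-\rho)/L}\,\zeta$, the function $w^N(w+1)^{L-N}$ is approximated by $(-1)^N \rr_0^L e^{-\zeta^2/2}$, and the normalization $\bz^L = (-1)^N \rr_0^L z$ turns the integrand into a Gaussian-weighted factor of $z^j e^{j\zeta^2/2}$. Summing the resulting Gaussian integrals over $j$ converts into $\polylog_{3/2}(z)$ and $\polylog_{5/2}(z)$ (the latter arising from the cubic correction coming from the $e^{tv}$ factor), exactly matching the definitions~\eqref{eq:A1andA2} of $A_1(z)$ and $A_2(z)$.

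Next, I would substitute the scaling~\eqref{eq:akfm}--\eqref{eq:blfm}, noting $k-N-1 = -(b-\ell)/2$ and $-a+k-N=-(b+\ell)/2$, each a polynomial in $L^{1/2}$ with leading pieces of order $L^{3/2}$. These leading deterministic parts pair with the leading constants of the three power sums and must cancel after invoking the identity $\prod_{u \in \rootsL_{\bz}}(-u)^N = \prod_{v \in \rootsR_{\bz}}(v+1)^{L-N}$ from~\eqref{eq:vvarerela} and the constant-term identity~\eqref{eq:aux_2017_03_26_02}. What survives is the coupling of the $O(L^{1/2})$ fluctuation part carrying $x$ with the $O(L^{1/2})$ Gaussian correction, producing $xA_1(z)$; and the coupling of the leading $O(L^{3/2})$ part carrying $\tau$ with the $O(L^{-1/2})$ cubic Gaussian correction, producing $\tau A_2(z)$.

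The main obstacle is to simultaneously handle (i) the delicate cancellations among the three large leading-order contributions, each of which individually grows with $L$ but whose scaling-weighted sum is bounded, and (ii) the steepest-descent analysis at the saddle $w=-\rho$ paired with a uniformly convergent expansion in $j$. The error term $O(N^{\epsilon-1/2})$ follows by restricting the dominant contribution to the window $|\zeta|\le N^\epsilon$, where Gaussian tail bounds control the contribution outside, and by truncating the Taylor expansion of $f_0$ at third order; the cubic remainder $\zeta^3/\sqrt{L}$ is absorbed into $A_2$, while the next-order correction gives the stated bound. This argument mirrors and extends the one-point analysis of \cite{Liu16}.
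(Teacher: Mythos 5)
Your proposal is correct and follows essentially the same route as the cited argument in \cite{Liu16}, which the paper invokes directly (the paper's own ``proof'' is a citation: it identifies $\CA(\bz;a,k,t)$ with $\mathcal{C}^{(2)}_{N,2}(\bz;k-N,a)$ in the notation of \cite{Liu16} and recalls the integral representation plus steepest descent). You fill in the same skeleton: residue-theorem representation of $\log\CA(\bz)$, saddle at $w=-\rho$ where $f_0(w)=\rho\log w+(1-\rho)\log(w+1)$ has $f_0'(-\rho)=0$, $f_0''(-\rho)=-1/(\rho(1-\rho))$, local scaling $w=-\rho+\sqrt{\rho(1-\rho)/L}\,\zeta$, which is the same as the paper's $w=-\rho+\rho\sqrt{1-\rho}N^{-1/2}\,\ww$ (since $N=\rho L$), and identification of $xA_1(z)+\tau A_2(z)$ via polylogarithms with an $O(N^{\epsilon-1/2})$ error from restricting to the window $|\zeta|\le N^{\epsilon}$.

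The only cosmetic difference is in the residue representation. You keep the full $\frac{q_{\bz}'(u)}{q_{\bz}(u)}=\frac{L(u+\rho)}{u(u+1)}\big(1-\bz^L/(u^N(u+1)^{L-N})\big)^{-1}$ and expand the geometric series in $j$; the paper instead splits off the ``$z=0$'' residue at $u=-1$ (giving $(L-N)\log(w+1)$) and writes the remainder as a single contour integral with $\bz^L/q_{\bz}(u)$ in the integrand. These are algebraically equivalent, and both lead to the same polylogarithmic sums. Your bookkeeping of the scaling, namely $k-N-1=-(b-\ell)/2$ and $-a+k-N=-(b+\ell)/2$ as polynomials in $L^{1/2}$ whose large leading parts cancel via $\prod_{u\in\rootsL_{\bz}}(-u)^N=\prod_{v\in\rootsR_{\bz}}(v+1)^{L-N}$, and whose $x$-bearing ($O(L^{1/2})$) and $\tau$-bearing ($O(L^{3/2})$) pieces survive to produce $xA_1$ and $\tau A_2$ respectively, is the correct cancellation structure.
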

\begin{proof}
This follows from (4.25) and (4.26) of Section 4.3 of \cite{Liu16}: we have $\CA(\bz;a,k,t)=\mathcal{C}^{(2)}_{N,2}(\bz;k-N,a)$ in terms of the notation used in \cite{Liu16}. 
(Note that the limit there is $e^{\tau^{1/3}x A_1(z) +\tau A_2(z)}$, but in this paper, we use the scale of the height function so that $\tau^{1/3}x$ is changed to $x$.) 
It is easy to check that the conditions (4.1), (4.2), and (4.3) in \cite{Liu16} are satisfied, and hence we obtain the above lemma. 
The analysis was based on the integral representation of $\log \CA(\bz)$ (see (4.28) of \cite{Liu16}) and applying the method of the steepest-descent. 
Indeed, by the residue theorem, ($w=0$ is the case we need)
\beq
	\log \bigg( \prod_{u\in\rootsL_{\bz}}(w-u) \bigg) = (L-N)\log (w+1)
	+ \frac{L\bz^L}{2\pi \ii} \oint \frac{(u+\rho)\log (w-u)}{u(u+1)q_{\bz}(u)} \dd u
\eeq
for $\Re(w)>-\rho$ where the contour is a simple closed curve in $\Re(u)<-\rho$ which contains $\rootsL_{\bz}$ inside. We can find a similar formula for the product of $v$. 
\end{proof}

\begin{lm} \label{lem:lemma82} 
Let $N$, $L$, and $\epsilon$ as in the previous Lemma. 
Fix complex numbers $z_1, z_2$ such that $0<|z_1|, |z_2|<1$. Then for $\bz_1, \bz_2$ satisfying $\bz_1^L=(-1)^N\rr_0^L z_1$,  $\bz_2^L=(-1)^N\rr_0^L z_2$, 
\begin{equation}
\label{eq:aux_2017_03_26_08}
	\frac{\prod_{v\in\rootsR_{\bz_1}}(v+1)^{L-N}\prod_{u\in\rootsL_{\bz_2}}(-u)^N} {\Delta\left(\rootsR_{\bz_1};\rootsL_{\bz_2}\right)}
	= e^{2B(z_1,z_2)} (1+O(N^{\epsilon-1/2})) 
\end{equation}
where $B(z_1, z_2)$ is defined in~\eqref{eq:aux_2017_03_26_10}.
\end{lm}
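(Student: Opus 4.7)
The plan is to mimic and extend the residue-calculus plus steepest-descent argument used in the proof of Lemma~\ref{lem:katzlt} (drawn from \cite{Baik-Liu16,Liu16}). Starting from the identity $\log\text{LHS} = (L-N)\sum_v \log(v+1) + N\sum_u \log(-u) - \sum_{v,u}\log(v-u)$ with $v\in\rootsR_{\bz_1}$, $u\in\rootsL_{\bz_2}$, I would convert each single sum to a contour integral via the residue theorem applied to $q'_{\bz_j}(w)/q_{\bz_j}(w)$. Using the partial-fraction identity
\[
\frac{q'_{\bz}(w)}{q_{\bz}(w)} = \frac{N}{w} + \frac{L-N}{w+1} + R(w), \qquad R(w) := \frac{L(w+\rho)\bz^L}{w(w+1)q_{\bz}(w)},
\]
and the fact that the residues of $R$ at $w=0$ and $w=-1$ cancel those of the first two terms, the three sums collapse (after $(L-N)\log(v+1)$ and $N\log(-u)$ contributions cancel cleanly) to
\[
\log\text{LHS} = -\frac{1}{(2\pi\ii)^2}\iint_{\Gamma_R^{(1)}\times\Gamma_L^{(2)}}\log(y-w)R_1(w)R_2(y)\,dw\,dy,
\]
where $R_j$ is built from $\bz_j$ and $\Gamma_R^{(1)}$ encloses $\rootsR_{\bz_1}$ in $\Re w > -\rho$, $\Gamma_L^{(2)}$ encloses $\rootsL_{\bz_2}$ in $\Re y<-\rho$.

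Next, I would deform both contours to steepest-descent paths through the common saddle point $w = -\rho$: vertical segments slightly offset from $\Re w = -\rho$ (so the $w$- and $y$-contours do not intersect), closed by large arcs along which the integrand is super-exponentially small. Under the local scaling $w = -\rho + \alpha L^{-1/2}\xi$, $y = -\rho + \alpha L^{-1/2}\eta$ with $\alpha = \sqrt{\rho(1-\rho)}$, the Gaussian saddle expansion gives $w^N(w+1)^{L-N} = (-1)^N\rr_0^L e^{-\xi^2/2}(1 + O(L^{-1/2}|\xi|^3))$, so $q_{\bz_j}(w) \approx (-1)^N\rr_0^L(e^{-\xi^2/2} - z_j)$, and the clean identity $R_j(w)\,dw \approx -z_j\xi\,d\xi/(e^{-\xi^2/2}-z_j)$ holds. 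Combined with $\log(y-w) = \log(\alpha L^{-1/2}) + \log(\eta-\xi)$, this reduces the leading asymptotic to
\[
\log\text{LHS}\approx -\frac{z_1 z_2}{(2\pi\ii)^2}\iint\frac{\xi\eta\bigl[\log(\alpha L^{-1/2}) + \log(\eta-\xi)\bigr]}{(e^{-\xi^2/2}-z_1)(e^{-\eta^2/2}-z_2)}\,d\xi\,d\eta.
\]

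The $\log(\alpha L^{-1/2})$ piece vanishes: expanding $1/(e^{-\xi^2/2}-z_1) = \sum_{k\ge 0} z_1^k e^{(k+1)\xi^2/2}$, which converges absolutely on the scaled vertical contour (inside the pole-free strip $|\Re\xi|<\sqrt{-2\log|z_1|}$), each $\int \xi\, e^{(k+1)\xi^2/2}\,d\xi$ reduces to $[e^{(k+1)\xi^2/2}/(k+1)]$ evaluated at $\pm\ii\infty$, which is $0$. The surviving $\log(\eta-\xi)$ piece matches~\eqref{eq:aux_2017_03_26_10} provided the orientations are tracked carefully: the scaled closed contour enclosing $\inodesR_{z_1}$ counterclockwise equals the upward vertical line $\Re\xi = \mathfrak{a}<0$ used to define $B$, whereas the closed contour enclosing $\inodesL_{z_2}$ counterclockwise equals the \emph{negative} of the upward vertical line $\Re\eta = \mathfrak{b}>0$. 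The two orientation signs combine with the overall minus sign to yield exactly $+2B(z_1,z_2)$.

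The main obstacle will be uniform control of the error. I would split the contour into a local piece $|\xi|,|\eta|\le L^\epsilon$ and the complementary tail. On the local piece, the Gaussian approximation of $q_{\bz_j}$ has error $O(L^{-1/2+3\epsilon})$ uniformly, and integration against the $|\xi\eta|$-factor with the exponentially decaying $1/(e^{-\xi^2/2}-z)$ yields a total contribution of $O(L^{\epsilon-1/2})$. On the tail, the steepest-descent property forces $|R_j(w)|$ to decay at a super-exponential rate controlled by $\Re(g(w)-g(-\rho))\gtrsim L^{2\epsilon}$, so the tail contribution is $O(e^{-cL^{2\epsilon}})$. Summing these gives the stated error $O(N^{\epsilon-1/2})$.
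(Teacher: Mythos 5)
Your proposal follows essentially the same route as the paper's proof: both convert the three logarithmic sums into a double contour integral via the residue theorem, localize at the saddle $w=-\rho$ under the scaling $w=-\rho+\rho\sqrt{1-\rho}N^{-1/2}\xi$, observe that the additive normalization constant in the logarithm integrates to zero, and split the integral into a local piece giving $2B(z_1,z_2)$ and an exponentially small tail. One small bookkeeping slip to note: $\inodesR_{z_1}$ lies in $\Re\zeta>0$, so its scaled contour deforms to the line $\Re\eta=\mathfrak{b}>0$ (not $\Re\xi=\mathfrak{a}<0$ as you wrote); the identification with the definition of $B(z_1,z_2)$ then goes through after using the symmetry $B(z,z')=B(z',z)$.
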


\begin{proof}
The case when $\bz_1=\bz_2$ was obtained in \cite{Baik-Liu16}: 
take the square of (8.17) in Lemma 8.2, whose proof was given in Section 9.2.
The case of general $\bz_1$ and $\bz_2$ is almost the same, which we outline here.  
From the residue theorem, 
\begin{equation}\label{eq:digra}
	\begin{split}
	&(L-N)\sum_{v\in\rootsR_{\bz_1}}\log(v+1)+N\sum_{u\in\rootsL_{\bz_2}}\log(-u)-\sum_{v\in\rootsR_{\bz_1}}\sum_{u\in\rootsL_{\bz_2}}\log(v-u)\\
	&=\bz_1^L\bz_2^L\int_{-\rho_\mathfrak{a}-\ii\infty}^{-\rho_\mathfrak{a}+\ii\infty}\int_{-\rho_\mathfrak{b}-\ii\infty}^{-\rho_\mathfrak{b}+\ii\infty}
	\log\bigg(\frac{N^{1/2}(v-u)}{\rho\sqrt{1-\rho}}\bigg) \frac{L(v+\rho)}{v(v+1)q_{\bz_1}(v)}\frac{L(u+\rho)}{u(u+1)q_{\bz_2}(u)}\ddbarr{u}\ddbarr{v}
	\end{split}
\end{equation}
where the contours for $u$ and $v$ are two vertical lines $\Re(u)=-\rho_\mathfrak{a}:=-\rho+\mathfrak{a}\rho\sqrt{1-\rho}N^{-1/2}$ and $\Re(v)=-\rho_\mathfrak{b}:=-\rho+\mathfrak{b}\rho\sqrt{1-\rho}N^{-1/2}$ with constants $\mathfrak{a}$ and $\mathfrak{b}$ satisfying $-\sqrt{-\log|z_2|}<\mathfrak{a}<0<\mathfrak{b}<\sqrt{-\log|z_1|}$. 
This formula is similar to (9.26) in \cite{Baik-Liu16} and the proof is also similar.  
We divide the double integral~\eqref{eq:digra} into two parts: $|u+\rho_\mathfrak{a}|,|v+\rho_\mathfrak{a}|\le N^{\epsilon/3}$ and the rest. 
It is a direct to check that the formula~\eqref{eq:digra} where the integral is restricted to the $|u+\rho_\mathfrak{a}|,|v+\rho_\mathfrak{a}|\le N^{\epsilon/3}$ is equal to 
	\begin{equation}
	\begin{split}
	z_1z_2\int_{\Re\xi=\mathfrak{a}}\int_{\Re\zeta=\mathfrak{b}}\frac{\xi\zeta\log(\zeta-\xi)}{\left(e^{-\xi^2/2}-z_2\right)\left(e^{-\zeta^2/2}-z_1\right)}\ddbarr{\xi}\ddbarr{\zeta}\left(1+O(N^{\epsilon-1/2})\right)
	\end{split}
	\end{equation}
where we used the change of variables $u=-\rho+\xi\rho\sqrt{1-\rho}N^{-1/2}$ and $v=-\rho+\zeta\rho\sqrt{1-\rho}N^{-1/2}$.
On the other hand, if $|u+\rho_\mathfrak{a}|\ge N^{\epsilon/3}$ or $|v+\rho_\mathfrak{b}|\ge N^{\epsilon/3}$, the integrand decays exponentially fast and hence the integral in these regions is exponentially small. See Section 9.2 of \cite{Baik-Liu16} for more discussions. 
From~\eqref{eq:aux_2017_03_26_10}, we obtain~\eqref{eq:aux_2017_03_26_08}.
\end{proof}

Recalling the definitions~\eqref{eq:forBs} and~\eqref{eq:costcdefaq}, and using Lemma~\ref{lem:katzlt} and Lemma~\ref{lem:lemma82}, we find that  
\begin{equation}
\begin{split}
	&\const(\bzz)
	= \ccc(\bfz) \big(1+O(N^{\epsilon-1/2})\big) .
\end{split}
\end{equation}

\subsection{Analysis of $\gdet(\bzz)$} \label{sec:M}

We can obtain the limit of $\gdet(\bzz)$ using either the Fredholm determinant or its series expansion~\eqref{eq:aux_2017_03_22_01}. 
Both are suitable for the asymptotic analysis. 
Here we use the series expansion. 
From~\eqref{eq:aux_2017_03_22_01}, we have 
\begin{equation} 
\begin{split}
	\gdet(\bzz) =\sum_{\bn\in (\intZ_{\ge 0})^m} \frac{1}{(\bn !)^2}
\gdet_{\bn}(\bzz)
	\end{split}
\end{equation}
with
\begin{equation} \label{eq:aux_2017_03_22_01aa}
	\gdet_{\bn}(\bzz)
	=(-1)^{|\bn|}\sum_{\substack{\UU^{(\ell)}\in (\rootsL_{\bz_\ell})^{n_\ell} \\ \VV^{(\ell)}\in (\rootsR_{\bz_\ell})^{n_\ell} \\ l=1,\cdots,m}} \det\left[\KsL(w_i,w'_j) \right]_{i,j=1}^{|\bn|}
	\det\left[\KsR(w'_i,w_j)\right]_{i,j=1}^{|\bn|}
\end{equation}
where $\bUU=(\UU^{(1)}, \cdots, \UU^{(m)})$, $\bVV= (\VV^{(1)}, \cdots, \VV^{(m)})$ with $\UU^{(\ell)}=(u_1^{(\ell)}, \cdots, u_{n_\ell}^{(\ell)})$, $\VV^{(\ell)}=(v_1^{(\ell)}, \cdots, v_{n_\ell}^{(\ell)})$, and 
\begin{equation} 
w_i=\begin{dcases}
	u_k^{(\ell)}& \text{if $i=n_1 +\cdots+n_{\ell-1}+k$ for some $k\le n_\ell$ with odd integer $\ell$,}\\
v_k^{(\ell)}& \text{if $i=n_1 +\cdots+n_{\ell-1}+k$ for some $k\le n_\ell$ with even integer $\ell$,}
\end{dcases}
\end{equation}
and
\begin{equation}
	w'_i=\begin{dcases}
v_k^{(\ell)}& \text{if $i=n_1+\cdots+n_{\ell-1}+k$ for some $k\le n_\ell$ with odd integer $\ell$,}\\
u_k^{(\ell)}& \text{if $i=n_1+\cdots+n_{\ell-1}+k$ for some $k\le n_\ell$ with even integer $\ell$.}
\end{dcases}
\end{equation}
We prove the convergence of this series to the series~\eqref{eq:Fdmddser} with~\eqref{eq:Frdse1}.

\subsubsection{Strategy}

To be able to cite easily later, let us state the following simple fact. 

\begin{lm} \label{lem:pfofDlimbasic}  
Suppose that 
\begin{enumerate}[(A)]
\item for each fixed $\bn$, $\gdet_\bn (\bzz)\to \gdetlm_\bn(\bfz)$ as $L\to \infty$, and
\item there is a constant $C>0$ such that $|\gdet_\bn (\bzz)|\le C^{|\bn|}$ for all $\bn$ and for all large enough $L$.
\end{enumerate}
Then $\gdet (\bzz)\to \gdetlm(\bfz)$ as $L\to \infty$.
\end{lm}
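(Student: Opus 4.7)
The plan is to apply a term-by-term dominated convergence argument to the two absolutely convergent series expansions
\[
\gdet(\bzz)=\sum_{\bn\in(\intZ_{\ge 0})^m}\frac{1}{(\bn!)^2}\gdet_{\bn}(\bzz), \qquad \gdetlm(\bfz)=\sum_{\bn\in(\intZ_{\ge 0})^m}\frac{1}{(\bn!)^2}\gdetlm_{\bn}(\bfz),
\]
established in Lemma~\ref{lem:DFredexpf} and Lemma~\ref{lem:Freserfola} respectively. The desired conclusion is just the exchange of limit and sum.

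First, I would note that passing $L\to\infty$ in hypothesis (B) and using hypothesis (A) gives $|\gdetlm_{\bn}(\bfz)|\le C^{|\bn|}$ for every $\bn$. Hence for all sufficiently large $L$,
\[
\frac{1}{(\bn!)^2}\bigl|\gdet_{\bn}(\bzz)-\gdetlm_{\bn}(\bfz)\bigr|\le \frac{2C^{|\bn|}}{(\bn!)^2},
\]
uniformly in $L$. Next, I would observe that this majorant is summable: since $|\bn|=n_1+\cdots+n_m$ and $\bn!=n_1!\cdots n_m!$,
\[
\sum_{\bn\in(\intZ_{\ge 0})^m}\frac{2C^{|\bn|}}{(\bn!)^2}=2\prod_{\ell=1}^m\sum_{n=0}^{\infty}\frac{C^n}{(n!)^2}<\infty,
\]
the one-dimensional series converging by the ratio test for any $C>0$.

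Finally, I would invoke the dominated convergence theorem for sums (with counting measure on $(\intZ_{\ge 0})^m$): hypothesis (A) forces each term $\frac{1}{(\bn!)^2}(\gdet_{\bn}(\bzz)-\gdetlm_{\bn}(\bfz))\to 0$, and the uniform bound above lets us push the limit inside the sum, yielding $\gdet(\bzz)-\gdetlm(\bfz)\to 0$ as $L\to\infty$. I do not anticipate any real obstacle here: the argument is purely a standard exchange of limit and sum, and the only input beyond the two hypotheses is the elementary summability of $\sum C^n/(n!)^2$.
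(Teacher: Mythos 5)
Your proof is correct and matches the paper's approach — the paper's proof of this lemma is simply the one-line statement that it follows from the dominated convergence theorem, and you have spelled out exactly the expected details (deriving the uniform majorant $2C^{|\bn|}/(\bn!)^2$ from (A) and (B), checking its summability over $(\intZ_{\ge 0})^m$, and invoking DCT for the counting measure).
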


\begin{proof}
It follows from the dominated convergence theorem.
\end{proof}

We are going to show that the conditions (A) and (B) are satisfied. 
To be precise, we need to show that (A) and (B) hold locally uniformly in $\bfz$ so that the conclusion holds locally uniformly. 
The local uniformity is easy to check throughout the proof. 
To make the presentation light, we do not state the local uniformity explicitly and, instead, state only the pointwise convergence for each $\bfz$ in the rest of this section.

Let us discuss the strategy of verifying the conditions (A) and (B). 
Suppose for a moment that $N/L=\rho$ is fixed. 
If $\bz^L=(-1)^N \rr_0^L z$ for a fixed $z$, then 
the contour $|w^N(w+1)|^{L-N}=|\bz|^L$, on which the roots of $q_{\bz}(w)$ lie, converges to the self-crossing contour  $|w^\rho(w+1)^{1-\rho}|=\rr_0$ as $L,N \to\infty$ since $|\bz|\to \rr_0$ (see Figure~\ref{fig:rootsfinite}.)
The point of self-intersection is $w=-\rho$. 
For large $L,N$ and the parameters satisfying the conditions of Theorem~\ref{thm:htfntasy}, 
it turns out that the main contribution to the sum $\gdet_{\bn}(\bzz)$ comes from the points $\UU^{(\ell)}$ and $\VV^{(\ell)}$ near the self-crossing point $w=-\rho$. 
As $L\to \infty$, $q_{\bz}(w)$ has more and more roots.
We scale the roots near the point $w=-\rho$ in such a way that the distances between the scaled roots are $O(1)$: this is achieved if\footnote{The roots are less dense near the self-crossing point $w=-\rho$ than elsewhere: A typical distance between two neighboring roots is $O(N^{-1})$, but near $w=-\rho$ this distance is $O(N^{-1/2})$.} we take $N^{1/2}(w+\rho)\mapsto w$. 
Under this scale, for each $w$ on the set $\rootsL_{\bz}\cup \rootsR_\bz$ (which depends on $L$ and $N$; see~\eqref{eq:RLfinds}) in a neighborhood of the point $-\rho$, there is a unique point $\zeta$ on the set $\inodesL_z\cup \inodesR_z$ (which is independent of $L$ and $N$; see~\eqref{eq:LRlmtdefs}), and vice versa. 
See Lemma~\ref{lm:limit_nodes} for the precise statement. 
We show that $\KsL$ and $\KsR$ converges to $\lmKL$ and $\lmKR$ pointwise for the points near $-\rho$. 
We then estimate the kernels when one of the argument is away from $-\rho$. 
These two calculations are enough to prove the conditions (A) and (B). 
See Lemma~\ref{lem:K1Kw2estmfo} for the precise statement. 
The fact that we only assume that $N/L=\rho$ is in a compact subset of $(0,1)$ for all $L$ does not change the analysis. 

The following lemma is from \cite{Baik-Liu16}. 

\begin{lm}[Lemma 8.1 of \cite{Baik-Liu16}] \label{lm:limit_nodes}
Let $0<\epsilon<1/2$. 
Fix a complex number $z$ satisfying $0<|z|<1$ and let $\bz$ be a complex number satisfying $\bz^L=(-1)^N\rr_0^L z$. 
Recall the definitions of the sets $\rootsL_{\bz}$ and $\rootsR_\bz$ in~\eqref{eq:RLfinds}, and the sets $\inodesL_z$ and $\inodesR_z$ in~\eqref{eq:LRlmtdefs}. 
Let $\mathcal{M}_{N,\LL}$ be the map from $\rootsL_{\bz}\cap\left\{w:|w+\rho|\le \rho\sqrt{1-\rho}N^{\epsilon/4-1/2}\right\}$ to $\inodesL_{z}$ defined by
\begin{equation} \label{eq:lmlimit_nodesdc}
	\mathcal{M}_{N,\LL}(w)=\xi \quad
	\mbox{where }\xi\in\inodesL_{z} \mbox{ and } 
	\left|\xi-\frac{(w+\rho)N^{1/2}}{\rho\sqrt{1-\rho}}\right|\le N^{3\epsilon/4-1/2}\log N.	
\end{equation}
Then for all large enough $N$ the following holds. 
\begin{enumerate}[(a)]
		\item $\mathcal{M}_{N,\LL}$ is well-defined.
		\item $\mathcal{M}_{N,\LL}$ is injective.
		\item Setting $\inodesL_{z}^{(c)}:=\inodesL_{z}\cap\{\xi:|\xi|\le c\}$ for $c>0$, we have 
		\begin{equation}
		\inodesL_{z}^{(N^{\epsilon/4}-1)}\subseteq I(\mathcal{M}_{N,\LL})\subseteq \inodesL_{z}^{(N^{\epsilon/4}+1)}
\end{equation}
where $I(\mathcal{M}_{N,\LL}):=\mathcal{M}_{N,\LL}\left(\rootsL_{\bz}\cap \{w :|w+\rho|\le \rho\sqrt{1-\rho}N^{\epsilon/4-1/2}\}\right)$, the image of the map $\mathcal{M}_{N,\LL}$.
\end{enumerate}
If we define the mapping $\mathcal{M}_{N,\RR}$ in the same way but replace $\rootsL_{\bz}$ and $\inodesL_{z}$ by $\rootsR_{\bz}$ and $\inodesR_{z}$ respectively, the same results hold for $\mathcal{M}_{N,\RR}$.
\end{lm}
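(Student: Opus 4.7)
The plan is to reduce the problem to Rouché's theorem for a perturbed analytic equation centered at the critical point $w=-\rho$. First, I would observe that $\log[w^N(w+1)^{L-N}]=L\phi(w)+i\pi N$ modulo $2\pi i$, where $\phi(w):=\rho\log(-w)+(1-\rho)\log(w+1)$. A direct computation gives $\phi'(-\rho)=\rho/(-\rho)+(1-\rho)/(1-\rho)=0$ and $\phi''(-\rho)=-1/(\rho(1-\rho))$. Substituting $w=-\rho+\xi\,\rho\sqrt{1-\rho}\,N^{-1/2}$ and using $L/N=1/\rho$ to cancel prefactors in the quadratic term, Taylor expansion yields
\begin{equation*}
L\bigl(\phi(w)-\log\rr_0\bigr)=-\frac{\xi^2}{2}+R_L(\xi),\qquad |R_L(\xi)|\le \frac{C|\xi|^3}{N^{1/2}},
\end{equation*}
uniformly for $|\xi|\le N^{\epsilon/4}$, which matches the rescaled domain of $\mathcal{M}_{N,\LL}$. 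Combined with $\bz^L=(-1)^N\rr_0^L z$, the equation $w^N(w+1)^{L-N}=\bz^L$ becomes, in the $\xi$-variable, $F_L(\xi):=e^{-\xi^2/2+R_L(\xi)}-z=0$, a small perturbation of $F(\xi):=e^{-\xi^2/2}-z$, whose zeros in $\Re\xi<0$ are by definition $\inodesL_z$.

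The second step is to apply Rouché's theorem (or equivalently Hurwitz's theorem) on disks $D_{\zeta_0}$ of radius $r_N:=N^{3\epsilon/4-1/2}\log N$ centered at each $\zeta_0\in\inodesL_z$ with $|\zeta_0|\le N^{\epsilon/4}+1$. On $\partial D_{\zeta_0}$, a first-order expansion yields $|F(\xi)|\ge c(1+|\zeta_0|)r_N$ using $F'(\zeta_0)=-\zeta_0 z\ne 0$ and the lower bound $|\zeta_0|\ge\sqrt{-2\log|z|}>0$, while $|F_L(\xi)-F(\xi)|\le C(1+|\zeta_0|)^3 N^{-1/2}$. Since $\epsilon<1/2$, for large $N$ the perturbation is dominated, so each $D_{\zeta_0}$ contains exactly one zero of $F_L$. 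Moreover the lattice spacing of $\inodesL_z$ in this region is at least $c/(1+|\zeta_0|)$, which far exceeds $r_N$, so the disks are pairwise disjoint; Rouché applied to the exterior region shows that no zeros of $F_L$ lie outside $\bigcup D_{\zeta_0}$.

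With the correspondence established, all three claims follow. For (a), given $w$ in the specified domain, the rescaled point $\xi_w$ is a zero of $F_L$ with $|\xi_w|\le N^{\epsilon/4}$, hence lies in a unique $D_{\zeta_0}$; set $\mathcal{M}_{N,\LL}(w):=\zeta_0$, so that $|\xi_w-\zeta_0|\le r_N$. Injectivity (b) is immediate, since distinct $w$'s yield distinct $\xi_w$'s and each $D_{\zeta_0}$ contains only one zero of $F_L$. The first inclusion in (c) holds because for any $\zeta_0\in\inodesL_z^{(N^{\epsilon/4}-1)}$ the unique zero of $F_L$ in $D_{\zeta_0}$ satisfies $|\xi|\le N^{\epsilon/4}-1+r_N\le N^{\epsilon/4}$, so pulls back to a valid $w$ in the domain with $\mathcal{M}_{N,\LL}(w)=\zeta_0$; the opposite inclusion is immediate from $|\mathcal{M}_{N,\LL}(w)|\le|\xi_w|+r_N\le N^{\epsilon/4}+1$. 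The argument for $\rootsR_{\bz}$ and $\inodesR_z$ is identical with the half-plane $\Re\xi<0$ replaced by $\Re\xi>0$.

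The principal technical hurdle is bookkeeping the cubic remainder $R_L(\xi)$ uniformly up to $|\xi|=N^{\epsilon/4}$: its bound reaches $CN^{3\epsilon/4-1/2}$ at the edge of the region, which both forces the specific choice of radius $r_N$ and is the only reason the constraint $\epsilon<1/2$ is needed in order for the Rouché comparison to survive. A secondary subtlety is maintaining the quantitative lower bound $|F(\xi)|\gtrsim(1+|\zeta_0|)r_N$ on $\partial D_{\zeta_0}$ when $|\zeta_0|$ grows like $N^{\epsilon/4}$, which requires tracking the increasing magnitude of $F'(\zeta_0)$ carefully rather than treating it as a constant.
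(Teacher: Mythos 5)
Your rescaling $w=-\rho+\xi\,\rho\sqrt{1-\rho}\,N^{-1/2}$, the Taylor expansion of $\phi(w)=\rho\log(-w)+(1-\rho)\log(w+1)$ at the double critical point $w=-\rho$ with the identities $\phi(-\rho)=\log\rr_0$, $\phi'(-\rho)=0$, $\phi''(-\rho)=-1/(\rho(1-\rho))$, and the cancellation $L\rho=N$ producing $-\xi^2/2$, are all correct, and so is the remainder bound $|R_L(\xi)|\le C|\xi|^3 N^{-1/2}$ on $|\xi|\le N^{\epsilon/4}$. This is the standard localization near the self-intersection point, and it is what makes Lemma 8.1 of \cite{Baik-Liu16} work; the paper here does not reprove it but cites that source, and your reduction matches the natural approach. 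The bookkeeping for the Rouch\'e comparison on the small disks $D_{\zeta_0}$ is also consistent: $|F'(\zeta_0)|=|z||\zeta_0|$ gives $|F|\gtrsim |\zeta_0|r_N$ on $\partial D_{\zeta_0}$ after absorbing the second-order correction (possible because $|\zeta_0|r_N=O(N^{\epsilon-1/2}\log N)\to0$), the perturbation is $O(|\zeta_0|^3 N^{-1/2})$, and the required inequality reduces to $|\zeta_0|^2\ll N^{3\epsilon/4}\log N$, which holds since $|\zeta_0|\le N^{\epsilon/4}+1$. The disjointness of the $D_{\zeta_0}$'s from the $\sim 2\pi/|\zeta_0|$ spacing of $\inodesL_z$ (comparing $\zeta^2=-2\log z-4\pi i k$ for consecutive $k$) is also correct under $\epsilon<1/2$.

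The one place you leave a real gap is the sentence ``Rouch\'e applied to the exterior region shows that no zeros of $F_L$ lie outside $\bigcup D_{\zeta_0}$.'' This is the step that certifies existence in part (a) — i.e., that every rescaled root $\xi_w$ with $|\xi_w|\le N^{\epsilon/4}$ actually falls inside one of your disks — and it is not automatic: you must either choose an outer contour $|\xi|=R$ that stays a uniform distance (at least $\gtrsim 1/|\zeta_0|$) from all points of $\inodesL_z$ so that $|F|$ is bounded below there, and then apply the argument-principle count on the big disk minus the small disks, or argue directly. The direct route is in fact cleaner and avoids the issue entirely: a zero $\xi_w$ of $F_L$ satisfies $\xi_w^2=-2\log z+2R_L(\xi_w)-4\pi i k$ for some $k\in\intZ$, and picking $\zeta_0\in\inodesL_z$ with $\zeta_0^2=-2\log z-4\pi i k$ (same $k$, same half-plane $\Re<0$) gives
\begin{equation*}
|\xi_w-\zeta_0|=\frac{|\xi_w^2-\zeta_0^2|}{|\xi_w+\zeta_0|}=\frac{2|R_L(\xi_w)|}{|\xi_w+\zeta_0|}\lesssim\frac{|\xi_w|^3 N^{-1/2}}{|\zeta_0|}=O\bigl(N^{\epsilon/2-1/2}\bigr),
\end{equation*}
where the denominator is $\gtrsim|\zeta_0|$ because both points lie in the sector $3\pi/4<\arg\xi<5\pi/4$. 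This is comfortably inside the tolerance $N^{3\epsilon/4-1/2}\log N$, and it makes the well-definedness, injectivity and the two inclusions in (c) all immediate. I would replace the hand-waved exterior-Rouch\'e claim with this calculation.
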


Before we go further, we conjugate the kernels which leaves the determinants in~\eqref{eq:aux_2017_03_22_01aa} unchanged. 
We use the conjugated kernels in the rest of this subsection. 
This makes the necessary convergences possible. 
For $w\in\roots_{z_i}\cap\mrootL$ and $w'\in\roots_{z_j}\cap\mrootR$, we change~\eqref{eq:aux_2017_3_17_01} to  
\begin{equation} 
 \begin{split}
 	\tKsL(w,w') & 
	= -\left(\delta_i(j)+\delta_i\left(j+(-1)^i\right)\right) 
	\frac{\jac(w) \sqrt{\hfs_i(w)}\sqrt{ \hfs_j(w')} (\sH_{z_i}(w))^2 }{\sH_{z_{i-(-1)^{i}}}(w) \sH_{z_{j-(-1)^{j}}}(w')(w-w')} \sQL(j)
\end{split} \end{equation}
and for $w\in\roots_{z_i}\cap\mrootR$ and $w'\in\roots_{z_j}\cap\mrootL$,
we change~\eqref{eq:aux_2017_3_17_01_0002} to  
\begin{equation} \label{eq:aux_2017_3_17_01_0003}
 \begin{split}
 	\tKsR(w,w') & 
	=- \left(\delta_i(j)+\delta_i\left(j-(-1)^i\right)\right)
 	\frac{\jac(w) \sqrt{\hfs_i(w)}\sqrt{\hfs_j(w')} (\sH_{z_i}(w))^2 }{\sH_{z_{i+(-1)^{i}}}(w) \sH_{z_{j+(-1)^{j}}}(w') (w-w')} \sQR(j)
\end{split} \end{equation}
where we set (note the change from $\fs_j(w)$ in~\eqref{eq:def_fs}) 
\begin{equation} \label{eq:newfhfs}
	  	\hfs_j(w) 
		=\begin{dcases}
		 \frac{\fftn_j(w)\fftn_{j-1}(-\rho)}{\fftn_{j-1}(w)\fftn_j(-\rho)} 
		 &\text{for $\Re(w)<-\rho$,}\\ 
		 \frac{\fftn_{j-1}(w)\fftn_j(-\rho)}{\fftn_j(w)\fftn_{j-1}(-\rho)} 
		 &\text{for $\Re(w)>-\rho$} 
	  	 \end{dcases}
\end{equation}
with (see~\eqref{eq:def_fftn})
\begin{equation} \label{eq:CapfF}
	\fftn_{i}(w):= w^{-k_i+N+1} (w+1)^{-a_i+k_i-N} e^{t_iw}. 
\end{equation}
We multiplied by $-1$ to remove a minus sign in the limit.
The square-root function is defined as follows: for a complex number $w=re^{\ii \theta}$ with $r\ge 0$ and $-\pi<\theta\le \pi$, we set $\sqrt{w}= r^{1/2} e^{\ii \theta/2}$. 
Note that $\sqrt{w}$ is not continuous when $w$ is a negative real number, and hence $\tKsL(w,w')$ and $\tKsR(w,w')$ may be discontinuous for some $w$ and $w'$. 
However, the product of $\det\big[\tKsL(w_i,w'_j) \big]_{i,j=1}^{|\bn|}$ and $\det\big[\tKsR(w'_i,w_j)\big]_{i,j=1}^{|\bn|}$ is still continuous at the branch cuts since each of $\sqrt{\hfs_i(w)}$ is multiplied twice. 
We also note that the change from $\fs_j$ to $\hfs_j$ has the effect of conjugating the matrices in the determinants $\det\big[\KsL(w_i,w'_j) \big]_{i,j=1}^{|\bn|}$ and $\det\big[\KsR(w'_i,w_j)\big]_{i,j=1}^{|\bn|}$ and multiplying both by $(-1)^{|\bn|}$.
Hence $\gdet_{\bn}(\bzz)$ in~\eqref{eq:aux_2017_03_22_01aa} is unchanged if we replace
$\KsL$ and $\KsR$ by $\tKsL$ and $\tKsR$.

We also conjugate the limiting kernels. 
For $\zeta\in(\inodesL_{z_i}\cup \inodesR_{z_i})\cap \SSSL$ and $\zeta'\in(\inodesL_{z_j}\cup \inodesR_{z_j})\cap \SSSR$ for some $i,j$, we change~\eqref{eq:limitkeL} to 
\begin{equation} 
\begin{split}
	\tlmKL(\zeta,\zeta') 
	&=(\delta_i(j)+\delta_i(j+(-1)^i)) 
	 \frac{ \sqrt{\sfs_i(\zeta)}\sqrt{\sfs_j(\zeta')} e^{2\hftn(\zeta, z_i)-\hftn(\zeta, z_{i-(-1)^{i}}) -\hftn(\zeta', z_{j-(-1)^{j}})}}{\zeta (\zeta-\zeta')}\QWL(j) ,
\end{split}
\end{equation}
and for $\zeta\in(\inodesL_{z_i}\cup \inodesR_{z_i})\cap \SSSR$ and $\zeta'\in(\inodesL_{z_j}\cup \inodesR_{z_j})\cap\SSSL$, we change~\eqref{eq:limitkeR} to 
\begin{equation} 
\begin{split}
	\tlmKR(\zeta,\zeta') 
	&=(\delta_i(j)+\delta_i(j-(-1)^i)) 
	\frac{\sqrt{\sfs_i(\zeta)}\sqrt{\sfs_j(\zeta')} e^{2\hftn(\zeta, z_i)-\hftn(\zeta, z_{i+(-1)^{i}}) -\hftn(\zeta', z_{j+(-1)^{j}})}}{\zeta (\zeta-\zeta')} \QWR(j).
\end{split}
\end{equation}
Recall the definition of $\sfs_i$ in~\eqref{eq:def_sfs}.

\bigskip

Next lemma shows how we prove the strategy mentioned above. 

\begin{lm} \label{lem:K1Kw2estmfo}
Fix $0<\epsilon<1/(1+2m)$. 
Let
\beq
	\isk=\isk_N:= \bigg\{ w\in \complexC: |w+\rho| \le \frac{\rho\sqrt{1-\rho}}{N^{1/2-\epsilon/4}}\bigg \}
\eeq
be a disk centered at $-\rho$.
Suppose the following: 
\begin{enumerate}[(i)]
\item We have   
\beq
\label{eq:aux_2017_07_01_01}
	|\tKsL(w,w')| = |\tlmKL(\zeta, \zeta')| +O(N^{\epsilon-1/2}\log N), 
	\quad 
	|\tKsR(w',w)| = |\tlmKR(\zeta', \zeta)|+O(N^{\epsilon-1/2}\log N)
\eeq
as $L\to \infty$, uniformly for $w\in \mrootL\cap \isk$ and $w'\in \mrootR\cap\isk$, 
where $\zeta\in\SSSL, \zeta'\in\SSSR$ are the unique points corresponding to $w, w'$ under either the map $\mathcal{M}_{N,\LL}$ or $\mathcal{M}_{N,\RR}$ in Lemma~\ref{lm:limit_nodes}. 
\item 
For each $\bn$, 
\beqq 
	\det\left[\tKsL(w_i, w'_j)\right]_{i,j=1}^{|\bn|} \to \det\left[\tlmKL(\sw_i, \sw'_j)\right]_{i,j=1}^{|\bn|}, 
	\quad
	\det\left[\tKsR(w_i', w_j)\right]_{i,j=1}^{|\bn|} \to \det\left[\tlmKR(\sw'_i, \sw_j)\right]_{i,j=1}^{|\bn|}
\eeqq
as $L\to \infty$, where for $w_i\in \mrootL\cap \isk$ and $w_i'\in \mrootR\cap\isk$, 
$\zeta_i\in\SSSL$ and $\zeta_i'\in\SSSR$ are the unique points corresponding to $w, w'$ under either the map $\mathcal{M}_{N,\LL}$ or $\mathcal{M}_{N,\RR}$ in Lemma~\ref{lm:limit_nodes}. 
\item 
There are positive constants $c$ and $\alpha$ such that 
\beqq
	|\tKsL(w,w')|= O(e^{-cN^{\alpha }} ), 
	\qquad
	|\tKsR(w',w)|= O(e^{-cN^{\alpha }} )
\eeqq
as $L\to\infty$, uniformly for $w\in \mrootL\cap \isk^c$ and $w'\in \mrootR$, and also for $w'\in \mrootR\cap \isk^c$ and $w\in \mrootL$. 
\end{enumerate}
Then the conditions (A) and (B) in Lemma~\ref{lem:pfofDlimbasic} hold, and therefore, $\gdet (\bzz)\to \gdetlm(\bfz)$.
\end{lm}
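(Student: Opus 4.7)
The plan is to verify hypotheses (A), (B) of Lemma~\ref{lem:pfofDlimbasic}; then the conclusion $\gdet(\bzz)\to \gdetlm(\bfz)$ follows directly from that lemma. The three assumptions (i)--(iii) are tailored so that (ii) yields termwise convergence of determinants in the bulk regime where all arguments lie in $\isk$, (i) produces a uniform dominating function, and (iii) kills the contribution from configurations having any argument outside $\isk$. I will work throughout with the conjugated kernels $\tKsL,\tKsR,\tlmKL,\tlmKR$ defined just before the lemma; these leave the determinants in~\eqref{eq:aux_2017_03_22_01aa} unchanged, and (i)--(iii) are stated directly for them.

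For (A), split $\gdet_\bn(\bzz)=\gdet_\bn^{\mathrm{in}}(\bzz)+\gdet_\bn^{\mathrm{out}}(\bzz)$, where $\gdet_\bn^{\mathrm{in}}$ is the portion of~\eqref{eq:aux_2017_03_22_01aa} in which every component of every $\UU^{(\ell)},\VV^{(\ell)}$ lies in $\isk$. In $\gdet_\bn^{\mathrm{in}}$, use the bijections $\mathcal{M}_{N,\LL},\mathcal{M}_{N,\RR}$ of Lemma~\ref{lm:limit_nodes} to re-index each $w\in \mrootL\cap\isk$ by its unique partner $\zeta\in\SSSL$ (and similarly on the $\mrootR$ side), turning the finite sum into a sum over $(\SSSL\cap\{|\zeta|\le N^{\epsilon/4}+1\})^{n_\ell}$ etc. Hypothesis (ii) then gives termwise convergence to the corresponding summand of $\gdetlm_\bn(\bfz)$. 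For a dominating function, apply Hadamard's inequality $|\det[A_{ij}]_{i,j=1}^n|^2\le\prod_i\sum_j|A_{ij}|^2$ to both factors $\det[\tKsL]$, $\det[\tKsR]$, and then (i) bounds each entry $|\tKsL(w,w')|$ by $|\tlmKL(\zeta,\zeta')|+O(N^{\epsilon-1/2}\log N)$. Because the cubic term $-\tfrac{1}{3}(\tau_i-\tau_{i-1})\zeta^3$ in~\eqref{eq:def_sfs} makes $|\sqrt{\sfs_i(\zeta)}|$ decay super-exponentially along the asymptotic directions of $\inodesL_{z_i}\cup\inodesR_{z_i}$ (with the linear term providing the decay when $\tau_i=\tau_{i+1}$ under $x_i<x_{i+1}$), the resulting majorant is summable over $(\SSSL)^{n_1}\times(\SSSR)^{n_1}\times\cdots$, so dominated convergence gives $\gdet_\bn^{\mathrm{in}}(\bzz)\to \gdetlm_\bn(\bfz)$.

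The error term $\gdet_\bn^{\mathrm{out}}(\bzz)$ is handled by (iii). In any summand there, some row of one of the two matrices has an argument outside $\isk$, so by (iii) every entry of that row is $O(e^{-cN^\alpha})$. Factoring that row out of its determinant and using Hadamard on the remaining $(|\bn|-1)\times(|\bn|-1)$ determinant together with the pointwise bounds on $\tKsL,\tKsR$ provided by (i), (iii), the summand is at most $(\mathrm{poly}(N))^{|\bn|}e^{-cN^\alpha}$. The total number of tuples is at most $(|\mrootL||\mrootR|)^{|\bn|}=O(L^{2|\bn|})$, so $|\gdet_\bn^{\mathrm{out}}(\bzz)|=O(L^{C|\bn|}e^{-cN^\alpha})\to 0$, yielding (A).

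For (B), a Hilbert--Schmidt-type bound suffices. Defining $\|\tKsL\|_{\mathrm{HS}}^2:=\sum_{w\in\mrootL,w'\in\mrootR}|\tKsL(w,w')|^2$ and similarly for $\tKsR$, Hadamard applied row-by-row to the two determinants followed by Cauchy--Schwarz yields
\begin{equation}
|\gdet_\bn(\bzz)|\le \|\tKsL\|_{\mathrm{HS}}^{|\bn|}\,\|\tKsR\|_{\mathrm{HS}}^{|\bn|}.
\end{equation}
Splitting the Hilbert--Schmidt sum into inside and outside pieces and applying (i), (iii) as above shows that $\|\tKsL\|_{\mathrm{HS}},\|\tKsR\|_{\mathrm{HS}}$ are uniformly bounded in $L$ by the finite limits $\|\tlmKL\|_{\mathrm{HS}},\|\tlmKR\|_{\mathrm{HS}}$. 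This gives $|\gdet_\bn(\bzz)|\le C^{|\bn|}$ with $C$ independent of $L$ and $\bn$, which is (B). The technically most delicate point in the whole argument is verifying the needed polynomial pointwise bound on $\tKsL(w,w')$ in the outside regime: the factor $\jac(w)/(w-w')$ carries a $1/(w+\rho)$ singularity that must be shown to be dominated by the exponential weights along the level set $|w^N(w+1)^{L-N}|=|\bz|^L$, uniformly across all roots just outside the disk $\isk$.
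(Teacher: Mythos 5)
Your overall plan — split into inside/outside of $\isk$, use (ii) and a dominating function for the inside part of (A), use (iii) for the outside part, and use a Hadamard-type bound for (B) — matches the paper's strategy, and the (A) argument is essentially right. The gap is in (B).

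You claim that Hadamard row-by-row followed by Cauchy--Schwarz yields
\[
|\gdet_\bn(\bzz)|\le \|\tKsL\|_{\mathrm{HS}}^{|\bn|}\,\|\tKsR\|_{\mathrm{HS}}^{|\bn|},
\qquad
\|\tKsL\|_{\mathrm{HS}}^2:=\sum_{w\in\mrootL,w'\in\mrootR}|\tKsL(w,w')|^2 .
\]
This does not follow. After Hadamard you have
$|\det[\tKsL(w_i,w'_j)]|\le\prod_i\big(\sum_j|\tKsL(w_i,w'_j)|^2\big)^{1/2}$; if you now apply Cauchy--Schwarz to the sum over configurations, the factor
$\sum_{\text{configs}}\prod_i\sum_j|\tKsL(w_i,w'_j)|^2$
contains spare summations over the $w'_j$'s that the integrand does not decay in, so you pick up a factor of order $|\mrootR|^{|\bn|}=O(L^{m|\bn|})$, which diverges. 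Alternatively, if you do Cauchy--Schwarz on the config sum first and then Cauchy--Binet plus the Hadamard inequality for positive-semidefinite matrices, you get a bound of the form $|\bn|!\,\|\tKsL\|_{\mathrm{HS}}^{|\bn|}\|\tKsR\|_{\mathrm{HS}}^{|\bn|}$ — which still cannot be written as $C^{|\bn|}$, so it does not verify hypothesis (B) as literally stated (though it would still let you sum the series). Neither route gives your claimed inequality.

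What actually closes the argument — and what the paper uses — is the mixed $\ell^1(\ell^2)$ quantity, not the Hilbert--Schmidt norm. Bound $\sum_{j}|\tKsL(w_i,w'_j)|^2\le\sum_{w'\in\mrootR}|\tKsL(w_i,w')|^2$ inside the Hadamard product; this decouples the $w'$-dependence, so that the sum over configurations factorizes exactly into
\[
|\gdet_\bn(\bzz)|\le
\Big(\sum_{w\in\mrootL}\sqrt{\textstyle\sum_{w'\in\mrootR}|\tKsL(w,w')|^2}\Big)^{|\bn|}
\Big(\sum_{w'\in\mrootR}\sqrt{\textstyle\sum_{w\in\mrootL}|\tKsR(w',w)|^2}\Big)^{|\bn|}.
\]
One then shows this $\ell^1(\ell^2)$ quantity is bounded uniformly in $L$, by the inside/outside split and the super-exponential decay of the limiting kernel along $\SSSL,\SSSR$ (the cubic $\zeta^3$ in $\sfs_i$), which makes $\sum_{\zeta}\sqrt{\sum_{\zeta'}|\tlmKL(\zeta,\zeta')|^2}<\infty$; the condition $\epsilon<1/(1+2m)$ is precisely what keeps the $O(N^{m\epsilon})$-many lattice points in $\isk$ from overwhelming the error terms. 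Note this quantity is in general larger than $\|\tKsL\|_{\mathrm{HS}}$, so your boundedness argument for the HS norm does not transfer to it automatically — the decay of the limiting kernel is genuinely needed. Replacing your definition of $\|\cdot\|_{\mathrm{HS}}$ with this $\ell^1(\ell^2)$ norm, and rerunning your inside/outside estimate on it, repairs the proof and recovers the paper's argument.

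One minor remark: your closing comment about the $1/(w+\rho)$ singularity concerns the verification of hypothesis (iii), which is carried out later in the paper, not in the proof of this lemma, which takes (i)--(iii) as given.
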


If the absolute values in (i) are removed, then (i) will imply (ii). 
However, due to the discontinuity of the branch cuts of the square-root functions, $\tKsL(w,w')$ may converge to $-\tlmKL(\zeta, \zeta')$ if the points are at the branch cuts. 
Nevertheless, the branch cuts do not affect the determinants as we discussed before. 
To emphasize this point, we state (ii) separately. 

\begin{proof}
It is direct to check that due to the term $\sfs_i$ (see~\eqref{eq:def_sfs}) the kernels\footnote{If $\tau_i=\tau_{i+1}$ and $x_i<x_{i+1}$ for some $i$, then we have $O(e^{-c_1|\zeta|})$ and $O(e^{-c_1|\zeta'|})$, which are enough for the analysis.}  $\tlmKL(\zeta, \zeta')=O(e^{-c_1 |\zeta|^3})$ for some positive constant $c_1$ as $|\zeta|\to \infty$ along $\zeta\in\SSSL$ uniformly for $\zeta'\in \SSSR$, and also $\tlmKL(\zeta, \zeta')=O(e^{-c_1 |\zeta'|^3})$ as $|\zeta'|\to \infty$ along $\zeta'\in\SSSR$ uniformly for $\zeta\in \SSSL$. 
There are similar estimates for $\tlmKR(\zeta', \zeta)$. 
This implies, in particular, that $\tlmKL(\zeta, \zeta')$ and $\tlmKR(\zeta', \zeta)$ are bounded for $\zeta\in \SSSL$ and $\zeta'\in \SSSR$. 

Since $\mrootL$ and $\mrootR$ has $O(N^m)$ number of points, the assumption (iii) implies that 
\beq \label{eq:fKsqsuarfeboundec}
	\sum_{w'\in\mrootR }|\tKsL(w,w')|^2 = O(e^{-\frac12cN^{\alpha\epsilon}} ), 
	\qquad
	\sum_{w'\in\mrootR }|\tKsR(w',w)|^2 = O(e^{-\frac12cN^{\alpha\epsilon}} ) 
\eeq
uniformly for $w\in \mrootL\cap \isk^c$, and similarly, 
\beq \label{eq:fKsqsuarfeboundec2}
	\sum_{w\in\mrootL }|\tKsL(w,w')|^2 = O(e^{-\frac12cN^{\alpha\epsilon}} ), 
	\qquad
	\sum_{w\in\mrootL }|\tKsR(w',w)|^2 = O(e^{-\frac12cN^{\alpha\epsilon}} ) 
\eeq
uniformly for $w'\in \mrootR\cap \isk^c$.

We now show that there is a positive constant $C_1$ such that 
\beq \label{eq:Ksquareboundec}
	\sum_{w\in \mrootL} \sqrt{ \sum_{w'\in\mrootR }|\tKsL(w,w')|^2}  \le C_1 
\eeq
for all large enough $L$. 
The inequality is obtained if 
\beqq
	\sum_{w\in \mrootL\cap\isk^c} \sqrt{ \sum_{w'\in\mrootR}|\tKsL(w,w')|^2},
	\quad
	\sum_{w\in \mrootL\cap\isk} \sqrt{ \sum_{w'\in\mrootR\cap\isk^c}|\tKsL(w,w')|^2},
	\quad
	\sum_{w\in \mrootL\cap\isk} \sqrt{ \sum_{w'\in\mrootR\cap\isk}|\tKsL(w,w')|^2}
\eeqq
are all $O(1)$. 
The first two terms are bounded from the assumption (iii) and the fact that $\mrootL$  and $\mrootR$ have $O(N^m)$ points. 
For the third term, we use the assumption (i).  
It is direct to check that there are $O(N^{m\epsilon})$ number of points in $\mrootL\cap\isk$ and $\mrootR\cap\isk$. 
Hence the the third term is bounded by 
\beqq
	2\sum_{\zeta\in \SSSL} \sqrt{ \sum_{\zeta'\in \SSSR}  |\tlmKL(\zeta, \zeta')|^2 } 
	+ O(N^{(m+1/2)\epsilon-1/2}\log N).
\eeqq
This is bounded since $|\tlmKL(\zeta, \zeta')|$ decays fast as $|\zeta|\to \infty$ or $|\zeta'|\to\infty$ on $\SSSL, \SSSR$ and  $\epsilon< 1/(1+2m)$. 
Hence we proved~\eqref{eq:Ksquareboundec}. 
Similarly, we have 
\beq \label{eq:Ksquareboundec2}
	\sum_{w'\in \mrootR} \sqrt{ \sum_{w\in\mrootL }|\tKsR(w',w)|^2} \le C_1.
\eeq

We now show that 
(B) in Lemma~\ref{lem:pfofDlimbasic} holds. Consider the formula~\eqref{eq:aux_2017_03_22_01aa} of $\gdet_{\bn}(\bzz)$.
As we mentioned before, we change $\KsL$ and $\KsR$ to $\tKsL$ and $\tKsR$ without changing the determinants.
From the Hadamard's inequality, for all different $w_j'$'s, 
\begin{equation} \label{eq:Hdmdi1}
\begin{split}
	\left|\det\left[\tKsL(w_i,w'_j)\right]_{i,j=1}^{|\bn|}\right|
	&\le \prod_{i=1}^{|\bn|} \sqrt{\sum_{j=1}^{|\bn|} |\tKsL(w_i,w'_j)|^2} 
	\le \prod_{i=1}^{|\bn|} \sqrt{\sum_{w'\in \mrootR} |\tKsL(w_i,w')|^2} 
\end{split}
\end{equation}
and, similarly,  for all different $w_j$'s,
\begin{equation} \label{eq:Hdmdi2}
\begin{split}
	\left|\det\left[ \tKsR(w'_i,w_j)\right]_{i,j=1}^{|\bn|}\right|
	&\le  \prod_{i=1}^{|\bn|}  \sqrt{\sum_{j=1}^{|\bn|} |\tKsR(w'_i,w_j)|^2} 
	\le  \prod_{i=1}^{|\bn|}  \sqrt{\sum_{w\in\mrootL}|\tKsR(w'_i,w)|^2} .
\end{split}
\end{equation}
Hence, 
\beqq
\bes
	|\gdet_\bn (\bzz)| 
	 &\le \sum_{\substack{w_i\in \mrootL \\ w_i'\in\mrootR \\ i=1,\cdots, |\bn|}} \left( \prod_{i=1}^{|\bn|} \sqrt{\sum_{w'\in \mrootR} |\tKsL(w_i,w')|^2}  \right)
	 \left( \prod_{i=1}^{|\bn|}  \sqrt{\sum_{w\in\mrootL}|\tKsR(w'_i,w)|^2} \right) \\
	 &= \left( \sum_{w\in \mrootL} \sqrt{\sum_{w'\in \mrootR} |\tKsL(w,w')|^2}  \right)^{|\bn|}
	 \left( \sum_{w'\in\mrootR}  \sqrt{\sum_{w\in\mrootL}|\tKsR(w' ,w)|^2} \right)^{|\bn|}.
\end{split}	
\eeqq
Using~\eqref{eq:Ksquareboundec} and~\eqref{eq:Ksquareboundec2}, we obtain (B) with $C=C_1^2$. 

We now prove that (A) in Lemma~\ref{lem:pfofDlimbasic} holds. 
Fix $\bn$. 
We divide the sum in the formula of $\gdet_{\bn}(\bzz)$ into two parts: the part that all $u_j^{(\ell)}, v_j^{(\ell)}$ are in $\isk$ and the rest. 
By the assumption (ii) and Lemma~\ref{lm:limit_nodes} (c), the first part converges, as $L\to \infty$, to 
\beqq
	(-1)^{|\bn|}\sum_{\substack{\UUlm^{(\ell)}\in (\inodesL_{z_\ell})^{n_\ell} \\ \VVlm^{(\ell)}\in (\inodesR_{z_\ell})^{n_\ell} \\ \ell=1,\cdots,m}}
	\det\left[\tlmKL(\sw_i, \sw'_j)\right]_{i,j=1}^{|\bn|}
	\det\left[\tlmKR(\sw'_i, \sw_j)\right]_{i,j=1}^{|\bn|}
\eeqq
which is equal to $\gdetlm_{\bn}(\bfz)$ in~\eqref{eq:Frdse1}. 
On the other hand, for the second part, note that~\eqref{eq:Ksquareboundec} and~\eqref{eq:Ksquareboundec2} imply, in particular, that there is a positive constant $C_2$ such that 
\beq \label{eq:Ksquareboundec001}
	\sqrt{ \sum_{w'\in\mrootR }|\tKsL(w,w')|^2}  \le C_2, 
	\qquad 
	\sqrt{ \sum_{w\in\mrootL }|\tKsR(w',w)|^2} \le C_2
\eeq
uniformly for $w\in \mrootL$ for the first inequality, and for $w'\in \mrootR$ for the second inequality. 
Now, by the Hadamard's inequality (see~\eqref{eq:Hdmdi1} and~\eqref{eq:Hdmdi2}) and the estimates~\eqref{eq:fKsqsuarfeboundec},~\eqref{eq:fKsqsuarfeboundec2}, and~\eqref{eq:Ksquareboundec001}, we find that
for the second part, 
\begin{equation}
\begin{split}
	\left|\det\left[\tKsL(w_i,w'_j)\right]_{i,j=1}^{|\bn|} \det\left[ \tKsR(w'_i,w_j)\right]_{i,j=1}^{|\bn|}\right|
	\le C_2^{2|\bn|} C_3 e^{-cN^{\alpha}} 
\end{split}
\end{equation}
for a positive constant $C_3$, 
since one of the variables is in $\isk^c$. 
Since there are only $O(N^m)$ points in $\mrootL$ and $\mrootR$, we find that the second part converges to zero. 
Hence, we obtain (A). 
\end{proof}

\subsubsection{Asymptotics of $\qr_\bz$, $\ql_{\bz}$, and $\hfs_j$}

In the remainder of this section, we verify the assumptions (i), (ii), and (ii) of Lemma~\ref{lem:K1Kw2estmfo}. 
The kernels contain $\ql_\bz(w)$, $\qr_\bz(w)$, and $\hfs_j$. 
We first find the asymptotics of these functions. 

The following asymptotic result was proved in \cite{Baik-Liu16}.

\begin{lm}[\cite{Baik-Liu16}] \label{lem:qnearcpw}
Let $z, z'$ be complex numbers satisfying $0<|z|, |z'|<1$. 
Let $\bz$ and $\bz'$ are two complex numbers such that 
$\bz^L=(-1)^N \rr_0^L z$ and $\bz'^L=(-1)^N \rr_0^L z'$.
For a complex number $\ww$, set 	
\begin{equation}
	w= -\rho+ \frac{\rho\sqrt{1-\rho}}{N^{1/2}} \ww.
\end{equation} 
There is a positive constant $C$ such that the following holds. 
\begin{enumerate}[(a)]
\item If $\Re(\ww)> c$ for some $c>0$, then, uniformly in $\ww$, 
	\begin{equation}
	\label{eq:qLforul}
	\ql_\bz(w)=e^{\hftn(\ww, z)}(1+O(N^{\epsilon/2-1}\log N)) \quad  \text{for $|\ww|\le N^{\epsilon/4}$}
	\end{equation}
and
	\begin{equation}
	e^{-CN^{-\epsilon/4}}\le |\ql_\bz(w)|\le e^{CN^{-\epsilon/4}} \quad \text{for $|\ww|\ge N^{\epsilon/4}$.}
	\end{equation}
\item If $\Re(\ww)< -c$ for some $c<0$, then, uniformly in $\ww$,  
		\begin{equation}
		\label{eq:qRRforul}
		\qr_\bz(w)=e^{\hftn(\ww, z)}(1+O(N^{\epsilon/2-1}\log N)) \quad  \text{for $|\ww|\le N^{\epsilon/4}$}
		\end{equation}
and
		\begin{equation}
		e^{-CN^{-\epsilon/4}}\le |\qr_\bz(w)|\le e^{CN^{-\epsilon/4}} \quad \text{for $|\ww|\ge N^{\epsilon/4}$.}
		\end{equation}
	\end{enumerate}
The errors are uniform for $z, z'$ in a compact subset of $0<|z|, |z'|<1$.
The function $\hftn$ is defined in~\eqref{eq:def_h_R} and~\eqref{eq:def_h_L}.
\end{lm}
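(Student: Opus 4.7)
The plan is to prove part (a) for $\ql_\bz$; part (b) for $\qr_\bz$ is completely symmetric by reflecting around the critical point $u=-\rho$, so I will only sketch case (a). The idea is to convert $\log\ql_\bz(w)$ into a contour integral via residue calculus, deform the contour to pass through a neighborhood of the double-critical point $u_*=-\rho$ of the function $u^N(u+1)^{L-N}$, and perform a local saddle analysis under the rescaling $u=-\rho+\rho\sqrt{1-\rho}N^{-1/2}\xi$.

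First I would represent $\log \ql_\bz(w)$ as a single contour integral. Since $\rootsL_\bz\cup\{-1\}$ is exactly the set of singularities of $q_\bz'/q_\bz - L(u+\rho)/[u(u+1)] = L(u+\rho)\bz^L/[u(u+1)q_\bz(u)]$ lying in $\Re(u)<-\rho$, a residue computation (using that the $u=-1$ residue of the latter integrand picks up precisely the $-(L-N)\log(w+1)$ that compensates the prefactor) yields
\begin{equation*}
\log\ql_\bz(w) \;=\; \frac{1}{2\pi \ii}\oint_{\gamma} \log(w-u)\,\frac{L(u+\rho)\,\bz^L}{u(u+1)\,q_\bz(u)}\,du,
\end{equation*}
where $\gamma$ is any simple closed contour that encloses $\rootsL_\bz\cup\{-1\}$ but not $u=0$ and not $w$. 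The hypothesis $\Re(\ww)>c$ ensures $w$ lies to the right of the critical line $\Re(u)=-\rho$, so we may deform $\gamma$ to a vertical segment $\Re(u)=-\rho+\mathfrak{a}\rho\sqrt{1-\rho}N^{-1/2}$ for a small $\mathfrak{a}>0$ (closed off at infinity by arcs where $|u^N(u+1)^{L-N}|$ is exponentially large, giving negligible contribution).

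Next I would perform the local change of variables $u=-\rho+\rho\sqrt{1-\rho}N^{-1/2}\xi$. A direct Taylor expansion shows the linear terms cancel and
\begin{equation*}
u^N(u+1)^{L-N} \;=\; (-1)^N\rr_0^L\exp\!\left(-\tfrac12\xi^2+O(N^{-1/2}\xi^3)\right),
\end{equation*}
whence $q_\bz(u)=(-1)^N\rr_0^L(e^{-\xi^2/2}-z)(1+O(N^{-1/2}\xi^3))$. Combined with $L(u+\rho)/[u(u+1)]=-N^{1/2}\xi/[\rho\sqrt{1-\rho}]\,(1+O(N^{-1/2}))$ and $du=\rho\sqrt{1-\rho}N^{-1/2}\,d\xi$, the $N$-dependent prefactors telescope and we arrive, restricting to the bulk $|\xi|\le N^{\epsilon/3}$, at
\begin{equation*}
\log\ql_\bz(w) \;=\; \frac{1}{2\pi \ii}\int_{\Re\xi=\mathfrak{a}} \log(\ww-\xi)\,\frac{-\xi z}{e^{-\xi^2/2}-z}\,d\xi + \text{errors}.
\end{equation*}
An integration by parts using $\int\frac{\xi z}{e^{-\xi^2/2}-z}d\xi=-\log(1-ze^{\xi^2/2})$ recovers the right-hand side of~\eqref{eq:aux_2017_09_16_01}, which by the symmetry~\eqref{eq:def_h_RminwithL} equals $\hftn(\ww,z)$ for $\Re(\ww)>0$. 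Exponentiating gives~\eqref{eq:qLforul}. The upper/lower bound for $|\ww|\ge N^{\epsilon/4}$ is obtained separately by bounding the contour integral directly, noting that $\hftn(\zeta,z)=O(\zeta^{-1})$ at infinity by~\eqref{eq:hftn_estimate}.

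The main obstacle will be producing the explicit error $O(N^{\epsilon/2-1}\log N)$ uniformly in $\ww$ within the claimed window. Three error sources must be handled: (i) the cubic and higher Taylor remainders in the expansion of $u^N(u+1)^{L-N}$, which contribute $O(N^{-1/2}\xi^3)$ inside the exponential and require controlling $\xi$ up to $|\xi|\le N^{\epsilon/3}$; (ii) the truncation of the contour from $|\xi|\le N^{\epsilon/3}$ to the full vertical line, whose tails are exponentially small because $|e^{-\xi^2/2}|\to 0$ along the line; and (iii) the $\log(\ww-\xi)$ singularity when $\xi$ comes close to $\ww$, which produces the extra $\log N$ factor. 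Verifying that the deformed contour $\gamma$ avoids all the extra roots $\rootsR_\bz$ and stays in the region where $|u^N(u+1)^{L-N}|\ge (1+\delta)|\bz|^L$ (so $q_\bz$ does not vanish) requires a separate geometric argument based on the shape of the level set $|u^N(u+1)^{L-N}|=|\bz|^L$ described in Figure~\ref{fig:rootsfinite} and the fact that $|z|<1$ is strictly less than $\rr_0/\rr_0=1$ in the rescaled picture.
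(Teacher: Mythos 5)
Your proposal follows essentially the same route as the paper, which in fact does not prove this lemma in detail but cites Lemma 8.2 and (the proof of) Lemma 8.4(c) of \cite{Baik-Liu16} and sketches exactly your strategy: represent $\log\ql_\bz$ (resp.\ $\log\qr_\bz$) via residue calculus as a contour integral involving $\frac{L(u+\rho)\bz^L}{u(u+1)q_\bz(u)}$, deform to a vertical line through a neighborhood of the double critical point $u=-\rho$, and do a local steepest-descent analysis under the rescaling $u=-\rho+\rho\sqrt{1-\rho}N^{-1/2}\xi$ which sends $u^N(u+1)^{L-N}\to(-1)^N\rr_0^L e^{-\xi^2/2}$; the resulting rescaled integral, after integration by parts, is identified with $\hftn(\ww,z)$ through the representation~\eqref{eq:aux_2017_09_16_01} and the symmetry~\eqref{eq:def_h_RminwithL}. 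A couple of small points: your formula $\log(w-u)=\log(\rho\sqrt{1-\rho}N^{-1/2})+\log(\ww-\xi)$ produces a constant $\log N^{-1/2}$ term that does drop out because $\oint\frac{-\xi z}{e^{-\xi^2/2}-z}\,d\xi=0$ along the vertical line, but you should say so explicitly rather than let the "prefactors telescope" phrase carry the weight; and for $|\ww|\ge N^{\epsilon/4}$ the conclusion is that $\ql_\bz(w)$ is within $e^{\pm CN^{-\epsilon/4}}$ of $1$ rather than of $e^{\hftn(\ww,z)}$, so one should estimate the contour integral for $\log\ql_\bz$ directly (as in the paper's quoted display) rather than combining the steepest-descent approximation with the decay of $\hftn$, since the steepest-descent error bound is not claimed to hold in that range.
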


\begin{proof}
The case when $|\ww|\le N^{\epsilon/4}$ is in Lemma 8.2 (a) and (b) in \cite{Baik-Liu16}, 
where we used the notations $q_{\bz,\LL}(w)= (w+1)^{L-N}\ql_\bz(w)$, $q_{\bz, \RR}(w)=w^N \qr_{\bz}(w)$,  
$\hftn_\LL(\ww, z)=\hftn(\ww,z)$ for $\ww<0$, and $\hftn_\RR(\ww, z)=\hftn(\ww,z)$ for $\ww>0$.

The upper bounds of $\ql_\bz(w)$ and $\qr_\bz(w)$ when $|\ww|\ge N^{\epsilon/4}$ were computed in the proof Lemma 8.4 (c) of the same paper:  (9.57) shows $|\qr_\bz(w)|=|\frac{q_{\bz, \RR}(w)}{w^N}|= O(e^{CN^{-\epsilon/4}})$. (There is a typo in that equation: the denominator on the left hand side should be $u^N$ instead of $u^{L-N}$.)
This upper bound was obtained from an upper bound of $\log |\qr_\bz(w)|$. 
Indeed, in the analysis between (9.52) and (9.57) in \cite{Baik-Liu16}, we first write (there is another typo in (9.52); $(L-N)\log (-u)$ should be $N\log(-u)$)
\begin{equation*}
	\log |\qr_\bz(w)|
	= \Re\bigg[ \sum_{v\in\rootsR_{z}}\log(-w+v)-N\log (-w)\bigg] 
	=\Re \left[ -L\bz^L\int_{-\rho+\ii\realR}\log\left(\frac{v-w}{-\rho-w}\right)\frac{(v+\rho)}{v(v+1)q_\bz(v)}\ddbarr{v} \right].
\end{equation*}
After estimating the integral in an elementary way, we obtain $|\log |\qr_\bz(w)||\le CN^{-\epsilon/4}$. 
This implies both the upper and the lower bounds, $e^{-N^{-\epsilon/4}}\le |\qr_\bz(w)|\le e^{N^{-\epsilon/4}}$. 
The estimate of $\ql_\bz(w)$ is similar.
\end{proof}

Now we consider $\hfs_j(w)$.   
The following result applies to  $\fftn_j(w)$ (recall~\eqref{eq:CapfF}), and hence $\hfs_j$,  for $w$ near $-\rho$. 

 \begin{lm}[\cite{Liu16}] \label{lem:gnnec}
 	Assume the same conditions for $k,t, a$ as Lemma~\ref{lem:katzlt}. 
 	Set 
 	\begin{equation}
 	\gn(w)= w^{-k+N+1} (w+1)^{-a+k-N} e^{tw}. 
 	\end{equation}
 	Then, for 
 	\begin{equation}
 	w= -\rho+ \frac{\rho\sqrt{1-\rho}}{N^{1/2}} \ww
 	\end{equation}
 	with $\ww=O(N^{\epsilon/4})$, we have 
 	\begin{equation} \label{eq:lecon}
 	\frac{\gn(w)}{\gn(-\rho)} = e^{x\ww +\frac12\gamma\ww^2 -\frac{1}{3}\tau \ww^3 } (1+O(N^{\epsilon-1/2})).
 	\end{equation}
 \end{lm}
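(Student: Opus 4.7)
The plan is to take the logarithm of $\gn(w)/\gn(-\rho)$, expand each term in a Taylor series in $w+\rho$, and use the explicit scalings of $t,a,k$ to extract the coefficients of $\ww$, $\ww^2$, and $\ww^3$ from the resulting power series while controlling everything else as an error.

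Writing $A := -k+N+1 = (b-\ell)/2$ and $B := a-k+N = (b+\ell)/2$, the scalings~\eqref{eq:akfm}--\eqref{eq:blfm} give
\begin{equation*}
A = \rho(1-\rho)t - \rho\ell - x\rho^{1/2}(1-\rho)^{1/2}L^{1/2}, \qquad B = \rho(1-\rho)t + (1-\rho)\ell - x\rho^{1/2}(1-\rho)^{1/2}L^{1/2}.
\end{equation*}
Setting $\delta := w+\rho = \rho(1-\rho)^{1/2}N^{-1/2}\ww$, I will use
\begin{equation*}
\log\frac{\gn(w)}{\gn(-\rho)} = A\log\bigl(1-\tfrac{\delta}{\rho}\bigr) - B\log\bigl(1+\tfrac{\delta}{1-\rho}\bigr) + t\delta = \sum_{k\ge 1}\alpha_k \delta^k
\end{equation*}
with $\alpha_1 = -A/\rho - B/(1-\rho) + t$ and $\alpha_k = -A/(k\rho^k) + (-1)^{k+1}B/(k(1-\rho)^k)$ for $k\ge 2$.

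The next step is to identify $\alpha_k\delta^k$ for $k=1,2,3$ using the explicit formulas for $A,B,t$ together with $\ell=(1-2\rho)t+\gamma L+O(L^{1/2})$ and $t=\tau L^{3/2}/\sqrt{\rho(1-\rho)}+O(L)$. For $k=1$, the $t$-contributions and the $\ell$-contributions cancel exactly, leaving $\alpha_1=xL^{1/2}/\sqrt{\rho(1-\rho)}$, hence $\alpha_1\delta=x\ww$. For $k=2$, one computes $-(1-\rho)^2A+\rho^2 B = \rho(1-\rho)\bigl[\ell-(1-2\rho)t\bigr]+O(L^{1/2}) = \rho(1-\rho)\gamma L + O(L^{1/2})$, which yields $\alpha_2\delta^2 = \tfrac12\gamma\ww^2 + O(N^{\epsilon/2-1/2})$. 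For $k=3$, the key algebraic identity $(1-\rho)^3+\rho^3-(1-2\rho)^2=\rho(1-\rho)$ causes $(1-\rho)^3 A + \rho^3 B$ to collapse to $\tau[\rho(1-\rho)]^{3/2}L^{3/2}+O(L)$, giving $\alpha_3\delta^3=-\tfrac13\tau\ww^3+O(N^{3\epsilon/4-1/2})$.

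Finally, for $k\ge 4$, the crude bound $|A|,|B|,|t|=O(L^{3/2})$ together with $\delta=O(N^{-1/2+\epsilon/4})$ and $L=\Theta(N)$ gives $|\alpha_k\delta^k|\le C\cdot L^{3/2}N^{-k/2}|\ww|^k=O(N^{3/2-k/2+k\epsilon/4})$; summing the geometric tail in $\delta/\rho$ and $\delta/(1-\rho)$, which both remain bounded away from $1$ since $|\ww|\le N^{\epsilon/4}$, the total tail contribution is $O(N^{\epsilon-1/2})$ for $\epsilon<1/2$. Exponentiating the expansion and collecting all error terms yields~\eqref{eq:lecon}. The main obstacle is really just the algebraic bookkeeping: the specific cancellations that eliminate $t$ at order $\delta$, eliminate $(1-2\rho)t$ at order $\delta^2$, and reduce the order-$\delta^3$ coefficient to a clean multiple of $\tau$ are what single out the three surviving terms $x\ww$, $\tfrac12\gamma\ww^2$, and $-\tfrac13\tau\ww^3$; everything else is routine Taylor-series estimation.
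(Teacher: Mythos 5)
Your proof is correct, and it is in spirit the same computation the paper relies on; the paper itself does not reproduce it but instead cites (4.40), (4.41) and (4.46) of \cite{Liu16} and checks that the parameters match. You have given the direct steepest-descent-style Taylor verification that the citation encapsulates: expand $\log\bigl(g(w)/g(-\rho)\bigr)$ in powers of $\delta=w+\rho$, observe the exact cancellation of $t$- and $\ell$-terms at order $\delta$ (so $\alpha_1\delta=x\ww$ with no error), and use the identities $\rho(1-\rho)^{2}+\rho^{2}(1-\rho)=\rho(1-\rho)$ and $(1-\rho)^{3}+\rho^{3}-(1-2\rho)^{2}=\rho(1-\rho)$ to collapse the $\delta^{2}$ and $\delta^{3}$ coefficients to $\tfrac12\gamma\ww^{2}$ and $-\tfrac13\tau\ww^{3}$ up to $O(N^{\epsilon-1/2})$. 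The tail bound for $k\ge 4$ via the geometric series in $\delta/\rho$ and $\delta/(1-\rho)$ is also handled correctly, and the restriction $\epsilon<1/2$ from Lemma~\ref{lem:katzlt} is exactly what you need.

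One small slip worth noting: in the displayed general formula you wrote $\alpha_k=-A/(k\rho^k)+(-1)^{k+1}B/(k(1-\rho)^k)$ for $k\ge 2$, but the correct sign is $(-1)^{k}$, i.e. $\alpha_2=-A/(2\rho^2)+B/(2(1-\rho)^2)$ and $\alpha_3=-A/(3\rho^3)-B/(3(1-\rho)^3)$. Your subsequent order-by-order computations (the combinations $-(1-\rho)^{2}A+\rho^{2}B$ and $(1-\rho)^{3}A+\rho^{3}B$) and the tail estimate all use the correct signs, so this is only a typo in the displayed formula and does not affect the argument. You also silently correct a typo in the paper's~\eqref{eq:blfm} (the coefficient of $t$ in $b$ should be $2\rho(1-\rho)$, matching~\eqref{eq:scaling_b}, not $2\rho(1-2\rho)$); that correction is needed for the exact cancellation at order $\delta$.
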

 
 \begin{proof}
 	If we set $k'=k-N+1$ and $\ell'=a+2$, then $\gn(w)$ is same as $\tilde{g}_2(w)$ in (4.41) of \cite{Liu16} with $k'$ and $\ell'$ instead of $k$ and $\ell$ (and $\tau^{1/3}x$ replaced by $x$.)
 Since $\ell'=a+2=\ell+3$ satisfies the condition~\eqref{eq:blfm}, we see that 
 	$\frac{\gn(w)}{\gn(-\rho)}$ is equal to $g_2(w)$ in (4.40) of \cite{Liu16} with $j=0$. 
 	The asymptotics of $g_2(w)$ was obtained in (4.46) of \cite{Liu16}, which is same as~\eqref{eq:lecon}, 
 	under the conditions on $t$, $k'$, and $\ell'$ satisfying (4.1), (4.2), and (4.3) of \cite{Liu16}. 
 	From~\eqref{eq:akfm} and~\eqref{eq:blfm}, we find that these conditions are satisfied, and hence we obtain the lemma. 
 \end{proof} 
 
The next lemma is about $\hfs_j(w)$ when $w$  is away from $-\rho$.

 \begin{lm}
 	\label{lm:tail_1}
 	Suppose $x_j\in\realR,\gamma_j\in[0,1]$ and $\tau_j\in\realR_{\ge 0}$ are fixed constants, and assume that\footnote{If $\tau_{j-1}=\tau_{j}$ and $x_{j-1}<x_{j}$, then the errors change to $O(e^{-cN^{\epsilon/4}})$. In the proof, this change comes when we conclude the error term from~\eqref{eq:rhsfpr}.}  $\tau_1<\cdots<\tau_m$. 
Let $t_j$, $\ell_j$ and $b_j$ satisfy~\eqref{eq:scaling_t},~\eqref{eq:scaling_ell}, and~\eqref{eq:scaling_b}. Let
 	\begin{equation}
 	\label{eq:scaling_ak}
 	a_j=\ell_j+1,\qquad k_j=N-\frac{b_j-\ell_j}{2}+1
 	\end{equation}
as defined in~\eqref{eq:scaling_a} and~\eqref{eq:scaling_k}. 
Then, for $w\in\rootsL_{\bz}\cup \rootsR_{\bz}$ satisfying $|w+\rho|\ge \rho\sqrt{1-\rho}N^{\epsilon/4-1/2}$,  
\begin{equation} \label{eq:aux_2017_04_04_03}
 	\hfs_j(w)=O(e^{-cN^{3\epsilon/4}}).
\end{equation}
The  errors are uniform for $z$ in a compact subset of $0<|z|<1$. 
\end{lm}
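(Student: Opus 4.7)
The overall strategy is to factor $\hfs_j(w) = [\fftn_j(w)/\fftn_j(-\rho)]\cdot[\fftn_{j-1}(-\rho)/\fftn_{j-1}(w)]$ for $\Re(w)<-\rho$ (with the reciprocal for $\Re(w)>-\rho$), and apply Lemma~\ref{lem:gnnec} separately to each factor. The parameters $(k_j,a_j,t_j)$ and $(k_{j-1},a_{j-1},t_{j-1})$ both satisfy the scaling hypotheses of Lemma~\ref{lem:gnnec} with $(\tau,\gamma,x)$ equal to $(\tau_j,\gamma_j,x_j)$ and $(\tau_{j-1},\gamma_{j-1},x_{j-1})$ respectively. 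Taking the ratio gives, for $|\ww|\le N^{\epsilon/4}$ with $\ww=(w+\rho)N^{1/2}/(\rho\sqrt{1-\rho})$,
\[
\hfs_j(w)=\sfs_j(\ww)\bigl(1+O(N^{\epsilon-1/2})\bigr),
\]
and the exponents in $\sfs_j$ combine into $-\tfrac13\Delta\tau_j\ww^3+\tfrac12\Delta\gamma_j\ww^2+\Delta x_j\ww$ where $\Delta\tau_j:=\tau_j-\tau_{j-1}$, etc.

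I would then split the range $|w+\rho|\ge\rho\sqrt{1-\rho}N^{\epsilon/4-1/2}$ into two subranges. For $|\ww|$ in the intermediate range $N^{\epsilon/4}\le|\ww|=o(N^{1/2})$, a look inside the proof of Lemma~\ref{lem:gnnec} in \cite{Liu16} shows that the remainder in its underlying Taylor expansion is $O(N^{-1/2}|\ww|^4)$, which is still dominated by the cubic term $-\tfrac13\Delta\tau_j\ww^3\sim|\ww|^3$. Hence the approximation $\hfs_j(w)=\sfs_j(\ww)(1+o(1))$ persists throughout this range. By Lemma~\ref{lm:limit_nodes}, the rescaled $\ww$ lies close to $\inodesL_z\cup\inodesR_z$, so $\arg\ww$ is near $\pm3\pi/4$ (for $w\in\rootsL_\bz$) or near $\pm\pi/4$ (for $w\in\rootsR_\bz$). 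Consequently $\Re(\ww^3)$ has a fixed sign with $|\Re(\ww^3)|\ge c|\ww|^3$, and since $\Delta\tau_j>0$ the cubic term dominates $\log|\sfs_j(\ww)|$, giving $|\hfs_j(w)|\le C\exp(-c|\ww|^3)\le C\exp(-cN^{3\epsilon/4})$.

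For the remaining bulk regime where $|w+\rho|$ is bounded below uniformly in $L$, the Taylor expansion breaks down, and I would work with the exact formula
\[
\log|\hfs_j(w)|=-\Delta k_j\log(|w|/\rho)+(\Delta k_j-\Delta a_j)\log(|w+1|/(1-\rho))+\Delta t_j(\Re(w)+\rho),
\]
together with the constraint $\rho\log(|w|/\rho)+(1-\rho)\log(|w+1|/(1-\rho))=\log|z|/L$ coming from $w\in\rootsL_\bz\cup\rootsR_\bz$. Eliminating one logarithm via this constraint and using $\Delta t_j=\Delta\tau_j L^{3/2}/\sqrt{\rho(1-\rho)}+O(L)$ (which is the dominant scale, since $\Delta k_j,\Delta a_j$ are also $O(L^{3/2})$ through their $\Delta t_j$ contributions), the right-hand side simplifies to $-cL^{3/2}(1+o(1))$ for some $c>0$ uniform on the bulk of the level set, giving a much stronger bound $|\hfs_j(w)|\le e^{-cL^{3/2}}$. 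The footnote case $\tau_{j-1}=\tau_j$, $x_{j-1}<x_j$ is handled by the same scheme: the cubic term in $\sfs_j$ vanishes, and the negative linear term $\Delta x_j\Re(\ww)$ provides the weaker decay $O(e^{-cN^{\epsilon/4}})$.

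The main obstacle is the bulk step: verifying that the coefficient of $L^{3/2}$ above is strictly negative and bounded away from zero on the global level set $\{|w^N(w+1)^{L-N}|=|\bz|^L\}$ outside a neighborhood of $-\rho$. This requires a concrete description of the shape of the level set and of the interaction between the logarithmic terms $\log(|w|/\rho),\log(|w+1|/(1-\rho))$ and the linear term $\Re(w)+\rho$ along it. The argument is analogous to the one-point tail estimate in Lemma~8.4 of \cite{Baik-Liu16}, and the multi-point version here proceeds by replacing $k,a,t$ by the differences $\Delta k_j,\Delta a_j,\Delta t_j$ and tracking the resulting cancellations.
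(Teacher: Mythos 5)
Your proposal takes a genuinely different route from the paper, and the gap you yourself flag in the bulk regime is real and is exactly where the argument would stall. You attempt to extend the local cubic approximation $\hfs_j(w)\approx\sfs_j(\ww)$ into the range $N^{\epsilon/4}\le|\ww|\lesssim N^{1/2}$ by controlling the Taylor remainder, and then for $|w+\rho|$ bounded below you fall back on an unproven global estimate along the level set $|w^\rho(w+1)^{1-\rho}|=\text{const}$. That global estimate — that the coefficient of $L^{3/2}$ in $\log|\hfs_j(w)|$ is strictly negative, uniformly over the bulk of the level set — is not a minor verification: it requires a detailed geometric analysis of how $\log|w|$, $\log|w+1|$, and $\Re(w)$ interact along a curve that degenerates as $\rho$ varies, and you have not supplied it. Without it the proposal does not establish the lemma.

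The paper avoids this entirely with a monotonicity argument that makes the intermediate/bulk dichotomy unnecessary. The key Claim in the paper's proof is that $\left|w^m(w+1)^n e^{\ell w}\right|$ is monotone in $\Re(w)$ along the level curve $\left|w^\rho(w+1)^{1-\rho}\right|=\text{const}$, possibly restricted to the region $|w+\rho|^2$ exceeding an explicit threshold; checking the threshold against~\eqref{eq:scaling_t}--\eqref{eq:scaling_k} reduces to a short scaling calculation and shows the threshold is $O(N^{-1})$, hence swallowed by the lemma's hypothesis $|w+\rho|\ge\rho\sqrt{1-\rho}N^{\epsilon/4-1/2}$. Once monotonicity is established, the supremum of $\left|\fftn_j(w)/\fftn_{j-1}(w)\right|$ over the entire tail region is attained at the boundary point $u_c$ with $|\ww_c|=N^{\epsilon/4}$, where Lemma~\ref{lem:gnnec} applies directly and gives the $e^{-cN^{3\epsilon/4}}$ bound from the $\arg\xi_c\to\pm\tfrac{3\pi}{4}$ geometry. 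In short: the paper replaces your "extend the Taylor expansion, then do a hard global estimate" program with "prove monotonicity once, then evaluate at a single favorable point," which both closes the gap you left open and removes the need to peer into the internals of Lemma~\ref{lem:gnnec}'s proof.
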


\begin{proof}
The proof is similar to the first part of the proof of Lemma 8.4(c) in \cite{Baik-Liu16}, which is given in Section 9.4 of the same paper. 
We prove the case when $w\in \rootsL_{\bz}$. The case when $w\in \rootsR_{\bz}$ is similar. 
Recall from~\eqref{eq:newfhfs} and~\eqref{eq:CapfF} that for $w\in \rootsL_{\bz}$, $\hfs_j(w) = \frac{\fftn_j(w)\fftn_{j-1}(-\rho)}{\fftn_{j-1}(w)\fftn_j(-\rho)}$ and 
\beqq
	\frac{\fftn_j(w)}{\fftn_{j-1}(w)} =  w^{k_{j-1}-k_j} (w+1)^{-a_j+k_j+a_{j-1}-k_{j-1}} e^{(t_j-t_{j-1})w}. 
\eeqq 
We start with the following Claim. This is similar to a claim in Section 9.4 part (c). 
 	
\begin{claim}\label{claim:01}
Suppose $m$, $n$ and $\ell$ are positive integers and $\rho\in (0,1)$ satisfying
 		\begin{equation}
 		\label{eq:aux_2017_03_29_02}
 		-\frac{m}{\rho}+\frac{n}{1-\rho}+\ell\ge 0.
 		\end{equation}
 		Then the function
 		\begin{equation}
 		\label{eq:aux_2017_03_29_03}
 		\left|w^m(w+1)^ne^{\ell w}\right|
 		\end{equation}
increases as $\Re(w)$ increases along any fixed contour 
 		\begin{equation}
 		\label{eq:aux_2017_03_29_01}
 		\left|w^\rho(w+1)^{1-\rho}\right|=const.
 		\end{equation}
If~\eqref{eq:aux_2017_03_29_02} is not satisfied, then~\eqref{eq:aux_2017_03_29_03} increases as $\Re(w)$ increases along the part of the contour~\eqref{eq:aux_2017_03_29_01} satisfying
 		\begin{equation}
 		\label{eq:aux_2017_03_29_04}
 		|w+\rho|^2\ge -\frac{\rho(1-\rho)}{\ell}\left(-\frac{m}{\rho}+\frac{n}{1-\rho}+\ell\right).
 		\end{equation}
\end{claim}
	
 	\begin{proof}[Proof of Claim] 
 		Set 
 		\begin{equation*}
 		c=-\frac{\rho n-(1-\rho)m}{\rho \ell}.
 		\end{equation*}
 		We have $c\le 1-\rho$ from the condition~\eqref{eq:aux_2017_03_29_02}.
 		Note that
 		\begin{equation*}
 		\left|w^m(w+1)^ne^{\ell w}\right|=const\cdot \left|(w+1)^{n-\frac{1-\rho}{\rho}m}e^{\ell w}\right|=const\cdot\left|(w+1)^{-c}e^{w}\right|^{\ell}
 		\end{equation*}
 		by using the condition that $w$ is on the contour~\eqref{eq:aux_2017_03_29_01}. It is direct to check by parameterizing the contour and taking the derivatives that the function $\left|(w+1)^{-c}e^{w}\right|$ increases as $\Re(w)$ increase along the contour~\eqref{eq:aux_2017_03_29_01}: see Claim in Section 9.4 part (c) of \cite{Baik-Liu16}. 
If~\eqref{eq:aux_2017_03_29_02} is not satisfied, then $c>1-\rho$, and in this case, again it is direct to check that the function $\left|(w+1)^{-c}e^{w}\right|$ increases as $\Re(w)$ increases  along the contour~\eqref{eq:aux_2017_03_29_01} if $w$ is restricted to $|w+\rho|^2\ge \rho(c-1+\rho)$. 
The last condition is~\eqref{eq:aux_2017_03_29_04}.
\end{proof}

We continue the proof of Lemma~\ref{lm:tail_1}.	
We prove that the function $\left| \frac{\fftn_j(w)}{\fftn_{j-1}(w)} \right|$ increases as $\Re(w)$ increases along parts of the contour
 	\begin{equation}
 	\label{eq:aux_2017_03_29_06}
 	\left|w^{\rho}(w+1)^{1-\rho}\right|=\rr_0 |z|^{1/L}
 	\end{equation}
 	with the restriction
 	\begin{equation}
 	\label{eq:aux_2017_03_29_07}
 	|w+\rho|\ge \rho\sqrt{1-\rho}N^{\epsilon/4-1/2}.
 	\end{equation}
Recall that $\rootsL_{\bz}\cup \rootsR_\bz$ is a discrete subset of the contour~\eqref{eq:aux_2017_03_29_06}.
By Claim, it is sufficient to show that
    \begin{equation}
    \label{eq:aux_2017_03_29_13}
    \left(\rho\sqrt{1-\rho}N^{\epsilon/4-1/2}\right)^2 \ge 
    -\frac{\rho(1-\rho)}{t_j-t_{j-1}}\left(-\frac{-k_j+k_{j-1}}{\rho}+\frac{-a_j+k_j+a_{j-1}-k_{j-1}}{1-\rho}+t_j-t_{j-1}\right).
    \end{equation}
From~\eqref{eq:scaling_ak}, and~\eqref{eq:scaling_t},~\eqref{eq:scaling_ell} and~\eqref{eq:scaling_b}, the right-hand side is equal to
    \begin{equation*}
    -\frac{1}{2\rho(1-\rho)(t_j-t_{j-1})}\left((b_j-b_{j-1})-2\rho(1-\rho)(t_j-t_{j-1})-(1-2\rho)(\ell_j-\ell_{j-1})+O(t_j^{2/3})\right) .
    \end{equation*}
This is $O(N^{-1})$.  Thus~\eqref{eq:aux_2017_03_29_13} holds for sufficiently large $N$, and we obtain the monotonicity of $\left| \frac{\fftn_j(w)}{\fftn_{j-1}(w)} \right|$.
    
The monotonicity implies that 
\begin{equation}
   \left|\frac{\fftn_j(w)}{\fftn_{j-1}(w)} \right|\le \left|\frac{\fftn_j(u_c)}{\fftn_{j-1}(u_c)}\right|
   \end{equation}
with $u_c$ on the contour satisfying 
\begin{equation} 
    u_c=-\rho+\rho\sqrt{1-\rho}\xi_c N^{-1/2} 
\end{equation}
for a complex number $\xi_c$ satisfying $|\xi_c|=N^{\epsilon/4}$. 
Now by   Lemma~\ref{lem:gnnec}, 
   \begin{equation} \label{eq:rhsfpr}
   \left|\frac{\fftn_j(u_c)\fftn_{j-1}(-\rho)}{\fftn_{j-1}(u_c)\fftn_j(-\rho)}\right|=\left|e^{(x_{j}-x_{j-1})\xi_c+\frac{1}{2}(\gamma_{j}-\gamma_{j-1})\xi^2_c-\frac{1}{3}(\tau_{j}-\tau_{j-1})\xi^3_c}\right|\left(1+O(N^{\epsilon-1/2})\right).
   \end{equation}
The point $u_c$ is on the contour~\eqref{eq:aux_2017_03_29_06} and the contour is close to the contour $\left|w^{\rho}(w+1)^{1-\rho}\right|=\rr_0$ exponentially, which is self-intersecting at $w=-\rho$ like an x. 
This implies that  $\arg(\xi_c)$ converges to either $\frac{3\pi}{4}$ or $-\frac{3\pi}4$.
Since $\tau_{j}-\tau_{j-1}$ is positive, we find that the right-hand side of~\eqref{eq:rhsfpr} decays fast and is of order $e^{-cN^{3\epsilon/4}}$.
This proves~\eqref{eq:aux_2017_04_04_03} when $w\in \rootsL_{\bz}$.
The case when $w\in \rootsR_{\bz}$ is similar. 
\end{proof}

From Lemma~\ref{lem:gnnec} and~\ref{lm:tail_1}, we obtain the following asymptotics of $\hfs_j(w)$. 

\begin{lm} \label{lm:asym_fs}
Recall $\hfs_j(w)$ introduced in~\eqref{eq:newfhfs} and $\sfs_j(w)$ defined in~\eqref{eq:def_sfs}. 
Assume the same conditions on the parameters as in Lemma~\ref{lm:tail_1}.
Then\footnote{If $\tau_{j-1}=\tau_{j}$ and $x_{j-1}<x_{j}$, then the error changes to $O(e^{-cN^{\epsilon/4}})$ for $|\ww|\ge N^{\epsilon/4}$.} there is a positive constant $c$ such that for $w\in \rootsL_{\bz}\cup \rootsR_\bz$, 
\begin{equation}
	\label{eq:asym_fs}
	\hfs_j(w) 
	=\begin{dcases}
	\sfs_j(\ww)
	(1+O(N^{\epsilon-1/2}))& \text{if $|\ww|\le N^{\epsilon/4}$}\\
	O(e^{-cN^{3\epsilon/4}})& \text{if $|\ww|\ge N^{\epsilon/4}$}
	\end{dcases}
	\end{equation}
where for given $w$, $\ww$ is defined by the relation 
\begin{equation}
	w=-\rho+\frac{\rho\sqrt{1-\rho}}{N^{1/2}}\ww.
\end{equation}
In particular, $\hfs_j(w)$ is bounded uniformly for $w\in \rootsL_{\bz}\cup \rootsR_\bz$ as $N\to\infty$. 
\end{lm}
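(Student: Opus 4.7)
The plan is to derive both cases of~\eqref{eq:asym_fs} directly from the two preceding lemmas, and then deduce boundedness from the specific geometry of the sets $\inodesL_z,\inodesR_z$.

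\textbf{Case $|\ww|\le N^{\epsilon/4}$.} I would observe that the function $\fftn_i(w)=w^{-k_i+N+1}(w+1)^{-a_i+k_i-N}e^{t_iw}$ defined in~\eqref{eq:CapfF} is exactly the function $\gn$ of Lemma~\ref{lem:gnnec} with parameters $(k,a,t)$ replaced by $(k_i,a_i,t_i)$. The scaling hypotheses~\eqref{eq:scaling_t}--\eqref{eq:scaling_b} together with~\eqref{eq:scaling_ak} show that each triple $(k_i,a_i,t_i)$ satisfies the conditions of that lemma with parameters $(x_i,\gamma_i,\tau_i)$. Applying Lemma~\ref{lem:gnnec} once for index $j$ and once for index $j-1$ and taking the ratio, the $O(N^{\epsilon-1/2})$ errors combine multiplicatively and I obtain, for $\Re(\ww)<0$,
\[
\frac{\fftn_j(w)\fftn_{j-1}(-\rho)}{\fftn_{j-1}(w)\fftn_j(-\rho)}
= e^{(x_j-x_{j-1})\ww+\frac12(\gamma_j-\gamma_{j-1})\ww^{2}-\frac13(\tau_j-\tau_{j-1})\ww^{3}}\bigl(1+O(N^{\epsilon-1/2})\bigr),
\]
which is precisely $\sfs_j(\ww)$ as defined in~\eqref{eq:def_sfs}. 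The same calculation with the reciprocal definition of $\hfs_j$ in~\eqref{eq:newfhfs} gives the $\Re(\ww)>0$ case with the correct sign flip, matching the second branch of $\sfs_j$.

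\textbf{Case $|\ww|\ge N^{\epsilon/4}$.} This is a direct invocation of Lemma~\ref{lm:tail_1}, whose hypotheses coincide with those of the present lemma; the bound $O(e^{-cN^{3\epsilon/4}})$ is exactly what we need.

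\textbf{Uniform boundedness.} The second case already gives exponential smallness, so it suffices to bound $|\sfs_j(\ww)|(1+o(1))$ uniformly for $\ww\in(\inodesL_{z}\cup\inodesR_{z})\cap\{|\ww|\le N^{\epsilon/4}\}$. Since $w\in\rootsL_{\bz}\cup\rootsR_{\bz}$ lies on the level contour $|w^N(w+1)^{L-N}|=|\bz|^L$, the corresponding $\ww$ lies near $\{\Re(\ww^2)=-2\log|z|\}$ (cf.\ Figure~\ref{fig:Scontour}); hence on $\inodesL_{z}$ we have $\arg(\ww)\to\pm 3\pi/4$ and $\Re(\ww^{3})\ge c|\ww|^{3}$ for large $|\ww|$, so the exponent $-\tfrac13(\tau_j-\tau_{j-1})\ww^{3}$ has negative real part; on $\inodesR_{z}$ we have $\arg(\ww)\to\pm\pi/4$ and $\Re(\ww^{3})\le -c|\ww|^{3}$, so $+\tfrac13(\tau_j-\tau_{j-1})\ww^{3}$ again has negative real part. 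In both situations the cubic term dominates the quadratic and linear ones (here I use the strict ordering $\tau_{j-1}<\tau_j$), giving $|\sfs_j(\ww)|=O(1)$ uniformly. The one subtlety, flagged in the footnote, is the degenerate case $\tau_{j-1}=\tau_j$: there the cubic disappears, the quadratic contributes $O(1)$ because $|e^{c\ww^{2}}|=|z|^{-c}$ along the level contour, and the linear term is controlled by the sign condition $x_{j-1}<x_j$, yielding only exponential (not super-exponential) decay in the tail; this is the sole point requiring care, but the argument is identical in structure.
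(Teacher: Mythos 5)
Your proof is correct and follows the same route the paper intends: the case $|\ww|\le N^{\epsilon/4}$ follows by applying Lemma~\ref{lem:gnnec} to $\fftn_j$ and $\fftn_{j-1}$ separately and taking the ratio (the paper simply cites Lemma~\ref{lem:gnnec} and~\ref{lm:tail_1} without spelling this out), the tail case is a direct citation of Lemma~\ref{lm:tail_1}, and the uniform boundedness follows from the sign of $\Re(\ww^3)$ along the asymptotes of $\inodesL_z\cup\inodesR_z$ exactly as you argue. The only tiny slips are cosmetic --- along the level set one has $|e^{c\ww^2}|=|z|^{-2c}$ rather than $|z|^{-c}$, and strictly speaking $\ww$ lies only within $O(N^{3\epsilon/4-1/2}\log N)$ of $\inodesL_z\cup\inodesR_z$ (Lemma~\ref{lm:limit_nodes}) rather than on it, but neither affects the estimate.
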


\subsubsection{Verification of conditions (i) and (ii) of Lemma~\ref{lem:K1Kw2estmfo}}

Let $w\in\roots_{\bz_i}\cap\mrootL$ and $w'\in\roots_{\bz_j}\cap\mrootR$.  
Let $\zeta$ and $\zeta'$ be the image of $w$ and $w'$ under either the map $\mathcal{M}_{N,\LL}$ or $\mathcal{M}_{N,\RR}$ in Lemma~\ref{lm:limit_nodes}, depending on whether the point is on $\rootsL_{z}$ or $\rootsR_{z}$.
We also set 
\beq
	\ww:= \frac{N^{1/2}}{\rho\sqrt{1-\rho}} (w+\rho), 
	\qquad
	\ww':= \frac{N^{1/2}}{\rho\sqrt{1-\rho}} (w'+\rho). 
\eeq
Then, by Lemma~\ref{lm:limit_nodes}, 
\beq	\label{eq:wwtozt}
	|\ww-\zeta|, \quad |\ww'-\zeta'| \le  N^{3\epsilon/4-1/2}\log N.
\eeq

We have 
\begin{equation} 
\label{eq:aux_2017_07_01_02}
 \begin{split}
 	\tKsL(w,w') &= -\left(\delta_i(j)+\delta_i\left(j+(-1)^i\right)\right) 
	\frac{\jac(w) \sqrt{\hfs_i(w)}\sqrt{ \hfs_j(w')} (\sH_{\bz_i}(w))^2 }{\sH_{\bz_{i-(-1)^{i}}}(w) \sH_{\bz_{j-(-1)^{j}}}(w')(w-w')} \sQL(j). 
\end{split} 
\end{equation}
Clearly, $\sQL(j)= \QWL(j)$.
Assume that $w, w'\in\Omega$.
Then 
\begin{equation}
\label{eq:aux_2017_9_27_01}
	\jac(w)=-\frac{\rho\sqrt{1-\rho}}{\ww N^{1/2}}\left(1+O(N^{\epsilon/4-1/2})\right).
\end{equation}
The overall minus sign in $\tKsL(w,w')$ cancels the minus sign from $\jac(w)$.
For other factors in~\eqref{eq:aux_2017_07_01_02}, we use 
Lemma~\ref{lem:qnearcpw} and Lemma~\ref{lm:asym_fs}. Here, we recall from~\eqref{eq:defofHszw} that 
$\sH_z(w)=\qol_z(w)$ for $\Re(w)<-\rho$ and $\sH_z(w)=\qor_z(w)$ for $\Re(w)>-\rho$. 
We obtain 
\beq
	|\tKsL(w,w')| = |\tlmKL(\ww, \ww')| (1+O(N^{\epsilon-1/2}\log N)).
\eeq
We then take the approximate $\ww$ and $\ww'$ by $\zeta$ and $\zeta'$ using~\eqref{eq:wwtozt}. 
Since the derivatives of $\tlmKL(\ww, \ww')$ are bounded (which is straightforward to check), 
we find that
\beq
	|\tKsL(w,w')| = |\tlmKL(\zeta, \zeta')| (1+O(N^{\epsilon-1/2}\log N))
	+ O(N^{3\epsilon/4-1/2}\log N).
\eeq
Recall that $|\tlmKL(\zeta, \zeta')|$ are bounded; see the first paragraph of the proof of Lemma~\ref{lem:K1Kw2estmfo}.  
Therefore, we obtain the first equation of~\eqref{eq:aux_2017_07_01_01}.
The estimate of $|\tKsR(w,w')|$ is similar, and we obtain (i) of Lemma~\ref{lem:K1Kw2estmfo}.
The part (ii) of the lemma is similar.

\subsubsection{Verification of condition (iii) of Lemma~\ref{lem:K1Kw2estmfo}} \label{sec:asymlasiii}

Consider $|\tKsL(w,w')|$ when $w$ or $w'$ is in $\Omega^c$. 
Here $w\in\mrootL$ and $w'\in\mrootR$.
We estimate each of the factors on the right hand side of~\eqref{eq:aux_2017_07_01_02}. 
We have the following. 
The estimates are all uniform in $w$ or $w'$ in the domain specified. 
The positive constants $C$, $C'$, and $c$ may be different from a line to a line. 
\begin{enumerate}[(1)]
	\item $N^{1/2} |w+\rho|\ge C$ for $w \in\mrootL\cup \mrootR$. This follows from Lemma~\ref{lm:limit_nodes}.
	\item $N^{1/2} |w-w'|\ge C$ for $w\in\mrootL$ and $w'\in\mrootR$.
	\item $|\jac(w)|\le CN^{-1/2}$  for $w \in\mrootL\cup \mrootR$. Recall~\eqref{eq:jac} for the definition of $\jac(w)$. 
	\item $|\hfs_i(w)|\le C$  for $w \in\mrootL\cup \mrootR$ by Lemma~\ref{lm:asym_fs}.
	\item $|\hfs_i(w)|\le Ce^{-cN^{3\epsilon/4}}$ for $w\in \Omega^c\cap (\mrootL\cup \mrootR)$ by Lemma~\ref{lm:asym_fs}.
	\item $C \le \left|\sH_{\bz_i}(w)\right|\le C'$ for  $w\in\mrootL\cup\mrootR$ by Lemma~\ref{lem:qnearcpw}. For the upper bound, we also use the decay property of $\hftn(\ww, z)=O(\ww^{-1})$ from~\eqref{eq:hftn_estimate}.  Recall~\eqref{eq:defofHszw} for the definition of $\sH_{\bz}$.
	\item $|\sQL(i)|\le C$ and $|\sQR(i)|\le C$
\end{enumerate}
By combining these facts we obtain\footnote{If $\tau_{j-1}=\tau_{j}$ and $x_{j-1}<x_{j}$, then the error terms in (5) and the kernels are $O(e^{-cN^{\epsilon/4}})$.} that $|\tKsL(w,w')|=O( e^{-cN^{3\epsilon/4}})$ 
and $|\tKsR(w',w)|=O( e^{-cN^{3\epsilon/4}})$ if  $w$ or $w'$ is in $\Omega^c$.
Hence we obtain (iii) of  Lemma~\ref{lem:K1Kw2estmfo}. 
This completes the proof of Theorem~\ref{thm:htfntasy}.

\section{Properties of the limit of the joint distribution} \label{sec:consistency}

In this section, we discuss a few properties of the function
\begin{equation}
	 \FS(x_1, \cdots, x_m; \spp_1, \cdots, \spp_m)
\end{equation}
introduced in Section~\ref{sec:limitdisformula}, which is a limit of the joint distribution. 
In order to emphasize that this is a function of $m$ variables with $m$ parameters, let us use the notation
\begin{equation}
	\FSm (\bx; \bp)= \FS(x_1, \cdots, x_m; \spp_1, \cdots, \spp_m) 
\end{equation}
where 
\beq
	\bx=(x_1, \cdots, x_m), \qquad \bp=(\spp_1, \cdots, \spp_m).
\eeq
We also use the notations
\beq
	\bxk=(x_1, \cdots, x_{k-1}, x_{k+1}, \cdots, x_m), \qquad 
	\bpk=(\spp_1, \cdots, \spp_{k-1}, \spp_{k+1}, \cdots, \spp_m)
\eeq
for vectors of size $m-1$ with $x_k$ and $\spp_k$ removed, respectively.

Recall the formula 
\beq \label{eq:Sfformulaa}
\bes
	\FSm(\bx; \bp)=	
	\oint\cdots\oint \cccm(\bfz; \bx, \bp) \gdetlmm(\bfz; \bx, \bp) \ddbar{z_m}\cdots\ddbar{z_1}
\end{split}
\end{equation}
where $\bfz=(z_1, \cdots, z_m)$, and the contours are nest circles satisfying 
\beq
	0<|z_m|<\cdots<|z_1|<1.
\eeq
We wrote $\ccc(\bfz)$ and $\gdetlm(\bfz)$ by $\cccm(\bfz; \bx, \bp)$ and $\gdetlmm(\bfz; \bx, \bp)$ to emphasize that they are functions of $m$ variables $z_1, \cdots, z_m$ and depend on $\bx$ and $\bp$. 
Recall from Property (P1) in Subsubsection~\ref{sec:defFS}, which we proved in Subsubsection~\ref{sec:cccsz}, that for each $i$, $\cccm(\bfz)$ is a meromorphic function of $z_i$ in the disk $|z_i|< 1$ and the only simple poles are $z_i=z_{i+1}$ for $i=1, \cdots, m-1$.
The part (i) of the next lemma shows the analytic property of $\gdetlmm(\bfz; \bx, \bp)$ and proves Property (P2) in Subsubsection~\ref{sec:defFS}. 
The part (ii) is used later.

\begin{lm} \label{lem:Dgetlmmdcov}
\begin{enumerate}[(i)]
\item $\gdetlmm(\bfz; \bx, \bp)$ is analytic in each of the variables $z_k$ in the deleted disk $0<|z_k|<1$. 
\item For $1\le k\le m-1$,
\begin{equation} \label{eq:Dintheconstce}
	\lim_{z_k\to z_{k+1}}  \gdetlmm(\bfz; \bx, \bp)
	= \gdetlmmo(\bfzk; \bxk, \bpk)
\end{equation}
where $\bfzk= (z_1, \cdots, z_{k-1}, z_{k+1}, \cdots, z_m)$. 
\end{enumerate}
\end{lm}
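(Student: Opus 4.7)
Both parts are established by applying dominated convergence to the series expansion~\eqref{eq:Fdmddser}--\eqref{eq:aux_2017_07_01_05} for $\gdetlmm$, reducing to a term-by-term analysis of $\gdetlm_\bn(\bfz)$. The uniform bound $|\gdetlm_\bn(\bfz)|\le C^{|\bn|}$ needed for this transfer follows from Hadamard's inequality on $\lmKL,\lmKR$ together with super-exponential decay of $\sfs_\ell$ along $\inodesL_{z_\ell}\cup \inodesR_{z_\ell}$, exactly as in Subsection~\ref{sec:M}.

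For part (i), I fix $\bn$ and establish analyticity of $\gdetlm_\bn(\bfz)$ in each $z_k$. Each element of $\inodesL_{z_k}\cup\inodesR_{z_k}$ varies analytically in $z_k$ by the implicit function theorem applied to $e^{-\zeta^2/2}=z_k$ (the derivative $-\zeta e^{-\zeta^2/2}$ vanishes only at $\zeta=0$, which is excluded since $|z_k|<1$ forces $\zeta\ne 0$). The functions $\hftn(\cdot,z_k)$, $\sfs_\ell(\cdot)$, and the rational factors in~\eqref{eq:aux_2017_07_01_05} are analytic in $z_k$ on $0<|z_k|<1$. The only apparent singularities arise at $z_k=z_{k\pm 1}$ from the denominators $\Delta(\UUlm^{(k)};\UUlm^{(k\pm 1)})\Delta(\VVlm^{(k)};\VVlm^{(k\pm 1)})$ when matching root components coalesce. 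These vanish to combined order at most $2\min(n_k,n_{k\pm 1})\le n_k+n_{k\pm 1}$, which is compensated by the zero of order $n_k+n_{k\pm 1}$ of the companion numerator factor $(1-z_k/z_{k\pm 1})^{n_{k\pm 1}}(1-z_{k\pm 1}/z_k)^{n_k}$ (equal to a nonvanishing function times $(z_k-z_{k\pm 1})^{n_k+n_{k\pm 1}}$). Hence the apparent singularities are removable and $\gdetlm_\bn$ is analytic on $0<|z_k|<1$.

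For part (ii), I compute $\lim_{z_k\to z_{k+1}}\gdetlm_\bn(\bfz)$ term by term. Now the order count above becomes sharp: the numerator vanishes to order exactly $n_k+n_{k+1}$, while the denominator vanishes to order at most $2\min(n_k,n_{k+1})$, with equality only when $n_k=n_{k+1}$ and every element of $\UUlm^{(k+1)}$ (resp.\ $\VVlm^{(k+1)}$) matches, in the limit, a unique element of $\UUlm^{(k)}$ (resp.\ $\VVlm^{(k)}$) under some permutation; all other configurations contribute zero. Using $u'(z)=-1/(uz)$ for $e^{-u^2/2}=z$, each matched pair contributes an explicit Jacobian factor. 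Summing over the $(n_k!)^2$ matching permutations (one for $\UUlm$, one for $\VVlm$) supplies the factor that converts $1/(\bn!)^2$ into $1/(\tilde{\bn}!)^2$, where $\tilde{\bn}$ is obtained from $\bn$ by deleting the $k$-th entry. The identities $\sfs_k(\zeta)\sfs_{k+1}(\zeta)=\sfs_{\tilde k}(\zeta)$ (where $\sfs_{\tilde k}$ is the corresponding exponential in the reduced $(m-1)$-system) and $\hftn(\zeta,z_k)+\hftn(\zeta,z_{k+1})\to 2\hftn(\zeta,z_{k+1})$ then reassemble the remaining factors into precisely the $\tilde{\bn}$-summand of $\gdetlmmo(\bfzk;\bxk,\bpk)$, yielding~\eqref{eq:Dintheconstce}.

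The main obstacle will be the residue and combinatorial bookkeeping in part (ii): carefully tracking the Jacobian factors from each matched pair, the surviving $\Delta$-ratios, and the exponential factors, and verifying they reassemble into the correct summand of the reduced series. A clean way to organize this is to group summands by matching permutation and to extract leading-order behavior from the analytic branches of the root maps $z\mapsto u(z)$.
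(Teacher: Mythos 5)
Your proposal is correct and follows essentially the same route as the paper: work with the double series expansion, track orders of vanishing in the denominators $\Delta(\UUlm^{(k)};\UUlm^{(k\pm1)})$, $\Delta(\VVlm^{(k)};\VVlm^{(k\pm1)})$ against the companion zero of $(1-z_{k\pm1}/z_k)^{n_k}(1-z_k/z_{k\pm1})^{n_{k\pm1}}$, identify the surviving configurations as the permutation-matched ones with $n_k=n_{k+1}$, use $\dd z/\dd u = -uz$ for the Jacobian at each matched pair, and reassemble via $\sfs_k\sfs_{k+1}=\sfs_{\tilde k}$ together with the $(n_k!)^2$ permutation count. The one presentational difference is that the paper derives (i) as a corollary of (ii) (the finite limit at $z_k=z_{k+1}$ directly removes the only candidate pole), whereas you argue removability in (i) first by an order count and then recompute the limit in (ii); since the order-counting work is identical, this is the same argument arranged differently rather than a genuinely distinct method.
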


\begin{proof} 
Recall the series formula~\eqref{eq:aux_2017_03_29_14}:
\begin{equation} \label{eq:formrecsef}
	\gdetlmm(\bfz; \bx, \bp)
	=\sum_{\bn\in (\intZ_{\ge 0})^m} \frac{1}{(\bn !)^2}
\gdetlmm_{\bn}(\bfz; \bx, \bp), 
\qquad
	\gdetlmm_\bn(\bfz; \bx, \bp)= \sum 
	\gdetlmddm_{\bn, \bfz} (\bUUlm, \bVVlm; \bx, \bp) 
\end{equation}
where the last sum is over $\UUlm^{(\ell)}\in\left(\inodesL_{z_\ell}\right)^{n_\ell}$ and $\VVlm^{(\ell)}\in\left(\inodesR_{z_\ell}\right)^{n_\ell}$, $\ell=1,\cdots,m$. 
Here, (recall the notational convention in Definition~\ref{def:notc})
\begin{equation} \label{eq:duvfoforn}
\begin{split}
	&\gdetlmddm_{\bn, \bfz} (\bUUlm, \bVVlm; \bx, \bp)
	=
	\left[ \prod_{\ell=1}^m \frac{\Delta(\UUlm^{(\ell)})^2\Delta(\VVlm^{(\ell)})^2}{\Delta(\UUlm^{(\ell)};\VVlm^{(\ell)})^2}
	\fslm_\ell(\UUlm^{(\ell)}) 
	\fslm_\ell(\VVlm^{(\ell)})\right]\\
	&\times\left[ \prod_{\ell=2}^m  \frac{\Delta(\UUlm^{(\ell)};\VVlm^{(\ell-1)})\Delta(\VVlm^{(\ell)};\UUlm^{(\ell-1)}) e^{-\hftn(\VVlm^{(\ell)}, z_{\ell-1}) - \hftn(\VVlm^{(\ell-1)}, z_{\ell})} }
	{\Delta(\UUlm^{(\ell)};\UUlm^{(\ell-1)})\Delta(\VVlm^{(\ell)};\VVlm^{(\ell-1)}) 
	 e^{\hftn(\UUlm^{(\ell)}, z_{\ell-1}) + \hftn(\UUlm^{(\ell-1)}, z_{\ell})} }
	\left(1-\frac{z_{\ell-1}}{z_{\ell}}\right)^{n_\ell} \left(1-\frac{z_\ell}{z_{\ell-1}}\right)^{n_{\ell-1}} \right]
\end{split}
\end{equation}
where $\bUUlm=(\UUlm^{(1)}, \cdots, \UUlm^{(m)})$, $\bVVlm= (\VVlm^{(1)}, \cdots, \VVlm^{(m)})$ with $\UUlm^{(\ell)}=(\su_1^{(\ell)}, \cdots, \su_{n_\ell}^{(\ell)})$, $\VVlm^{(\ell)}=(\sv_1^{(\ell)}, \cdots, \sv_{n_\ell}^{(\ell)})$. 
Note that we may take the components of $\UUlm^{(\ell)}$ and $\VVlm^{(\ell)}$ to be all distinct due to the factors $\Delta(\UUlm^{(\ell)})$ and $\Delta(\VVlm^{(\ell)})$.
Recall that $\fslm_\ell(\zeta):=\frac{1}{\zeta} \sfs_\ell(\zeta) e^{2\hftn(\zeta, z_\ell)}$
where $\hftn$ and $\sfs_j$ are defined in~\eqref{eq:def_h_R},~\eqref{eq:def_h_L}, and~\eqref{eq:def_sfs}. 

The points $\su_j^{(\ell)}$ and $\sv_j^{(\ell)}$ are roots of the equation $e^{-\zeta^2/2}=z_\ell$, and hence they depend on $z_\ell$ analytically (if we order them properly). 
Note that the only denominators in~\eqref{eq:duvfoforn} which can vanish are 
$\Delta(\UUlm^{(\ell)};\UUlm^{(\ell-1)})$ and $\Delta(\VVlm^{(\ell)};\VVlm^{(\ell-1)})$. 
They vanish only when $z_{\ell-1}=z_{\ell}$. 
Hence the only possible poles of $\gdetlmm(\bfz; \bx, \bp)$ are $z_k=z_{k+1}$ for $k=1, \cdots, m-1$. 
Hence (ii) implies (i).

We now prove (ii). We may assume that $z_1, \cdots, z_m$ are {all distinct and take the limit as $z_k\to z_{k+1}$; otherwise, we} may take successive limits.
Let us consider which terms in~\eqref{eq:duvfoforn} vanish when $z_k= z_{k+1}$.
Clearly, 
$(1-\frac{z_{k}}{z_{k+1}})^{n_{k+1}} (1-\frac{z_{k+1}}{z_{k}})^{n_{k}}$ vanishes.
On the other hand, when $z_k=z_{k+1}$, $\UUlm^{(k)}$ and $\UUlm^{(k+1)}$ are from the same set. 
If there is a non-zero overlap between the pair of vectors, then $\Delta(\UUlm^{(k+1)}; \UUlm^{(k)})$, which is in the denominator in~\eqref{eq:duvfoforn}, vanishes. 
Similarly, $\Delta(\VVlm^{(k+1)}; \VVlm^{(k)})$ may also vanish. 
Hence $\gdetlmddm_{\bn, \bfz} (\bUUlm, \bVVlm; \bx, \bp)$ is equal to 
\beq \label{eq:zdivbiyuva}
	\frac{(z_{k+1}-z_k)^{n_{k+1}+n_k}}{\Delta(\UUlm^{(k+1)}; \UUlm^{(k)}) \Delta(\VVlm^{(k+1)}; \VVlm^{(k)})}
\eeq
times a non-vanishing factor as $z_k\to z_{k+1}$. 
Now, note that if $\su_i^{(k)}\to \su_j^{(k+1)}$ for some $i$ and $j$, then from $e^{-(\su_j^{(k+1)})^2/2}=z_{k+1}$, 
\beq
	\lim_{\su_i^{(k)}\to \su_j^{(k+1)}} \frac{z_{k+1}-z_k}{\su_j^{(k+1)}-\su_i^{(k)}} 
	= \frac{\dd z_{k+1}}{\dd \su_j^{(k+1)}} 
	= -\su_j^{(k+1)}z_{k+1}. 
\eeq
Since $\Delta(\UUlm^{(k+1)}; \UUlm^{(k)})$ has at most $\min\{n_{k+1}, n_k\}$ vanishing factors $\su_j^{(k+1)}-\su_i^{(k)}$ (because the components of $\UUlm^{(k)}$ are all distinct, and so are $\UUlm^{(k+1)}$) and similarly, $\Delta(\VVlm^{(k+1)}; \VVlm^{(k)})$ has at most $\min\{n_{k+1}, n_k\}$ vanishing factors $\sv_j^{(k+1)}-\sv_i^{(k)}$, 
we find, by looking at the degree of the numerator, that~\eqref{eq:zdivbiyuva} is non-zero only if (a) $n_k=n_{k+1}$, (b) $\UUlm^{(k)}$ converges to a vector whose components are a permutation of the components of $\UUlm^{(k+1)}$, and (c) $\VVlm^{(k)}$ converges to a vector whose components are a permutation of the components of $\VVlm^{(k+1)}$. 
When $n_k=n_{k+1}$, $\UUlm^{(k)}\to \UUlm^{(k+1)}$, and $\VVlm^{(k)}\to \VVlm^{(k+1)}$, the term~\eqref{eq:zdivbiyuva} converges to 
\beqq
	\frac{z_{k+1}^{2n_{k+1}} \prod_{i=1}^{n_{k+1}} \su_i^{(k+1)} \sv_i^{(k+1)}}{\Delta(\UUlm^{(k+1)})^2\Delta(\VVlm^{(k+1)})^2}.
\eeqq
Hence, using the fact that (see~\eqref{eq:def_sfs})
$\sfs_k(w)\sfs_{k+1}(w)$ is equal to $e^{-\frac13 (\tau_{k+1}-\tau_{k-1}) \zeta^3 + \frac12 (\gamma_{k+1}-\gamma_{k-1}) \zeta^2 + (x_{k+1}-x_{k-1}) \zeta}$ 
for $\Re(w)<0$ and is equal to $e^{\frac13 (\tau_{k+1}-\tau_{k-1}) \zeta^3 - \frac12 (\gamma_{k+1}-\gamma_{k-1}) \zeta^2 - (x_{k+1}-x_{k-1}) \zeta}$ 
for $\Re(w)>0$, we find that $\gdetlmddm_{\bn, \bfz} (\bUUlm, \bVVlm; \bx, \bp)$ converges to 
\beqq
	\gdetlmddmo_{\bnk, \bfzk} (\bUUlmk, \bVVlmk; \bxk, \bpk) 
\eeqq
where $\bnk$, $\bUUlmk$, $\bVVlmk$ are same as $\bn, \bUUlm, \bVVlm$ with $n_k$, 
$\bUUlm^{(k)}$, $\bVVlm^{(k)}$ removed. 
We obtain the same limit if $n_k=n_{k+1}$, $\UUlm^{(k)}$ converges to a vector whose components are a permutation of the components of $\UUlm^{(k+1)}$, and $\VVlm^{(k)}$ converges to a vector whose components are a permutation of the components of $\VVlm^{(k+1)}$. 
Hence,
\begin{equation*}
	\lim_{z_k\to z_{k+1}} \gdetlmm_\bn(\bfz; \bx, \bp)
	= (n_k!)^2 \sum
	\gdetlmddmo_{\bnk, \bfzk} (\bUUlmk, \bVVlmk; \bxk, \bpk)
\end{equation*}
where the sum is over $\UUlm^{(\ell)}\in\left(\inodesL_{z_\ell}\right)^{n_\ell}$ and $\VVlm^{(\ell)}\in\left(\inodesR_{z_\ell}\right)^{n_\ell}$, $\ell=1,\cdots,k-1, k+1, \cdots, m$.  
This implies~\eqref{eq:Dintheconstce}. 
This completes the proof of (ii), and hence (i), too.
\end{proof}

Next lemma shows what happens if we interchange two consecutive contours in the formula~\eqref{eq:Sfformulaa} of $\FSm(\bx; \bp)$.  
Although we do not state explicitly, we can also obtain an analogous formula for the finite-time joint distribution from a similar computation. 

\begin{lm} \label{lem:FresidueF}
For every $1\le k\le m-1$, 
\beq \label{eq:Fresintm}
\bes
	\FSm(\bx; \bp)=	 \FSmo(\bxk; \bpk) + 
	\oint\cdots\oint \cccm(\bfz; \bx, \bp) \gdetlmm(\bfz; \bx, \bp) \ddbar{z_m}\cdots\ddbar{z_1}
\end{split}
\end{equation}
where the contours are nested circles satisfying  
\beq \label{eq:Fsneocon}
	\text{$0<|z_m|<\cdots<|z_{k+1}|<|z_{k-1}|<\cdots<|z_1|<1$ 
	and $0<|z_k|<|z_{k+1}|$.}
\eeq
\end{lm}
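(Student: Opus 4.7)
The plan is to prove~\eqref{eq:Fresintm} by contour deformation and the residue theorem, treating $z_k$ as the deformation variable while keeping the other contours fixed. In the original formula~\eqref{eq:Sfformulaa} the $z_k$-contour is a circle of radius strictly larger than $|z_{k+1}|$, whereas in~\eqref{eq:Fsneocon} we want the radius to be strictly smaller than $|z_{k+1}|$. The only singularity of the integrand $\cccm(\bfz; \bx, \bp)\gdetlmm(\bfz; \bx, \bp) \frac{1}{z_k}$ (note the $1/z_k$ coming from $\ddbar{z_k}$) inside the unit disk that we will cross is the simple pole at $z_k = z_{k+1}$: by property (P1) the function $\cccm$ is meromorphic in $|z_k|<1$ with only this simple pole, and by Lemma~\ref{lem:Dgetlmmdcov}(i) the function $\gdetlmm$ is analytic on the punctured disk $0<|z_k|<1$ and extends analytically across $z_k=z_{k+1}$. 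Thus deforming the $z_k$-contour picks up exactly the residue at $z_k=z_{k+1}$, producing~\eqref{eq:Fresintm} with the residue integral playing the role of $\FSmo(\bxk; \bpk)$.

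The heart of the proof is therefore to identify the residue. Write $\cccm(\bfz)=\tfrac{z_k}{z_k-z_{k+1}}\,R(\bfz)$ where $R$ is regular at $z_k=z_{k+1}$; then
\begin{equation*}
\mathrm{Res}_{z_k=z_{k+1}}\!\left[\frac{\cccm(\bfz)\gdetlmm(\bfz)}{z_k}\right]
= R(\bfz)\gdetlmm(\bfz)\Big|_{z_k=z_{k+1}}.
\end{equation*}
For the $\gdetlmm$ factor, Lemma~\ref{lem:Dgetlmmdcov}(ii) gives directly
$\gdetlmm(\bfz;\bx,\bp)|_{z_k=z_{k+1}}=\gdetlmmo(\bfzk;\bxk,\bpk)$.
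For $R$, I will substitute $z_k=z_{k+1}$ into the explicit formula~\eqref{eq:costcdefaq}. In the first product of~\eqref{eq:costcdefaq} (with the $\ell=k$ factor removed) only the $\ell=k-1$ term is affected, collapsing from $\tfrac{z_{k-1}}{z_{k-1}-z_k}$ to $\tfrac{z_{k-1}}{z_{k-1}-z_{k+1}}$. In the second product, the $\ell=k$ factor collapses to $1$ (using $B(z,z)=B(z)$ from~\eqref{eq:Bdenf}), the $\ell=k-1$ factor has its denominator exponent shifted from $z_k$ to $z_{k+1}$, and all other factors are unchanged. Matching these against the corresponding factors in $\cccmo(\bfzk;\bxk,\bpk)$ under the re-indexing $\tilde z_\ell=z_\ell$ for $\ell<k$ and $\tilde z_\ell=z_{\ell+1}$ for $\ell\ge k$ (and likewise for $\bxk,\bpk$) shows entry by entry that $R(\bfz)|_{z_k=z_{k+1}}=\cccmo(\bfzk;\bxk,\bpk)$.

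Combining the two ingredients, the residue equals $\cccmo(\bfzk;\bxk,\bpk)\gdetlmmo(\bfzk;\bxk,\bpk)$. Substituting back into the deformation identity,
\begin{equation*}
\FSm(\bx;\bp) = \oint\cdots\oint \cccmo(\bfzk)\gdetlmmo(\bfzk)\,\ddbar{z_m}\cdots\widehat{\ddbar{z_k}}\cdots\ddbar{z_1}
+ \oint\cdots\oint \cccm(\bfz)\gdetlmm(\bfz)\,\ddbar{z_m}\cdots\ddbar{z_1},
\end{equation*}
where in the first integral the remaining $(m-1)$ variables lie on nested circles $|z_m|<\cdots<|z_{k+1}|<|z_{k-1}|<\cdots<|z_1|<1$ and in the second integral the contours satisfy~\eqref{eq:Fsneocon}. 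The nested chain for the first integral is precisely the admissible contour in the definition of $\FSmo(\bxk;\bpk)$, so that integral equals $\FSmo(\bxk;\bpk)$ and we obtain~\eqref{eq:Fresintm}.

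The only potentially delicate step is the bookkeeping that verifies $R|_{z_k=z_{k+1}}=\cccmo(\bfzk;\bxk,\bpk)$ factor by factor; everything else is a clean application of the residue theorem, made legitimate by the analyticity statement in Lemma~\ref{lem:Dgetlmmdcov}(i). I also note that along the deformation path the contour of $z_k$ crosses $|z_k|=|z_{k+1}|$ but, apart from the point $z_k=z_{k+1}$ itself, the integrand remains analytic there, so no additional contributions appear.
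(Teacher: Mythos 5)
Your proposal is correct and takes essentially the same approach as the paper: deform the $z_k$-contour inward, observe via (P1) and Lemma~\ref{lem:Dgetlmmdcov}(i) that the only pole crossed is the simple pole at $z_k=z_{k+1}$, and identify the residue using Lemma~\ref{lem:Dgetlmmdcov}(ii) together with a direct check on $\cccm$ (the paper states this last step as ``direct to check''; you spell out the bookkeeping explicitly). The factor-by-factor matching of $R|_{z_k=z_{k+1}}$ with $\cccmo(\bfzk;\bxk,\bpk)$, including the collapse of the $\ell=k$ exponential via $B(z,z)=B(z)$, is accurate.
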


\begin{proof}
We start with the formula~\eqref{eq:Sfformulaa}. 
We fix all other contours and deform the $z_k$-contour so that $|z_k|$ is smaller than $|z_{k+1}|$. 
Then the integral~\eqref{eq:Sfformulaa} is equal to a term due to the residue plus the same integral with the contours changed to satisfy
\beqq
	0< |z_m|<\cdots<|z_{k+2}|<|z_k|<|z_{k+1}|<|z_{k-1}|<\cdots<|z_1|<1.
\eeqq
Since the integrand have poles only at $z_i=z_{i+1}$, $i=1, \cdots, m-1$, 
it is analytic at $z_k=z_i$ for $i\ge k+2$. 
Hence the conditions that $|z_k|>|z_i|$, $i\ge k+2$, are not necessary, and we can take the contours as~\eqref{eq:Fsneocon}. 

It remains to show that the residue term is $\FSmo(\bxk; \bpk)$. 
It is direct to check from the definition~\eqref{eq:costcdefaq} that
\beq \label{eq:Cintheconstce}
	\lim_{z_k\to z_{k+1}} (z_k-z_{k+1}) \cccm(\bfz; \bx, \bp)
	= z_{k+1} \cccmo(\bfzk; \bxk, \bpk)
\eeq
where we set $\bfzk=(z_1, \cdots, z_{k-1}, z_{k+1}, \cdots, z_m)$.
On the other hand, from Lemma~\ref{lem:Dgetlmmdcov} (ii), $\gdetlmm(\bfz; \bx, \bp)$ converges to 
$\gdetlmm(\bfzk; \bxk, \bpk)$. 
Thus, noting $\frac{\dd z_k}{2\pi i z_k}$ in~\eqref{eq:Sfformulaa}, the residue term 
is equal to $\FSmo(\bxk; \bpk)$. 
\end{proof}

The multiple integral in~\eqref{eq:Fresintm} has a natural probabilistic interpretation.
The following theorem gives an interpretation for more general choices of contours. 

\begin{thm} \label{thm:pofEpm}
Assume the same conditions as Theorem~\ref{thm:htfntasy}.
Let $E_j^{\pm}$ be the events defined by 
\beq
	E_j^{-}= \left\{\frac{ \height(\mr p_j)-(1-2\rho)s_j-(1-2\rho+2\rho^2)t_j}{-2\rho^{1/2}(1-\rho)^{1/2} L^{1/2}}\le x_j\right\}
\eeq
and
\beq
	E_j^{+}= \left\{\frac{ \height(\mr p_j)-(1-2\rho)s_j-(1-2\rho+2\rho^2)t_j}{-2\rho^{1/2}(1-\rho)^{1/2} L^{1/2}}> x_j\right\}.
\eeq
Then
\begin{equation}
	\label{eq:001}
	\lim_{L\to\infty}\prob\left(E_1^\pm\cap\cdots\cap E_{m-1}^\pm \cap E_m^{-} \right)=(-1)^\#\oint\cdots\oint \ccc(\bfz) \gdetlm(\bfz) \ddbar{z_m}\cdots\ddbar{z_1}
\end{equation}
where $\#$ denotes the number of appearances of $+$ in $E_1^\pm\cap\cdots\cap E_{m-1}^\pm$, and the contours are circles of radii between $0$ and $1$ such that for each $1\le j\le m-1$, 
\beq \label{eq:contrs}
	\text{$|z_j|>|z_{j+1}|$ if we have $E_j^-$ and $|z_j|<|z_{j+1}|$ if we have $E_j^+$.}
\eeq
\end{thm}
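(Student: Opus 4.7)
The approach is to combine inclusion--exclusion on the probability side with residue calculus on the integral side, using Theorem~\ref{thm:htfntasy} and Lemma~\ref{lem:FresidueF} as the main inputs. On the probability side, the elementary identity $\mathbf{1}_{E_j^+}=1-\mathbf{1}_{E_j^-}$ followed by the binomial expansion of the product gives, for $S\subseteq\{1,\dots,m-1\}$ the set of $+$ indices (so $\#=|S|$),
\begin{equation*}
	\prob\Big(\bigcap_{j\in S}E_j^+\cap\bigcap_{j\in\{1,\dots,m\}\setminus S}E_j^-\Big)
	=\sum_{T\subseteq S}(-1)^{|T|}\prob\Big(\bigcap_{j\in T\cup(\{1,\dots,m\}\setminus S)}E_j^-\Big).
\end{equation*}
Taking $L\to\infty$ via Theorem~\ref{thm:htfntasy} applied to each all-$-$ joint event on the right, and reindexing by $T':=S\setminus T$ (so $|T|=|S|-|T'|$), yields
\begin{equation*}
	\lim_{L\to\infty}\prob(\cdots)=(-1)^{|S|}\sum_{T\subseteq S}(-1)^{|T|}\FS^{(m-|T|)}\bigl(\bx^{[T]};\bp^{[T]}\bigr),
\end{equation*}
where $\bx^{[T]}$ and $\bp^{[T]}$ denote the vectors with coordinates indexed by $T$ removed.

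On the integral side, denote by $I_S$ the integral on the right of~\eqref{eq:001}. It therefore suffices to prove
\begin{equation*}
	I_S=\sum_{T\subseteq S}(-1)^{|T|}\FS^{(m-|T|)}\bigl(\bx^{[T]};\bp^{[T]}\bigr).
\end{equation*}
As a preliminary, note that $I_S$ depends only on $S$ and not on the specific radii chosen subject to~\eqref{eq:contrs}: by Lemma~\ref{lem:Dgetlmmdcov}(i) and the definition of $\cccm$ in Subsubsection~\ref{sec:cccsz}, the integrand is analytic in each $z_j$ in the punctured disk apart from simple poles at $z_j=z_{j\pm1}$, so the radii of non-adjacent contours can be interchanged freely, and Remark~\ref{rmk:zord} ensures that the asymptotic arguments of Section~\ref{sec:pfastp} remain valid for any such configuration.

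We prove the identity for $I_S$ by induction on $|S|$. The base case $|S|=0$ is the definition~\eqref{eq:Sfformulaa} of $\FSm$. For the inductive step, fix $k\in S$ and choose radii satisfying~\eqref{eq:contrs} together with $|z_{k-1}|>|z_{k+1}|$ in the case $k-1\notin S$ (permissible by the radius-independence, since $|z_k|$ is then a strict local minimum of the radius pattern). Deforming the $z_k$-contour so that $|z_k|$ moves from below to above $|z_{k+1}|$ crosses exactly the simple pole at $z_k=z_{k+1}$; by~\eqref{eq:Cintheconstce} together with Lemma~\ref{lem:Dgetlmmdcov}(ii), the residue contributes precisely $I^{(m-1)}_{\phi(S\setminus\{k\})}(\bx^{[k]};\bp^{[k]})$, where $\phi\colon\{1,\dots,m\}\setminus\{k\}\to\{1,\dots,m-1\}$ is the order-preserving bijection. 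We thus obtain $I_S=I_{S\setminus\{k\}}-I^{(m-1)}_{\phi(S\setminus\{k\})}(\bx^{[k]};\bp^{[k]})$, and applying the inductive hypothesis in dimensions $m$ and $m-1$, followed by the pairing $T\leftrightarrow T\cup\{k\}$ between subsets of $S\setminus\{k\}$ and subsets of $S$ containing $k$, assembles the claimed identity.

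The main technical obstacle lies in this bookkeeping step: after the residue at $z_k=z_{k+1}$, the surviving $(m-1)$ contours can fit several different lower-dimensional patterns $I^{(m-1)}_{S'}$, depending on how the original radii were laid out (specifically, on the relative sizes of $|z_{k-1}|$ and $|z_{k+1}|$). The radius-independence of $I_S$ established above is precisely what resolves this, letting us normalize the configuration at every level of the recursion so that the induced pattern is exactly $\phi(S\setminus\{k\})$, which is in turn what allows the two inductive hypotheses to combine cleanly into the single signed sum over all subsets of $S$.
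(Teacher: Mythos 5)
Your proof is correct, but it takes a genuinely different route from the paper's. The paper goes back to the \emph{finite-time} level: it re-derives the multiple-integral formula for mixed events $\tilde E_j^\pm$ by reworking the summation identity of Proposition~\ref{thm:DRL_simplification} under the reversed inequality $\prod_j|w'_j+1|>\prod_j|w_j+1|$ (see~\eqref{eq:popral}), observes that the algebra of Theorem~\ref{thm:multi_point_formula} is insensitive to the nesting of the $z_i$-contours, and then passes to the limit using Remark~\ref{rmk:zord}, which says the asymptotics of Section~\ref{sec:pfastp} also do not depend on the nesting. You instead work entirely at the level of the limiting formula: inclusion--exclusion expresses each mixed $\pm$ probability as an alternating sum of all-$-$ probabilities (each already covered by Theorem~\ref{thm:htfntasy}), and the matching identity on the integral side follows by iterated residue calculus across the simple pole $z_k=z_{k+1}$ (a generalization of Lemma~\ref{lem:FresidueF}) together with an induction on the number of $+$ signs, relying on Lemma~\ref{lem:Dgetlmmdcov} to guarantee that $z_k=z_{k+1}$ is the only pole crossed and to evaluate its residue. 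Both arguments are sound; the paper's yields the finite-$L$ mixed-event formula~\eqref{eq:aux_2017_07_01_03} as a by-product, whereas yours avoids redoing any finite-$L$ algebra and instead exploits the analytic structure of the limit. Two small notational points: the invocation of Remark~\ref{rmk:zord} to justify radius-independence of $I_S$ is superfluous — that independence is a purely analytic consequence of Lemma~\ref{lem:Dgetlmmdcov}(i) plus the location of the poles of $\cccm$, with no asymptotics involved; and the induction should be understood as strong induction on $|S|$ holding simultaneously for all $m$, since the recursion drops $m$ by one.
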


\begin{proof} 
Theorem~\ref{thm:htfntasy} is the special case when we take $E_j^-$ for all $j$. 
The general case follows from the same asymptotic analysis starting with a different finite-time formula. 
We first change Theorem~\ref{prop:multipoint_distribution_origin}. 
Let $\tilde E_j^+:=\left\{\xx_{k_j}(t_j)\ge a_j\right\}$ and $\tilde E_j^-:=\left\{\xx_{k_j}(t_j)< a_j\right\}$ be events of TASEP in $\conf_N(L)$. 
Then we claim that 
\begin{equation}
\label{eq:aux_2017_07_01_03}
\begin{split}
\prob_Y\left(\tilde E_1^\pm\cap\cdots\cap \tilde E_{m-1}^\pm \cap \tilde E_m^{-}\right)
=  
(-1)^{\#}\oint \cdots\oint 
\constc(\bbz, \bk)  \detv_Y(\bbz, \bk, \tilde\ba, \bt) \ddbar{z_m}\cdots \ddbar{z_1}
\end{split}
\end{equation}
where 
the contours are circles of radii between $0$ and $1$ such that for each $1\le j\le m-1$, 
$|z_j|>|z_{j+1}|$ if we have $\tilde E_j^-$ and $|z_j|<|z_{j+1}|$ if we have $\tilde E_j^+$.
Here $\#$ denotes the number of appearances of $+$ in $\tilde E_1^\pm\cap\cdots\cap \tilde E_{m-1}^\pm$
and 
$\tilde \ba=(\tilde a_1,\cdots,\tilde a_m)$ with $\tilde a_j=a_j$ or $a_j-1$ depending on whether we have $\tilde E_j^-$ or $\tilde E_j^+$, respectively. 
The identity~\eqref{eq:aux_2017_07_01_03} follows from the same proof of Theorem~\ref{prop:multipoint_distribution_origin} with a small change. 
The contour condition is used in the proof when we apply Proposition~\ref{thm:DRL_simplification}. 
Now,  in Proposition~\ref{thm:DRL_simplification},  if we replace the assumption\footnote{See the paragraph including the equation~\eqref{eq:wordtep} for a discussion how the condition $\prod_{j=1}^N|w'_j+1|<\prod_{j=1}^N|w_j+1|$ is related to the condition $|z'|<|z|$.} $\prod_{j=1}^N|w'_j+1|<\prod_{j=1}^N|w_j+1|$ by $\prod_{j=1}^N|w'_j+1|>\prod_{j=1}^N|w_j+1|$, then the conclusion changes to  
\begin{equation} \label{eq:popral}
\begin{split}
	&\sum_{\substack{X\in \conf_N(L)\cap\{x_{k}< a\} }} \DR_{X}\left(W\right)\DL_{X}\left(W'\right)\\
	&=-\left(\frac{z}{z'}\right)^{(k-1)L}\left(1-\left(\frac{z'}{z}\right)^L\right)^{N-1}
\left[ \prod_{j=1}^N \frac{w_j^{-k}(w_j+1)^{-a+k+2}}{(w'_j)^{-k}(w'_j+1)^{-a+k+1}} \right] \det\left[\frac{1}{w_i-w'_{i'}}\right]_{i,i'=1}^N. 
\end{split}
\end{equation}
Note the change in the summation domain from $x_k\ge a$ to $x_k<a$. 
This identity follows easily from Proposition~\ref{prop:01} and the geometric series formula. 
The probability of the event $\tilde E_1^\pm\cap\cdots\cap \tilde E_{m-1}^\pm \cap \tilde E_m^{-}$ is then obtained from the same calculation as before using,  for each $i$,  either Proposition~\ref{thm:DRL_simplification} or~\eqref{eq:popral} depending on whether we have $\tilde E_i^+$ or $\tilde E_i^-$.  

For the periodic step initial condition, Theorem~\ref{thm:multi_point_formula} showed that 
$\constc(\bbz, \bk)  \detv_Y(\bbz, \bk, \tilde\ba, \bt)$ is equal to $\const(\bbz) \gdet(\bbz)$. 
This proof does not depend on how the $z_i$-contours are nested. 
Hence we obtain 
\begin{equation}
\label{eq:aux_2017_07_01_04}
	\prob\left(\tilde E_1^\pm\cap\cdots\cap \tilde E_{m-1}^\pm \cap \tilde E_m^{-}\right)=(-1)^\#\oint\cdots\oint \const(\bbz) \gdet(\bbz) \ddbar{z_m}\cdots\ddbar{z_1}
\end{equation}
where $\const(\bbz)$ and $ \gdet(\bbz)$ are the same as that in Theorem~\ref{thm:multi_point_formula} except that the parameter $\ba$ is replaced by $\tilde{\ba}$. 
The contours are the same as that in~\eqref{eq:aux_2017_07_01_03}.
Now, the asymptotic analysis follows from Section~\ref{sec:pfastp}. Recall that the analysis of Section~\ref{sec:pfastp} does not depend on the ordering of $|z_i|$; see Remark~\ref{rmk:zord}. 
\end{proof}

We now prove the consistency of $\FSm(\bx; \bp)$ when one of the variables tends to positive infinity. 

\begin{prop}[Consistency] \label{prop:consy}
We have 
\begin{equation} \label{eq:conscy}
	\lim_{x_k \to +\infty} \FSm(\bx; \bp) 
	= \FSmo (\bxk; \bpk). 
\end{equation}
\end{prop}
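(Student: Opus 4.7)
The plan is to exploit the elementary partition identity
\[
\prob\Bigl(\bigcap_{j\neq k} E_j^-\Bigr) \;=\; \prob\Bigl(\bigcap_{j=1}^m E_j^-\Bigr) \;+\; \prob\Bigl(\bigcap_{j\neq k} E_j^- \cap E_k^+\Bigr),
\]
valid at every finite $L$, together with Theorem~\ref{thm:htfntasy} applied to both the $m$-point and the $(m-1)$-point intersections. On the left, Theorem~\ref{thm:htfntasy} applied to the $m-1$ space-time points $\fontlm{p}_j$, $j\neq k$ (whose $\tau$-coordinates still satisfy the required ordering after $\tau_k$ is removed) produces the limit $\FSmo(\bxk;\bpk)$. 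The first term on the right converges to $\FSm(\bx;\bp)$. Consequently the second term on the right converges as well, to a residual limit $R_k(x_k)$, and passing $L\to\infty$ yields the relation
\[
\FSmo(\bxk;\bpk) \;=\; \FSm(\bx;\bp) + R_k(x_k).
\]

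It remains to show $R_k(x_k)\to 0$ as $x_k\to +\infty$. The key is the trivial monotone bound, valid at every finite $L$,
\[
\prob\Bigl(\bigcap_{j\neq k} E_j^- \cap E_k^+\Bigr) \;\le\; \prob(E_k^+),
\]
which passes to the limit $L\to\infty$. By the one-point ($m=1$) case of Theorem~\ref{thm:htfntasy}, already established in \cite{Baik-Liu16}, $\lim_{L\to\infty}\prob(E_k^+) = 1 - \FS^{(1)}(x_k;\fontlm{p}_k)$, and the remark following Definition~\ref{def:FSdefn} records that $\FS^{(1)}$ is a genuine one-dimensional distribution function; in particular, this tail tends to $0$ as $x_k\to +\infty$. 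Hence $R_k(x_k)\to 0$, proving~\eqref{eq:conscy}.

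The main obstacle, though mild, is purely bookkeeping, namely identifying the $(m-1)$-point limit with $\FSmo(\bxk;\bpk)$ uniformly in $k$, including the boundary case $k=m$ (in which Theorem~\ref{thm:pofEpm} does not apply directly), and propagating the relaxed restriction $x_i<x_{i+1}$ whenever two consecutive $\tau_j$'s coincide. For $1\le k\le m-1$ one may alternatively combine Lemma~\ref{lem:FresidueF}, which expresses $\FSm(\bx;\bp) - \FSmo(\bxk;\bpk)$ as a single multiple integral over unnested contours, with Theorem~\ref{thm:pofEpm}, which identifies that integral (up to a sign) with $R_k(x_k)$ and thereby supplies an independent check of the computation. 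The partition identity above, however, is more uniform in $k$ and avoids having to treat the endpoint $k=m$ by a separate argument.
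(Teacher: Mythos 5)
Your proof is correct, and it takes a genuinely different and more unified route than the paper.

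The paper splits into two cases. For $k=m$ it returns to the explicit contour formula~\eqref{eq:Sfformulaa}, shrinks the $z_m$-contour to radius $x_m^{-2}$, and performs a direct analytic estimate showing that the extra factors in $\cccm$ are $O(x_m^{-1})$ and the extra contributions to $\gdetlmm$ are $O(e^{-cx_m})$; this requires verifying, term by term in the series formula for $\gdetlm$, that the factor $\fslm_m$ kills everything super-exponentially as $|z_m|\to 0$. For $k<m$ the paper first uses Lemma~\ref{lem:FresidueF} to rewrite $\FSm-\FSmo$ as a single multiple integral with the $z_k$-contour swapped inward, then invokes Theorem~\ref{thm:pofEpm} to identify that integral with $(-1)\cdot\lim\prob(E_1^-\cap\cdots\cap E_k^+\cap\cdots\cap E_m^-)$, and finally applies the one-point tail bound. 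Theorem~\ref{thm:pofEpm} is stated only with $E_m^-$ as the final event, which is precisely why the paper cannot run this second argument for $k=m$ and must fall back on the analytic estimate.

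You sidestep both the analytic estimate and the machinery of Lemma~\ref{lem:FresidueF}/Theorem~\ref{thm:pofEpm} by working at finite $L$ with the elementary partition identity
\[
\prob\Bigl(\bigcap_{j\neq k}E_j^-\Bigr)=\prob\Bigl(\bigcap_{j=1}^m E_j^-\Bigr)+\prob\Bigl(\bigcap_{j\neq k}E_j^-\cap E_k^+\Bigr),
\]
passing each term to the $L\to\infty$ limit via Theorem~\ref{thm:htfntasy} (applied to $m-1$, $m$, and $1$ points respectively), and squeezing the residual term between $0$ and $1-\FS^{(1)}(x_k;\spp_k)$. This works uniformly in $k$, including $k=m$, because it never asks the extra event $E_k^+$ to have a contour representation — only to be a probability, which it trivially is at finite $L$. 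What your route buys is brevity and uniformity; what the paper's route buys is an analytic verification that the formula for $\FS$ itself, not merely the limiting distribution it represents, degenerates correctly as $x_m\to+\infty$, together with the byproduct machinery (Lemma~\ref{lem:FresidueF}, Theorem~\ref{thm:pofEpm}) that the authors evidently wanted on record for other reasons. Both proofs ultimately rest on the same one-point tail estimate and on $\FS^{(m-1)}$ being the limit of the $(m-1)$-point distribution.
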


\begin{proof}
We consider the case when $k=m$ first and then the case when $k<m$. 

(a) Assume $k=m$. When $m=1$, we showed in Section 4 of \cite{Baik-Liu16} that $\FS^{(1)}(x_1; \spp_1)$ is a distribution function. Hence, 
\beq \label{eq:taillimone1}
	\lim_{x_1 \to +\infty} \FS^{(1)}(x_1; \spp_1)=1.
\eeq
Now we assume that $m\ge 2$ and take $x_m\to +\infty$. 
Recall the formula~\eqref{eq:Sfformulaa} in which the $z_m$-contour is the smallest contour. 
From the definition (see~\eqref{eq:costcdefaq}), 
\beqq
	\cccm(\bfz; \bx, \bp)=\cccmo(\bfz^{[m]}; \bx^{[m]}, \bp^{[m]}) \frac{z_{m-1}}{z_{m-1}-z_m} 
	\frac{e^{x_m A_1(z_{m})+ \tau_m A_2(z_m)}}{e^{x_{m-1} A_1(z_{m}) + \tau_{m-1} A_2(z_{m})}}
	e^{2B(z_{m})-2B(z_{m},z_{m-1})}.
\eeqq
We choose the contour for $z_m$ given by $z_m=\frac1{x_m^2}e^{\ii\theta}$, $\theta\in [0, 2\pi)$. 
Since $A_i(z)=O(z)$, $B(z)=O(z)$, and $B(z, w)=O(z)$ as $z\to 0$, we have 
\begin{equation}
\label{eq:aux_2017_07_01_06}
	\cccm(\bfz; \bx, \bp)=\cccmo(\bfz^{[m]}; \bx^{[m]}, \bp^{[m]})+O(x_m^{-1})
\end{equation}
as $x_m\to \infty$.

We now show that 
\begin{equation}
\label{eq:aux_2017_07_01_07}
	\gdetlmm(\bfz; \bx, \bp)=\gdetlmmo(\bfz^{[m]}; \bx^{[m]}, \bp^{[m]})+O(e^{-cx_m})
\end{equation}
as $x_m\to \infty$, where we had chosen $|z_m|=\frac1{x_m^2}$. 
From the series formula~\eqref{eq:Fdmddser}, 
\begin{equation}
\label{eq:aux_2017_09_26_02}
	\gdetlmm(\bfz; \bx, \bp) - \gdetlmmo(\bfz^{[m]}; \bx^{[m]}, \bp^{[m]})= \sum_{n_m\ge 1}\sum_{n_1,\cdots,n_{m-1}\ge 0}\frac{1}{(\bn!)^2}\gdetlm_\bn(\bbz).
\end{equation}
We need to show that the sum is exponentially small. 
From~\eqref{eq:aux_2017_03_29_14} and~\eqref{eq:aux_2017_07_01_05},
\begin{equation} \label{eq:Dnwimesp}
	\gdetlm_\bn(\bfz)  
	= \sum_{\substack{\UUlm^{(\ell)}\in\left(\inodesL_{z_\ell}\right)^{n_\ell} \\ \VVlm^{(\ell)}\in\left(\inodesR_{z_\ell}\right)^{n_\ell} \\ \ell=1,\cdots,m-1}}
	\bigg[ 	\sum_{\substack{\UUlm^{(m)}\in\left(\inodesL_{z_m}\right)^{n_m} \\ \VVlm^{(m)}\in\left(\inodesR_{z_m}\right)^{n_m}}}
	\gdetlmdd_{\bn, \bfz} (\bUUlm, \bVVlm) \bigg] 
\end{equation}
where for each $\bUUlm$ and $\bVVlm$, $\gdetlmdd_{\bn, \bfz} (\bUUlm, \bVVlm)$ is equal to 
\begin{equation}
\label{eq:aux_2017_09_26_01}
\begin{split}
	&\frac{\Delta(\UUlm^{(m)})^2\Delta(\VVlm^{(m)})^2}{\Delta(\UUlm^{(m)};\VVlm^{(m)})^2} 
\fslm_m(\UUlm^{(m)})  
\fslm_m(\VVlm^{(m)})\\
&\cdot
\frac{\Delta(\UUlm^{(m)};\VVlm^{(m-1)})\Delta(\VVlm^{(m)};\UUlm^{(m-1)}) e^{-\hftn(\VVlm^{(m)}, z_{m-1}) - \hftn(\VVlm^{(m-1)}, z_{m})} }
{\Delta(\UUlm^{(m)};\UUlm^{(m-1)})\Delta(\VVlm^{(m)};\VVlm^{(m-1)}) 
	e^{\hftn(\UUlm^{(m)}, z_{m-1}) + \hftn(\UUlm^{(m-1)}, z_{m})} }
\left(1- \frac{z_{m-1}}{z_{m}} \right)^{n_m} \left(1-\frac{z_m}{z_{m-1}} \right)^{n_{m-1}}
\end{split}
\end{equation}
times 
\beq \label{eq:ddwithoutmtm}
	\gdetlmdd_{\bn^{[m]}, \bfz^{[m]}} (\bUUlm^{[m]}, \bVVlm^{[m]}), 
\eeq
a factor which does not depend on $z_m$, and hence also on $\UUlm^{(m)}$ and $\VVlm^{(m)}$ (and $x_m$ and $\mr p_m$.)
The term~\eqref{eq:ddwithoutmtm} is same as $\gdetlmdd_{\bn, \bfz} (\bUUlm, \bVVlm)$ when $m$ is replaced by $m-1$.
Since $n_m\ge 1$ in the sum~\eqref{eq:aux_2017_09_26_02}, 
the inside sum in~\eqref{eq:Dnwimesp} is not over an empty set. 
We show that for each $\bUUlm$ and $\bVVlm$,~\eqref{eq:aux_2017_09_26_01} is 
\beq \label{eq:estofdimqp}
	O(e^{-c x_m|\UUlm^{(m)}|-cx_m|\VVlm^{(m)}|})
\eeq
for a constant $c>0$, where $|\UUlm^{(m)}|$ is the sum of the absolute values of $\UUlm^{(m)}$, and $|\VVlm^{(m)}|$ is similarly defined.
This proves~\eqref{eq:aux_2017_07_01_07}.

To show the decay of~\eqref{eq:aux_2017_09_26_01}, we first note that every component $u$ of $\UUlm^{(m)}$ is a solution of the equation $e^{-u^2/2}=z_m$ satisfying $\Re(u)<0$. 
As $|z_m|=\frac1{x_m^2}\to 0$, $\Re(u^2)\to \infty$. 
Since $\Re(u^2)=(\Re(u))^2-(\Im(u))^2$, this implies that $\Re(u)\to -\infty$, and hence $|u|\to\infty$. 
Similarly, every component $v$ of $\VVlm^{(m)}$ satisfies $\Re(v)\to \infty$ and $|v|\to \infty$. 
It is also easy to check (see Figure~\ref{fig:Scontour}) that the solutions of the equation $e^{-\zeta^2/2}=z$ lies in the sectors $-\pi/4<\arg(\zeta)<\pi/4$ or $3\pi/4<\arg(\zeta)< 5\pi/4$ for any $0<|z|<1$. 
Hence $|u|\le \sqrt{2}\Re(u)$, $|v|\le \sqrt{2}\Re(v)$, and $\sqrt{2}|u-v|\ge |u|+|v|$. 
We now consider each term in~\eqref{eq:aux_2017_09_26_01}. 
Considering the degrees, 
\beqq
	\frac{\Delta(\UUlm^{(m)})^2\Delta(\VVlm^{(m)})^2}{\Delta(\UUlm^{(m)};\VVlm^{(m)})^2}=O(1),\qquad \frac{\Delta(\UUlm^{(m)};\VVlm^{(m-1)})\Delta(\VVlm^{(m)};\UUlm^{(m-1)})  }
{\Delta(\UUlm^{(m)};\UUlm^{(m-1)})\Delta(\VVlm^{(m)};\VVlm^{(m-1)})  }=O(1).
\eeqq
From the formula of $\hftn$ in~\eqref{eq:aux_2017_09_16_01}, and using~\eqref{eq:def_h_RminwithL}, 
\beqq
	\hftn(\VVlm^{(m)}, z_{m-1}), \, \hftn(\UUlm^{(m)}, z_{m-1}), \, 
	\hftn(\VVlm^{(m-1)}, z_{m}), \,  \hftn(\UUlm^{(m-1)}, z_{m}) =O(1).
\eeqq
Recall from~\eqref{eq:aux_2017_04_05_02} that $\fslm_m(\zeta)= \frac1{\zeta} \sfs_\ell(\zeta) e^{2\hftn(\zeta, z_{m})}$.
As above, 
\beqq
	\hftn(\VVlm^{(m)}, z_{m}), \,  \hftn(\UUlm^{(m)}, z_{m})=O(1).
\eeqq
On the other hand, from the definition~\eqref{eq:def_sfs}, 
\beqq
	| \sfs_\ell(u)|\le e^{-cx_m|u|}, \qquad | \sfs_\ell(v)|\le e^{-cx_m|v|}
\eeqq
for the components $u$ and $v$ of $\UUlm^{(m)}$ and $\VVlm^{(m)}$, implying that 
\beqq
	|\fslm_m(\UUlm^{(m)}) \fslm_m(\VVlm^{(m)})| = O(e^{-c x_m|\UUlm^{(m)}|-cx_m|\VVlm^{(m)}|}).
\eeqq
This term dominates the factor
\beqq
	\left(1- \frac{z_{m-1}}{z_{m}} \right)^{n_m} =O(x_m^{2n_m}).
\eeqq
Combining together, we obtain the decay~\eqref{eq:estofdimqp} of~\eqref{eq:aux_2017_09_26_01}, and hence we obtain~\eqref{eq:aux_2017_07_01_07}. 

We insert~\eqref{eq:aux_2017_07_01_06} and~\eqref{eq:aux_2017_07_01_07} in~\eqref{eq:Sfformulaa}, and integrating over $\theta$, we obtain
\begin{equation*}
\begin{split}
&\FSm(\bx; \bp) 
=\FS^{(m-1)}(\bx^{[m]}, \bp^{[m]})+O(x_m^{-1})
\end{split}
\end{equation*}
for all large $x_m$. Hence we proved~\eqref{eq:conscy} for $k=m$.

(b) Assume that $k<m$.  
Let us denote the integral in~\eqref{eq:001} with the contours~\eqref{eq:contrs} by 
\beqq
	\FgSm(x_1^{\pm}, \cdots, x_{m-1}^{\pm}, x_m^{-}; \bp).
\eeqq
Note that 
\beqq
	\FgSm(x_1^{-}, \cdots, x_{m-1}^{-}, x_m^{-}; \bp)= \FSm(x_1, \cdots, x_{m-1}, x_m ;  \bp).
\eeqq
Fix $k$ such that $1\le k\le m-1$. 
By Lemma~\ref{lem:FresidueF}, we obtain~\eqref{eq:conscy} if we show that 
\beq \label{eq:implsh}
	\lim_{x_k\to +\infty} \FgSm(x_1^{-}, \cdots, x_{k-1}^{-},  x_k^+, x_{k+1}^-, \cdots x_m^{-}; \bp) =0.
\eeq
Now, from the joint probability function interpretation stated in Theorem~\ref{thm:pofEpm},  
\begin{equation} \label{eq:004}
 \begin{split}
	 \FgSm(\cdots, x_{i-1}^{\pm},  x_i^{\pm}, x_{i+1}^{\pm}, \cdots ; \bp) 
	 \le \FgSmo( \cdots, x_{i-1}^{\pm},   x_{i+1}^{\pm}, \cdots; \bp^{[i]}) 
	 \end{split}
\end{equation}
for any $1\le i\le m$ (for any choice of $\pm$-sign for $x_i$, $1\le i\le m-1$, and the choice of $-$ sign for $x_m$.)
Using~\eqref{eq:004} $m-1$ times, we find that
\begin{equation}
 \begin{split}
	 \FgSm(x_1^{-}, \cdots, x_{k-1}^{-},  x_k^+, x_{k+1}^-, \cdots x_m^{-}; \bp) 
	 \le \FgSmoo(x_k^+; \spp_k) 
	 = 1- \FS^{(1)}(x_k; \spp_k).
\end{split}
\end{equation}
The one-point function $\FS^{(1)}(x_k; \spp_k)$ converges to $1$ as $x_k\to +\infty$ from~\eqref{eq:taillimone1}.
Hence we obtain~\eqref{eq:implsh}, and this completes the proof of~\eqref{eq:conscy} when $1\le k\le m-1$. 
\end{proof}

In the opposite direction, we have the following result. 

\begin{lm} \label{lm:Flimni}
We have
\begin{equation} \label{eq:lefttailf}
	\lim_{x_k \to -\infty} \FSm(\bx; \bp) =0.
\end{equation}
\end{lm}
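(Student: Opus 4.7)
My plan is to exploit the probabilistic interpretation of $\FSm$ as a limit of joint distribution functions, and then reduce everything to the one-point case which was already established in \cite{Baik-Liu16}.

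The first step is to recall from Theorem~\ref{thm:htfntasy} that
\begin{equation*}
\FSm(\bx; \bp) = \lim_{L\to\infty} \prob\bigl( E_1^{-} \cap \cdots \cap E_m^{-} \bigr),
\end{equation*}
where $E_j^-$ are the scaled height events defined in Theorem~\ref{thm:pofEpm}. Since each $\prob(E_1^- \cap \cdots \cap E_m^-)$ is dominated by the marginal $\prob(E_k^-)$, passing to the limit and invoking the $m=1$ case of Theorem~\ref{thm:htfntasy} gives the pointwise bound
\begin{equation*}
\FSm(\bx; \bp) \le \FS^{(1)}(x_k; \spp_k)
\end{equation*}
for every $k$. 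This step is routine; it just uses monotonicity of probability and the convergence statement already proved.

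The second step is to send $x_k \to -\infty$ and show the right-hand side tends to $0$. This reduces the claim to the assertion that $\FS^{(1)}(\, \cdot\,; \spp_k)$ is an honest distribution function whose left tail vanishes. That property was established in Section~4 of \cite{Baik-Liu16} (specifically equation~(4.10) there, which is the identification with a proper cumulative distribution function for the tagged particle location in the relaxation-time scaling). No new asymptotic work is needed here; one simply cites the $m=1$ theorem. Combining the two steps yields $\lim_{x_k\to -\infty}\FSm(\bx;\bp)=0$.

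The main obstacle, as is typical for identities of this sort, would be to prove the left-tail vanishing \emph{directly} from the multiple-contour Fredholm integral formula~\eqref{eq:def_mpdist}. Doing so would require controlling cancellations between the growing factor $e^{x_k A_1(z_k)}$ (with $A_1(z_k)<0$ for $|z_k|<1$ on appropriate rays) in $\ccc(\bfz)$ and possible growth of $\gdetlm(\bfz)$, and then interchanging limit and contour integration; this is delicate and not needed for the lemma. The probabilistic sandwich route described above bypasses this analytic difficulty entirely, which is why the lemma is placed after Proposition~\ref{prop:consy} rather than being derived from the formula itself.
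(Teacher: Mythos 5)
Your proposal is correct and follows essentially the same route as the paper: bound $\FSm(\bx;\bp)$ by the marginal $\FS^{(1)}(x_k;\spp_k)$ using the probabilistic interpretation, then invoke the fact from \cite{Baik-Liu16} (Section 4) that $\FS^{(1)}$ is a genuine distribution function with vanishing left tail.
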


\begin{proof}
Since a joint probability is smaller than a marginal distribution and $\FSm(\bx; \bp)$ is a limit of joint probabilities,
\beq
	\FSm(\bx; \bp)\le \FS^{(1)}(x_k; \spp_k).
\eeq 
As mentioned before, the function $\FS^{(1)}(x_k; \spp_k)= \FS(x_k; \spp_k)$ is shown to be a distribution function in Section 4 of \cite{Baik-Liu16}. 
This implies the lemma. 
\end{proof}


\section{Infinite TASEP} \label{sec:inftasep}

If we take $L\to\infty$ while keeping all other parameters fixed, the periodic TASEP becomes the infinite TASEP with $N$ particles. 
In terms of the joint distribution, this is still true if $L$ is fixed but large enough.


\begin{lm}
\label{thm:multi-time_TASEP}
Consider the infinite TASEP on $\intZ$ with $N$ particles and let $\tilde{x}_i(t)$ denote the location of the $i$th particle (from left to right) at time $t$. 
Assume that the infinite TASEP has the initial condition given by $\tilde{x}_i(0)=y_i$,
where  $y_1<\cdots<y_N$. 
Also consider the TASEP in $\conf_N(L)$ and denote by $\xx_i(t)$ the location of the $i$th particle.
Assume that 
\beq \label{eq:lcyna}
	L>y_N-y_1
\eeq 
and let $\xx_i(t)$ have the same initial condition given by $\xx_i(0)=y_i$. 
Fix a positive integer $m$.
Let $k_1,\cdots,k_m$ be integers in $\{1,\cdots,N\}$, let $a_1,\cdots,a_m$ be integers, 
and let $t_1,\cdots,t_m$ be positive real numbers. 
Then for any integer $L$  satisfying (in addition to~\eqref{eq:lcyna})
	\begin{equation}
	\label{eq:aux_2017_07_01_08}
		L > \max\{a_1-k_1,\cdots, a_m-k_m\} - y_1+ N+1,  
	\end{equation}
we have  
	\begin{equation}
	\label{eq:aux_2017_07_01_09}
	\prob\left( \tilde{x}_{k_1}(t_1)\le  a_1,\cdots, \tilde{x}_{k_m}(t_m)\le  a_m\right)
	=\prob\left( \xx_{k_1}(t_1)\le a_1,\cdots, \xx_{k_m}(t_m)\le a_m\right) .
	\end{equation} 
\end{lm}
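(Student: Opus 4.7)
The plan is to prove the identity via a basic coupling between the two dynamics, driven by a single realization of Poisson clocks $\{\xi_i\}_{i=1}^N$ attached to the $N$ labeled particles, with both processes starting from the same configuration $(y_1,\dots,y_N)$. Let $\tau$ denote the first time at which the periodic constraint in the $\conf_N(L)$-dynamics becomes active, i.e.\ the first ring of $\xi_N$ at which $\xx_N(t^-)=\xx_1(t^-)+L-1$. By induction on the ordered sequence of clock rings, the two processes evolve identically for all $t<\tau$; past $\tau$, the extra blocking present only in the $\conf_N(L)$-process gives the general monotonicity $\xx_i(t)\le \tilde x_i(t)$ for all $i$ and all $t\ge 0$. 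In particular, on the event $\{\tau>t_{\max}\}$ (where $t_{\max}:=\max_j t_j$) we have $\xx_i(t_j)=\tilde x_i(t_j)$ for all $i,j$, so the events $\{\tilde x_{k_j}(t_j)\le a_j\ \forall j\}$ and $\{\xx_{k_j}(t_j)\le a_j\ \forall j\}$ coincide pathwise.

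It therefore suffices to show that under the hypothesis~\eqref{eq:aux_2017_07_01_08} both events are \emph{empty} on the complementary event $\{\tau\le t_{\max}\}$. Choose any index $j^*$ with $t_{j^*}=t_{\max}$. At the time $\tau$ the definition of $\tau$ together with $\xx_1(\tau)\ge y_1$ forces
\begin{equation*}
\xx_N(\tau)=\xx_1(\tau)+L-1\ge y_1+L-1,
\end{equation*}
and since $\tau\le t_{j^*}$ and $\xx_N$ is nondecreasing, $\xx_N(t_{j^*})\ge y_1+L-1$. The strict ordering $\xx_{k_{j^*}}+(N-k_{j^*})\le \xx_N$ then yields
\begin{equation*}
\xx_{k_{j^*}}(t_{j^*})\ge y_1+L-1-(N-k_{j^*}),
\end{equation*}
and by the coupling monotonicity the same lower bound holds for $\tilde x_{k_{j^*}}(t_{j^*})$. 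If either of the events occurred, we would have $a_{j^*}\ge y_1+L-1-(N-k_{j^*})$, i.e.\ $L\le a_{j^*}-k_{j^*}-y_1+N+1\le \max_j(a_j-k_j)-y_1+N+1$, contradicting~\eqref{eq:aux_2017_07_01_08}. Hence both events are empty on $\{\tau\le t_{\max}\}$, and combining with the previous paragraph we obtain the equality of probabilities~\eqref{eq:aux_2017_07_01_09}.

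The only conceptual step beyond routine coupling is the identification of the correct contradiction on $\{\tau\le t_{\max}\}$: the simple-sounding statement "$\xx_1(\tau)\ge y_1$ forces $\xx_N(\tau)\ge y_1+L-1$, which by particle ordering forces every $\xx_{k_{j^*}}(t_{j^*})$ to be large" is the only ingredient beyond the definitions. I do not anticipate a real obstacle, but care is needed in checking: (i) that $t_{j^*}\ge \tau$ holds for some chosen $j^*$, which is immediate once $t_{j^*}=t_{\max}$ is taken; (ii) that the hypothesis $L>y_N-y_1$ guarantees the initial configuration lies in $\conf_N(L)$ so the coupling is actually well-posed; and (iii) that the monotonicity $\xx_i\le\tilde x_i$ transfers the lower bound from $\xx_{k_{j^*}}$ to $\tilde x_{k_{j^*}}$, which is exactly what rules out the infinite-TASEP event as well.
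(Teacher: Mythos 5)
Your coupling of the two processes via common Poisson clocks is the same as the paper's starting point, and your identification that on $\{\tau>t_{\max}\}$ the two configurations agree pathwise is correct. However, the crucial step of your argument on $\{\tau\le t_{\max}\}$ is wrong, and the claim it is meant to establish is in fact false.

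You write that the particle ordering $\xx_{k_{j^*}}+(N-k_{j^*})\le \xx_N$ ``yields'' the \emph{lower} bound $\xx_{k_{j^*}}(t_{j^*})\ge y_1+L-1-(N-k_{j^*})$. This inequality goes the wrong way: the ordering gives $\xx_{k_{j^*}}(t)\le \xx_N(t)-(N-k_{j^*})$, i.e.\ an \emph{upper} bound on $\xx_{k_{j^*}}$. Having a large $\xx_N(t_{j^*})$ imposes no nontrivial lower bound on $\xx_{k_{j^*}}(t_{j^*})$: after particle $N$ hits the periodic wall at time $\tau$, it can become un-blocked (by particle $1$ moving), and particle $k_{j^*}$ can still be far behind at time $t_{j^*}\ge\tau$. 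Concretely, take $N=2$, $y_1=0$, $y_2=1$, $L=100$, $k_1=1$, $a_1=5$, $t_1$ large; the hypothesis $L>a_1-k_1-y_1+N+1=7$ holds, and there is positive probability that $\tau\le t_1$ while $\xx_1(t_1)\le 5$. So your assertion that both events are empty on $\{\tau\le t_{\max}\}$ is false, and splitting on $\tau$ alone cannot work.

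What is missing is the observation that the periodic blocking does not reach particle $k$ at time $\tau=T_N$ but only after it has propagated down the chain. The paper handles this precisely: it defines $T_N\le T_{N-1}\le\cdots\le T_1$ where $T_N$ is the first time $\xx_N=\xx_1+L-1$, and $T_i$ (for $i<N$) is the first time $\ge T_{i+1}$ with $\xx_i=\xx_{i+1}-1$. The key fact is the \emph{equality} $\xx_\ell(T_\ell)=\xx_{\ell+1}(T_\ell)-1$ at the stopping time $T_\ell$, which together with monotonicity of trajectories gives the telescoping lower bound $\xx_\ell(t)\ge \xx_N(T_N)-(N-\ell)$ whenever $t\ge T_\ell$. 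This replaces the direction-reversed inequality you used. One then shows that the event $\{\xx_{k_i}(t_i)\le a_i\ \forall i\}$ forces $t_i<T_{k_i}$ for all $i$ (a weaker condition than $\tau>t_{\max}$), and on that event the coupled processes agree at the queried space--time points. Your proof as written cannot be repaired simply by fixing the inequality sign, because what you actually need is the packed-configuration equality at the propagated stopping times, not the raw ordering of particles at a single time.
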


\begin{proof}\footnote{This lemma can be seen easily from the directed last passage percolation interpretation of the TASEP.} 
We first observe that the particles $\xx_i(t)$ are in the configuration space $\conf_N(L)$, while the particles $\tilde x_i(t)$ are in the configuration space $W_N:=\{(x_1,\cdots,x_N)\in\intZ^N: x_1<\cdots<x_N\}$. 
The only difference between these two configuration space is that $\conf_N(L)$ has an extra restriction $x_N\le x_1+L-1$. 
Therefore, {if this restriction does not take an effect before} time $t$, i.e, $\xx_N(s)<\xx_1(s)+L-1$ for all $0<s< t$, then the dynamics of TASEP on $\conf_N(L)$ is the same as that of infinite TASEP (with the same initial condition) before time $t$.
Furthermore, if we focus on the $i$-th particle in TASEP in $\conf_N(L)$, there exists a smallest random time $T_i$ such that the dynamics of this particle are the same in both TASEP $\conf_N(L)$ and infinite TASEP before time $T_i$. 
{The times $T_i$ are determined inductively as follows.}
First, $T_N$ is the smallest time such that $\xx_N(t)=\xx_{1}(t)+L-1$.
Next,  $T_{N-1}$ is the smallest time that satisfies $t\ge T_N$ and $\xx_{N-1}(t)=\xx_N(t)-1$.
For general index $1\le i\le N-1$, $T_i$ is the smallest time that satisfies $t\ge T_{i+1}$ and $\xx_i(t)=\xx_{i+1}(t)-1$. 
Note that $T_1\ge T_2\ge \cdots \ge T_N$ and for $1\le i\le N-1$, 
\beq \label{eq:xxtordd}
	\xx_i(T_i)=\xx_{i+1}(T_i)-1\ge \xx_{i+1}(T_{i+1})-1.
\eeq

The same consideration shows that if we consider $m$ particles $\xx_{k_1}(t_1), \cdots , \xx_{k_m}(t_m)$ of the TASEP in $\conf_N(L)$ at possibly different times, their joint distribution is same as that of the infinite TASEP if $t_i< T_{k_i}$ for all $i$. 
Therefore, we obtain~\eqref{eq:aux_2017_07_01_09} if we show that under the condition~\eqref{eq:aux_2017_07_01_08}, the event that $\xx_{k_i}(t_i)\le a_i$ for all $1\le i\le m$, is a subset of the event that 
$t_i< T_{k_i}$ for all $1\le i\le m$.
Now, suppose that $t_{i} \ge T_{k_i}$ for some $i$. 
Then, writing $\ell=k_i$ and using~\eqref{eq:xxtordd}, 
\begin{equation*}
	\begin{split}
	\xx_{\ell}(t_i)&\ge \xx_{\ell}(T_{\ell}) \ge \xx_{\ell+1}(T_{\ell+1})-1
	\ge \cdots \ge \xx_{N}(T_N)-(N-\ell).
	\end{split}
	\end{equation*}
Since $x_{N}(\tau_N)=\xx_1(T_N)+L-1$ and $\xx_1(T_N)\ge \xx_1(0)=y_1$, this implies that (recall that $\ell=k_i$)
\begin{equation*}
	\begin{split}
	\xx_{\ell}(t_i) \ge y_1+L-1 -(N-\ell) > a_{\ell}
	\end{split}
\end{equation*}
using the condition~\eqref{eq:aux_2017_07_01_08}. 
Hence we are not in the event that $\xx_{k_i}(t_i)\le a_i$ for all $1\le i\le m$. 
This completes the proof. 
\end{proof}

The above result implies, using the inclusion-exclusion principle, 
\begin{equation} 
	\prob\left( \tilde{x}_{k_1}(t_1)\ge a_1,\cdots, \tilde{x}_{k_m}(t_m)\ge a_m\right)
	=\prob\left( \xx_{k_1}(t_1)\ge a_1,\cdots, \xx_{k_m}(t_m)\ge a_m\right) 
\end{equation} 
for $L$ satisfying 
\begin{equation}
	\label{eq:aux_2017_10_09_01}
	L > \max\{a_1-k_1,\cdots, a_m-k_m,y_N-N\} - y_1+ N.  
	\end{equation}

Therefore, Theorem~\ref{prop:multipoint_distribution_origin} implies that 
\begin{equation} \label{eq:aux_2017_07_01_10}
	\prob\left( \tilde{x}_{k_1}(t_1)\ge a_1,\cdots, \tilde{x}_{k_m}(t_m)\ge a_m\right)=\mbox{ the right hand side of~\eqref{eq:multipoint_distribution_origin}} 
\end{equation} 
for any $L$ satisfying~\eqref{eq:aux_2017_10_09_01}. 
In particular, for the initial condition $\tilde{x}_i(0)=i-N$, $i=1, \cdots, N$, by Theorem~\ref{thm:multi_point_formula}, we find that 
\begin{equation}
\label{eq:aux_2017_08_14_01}
	\prob\left( \tilde{x}_{k_1}(t_1)\ge a_1,\cdots, \tilde{x}_{k_m}(t_m)\ge a_m\right)
	=\mbox{ the right hand side of~\eqref{eq:multi_point_formula}}
\end{equation}
for any integer $L$ satisfying  
\begin{equation} \label{eq:Lmaxf}
	L \ge  2N+ \max\{a_1-k_1,\cdots, a_m-k_m, -N\} . 
\end{equation}
Note that since the particles move only to the right, the above joint probability is same as that of infinite TASEP (with infinitely many particles) with the step initial condition. 
Hence we obtained a formula for the finite-time joint distribution in multiple times and locations of the infinite TASEP with the step initial condition. 
Actually we have infinitely many formulas, one for each $L$ satisfying~\eqref{eq:Lmaxf}. 
Since the infinite TASEP does not involve the parameter $L$, all these formulas should give an equal value for all $L$ satisfying~\eqref{eq:Lmaxf}.

Now, if we want to compute the large time limit of the joint distribution of the infinite TASEP under the KPZ scaling, we need to take  $a_i=O(t)$. 
The above restriction on $L$ implies that $L\ge O(t)$. 
This implies that $t\ll L^{3/2}$, which corresponds to the \emph{sub-relaxation} time scale.   
Hence the large-time limit of the joint distribution of the infinite TASEP is equal to the large-time limit, if exists, of the joint distribution of the periodic TASEP in the sub-relaxation time scale. 
However, it is not immediately clear if the formula~\eqref{eq:multi_point_formula} 
is suitable for 
the sub-relaxation time scale when $m\ge 2$.
In particular, the kernels $\KsL(w,w')$ and $\KsR(w,w')$ do not seem to converge. 
We leave the analysis of the multi-point distribution of the infinite TASEP as a future project.



\begin{thebibliography}{10}
	
	\bibitem{Amir-Corwin-Quastel11}
	G.~Amir, I.~Corwin, and J.~Quastel.
	\newblock Probability distribution of the free energy of the continuum directed
	random polymer in {$1+1$} dimensions.
	\newblock {\em Comm. Pure Appl. Math.}, 64(4):466--537, 2011.
	
	\bibitem{Baik-Deift-Johansson99}
	J.~Baik, P.~Deift, and K.~Johansson.
	\newblock On the distribution of the length of the longest increasing
	subsequence of random permutations.
	\newblock {\em J. Amer. Math. Soc.}, 12(4):1119--1178, 1999.
	
	\bibitem{Baik-Ferrari-Peche10}
	J.~Baik, P.~L. Ferrari, and S.~P{\'e}ch{\'e}.
	\newblock Limit process of stationary {TASEP} near the characteristic line.
	\newblock {\em Comm. Pure Appl. Math.}, 63(8):1017--1070, 2010.
	
	\bibitem{Baik-Liu16}
	J.~Baik and Z.~Liu.
	\newblock Fluctuations of {TASEP} on a ring in relaxation time scale.
	\newblock {\em Comm. Pure Appl. Math.}, 71(4):747--813, 2018.
	
	\bibitem{Baik-Liu16b}
	J.~Baik and Z.~Liu.
	\newblock {TASEP on a ring in sub-relaxation time scale}.
	\newblock {\em Journal of Statistical Physics}, 165(6):1051--1085, 2016.
	
	\bibitem{Borodin-Corwin-Ferrari14}
	A.~Borodin, I.~Corwin, and P.~Ferrari.
	\newblock Free energy fluctuations for directed polymers in random media in
	{$1+1$} dimension.
	\newblock {\em Comm. Pure Appl. Math.}, 67(7):1129--1214, 2014.
	
	\bibitem{Borodin-Ferrari08}
	A.~Borodin and P.~L. Ferrari.
	\newblock Large time asymptotics of growth models on space-like paths. {I}.
	{P}ush{ASEP}.
	\newblock {\em Electron. J. Probab.}, 13:no. 50, 1380--1418, 2008.
	
	\bibitem{Borodin-Ferrari-Prahofer07}
	A.~Borodin, P.~L. Ferrari, and M.~Pr{\"a}hofer.
	\newblock Fluctuations in the discrete {TASEP} with periodic initial
	configurations and the {${\rm Airy}_1$} process.
	\newblock {\em Int. Math. Res. Pap. IMRP}, (1):Art. ID rpm002, 47, 2007.
	
	\bibitem{Borodin-Ferrari-Prahofer-Sasamoto07}
	A.~Borodin, P.~L. Ferrari, M.~Pr{\"a}hofer, and T.~Sasamoto.
	\newblock Fluctuation properties of the {TASEP} with periodic initial
	configuration.
	\newblock {\em J. Stat. Phys.}, 129(5-6):1055--1080, 2007.
	
	\bibitem{Borodin-Ferrari-Sasamoto08a}
	A.~Borodin, P.~L. Ferrari, and T.~Sasamoto.
	\newblock Transition between {${\rm Airy}_1$} and {${\rm Airy}_2$} processes
	and {TASEP} fluctuations.
	\newblock {\em Comm. Pure Appl. Math.}, 61(11):1603--1629, 2008.
	
	\bibitem{Corwin11}
	I.~Corwin.
	\newblock The {K}ardar-{P}arisi-{Z}hang equation and universality class.
	\newblock {\em Random Matrices Theory Appl.}, 1(1):1130001, 76, 2012.

	\bibitem{Corwin-Ferrari-Peche10}
	I.~Corwin., P.~L. Ferrari, and S.~P\'ech\'e.
	\newblock Limit Processes for TASEP with Shocks and Rarefaction Fans.
	\newblock {\em J. Stat. Phys.}, 140(2):232--267, 2010.
	
	\bibitem{Corwin-Liu-Wang16}
	I.~Corwin, Z.~Liu, and D.~Wang.
	\newblock Fluctuations of {TASEP} and {LPP} with general initial data.
	\newblock {\em Ann. Appl. Probab.}, 26(4):2030--2082, 2016.
	
	\bibitem{DeNardisLeDoussalTwoTime}
	J.~de~Nardis and P.~Le~Doussal.
	\newblock Tail of the two-time height distribution for {KPZ} growth in one
	dimension.
	\newblock {\em J. Stat. Mech. Theory Exp.}, (5):053212, 2017.
	
	\bibitem{Derrida-Lebowitz98}
	B.~Derrida and J.~L. Lebowitz.
	\newblock Exact large deviation function in the asymmetric exclusion process.
	\newblock {\em Phys. Rev. Lett.}, 80(2):209--213, 1998.
	
	\bibitem{DotsenkoTime1}
	V.~Dotsenko.
	\newblock Two-time free energy distribution function in {$(1+1)$} directed
	polymers.
	\newblock {\em J. Stat. Mech. Theory Exp.}, (6):P06017, 23, 2013.
	
	\bibitem{DotsenkoTime2}
	V.~Dotsenko.
	\newblock Two-time distribution function in one-dimensional random directed
	polymers.
	\newblock {\em J. Phys. A}, 48(49):495001, 18, 2015.
	
	\bibitem{DotsenkoTime3}
	V.~Dotsenko.
	\newblock On two-time distribution functions in {$(1+1)$} random directed
	polymers.
	\newblock {\em J. Phys. A}, 49(27):27LT01, 8, 2016.
	
	\bibitem{Ferrari-Nejjar15}
	P.~L. Ferrari and P.~Nejjar.
	\newblock Anomalous shock fluctuations in {TASEP} and last passage percolation models.
	\newblock {\em Probab. Theory Relat. Fields}, 161(1-2):61--109, 2015.
	
	\bibitem{FerrariSpohnTimeCorrel}
	P.~L. Ferrari and H.~Spohn.
	\newblock On time correlations for {KPZ} growth in one dimension.
	\newblock {\em SIGMA Symmetry Integrability Geom. Methods Appl.}, 12:Paper No.
	074, 23, 2016.
	
	\bibitem{Golinelli-Mallick04}
	O.~Golinelli and K.~Mallick.
	\newblock Bethe ansatz calculation of the spectral gap of the asymmetric
	exclusion process.
	\newblock {\em J. Phys. A}, 37(10):3321--3331, 2004.
	
	\bibitem{Golinelli-Mallick05}
	O.~Golinelli and K.~Mallick.
	\newblock Spectral gap of the totally asymmetric exclusion process at arbitrary
	filling.
	\newblock {\em J. Phys. A}, 38(7):1419--1425, 2005.
	
	\bibitem{Gupta-Majumdar-Godreche-Barma07}
	S.~Gupta, S.~N. Majumdar, C.~Godr{\`e}che, and M.~Barma.
	\newblock Tagged particle correlations in the asymmetric simple exclusion
	process: Finite-size effects.
	\newblock {\em Phys. Rev. E}, 76:021112, 2007.
	
	\bibitem{Gwa-Spohn92}
	L.-H. Gwa and H.~Spohn.
	\newblock {Bethe solution for the dynamical-scaling exponent of the noisy
		Burgers equation}.
	\newblock {\em Phys. Rev. A}, 46:844--854, Jul 1992.
	
	\bibitem{Imamura-Sasamoto04}
	T.~Imamura and T.~Sasamoto.
	\newblock Fluctuations of the one-dimensional polynuclear growth model with
	external sources.
	\newblock {\em Nuclear Phys. B}, 699(3):503--544, 2004.
	
	\bibitem{Johansson00}
	K.~Johansson.
	\newblock Shape fluctuations and random matrices.
	\newblock {\em Comm. Math. Phys.}, 209(2):437--476, 2000.
	
	\bibitem{Johansson03}
	K.~Johansson.
	\newblock Discrete polynuclear growth and determinantal processes.
	\newblock {\em Comm. Math. Phys.}, 242(1-2):277--329, 2003.
	
	\bibitem{JohanssonTwoTime}
	K.~Johansson.
	\newblock Two time distribution in {B}rownian directed percolation, 2015.
	\newblock {\em Comm. Math. Phys.}, 351(2):441-492, 2017.
	
	\bibitem{Johansson18}
	K.~Johansson.
	\newblock {The two-time distribution in geometric last-passage percolation, 2018}.
	\newblock arXiv:1802.00729.
	
	\bibitem{Liu16}
	Z.~Liu.
	\newblock Height fluctuations of stationary {TASEP} on a ring in relaxation
	time scale.
	\newblock {\em Ann. Inst. H. Poincar{\'e} B.}, 54(2):1031--1057, 2018.
	
	\bibitem{Matetski-Quastel-Remenik17}
	K.~Matetski, J.~Quastel, and D.~Remenik.
	\newblock {The KPZ fixed point}.
	\newblock arXiv:1701.00018.
	
	\bibitem{Poghosyan-Priezzhev08}
	V.~S. Poghosyan and V.~B. Priezzhev.
	\newblock Determinant solution for the {TASEP} with particle-dependent hopping
	probabilities on a ring.
	\newblock {\em Markov Processes and Related Fields}, 14(2):233--254, 2008.
	
	\bibitem{Povolotsky-Priezzhev07}
	A.~M. Povolotsky and V.~B. Priezzhev.
	\newblock Determinant solution for the totally asymmetric exclusion process
	with parallel update. {II}. {R}ing geometry.
	\newblock {\em J. Stat. Mech. Theory Exp.}, (8):P08018, 27 pp. (electronic),
	2007.
	
	\bibitem{Prahofer-Spohn02}
	M.~Pr{\"a}hofer and H.~Spohn.
	\newblock Scale invariance of the {PNG} droplet and the {A}iry process.
	\newblock {\em J. Stat. Phys.}, 108(5-6):1071--1106, 2002.
	
	\bibitem{Priezzhev2003}
	V.~Priezzhev.
	\newblock Exact nonstationary probabilities in the asymmetric exclusion process
	on a ring.
	\newblock {\em Phys. Rev. Lett.}, 91(5):050601, 2003.
	
	\bibitem{Prolhac16}
	S.~Prolhac.
	\newblock Finite-time fluctuations for the totally asymmetric exclusion
	process.
	\newblock {\em Phys. Rev. Lett.}, 116:090601, 2016.
	
	\bibitem{Quastel-Remenik16}
	J.~Quastel and D.~Remenik.
	\newblock How flat is flat in random interface growth?
	\newblock arXiv:1606.09228.
	
	\bibitem{Rakos-Schutz05}
	A.~R{\'a}kos and G.~M. Sch{\"u}tz.
	\newblock Current distribution and random matrix ensembles for an integrable
	asymmetric fragmentation process.
	\newblock {\em J. Stat. Phys.}, 118(3-4):511--530, 2005.
	
	\bibitem{Sasamoto05}
	T.~Sasamoto.
	\newblock Spatial correlations of the 1{D} {KPZ} surface on a flat substrate.
	\newblock {\em J. Phys. A}, 38(33):L549--L556, 2005.
	
	\bibitem{Schutz97}
	G.~M. Sch{\"u}tz.
	\newblock Exact solution of the master equation for the asymmetric exclusion
	process.
	\newblock {\em J. Stat. Phys.}, 88(1-2):427--445, 1997.
	
	\bibitem{Tracy-Widom09}
	C.~A. Tracy and H.~Widom.
	\newblock Asymptotics in {ASEP} with step initial condition.
	\newblock {\em Comm. Math. Phys.}, 290(1):129--154, 2009.
	
\end{thebibliography}

\def\cydot{\leavevmode\raise.4ex\hbox{.}}

\end{document}